\documentclass[pdflatex,sn-mathphys-num]{sn-jnl}

\usepackage{graphicx}%
\usepackage{multirow}%
\usepackage{amsmath,amssymb,amsfonts}%
\usepackage{amsthm}%
\usepackage{mathrsfs}%
\usepackage[title]{appendix}%
\usepackage{xcolor}%
\usepackage{textcomp}%
\usepackage{manyfoot}%
\usepackage{booktabs}%
\usepackage{algorithm}%
\usepackage{algorithmicx}%
\usepackage{algpseudocode}%
\usepackage{listings}%

\usepackage{tikz-cd}
\usepackage{relsize}
\usepackage{mathtools,xparse}

\usepackage{hyperref}
\usepackage{accents}

\usepackage{calligra}
\usepackage{enumitem}

\usepackage[T1]{fontenc}
\usepackage[ttdefault=true]{AnonymousPro}

\setlist[enumerate]{wide=\parindent}

\usepackage{bbm}

 \allowdisplaybreaks

\DeclareMathAlphabet{\mathcalligra}{T1}{calligra}{m}{n}

\DeclareMathOperator{\Res}{Res}

\DeclareMathOperator{\Hom}{Hom}

\DeclareMathOperator{\Sing}{Sing}

\DeclareMathOperator{\Aut}{Aut}

\DeclareMathOperator{\End}{End}

\DeclareMathOperator{\obj}{obj}

\DeclareMathOperator{\fgr}{gr}

\theoremstyle{thmstyleone}%
\newtheorem{thm}{Theorem}[section]
\newtheorem{prop}[thm]{Proposition}
\newtheorem{coro}[thm]{Corollary}

\newtheorem{lem}[thm]{Lemma}

\theoremstyle{thmstyletwo}%

\newtheorem{rem}[thm]{Remark}

\theoremstyle{thmstylethree}%

\theoremstyle{definition}
\newtheorem{de}[thm]{Definition}

\begin{document}


\newcommand{\C}{{\mathbb{C}}}
\newcommand{\Z}{{\mathbb{Z}}}
\newcommand{\N}{{\mathbb{N}}}
\newcommand{\te}[1]{\mbox{#1}}
\newcommand{\set}[2]{{
    \left\{
        {#1}
    \,\middle\vert\,
        {#2}
    \right\}
}}

\newcommand{\choice}[2]{{
\left[
\begin{array}{c}
{#1}\\{#2}
\end{array}
\right]
}}
\def \<{\langle}
\def \>{\rangle}

\def \:{\mathopen{\overset{\circ}{
    \mathsmaller{\mathsmaller{\circ}}}
    }}
\def \;{\mathclose{\overset{\circ}{\mathsmaller{\mathsmaller{\circ}}}}}

\newcommand{\overit}[2]{{
    \mathop{{#1}}\limits^{{#2}}
}}
\newcommand{\belowit}[2]{{
    \mathop{{#1}}\limits_{{#2}}
}}

\newcommand{\wt}[1]{\widetilde{#1}}

\newcommand{\wh}[1]{\widehat{#1}}

\newcommand{\wck}[1]{\reallywidecheck{#1}}

\newcommand{\qqq}{{\mathbbm q}}

\newlength{\dhatheight}
\newcommand{\dwidehat}[1]{%
    \settoheight{\dhatheight}{\ensuremath{\widehat{#1}}}%
    \addtolength{\dhatheight}{-0.45ex}%
    \widehat{\vphantom{\rule{1pt}{\dhatheight}}%
    \smash{\widehat{#1}}}}
\newcommand{\dhat}[1]{%
    \settoheight{\dhatheight}{\ensuremath{\hat{#1}}}%
    \addtolength{\dhatheight}{-0.35ex}%
    \hat{\vphantom{\rule{1pt}{\dhatheight}}%
    \smash{\hat{#1}}}}

\newcommand{\dwh}[1]{\dwidehat{#1}}

\newcommand{\dis}{\displaystyle}

\newcommand{\pd}[1]{\frac{\partial}{\partial {#1}}}

\newcommand{\pdiff}[2]{\frac{\partial^{#2}}{\partial #1^{#2}}}

\newcommand{\qb}[2]{\genfrac{[}{]}{0pt}{}{#1}{#2}}

\newcommand{\vvp}[1]{\< #1 \>_{\,\,\,\mathllap{\wp}}}

\newcommand{\vvinf}[1]{\< #1 \>_{\,\,\,\,\,\mathllap{\infty}}}


\newcommand{\g}{{\mathfrak g}}
\newcommand{\h}{{\mathfrak h}}
\newcommand{\al}{\alpha}
\newcommand{\be}{\beta}
\newcommand{\ssl}{{\mathfrak{sl}}}
\newcommand{\id}{\te{id}}
\newcommand{\rtu}{{\xi}}
\newcommand{\half}{{\frac{1}{2}}}
\newcommand{\reciprocal}[1]{{\frac{1}{#1}}}
\newcommand{\inverse}{^{-1}}
\newcommand{\inv}{\inverse}
\newcommand{\Apsc}[1]{A}

\newcommand{\Bpsc}[1]{B}

\newcommand{\tpsc}[2]{\wt{#1}}


\newcommand{\E}{{\mathcal{E}}}

\newcommand{\ot}{\otimes}


\newcommand{\U}{{\mathcal{U}}}
\newcommand{\vac}{{{\mathbbm 1}}}


\makeatletter
\@addtoreset{equation}{section}
\def\theequation{\thesection.\arabic{equation}}
\makeatother \makeatletter

\title[Quantum affine VA]{Filtration and gradation of quantum affine vertex algebras}

\author*[1]{\fnm{Fei} \sur{Kong}
}
\email{kongmath@hunnu.edu.cn}

\affil*[1]{\orgdiv{Key Laboratory of Computing and Stochastic Mathematics (Ministry of Education), School of Mathematics and Statistics}, \orgname{Hunan Normal University}, \orgaddress{\city{Changsha}, \postcode{410081}, \country{China}}}



\abstract{In this paper, we present a unified construction of quantum affine vertex algebras associated to double Yangians, untwisted quantum affinization algebras, and twisted quantum affinization algebras. We show that these $\hbar$-adic quantum vertex algebras contain a dense
quantum vertex subalgebra equipped with a increasing filtration. Moreover, we prove that the associated graded algebras with respect to these filtrations are isomorphic to a $\Z$-graded dense quantum vertex subalgebra of the quantum affine vertex algebras associated to double Yangians.}


\keywords{Quantum vertex algebras, quantum affine vertex algebras, gradation, increasing filtration, twisted quantum affinizations}

\pacs[MSC Classification]{17B69, 17B37}
%
\maketitle


\section{Introduction}

It is well known that vertex algebras are naturally associated with untwisted affine Lie algebras, while twisted affine Lie algebras are linked to the same vertex algebras through the theory of twisted modules (or quasi-modules).
In the general vertex algebra theory, an important subject arguably is establishing and exploring natural associations of quantum vertex algebras to double Yangians and (untwisted or twisted) quantum affine algebras.

As one of the fundamental works, Etingof and Kazhdan (\cite{EK-qva}) developed a theory of
quantum vertex operator algebras
in sense of formal deformations of vertex algebras,
and constructed a family of examples as formal
deformations of the (universal) affine vertex algebras of type $A$ by using the $R$-matrix type relations given in \cite{RS-RTT}.
The most visible difference between quantum vertex operator algebras and vertex algebras is that the usual locality is replaced by the $S$-locality.
This $S$-locality is governed by an operator called the quantum Yang-Baxter operator, which satisfies the rational quantum Yang-Baxter equation.

In \cite{Li-nonlocal, Li-constructing}, the notions of nonlocal vertex algebra and (weak) quantum vertex algebra
were introduced and systematically studied; nonlocal vertex algebras serve as vertex algebra analogues of
noncommutative associative algebras.
A nonlocal vertex algebra is a weak quantum vertex algebra \cite{Li-nonlocal} if it satisfies the $S$-locality. In addition, it becomes a quantum vertex algebra \cite{Li-nonlocal} if the $S$-locality is controlled by a rational quantum Yang-Baxter operator.
As slight generalizations of quantum vertex operator algebras in the sense of Etingof-Kazhdan,
$\hbar$-adic (weak) quantum vertex algebras, which are formal deformations of (weak) quantum vertex algebras,
were studied in \cite{Li-h-adic}.
In particular, a conceptual construction of $\hbar$-adic weak quantum vertex algebras
and their modules was established from ``$S$-local'' sets of fields (vertex operators).
As an application, $\hbar$-adic quantum vertex algebras were associated to the centrally extended double Yangian of $\mathfrak{sl}_2$.
Mainly in order to associate quantum vertex algebras to quantum affine algebras, a theory of $\phi$-coordinated quasi-modules was developed in \cite{Li-phi-coor, Li-G-phi}.

In \cite{BJK-qva-BCD},
Butorac, Jing and Ko\v{z}i\'{c} extended Etingof-Kazhdan's construction
to type $B$, $C$ and $D$ rational $R$-matrices.
The modules of these quantum vertex operator algebras are in one-to-one correspondence with restricted modules for the corresponding double Yangians (see \cite{K-qva-phi-mod-BCD}).
For any symmetrizable generalized Cartan matrix $A$, we introduced an algebra $\wh{\mathcal{DY}}(A)$ in \cite{KL-YD-1}, which is essentially the centrally extended double Yangian when $A$ is of finite type, and we gave a Drinfeld-type presentation of $\wh{\mathcal{DY}}(A)$.
Using this, we constructed an $\hbar$-adic weak quantum vertex algebra $\mathcal V_A(\ell)$ for each $\ell\in\C$, and established an isomorphism between the category of restricted $\wh{\mathcal{DY}}(A)$-modules of level $\ell$ and the category of $\mathcal V_A(\ell)$-modules.

Based on the $R$-matrix presentation of quantum affine algebras
(see \cite{DF-qaff-RTT-Dr,JLM-qaff-RTT-Dr-BD,JLM-qaff-RTT-Dr-C}),
Ko\v{z}i\'{c} constructed the quantum vertex operator algebras associated with trigonometric
$R$-matrices of types $A$, $B$, $C$ and $D$ (\cite{Kozic-qva-tri-A, K-qva-phi-mod-BCD}),
and established a one-to-one correspondence between $\phi$-coordinated modules and
restricted modules for quantum affine algebras.
In \cite{JKLT-Defom-va}, we developed a method for constructing quantum vertex operator algebras by using vertex bialgebras.
Applying this method together with the Drinfeld's quantum affinization construction (\cite{Dr-new,J-KM,Naka-quiver}), we constructed in \cite{K-Quantum-aff-va} the quantum affine vertex algebras $V_{\hat\g,\hbar}(\ell,0)$ and $L_{\hat\g,\hbar}(\ell,0)$ for all symmetric Kac-Moody Lie algebras $\g$,
and established an isomorphism between the category of their $\phi$-coordinated modules and the category of certain restricted modules of untwisted quantum affinization of $\g$ of level $\ell$.
When $\g$ is of finite type and $\ell\in\Z_+$, we proved that $L_{\hat\g,\hbar}(\ell,0)/\hbar L_{\hat\g,\hbar}(\ell,0)$
is isomorphic to the simple affine vertex algebra $L_{\hat\g}(\ell,0)$.


Let $A$ be a symmetrizable generalized Cartan matrix and let $\g$ be the corresponding Kac-Moody Lie algebra.
In this paper, we present a unified construction of quantum affine vertex algebras $V_{\hbar,\tau}^\ell(\g)$ for each $\ell\in \C$
and $L_{\hbar, \tau}^\ell(\g)$ for each $\ell\in\Z_+$,
recovering the $\hbar$-adic weak quantum vertex algebra $\mathcal V_A(\ell)$ (see Appendix \ref{app:YD}) and the $\hbar$-adic quantum vertex algebras $V_{\hat\g,\hbar}(\ell,0)$ and $L_{\hat\g,\hbar}(\ell,0)$ (see Appendix \ref{app:qaff}).
The construction depends on a family $\tau$ of power series in $\C[[z,\hbar]]$; the collection of all such families forms an abelian group $(\mathfrak T,\ast,\varepsilon)$.
By choosing a specific $\tau\in\mathfrak T$, we establish an isomorphism between the category of equivariant $\phi$-coordinated quasi-modules of $V_{\hbar,\tau}^\ell(\g)$ and the category of restricted modules of the twisted quantum affinization algebra $\U_\hbar(\wh\g_\mu)$ of level $\ell$ (see Section \ref{sec:tqaff}).

Unlike affine vertex algebras, the quantum affine vertex algebras associated to double Yangians, untwisted quantum affine algebras, and twisted quantum affine algebras are not isomorphic.
However, we prove that these quantum affine vertex algebras contain dense quantum vertex subalgebras equipped with a increasing filtration, whose associated graded algebras coincide.
More precise, both $V_{\hbar,\varepsilon}^\ell(\g)$ and $L_{\hbar,\varepsilon}^\ell(\g)$ can be realized as  $\hbar$-adic quantum vertex quotient algebras of a deformation of a $\Z$-graded vertex algebra by using a vertex bialgebra (see Section \ref{sec:qaffva}).
Using this, we prove that they both contain $\Z$-graded dense quantum vertex subalgebras, which are denoted by $V_\varepsilon^\ell(\g)$ and $L_\varepsilon^\ell(\g)$, respectively (see Section \ref{sec:Z-grading}).
Moreover, for each $\tau\in\mathfrak T$, the $\hbar$-adic quantum vertex algebras $V_{\hbar,\tau}^\ell(\g)$ and $L_{\hbar,\tau}^\ell(\g)$ can also be realized as deformations of $V_{\hbar,\varepsilon}^\ell(\g)$ and $L_{\hbar,\varepsilon}^\ell(\g)$ by using vertex bialgebras (see Section \ref{sec:qaffva}).
Using this, we prove that the dense quantum vertex subalgebras with a increasing filtration obtained from $V_\varepsilon^\ell(\g)$ and $L_\varepsilon^\ell(\g)$ induce dense quantum vertex subalgebras with a increasing filtration in $V_{\hbar,\tau}^\ell(\g)$ and $L_{\hbar,\tau}^\ell(\g)$, whose associated graded algebras are exactly the $\Z$-graded quantum vertex algebra $V_\varepsilon^\ell(\g)$ and $L_\varepsilon^\ell(\g)$ (see Section \ref{sec:filtration}).

This paper is organized as follows: In Section \ref{sec:qva}, we recall the notions of $\hbar$-adic quantum vertex algebras and equivariant $\phi$-coordinated quasi-modules. In Section \ref{sec:deform}, we recall the deformation approach introduced in \cite{JKLT-Defom-va}.
In Section \ref{sec:construct}, we give an $\hbar$-adic counterpart of the construction of quantum vertex algebra introduced in \cite[Section 4.2]{K-qaffva-rtu}.
In Section \ref{sec:qaffva}, we present a unified construction of quantum affine vertex algebras and then realize them as deformations by using vertex bialgebras.
We show that these quantum affine vertex algebras recover the $\hbar$-adic weak quantum vertex algebra associated to double Yangians and untwisted quantum affinization algebras introduced in \cite{KL-YD-1} and \cite{K-Quantum-aff-va} (see Appendices \ref{app:YD} and \ref{app:qaff}).
Moreover, by choosing a specific $\tau\in \mathfrak T$, we establish, in Section \ref{sec:tqaff}, an isomorphism between the category of equivariant $\phi$-coordinated quasi-modules of $V_{\hbar,\tau}^\ell(\g)$ and the category of restricted modules of the twisted quantum affinization algebra $\U_\hbar(\wh\g_\mu)$ of level $\ell$.
In Section \ref{sec:Z-grading}, we prove that both $V_{\hbar,\varepsilon}^\ell(\g)$ and $L_{\hbar,\varepsilon}^\ell(\g)$ contain $\Z$-graded dense quantum vertex subalgebras, denoted  $V_\varepsilon^\ell(\g)$ and $L_\varepsilon^\ell(\g)$, respectively.
In Section \ref{sec:filtration}, we prove that the quantum affine vertex algebras $V_{\hbar,\tau}^\ell(\g)$ and $L_{\hbar,\tau}^\ell(\g)$ contain dense quantum vertex subalgebras equipped with a increasing filtration, and that their associated graded algebras are exactly the $\Z$-graded quantum vertex algebra $V_\varepsilon^\ell(\g)$ and $L_\varepsilon^\ell(\g)$.

Throughout this paper, we denote the set of nonnegative integer and  positive integers by $\N$ and $\Z_+$, respectively.
For any ring $R$, we denote the set of invertible elements by $R^\times$.

\section{Quantum vertex algebra}\label{sec:qva}

A $\C[[\hbar]]$-module $V$
is said to be \emph{torsion-free} if $\hbar v\ne 0$ for every $0\ne v\in V$,
and said to be \emph{separated} if $\cap_{n\ge1}\hbar^n V=0$.
For a $\C[[\hbar]]$-module $V$, using subsets $v+\hbar^nV$ for $v\in V$, $n\ge 1$
as the basis of open subsets one obtains a topology on $V$, which is called the
\emph{$\hbar$-adic topology}.
A $\C[[\hbar]]$-module $V$ is said to be \emph{$\hbar$-adically complete}
if every Cauchy sequence in $V$ with respect to this $\hbar$-adic topology has a limit in $V$.
A $\C[[\hbar]]$-module $V$ is \emph{topologically free} if $V=V_0[[\hbar]]$
for some vector space $V_0$ over $\C$.
It is known that a $\C[[\hbar]]$-module is topologically free if and only if it is torsion-free, separated, and $\hbar$-adically complete (\cite{Kassel-topologically-free}, see \cite{Li-h-adic}).
For each subset $K$ of $V$, we denote by $\overline{K}$ the closure of $K$ and set
\begin{align}\label{eq:Husdorff}
  [K]=\set{v\in V}{\hbar^nv\in K\,\,\te{for some }n\in\N}.
\end{align}
When $K$ is a submodule of $V$, a similar argument of \cite[Proposition 3.7]{Li-h-adic} shows that 
\begin{align*}
  \left[\overline{[K]}\right]=\overline{[K]}.
\end{align*}
For another topologically free $\C[[\hbar]]$-module $U=U_0[[\hbar]]$, we recall the complete tensor
\begin{align*}
  U\wh\ot V=(U_0\ot V_0)[[\hbar]].
\end{align*}

We view a vector space as a $\C[[\hbar]]$-module by letting $\hbar=0$.
Fix a $\C[[\hbar]]$-module $W$. For $k\in\Z_+$,
and some formal variables $z_1,\dots,z_k$, we define
\begin{align}
  \E^{(k)}(W;z_1,\dots,z_k)=\Hom_{\C[[\hbar]]}(W,W((z_1,\dots,z_k))).
\end{align}
For a subset $\Gamma$ of $\C^\times$, we let
$\C_\Gamma[z]$ be the monoid generated by polynomials of the form $z-\al$, where $\al\in \Gamma$.
Then an ordered sequence $(a_1(z),\dots, a_k(z))$ in $\E(W)$ is called \emph{$\Gamma$-compatible} if there exists $f(z_1,z_2)\in \C_\Gamma[z_1/z_2]$
such that
\begin{align*}
  &\prod_{1\le i<j\le k}f(z_i,z_j)a_1(z_1)a_2(z_2)\cdots a_k(z_k)\in\E^{(k)}(W).
\end{align*}
In addition, this sequence is called \emph{compatible} if $\Gamma=\{1\}$, and is called \emph{quasi-compatible}
if $\Gamma=\C^\times$.
Moreover, a subset $U\subset \E(W)$ is called ($\Gamma$-, quasi-)compatible if every ordered sequence is ($\Gamma$-, quasi-)compatible.

Let $\phi(z,z_1)\ne z$ be an associate of the additive formal group law $F_a(z_1,z_2)=z_1+z_2$.
To be more precise, $z\ne\phi(z,z_1)\in \C((z))[[z_1]]$ such that
\begin{align*}
  \phi(z,0)=z,\quad\te{and}\quad \phi(\phi(z,z_1),z_2)=\phi(z,z_1+z_2).
\end{align*}
For a quasi-compatible ordered pair $(a(z),b(z))$ such that
\begin{align*}
  f(z_1,z_2)a(z_1)b(z_2)\in\E^{(2)}(W)\quad\te{for some }0\ne f(z_1,z_2)\in \C[z_1,z_2],
\end{align*}
a vertex operator map $Y_\E^\phi$ was defined in \cite{Li-phi-coor}:
\begin{align}\label{eq:def-Y-E}
  &Y_\E^\phi(a(z),z_0)b(z)=\sum_{n\in\Z}a(z)_n^\phi b(z)z_0^{-n-1}\\
  &\quad\nonumber=\iota_{z,z_0}f(\phi(z,z_0),z)\inv \left.\left( f(z_1,z)a(z_1)b(z) \right)\right|_{z_1=\phi(z,z_0)}.
\end{align}


Now, we assume $W=W_0[[\hbar]]$ for some vector space $W_0$. Then $W$ is topologically free.
Define
\begin{align}\label{eq:E-hbar}
  \E_\hbar^{(k)}(W;z_1,\dots,z_k)=\Hom_{\C[[\hbar]]}(W,W_0((z_1,\dots,z_k))[[\hbar]]).
\end{align}
Note that $\E_\hbar^{(k)}(W;z_1,\dots,z_k)=\E^{(k)}(W;z_1,\dots,z_k)[[\hbar]]$ is topologically free.
For convenience, we will also write $\E_\hbar^{(k)}(W)=\E_\hbar^{(k)}(W;z_1,\dots,z_k)$ and write $\E_\hbar(W)=\E_\hbar^{(1)}(W)$.
For $n,k\in\Z_+$, the quotient map from $W$ to $W/\hbar^nW$ induces the following $\C[[\hbar]]$-module map
\begin{align*}
  \wt\pi_n^{(k)}:(\End_{\C[[\hbar]]} (W))[[z_1^{\pm 1},\dots,z_k^{\pm 1}]]\to (\End_{\C[[\hbar]]}(W/\hbar^nW))[[z_1^{\pm 1},\dots,z_k^{\pm 1}]].
\end{align*}
For $A(z_1,z_2),B(z_1,z_2)\in\Hom_{\C[[\hbar]]}(W,W_0((z_1))((z_2))[[\hbar]])$, we write $A(z_1,z_2)\sim B(z_2,z_1)$ if for each $n\in\Z_+$ there is $k\in\N$ such that
\begin{align*}
  (z_1-z_2)^k\wt\pi_n^{(2)}(A(z_1,z_2))= (z_1-z_2)^k\wt\pi_n^{(2)}(B(z_2,z_1)).
\end{align*}
Let $Z(z_1,z_2):\E_\hbar(W)\wh \ot \E_\hbar(W)\wh\ot\C((z))[[\hbar]]\to\End_{\C[[\hbar]]}(W)[[z_1^{\pm 1},z_2^{\pm 1}]]$ be defined by
\begin{align*}
  Z(z_1,z_2)\left(a(z)\ot b(z)\ot f(z)\right)=\iota_{z_1,z_2}f(z_1-z_2)a(z_1)b(z_2).
\end{align*}
A subset $U$ of $\E_\hbar(W)$ is said to be \emph{$\hbar$-adically $S$-local} if for any $a(z),b(z)\in U$, there exists $A(z)\in \left(\C U\ot \C U\ot \C((z))\right)[[\hbar]]$ such that
\begin{align*}
  a(z_1)b(z_2)\sim Z(z_2,z_1)\left(A(z)\right),
\end{align*}
where $\C U$ denotes the subspace spanned by $U$.

Recall from \cite[Remark 4.7]{Li-h-adic} that $\wt \pi_n^{(k)}$ induces a $\C[[\hbar]]$-module map
$\pi_n^{(k)}:\E_\hbar^{(k)}(W)\to \E^{(k)}(W/\hbar^n W)$ with kernel $\hbar^n\E_\hbar^{(k)}(W)$.
And $\E_\hbar^{(k)}(W)$ is isomorphic to the inverse limit of the following inverse system
\begin{align*}
  \begin{tikzcd}[column sep=1.5em, ampersand replacement=\&]
    0 \& \E^{(k)}(W/\hbar W)\ar[l]\& \E^{(k)}(W/\hbar^2 W)\ar[l]\& \E^{(k)}(W/\hbar^3 W)\ar[l]\&\cdots\ar[l].
  \end{tikzcd}
\end{align*}
If $k=1$, we will also write $\pi_n=\pi_n^{(1)}$.
Then an ordered sequence $(a_1(z)\dots,a_r(z))$ in $\E_\hbar(W)$ is said to be \emph{$\hbar$-adically ($\Gamma$-, quasi-) compatible} if for every $n\in\Z_+$,
the sequence $(\pi_n(a_1(z)),\dots,\pi_n(a_r(z)))$ in $\E(W/\hbar^nW)$ is ($\Gamma$-, quasi-) compatible.
A subset $U$ of $\E_\hbar(W)$ is said to be \emph{$\hbar$-adically ($\Gamma$-, quasi-) compatible} if every finite sequence in $U$ is $\hbar$-adically ($\Gamma$-, quasi-) compatible.

Let $(a(z),b(z))$ in $\E_\hbar(W)$ be $\hbar$-adically quasi-compatible.
We have the following vertex operator map (see \cite[Definition 2.17]{JKLT-G-phi-mod}):
\begin{align}\label{eq:def-Y-E}
  &Y_\E^\phi\left(a(z),z_0\right)b(x)
  =\sum_{n\in\Z}a(z)_nb(z)z_0^{-n-1}\\
  =&\varprojlim_{n>0}Y_\E^\phi(\pi_n(a(z)),z_0)\pi_n(b(z)).\nonumber
\end{align}
The formal group law $F_a$ is obvious an associate of $F_a$. We denote the corresponding vertex operator map $Y_\E^{F_a}$ by $Y_\E$ for short.

An \emph{$\hbar$-adic nonlocal vertex algebra} \cite[Definition 2.9]{Li-h-adic}
is a topologically free $\C[[\hbar]]$-module $V$ equipped with a $\C[[\hbar]]$-module map
\begin{align*}
  Y(\cdot,z):V&\longrightarrow \E_\hbar(V),\quad
  v\mapsto Y(v,z)=\sum_{n\in\Z}v_nz^{-n-1},
\end{align*}
and a distinguished vacuum vector $\vac$ such that the following vacuum property holds
\begin{align}\label{eq:vacuum-property}
  Y(\vac,z)v=v,\quad Y(v,z)\vac\in V[[z]],\quad \lim_{z\to 0}Y(v,z)\vac=v\quad\te{for }v\in V,
\end{align}
and for $u,v\in V$, $(Y(u,z),Y(v,z))$ is an $\hbar$-adic compatible pair with
\begin{align}
  &Y_\E\left(Y(u,z),z_0\right)Y(v,z)=Y\left(Y(u,z_0)v,z\right)\label{eq:weak-asso}
\end{align}
Denote by $\partial$ the canonical derivation on $V$:
\begin{align}\label{eq:canonical-der}
  u\mapsto\partial u=\lim_{z\to 0}\frac{d}{dz}Y(u,z)\vac.
\end{align}

An \emph{$\hbar$-adic weak quantum vertex algebra} (\cite[Definition 2.9]{Li-h-adic}) is an $\hbar$-adic nonlocal vertex algebra $V$, such that $\set{Y(u,z)}{u\in V}$ is $\hbar$-adically $S$-local.
And an \emph{$\hbar$-adic quantum vertex algebra} (\cite[Definition 2.20]{Li-h-adic}) is an $\hbar$-adic weak quantum vertex algebra $V$ equipped with a $\C[[\hbar]]$-module map (called a \emph{quantum Yang-Baxter operator})
\begin{align*}
  S(z):V\wh\ot V&\longrightarrow V\wh\ot V\wh\ot \C((z))[[\hbar]]\\
  v\ot u&\mapsto \sum_{n\in\Z}S_n(v\ot u)z^{-n-1},
\end{align*}
which satisfies the \emph{shift condition}:
\begin{align}\label{eq:qyb-shift}
  [\partial\ot 1,S(z)]=-\frac{d}{dz}S(z),
\end{align}
the \emph{quantum Yang-Baxter equation}:
\begin{align}\label{eq:qyb}
  S^{12}(z_1)S^{13}(z_1+z_2)S^{23}(z_2)=S^{23}(z_2)S^{13}(z_1+z_2)S^{12}(z_1),
\end{align}
and the \emph{unitarity condition}:
\begin{align}\label{eq:qyb-unitary}
  S^{21}(z)S(-z)=1,
\end{align}
satisfying the following conditions:

  (1) The \emph{vacuum property}:
  \begin{align}\label{eq:qyb-vac-id}
    S(z)(\vac\ot v)=\vac\ot v,\quad \te{for }v\in V.
  \end{align}

 (2) The \emph{$S$-locality}:
  For any $u,v\in V$, one has
  \begin{align}\label{eq:qyb-locality}
  Y(u,z_1)Y(v,z_2)\sim Y(z_2)(1\ot Y(z_1))S(z_2-z_1)(v\ot u).
  \end{align}

  (3) The \emph{hexagon identity}:
  \begin{align}\label{eq:qyb-hex-id}
    S(z_1)(Y(z_2)\ot 1)=(Y(z_2)\ot 1)S^{23}(z_1)S^{13}(z_1+z_2).
  \end{align}

\section{Deformation by vertex bialgebras}\label{sec:deform}

In this section, we recall the deformation method by vertex bialgebras developed in \cite{JKLT-Defom-va}.

\begin{de}
An {\em $\hbar$-adic nonlocal vertex bialgebra} is an $\hbar$-adic nonlocal vertex algebra $H$ with an $\hbar$-adic
coalgebra structure $(\Delta,\epsilon)$ on $H$ such that both $\Delta: H\rightarrow H\wh\otimes H$ and
$\epsilon: H\rightarrow \C[[\hbar]]$ are homomorphisms of $\hbar$-adic nonlocal vertex algebras.
\end{de}

\begin{rem}\label{rem:bialg-der}
Let $(H,\Delta,\varepsilon)$ be an $\hbar$-adic bialgebra equipped with a derivation $\partial$.
Suppose further that $H$ is topologically-free.
Then $H$ is an $\hbar$-adic nonlocal vertex bialgebra with vacuum $1$ and vertex operator map defined by
\begin{align*}
  Y(a,z)b=\left(e^{z\partial}a\right)b\quad\te{for } a,b\in H.
\end{align*}
We denote this $\hbar$-adic nonlocal vertex bialgebra by $(H,\partial,\Delta,\varepsilon)$.
\end{rem}

\begin{de}
Let $(H,\Delta,\varepsilon)$ be an $\hbar$-adic nonlocal vertex bialgebra.
A (left) \emph{$H$-module $\hbar$-adic (nonlocal) vertex algebra} (\cite{Li-smash}) is an $\hbar$-adic nonlocal vertex algebra $V$ equipped with a module structure $\tau(\cdot,z)$ on $V$ for $H$ viewed as an $\hbar$-adic nonlocal vertex algebra such that
\begin{align}
  &\tau(h,z)v\in V\wh\ot \C((z))[[\hbar]],\qquad
  \tau(h,z)\vac=\varepsilon(h)\vac,
  \label{eq:mod-va-for-vertex-bialg1-2}\\
  &
  \tau(h,z_1)Y(u,z_2)v=\sum Y(\tau(h_{(1)},z_1-z_2)u,z_2)\tau(h_{(2)},z_1)v
  \label{eq:mod-va-for-vertex-bialg3}
\end{align}
for $h\in H$, $u,v\in V$, where $\Delta(h)=\sum h_{(1)}\ot h_{(2)}$ is the coproduct in the Sweedler notation.
\end{de}

The following result was introduced in \cite[Theorem 4.9]{Li-smash}.

\begin{thm}
Let $(H,\Delta,\varepsilon)$ be an $\hbar$-adic nonlocal vertex bialgebra and let $(V,\tau)$ be an $H$-module $\hbar$-adic nonlocal vertex algebra.
Set $V\sharp H=V\wh\ot H$ as a topologically-free $\C[[\hbar]]$-module.
For $u,v\in V$, $h,h'\in H$ define
\begin{align*}
  Y^\sharp (u\ot h,z)(v\ot h')=\sum Y(u,z)\tau(h_{(1)},z)v\ot Y(h_{(2)},z)h',
\end{align*}
where $\Delta(h)=\sum h_{(1)}\ot h_{(2)}$.
Then $(V\sharp H,Y^\sharp, \vac\ot \vac)$ carries the structure of a nonlocal vertex algebra, which contains $V$ and $H$ canonically as $\hbar$-adic nonlocal vertex subalgebras such that
\begin{align*}
  Y^\sharp(h,z_1)Y^\sharp(u,z_2)=\sum Y^\sharp (\tau(h_{(1)},z_1-z_2)u,z_2)Y^\sharp(h_{(2)},z_1)\quad\te{for }u\in V,\,h\in H.
\end{align*}
\end{thm}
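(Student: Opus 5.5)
The plan is to derive the statement from the general criterion for constructing nonlocal vertex algebras out of compatible sets of fields, working modulo $\hbar^n$ and passing to the inverse limit. Set $W=V\sharp H$. For each $u\otimes h\in W$ define $Y^\sharp(u\otimes h,z)$ by the formula in the statement. The first step is to check that this lies in $\E_\hbar(W)$. For this I use \eqref{eq:mod-va-for-vertex-bialg1-2}: $\tau(h_{(1)},z)v\in V\wh\ot\C((z))[[\hbar]]$, so each truncation modulo $\hbar^n$ has only finitely many negative powers of $z$. The vacuum property then follows quickly. On one side, $\Delta(\vac)=\vac\ot\vac$ and $\tau(\vac,z)=\id$ give $Y^\sharp(\vac\otimes\vac,z)=\id$. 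On the other side, $\tau(h_{(1)},z)\vac=\varepsilon(h_{(1)})\vac$ together with $\sum\varepsilon(h_{(1)})h_{(2)}=h$ gives $Y^\sharp(u\ot h,z)(\vac\ot\vac)=Y(u,z)\vac\ot Y(h,z)\vac$, which has the right limit as $z\to0$.

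The second step is to establish the displayed commutation relation for the embedded copies $u\mapsto u\ot\vac$ and $h\mapsto\vac\ot h$. Note that $Y^\sharp(u\ot\vac)$ acts as $Y(u,z)\ot 1$ and $Y^\sharp(\vac\ot h)$ acts as $\sum\tau(h_{(1)},z)\ot Y(h_{(2)},z)$. The relation then reduces to \eqref{eq:mod-va-for-vertex-bialg3} applied in the first tensor factor, using coassociativity of $\Delta$. This also shows that $Y^\sharp(u\ot h,z)=Y^\sharp(u\ot\vac,z)Y^\sharp(\vac\ot h,z)$. In other words, $W$ is generated by the two subalgebras, with a normal-ordering (``PBW'') description.

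The third and main step is weak associativity \eqref{eq:weak-asso} together with $\hbar$-adic compatibility. I would reduce both modulo $\hbar^n$. There the question becomes whether, for the fields $a(z)=Y^\sharp(u\ot h,z)$ and $b(z)=Y^\sharp(v\ot h',z)$, the product $(z_1-z_2)^k a(z_1)b(z_2)$ lies in $\E^{(2)}$. I also need $Y_\E(a,z_0)b=Y^\sharp(a_{z_0}b,z)$. To see this, expand $a(z_1)b(z_2)$ as
\[
Y(u,z_1)\tau(h_{(1)},z_1)Y(v,z_2)\tau(h'_{(1)},z_2)\ot Y(h_{(2)},z_1)Y(h'_{(2)},z_2).
\]
Next, use \eqref{eq:mod-va-for-vertex-bialg3} to move $\tau(h_{(1)},z_1)$ past $Y(v,z_2)$. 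This produces $Y(\tau(h_{(1)},z_1-z_2)v,z_2)$. Because the $\tau$-action is a module action for the nonlocal vertex algebra $H$, I can apply $\tau(h_{(1)},z_1)\tau(h'_{(1)},z_2)\sim\tau(Y(h_{(1)},z_1-z_2)h'_{(1)},z_2)$. Combining these with weak associativity in $V$ and in $H$, and with the fact that $\Delta$ is a vertex algebra homomorphism (so that $\Delta(Y(h,z_0)h')=\sum Y(h_{(1)},z_0)h'_{(1)}\ot Y(h_{(2)},z_0)h'_{(2)}$), I would match $Y_\E(a,z_0)b$ term by term with $Y^\sharp(Y^\sharp(u\ot h,z_0)(v\ot h'),z)$.

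I expect the hardest part to be the bookkeeping of Sweedler indices and of the iota-expansions. All substitutions $z_1=z_2+z_0$ must be justified in the truncated setting, where only compatibility is available, not locality. To handle this I would first prove the statement for the generating fields $Y(u)\ot1$ and $\sum\tau(h_{(1)})\ot Y(h_{(2)})$ separately. I would then invoke the standard result (Li) that the nonlocal vertex algebra generated by a compatible set of fields under $Y_\E$ closes up with weak associativity. Finally, I would identify $Y^\sharp$ with the induced map via the factorization from the second step, and take the inverse limit over $n$.
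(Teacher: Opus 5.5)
The paper gives no proof of this statement; it quotes it from \cite[Theorem 4.9]{Li-smash}. Your first two steps are fine. The term-by-term computation in your third step is the right direct proof, and it goes through. After \eqref{eq:mod-va-for-vertex-bialg3} moves $\tau(h_{(1)},z_1)$ past $Y(v,z_2)$, you use weak associativity of $V$, of the $H$-module $(V,\tau)$ and of $H$, together with the fact that $\Delta$ is a homomorphism. These turn $Y_\E(Y^\sharp(u\ot h,z),z_0)Y^\sharp(v\ot h',z)$ into $Y^\sharp(Y^\sharp(u\ot h,z_0)(v\ot h'),z)$. Two points need repair.

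\emph{The justification.} What makes everything well defined is finiteness, not lower truncation. Modulo $\hbar^n$, \eqref{eq:mod-va-for-vertex-bialg1-2} says $\tau(h,z)v=\sum_i v_i\ot f_i(z)$ is a \emph{finite} sum with $f_i\in\C((z))$. An arbitrary element of $(V/\hbar^nV)((z))$ could not be fed into $Y(u,z)$.

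The same finiteness shows that $\tau(h_{(2)},z_1)\tau(h'_{(1)},z_2)w$ is a finite sum of vectors times products $g(z_1)k(z_2)$. So it already lies in $V\ot\C((z_1,z_2))$, with no power of $z_1-z_2$ needed.

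Hence, modulo $\hbar^n$, your rewritten product is a finite sum of products of four factors: $Y(u,z_1)Y(v_i,z_2)(\cdot)$, the series $\iota_{z_1,z_2}f_i(z_1-z_2)$, that $\tau\tau$-factor, and $Y(h_{(3)},z_1)Y(h'_{(2)},z_2)k$. Choose a single power $(z_1-z_2)^K$, with $K$ read off from compatibility in $V$, the pole orders of the $f_i$, and compatibility in $H$. This $K$ depends only on $u,v,h,h',n$, not on $w\ot k$, and it puts every factor into two-variable Laurent series.

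This gives $\hbar$-adic compatibility. Since $z_1\mapsto z_2+z_0$ is multiplicative, you may substitute factor by factor, which is exactly how $Y_\E$ is defined for a compatible pair. So having only compatibility, not locality, is no obstacle.

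\emph{The detour in your last paragraph.} Drop it; as stated it does not close the argument. Forming $\langle U\rangle$ from $U=\{Y^\sharp(u\ot\vac,z),Y^\sharp(\vac\ot h,z)\}$ already requires compatibility of all finite sequences from $U$, not just pairs. The factorization from your second step only shows that the fields $Y^\sharp(a,z)$, $a\in W$, lie in $\langle U\rangle$.

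Identifying $Y^\sharp$ with the vertex operator of $\langle U\rangle$ means proving
$Y_\E(Y^\sharp(a,z),z_0)Y^\sharp(b,z)=Y^\sharp(Y^\sharp(a,z_0)b,z)$, i.e.\ weak associativity itself. In the nonlocal setting nothing, such as skew-symmetry, gives this for free.

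You could reduce to the generator cases $a=u\ot\vac$ and $a=\vac\ot h$ with $b$ arbitrary. You would then use the vacuum-like vector $\vac\ot\vac$ to get an isomorphism $\langle U\rangle\cong W$. But the case $a=\vac\ot h$ is already the full computation, with all its Sweedler bookkeeping, so nothing is saved.
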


Recall the following notions introduced in \cite{JKLT-Defom-va}.

\begin{de}
Let $(H,\Delta,\varepsilon)$ be an $\hbar$-adic nonlocal vertex bialgebra.
A \emph{(left) $H$-comodule $\hbar$-adic nonlocal vertex algebra} (\cite{JKLT-Defom-va}) is an $\hbar$-adic nonlocal vertex algebra $V$ equipped with a homomorphism
$\rho:V\to V\wh\ot H$ of $\hbar$-adic nonlocal vertex algebras such that
\begin{align}
  (\rho\ot 1)\rho=\sigma^{23}(1\ot \Delta)\rho,\quad (1\ot \varepsilon)\rho=\te{Id}_V.
\end{align}
\end{de}

\begin{de}\label{de:deform-triple}
Let $V$ be an $\hbar$-adic nonlocal vertex algebra.
A \emph{deforming triple} for $V$ is a triple $(H,\rho,\tau)$,
where $H$ is an $\hbar$-adic nonlocal vertex bialgebra,
$(V,\rho)$ is a left $H$-comodule $\hbar$-adic nonlocal vertex algebra and $(V,\tau)$ is an $H$-module $\hbar$-adic nonlocal vertex algebra, such that
\begin{align}\label{eq:rho-tau-compatible}
  \rho(\tau(h,z)v)=(\tau(h,z)\ot 1)\rho(v)\quad \te{for }h\in H,\,v\in V.
\end{align}
\end{de}

In the rest of this section, we fix a cocommutative $\hbar$-adic nonlocal vertex bialgebra $H$ and an $H$-comodule $\hbar$-adic nonlocal vertex algebra $(V,\rho)$.
We note that $$\Hom(H,\Hom(V,V\wh\ot\C((z))[[\hbar]]))$$ is a unital associative algebra,
where the multiplication is defined by
\begin{align*}
  (f\ast g)(h,z)u=\sum f(h_{(1)},z)g(h_{(2)},z)u
\end{align*}
for $f,g\in\Hom(H,\Hom(V,V\wh\ot\C((z))[[\hbar]]))$,
and the unit $\varepsilon$ defined by
\begin{align*}
  \varepsilon(h,z)u=\varepsilon(h)u\quad\te{for }h\in H,\,u\in V.
\end{align*}
For $f,g\in\Hom(H,\Hom(V,V\wh\ot\C((z))[[\hbar]]))$, we say that $f$ and $g$ commute if
\begin{align*}
  [f(h,z_1),g(k,z_2)]=0\quad \te{for }h,k\in H.
\end{align*}
We say that $f$ is invertible if $f\ast g=\varepsilon$.

\begin{prop}\label{prop:deform}
Define
\begin{align*}
  T_\tau(z):V\wh\ot V&\longrightarrow V\wh\ot V\wh\ot \C((z))[[\hbar]],\\
  u\ot v&\mapsto \sum \tau(v_{(2)},-z)u\ot v_{(1)}
\end{align*}
where $\rho(v)=\sum v_{(1)}\ot v_{(2)}\ot V\wh\ot H$,
and define
$\mathfrak D_\tau^\rho(Y)(\cdot,z):V\to\E_\hbar(V)$ by
\begin{align*}
  \mathfrak D_\tau^\rho(Y)(z)=Y(z)T_\tau^{21}(-z).
\end{align*}
Then $(V,\mathfrak D_\tau^\rho(V),\vac)$ is an $\hbar$-adic nonlocal vertex algebra, which is denoted by $\mathfrak D_\tau^\rho(V)$.
Moreover, $(\mathfrak D_\tau^\rho(V),\rho)$ is an $H$-comodule $\hbar$-adic nonlocal vertex algebra.
Furthermore, $\rho:\mathfrak D_\tau^\rho(V)\to V\sharp H$ is an $\hbar$-adic nonlocal vertex algebra homomorphism.
\end{prop}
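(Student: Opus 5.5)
The plan is to realize $\mathfrak D_\tau^\rho(Y)$ as the restriction of $Y^\sharp$ along $\rho$, and then to pull all the axioms back from the smash product $V\sharp H$ of \cite[Theorem 4.9]{Li-smash}. I work under the standing hypotheses that $H$ is cocommutative and $(H,\rho,\tau)$ is a deforming triple (Definition \ref{de:deform-triple}), so that \eqref{eq:rho-tau-compatible} is available.

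First I unwind the definition. Writing $\rho(u)=\sum u_{(1)}\ot u_{(2)}$, we have $T_\tau^{21}(-z)(u\ot v)=\sum u_{(1)}\ot\tau(u_{(2)},z)v$, and hence
\begin{align*}
  \mathfrak D_\tau^\rho(Y)(u,z)v=\sum Y(u_{(1)},z)\tau(u_{(2)},z)v .
\end{align*}
The key identity I will prove is
\begin{align*}
  \rho\big(\mathfrak D_\tau^\rho(Y)(u,z)v\big)=Y^\sharp(\rho(u),z)\rho(v)\qquad\te{for }u,v\in V .
\end{align*}
For the left side I use three facts in turn:
\begin{itemize}
  \item $\rho$ is a homomorphism $V\to V\wh\ot H$ of $\hbar$-adic nonlocal vertex algebras, where $V\wh\ot H$ carries the tensor product structure;
  \item by \eqref{eq:rho-tau-compatible}, $\rho(\tau(u_{(2)},z)v)=\sum \tau(u_{(2)},z)v_{(1)}\ot v_{(2)}$;
  \item coassociativity, in the form $(\rho\ot1)\rho=\sigma^{23}(1\ot\Delta)\rho$.
\end{itemize}
Together these give
\begin{align*}
  \rho\big(\mathfrak D_\tau^\rho(Y)(u,z)v\big)=\sum Y(u_{(1)},z)\tau(u_{(3)},z)v_{(1)}\ot Y(u_{(2)},z)v_{(2)},
\end{align*}
where $u_{(2)},u_{(3)}$ now denote the two tensor factors of $\Delta$ applied to the $H$-component. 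For the right side, the definition of $Y^\sharp$ gives $\sum Y(u_{(1)},z)\tau(u_{(2)},z)v_{(1)}\ot Y(u_{(3)},z)v_{(2)}$. Cocommutativity of $H$ lets me swap $u_{(2)}$ and $u_{(3)}$, so the two sides agree. I expect this Sweedler bookkeeping, keeping track of which leg is fed to $\tau$ and which to $Y$, to be the main obstacle. A secondary point is checking that all the sums converge $\hbar$-adically in $V\wh\ot V\wh\ot\C((z))[[\hbar]]$.

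Next I transport the structure. Since $(1\ot\varepsilon)\rho=\te{Id}_V$, the map $\rho$ is injective and has a continuous $\C[[\hbar]]$-linear left inverse. The key identity shows that $\rho(V)$ is closed under $Y^\sharp$, and $\rho(\vac)=\vac\ot\vac$. The vacuum property \eqref{eq:vacuum-property} for $\mathfrak D_\tau^\rho(Y)$ then follows by applying $1\ot\varepsilon$ to the vacuum property of $Y^\sharp$. It can also be seen directly from $\tau(h,z)\vac=\varepsilon(h)\vac$ together with $\rho(\vac)=\vac\ot\vac$.

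For $\hbar$-adic compatibility and weak associativity \eqref{eq:weak-asso}, note that $(1\ot\varepsilon)$ commutes with each $\pi_n$ and with multiplication by polynomials in $z_1,z_2$. Applying it to the corresponding identities for $Y^\sharp(\rho(u),z)$ and $Y^\sharp(\rho(v),z)$, and using the key identity, yields the identities for $\mathfrak D_\tau^\rho(Y)$. This makes $\mathfrak D_\tau^\rho(V)$ an $\hbar$-adic nonlocal vertex algebra, and the key identity is then exactly the statement that $\rho:\mathfrak D_\tau^\rho(V)\to V\sharp H$ is a homomorphism.

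Finally, I must show that $\rho:\mathfrak D_\tau^\rho(V)\to\mathfrak D_\tau^\rho(V)\wh\ot H$ is a homomorphism; the comodule axioms themselves are unchanged because the map $\rho$ is the same. This means computing $\rho\big(\sum Y(u_{(1)},z)\tau(u_{(2)},z)v\big)$ once more, and now comparing it with
\begin{align*}
  \sum \mathfrak D_\tau^\rho(Y)(u_{(1)},z)v_{(1)}\ot Y(u_{(2)},z)v_{(2)} .
\end{align*}
Expanding the latter produces $\sum Y(u_{(1)},z)\tau(u_{(2)},z)v_{(1)}\ot Y(u_{(3)},z)v_{(2)}$, which is literally the same expression as before. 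So this step reduces to the computation already done, again using coassociativity, \eqref{eq:rho-tau-compatible}, and cocommutativity.
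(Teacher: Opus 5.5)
Your argument is correct and is essentially the standard one from \cite{JKLT-Defom-va}, which the paper cites for this statement without giving a proof: establish $\rho\bigl(\mathfrak D_\tau^\rho(Y)(u,z)v\bigr)=Y^\sharp(\rho(u),z)\rho(v)$, then pull the axioms of $V\sharp H$ back along the injective map $\rho$, whose left inverse is $1\ot\varepsilon$. One bookkeeping remark: with the twisted coassociativity $(\rho\ot 1)\rho=\sigma^{23}(1\ot\Delta)\rho$ used here, the $\tau$-leg in the key identity is already the first factor of $\Delta$, so that identity holds without any swap, and cocommutativity is genuinely needed only in your last step, where you check that $\rho:\mathfrak D_\tau^\rho(V)\to\mathfrak D_\tau^\rho(V)\wh\ot H$ is a homomorphism.
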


Denote by $\mathfrak L_H^\rho(V)$ be the set of all $H$-module $\hbar$-adic nonlocal vertex algebra structures $\tau$ such that $(H,\rho,\tau)$ becomes a deforming triple of $V$.

\begin{rem}\label{rem:trivial-deform}
The counit $\varepsilon$ defines an $H$-module $\hbar$-adic nonlocal vertex algebra structure on $V$ defined by
\begin{align*}
  \varepsilon(h,z)v=\varepsilon(h)v\quad\te{for }h\in H,\,v\in V.
\end{align*}
Then
$\varepsilon\in \mathfrak L_H^\rho(V)$ and
$\mathfrak D_\varepsilon^\rho(V)=V$.
\end{rem}

\begin{prop}\label{prop:deform-mult}
Let $\tau$ and $\tau'$ be commuting elements in $\mathfrak L_H^\rho(V)$. Then
$\tau\ast\tau'\in \mathfrak L_H^\rho(V)$ and $\tau\ast\tau'=\tau'\ast\tau$. Moreover, $\tau\in\mathfrak L_H^\rho(\mathfrak D_{\tau'}^\rho(V))$.
Furthermore,
\begin{align*}
  \mathfrak D_\tau^\rho(\mathfrak D_{\tau'}^\rho(V))
  =\mathfrak D_{\tau\ast\tau'}^\rho(V).
\end{align*}
\end{prop}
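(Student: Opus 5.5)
We will verify the claims of Proposition \ref{prop:deform-mult} directly from the definitions in Definition \ref{de:deform-triple}, using throughout that $H$ is cocommutative and that $\tau,\tau'$ commute.

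\textbf{Step 1: $\tau\ast\tau'$ is an $H$-module structure.} Write
\begin{align*}
  (\tau\ast\tau')(h,z)=\sum\tau(h_{(1)},z)\tau'(h_{(2)},z).
\end{align*}
Because $\tau$ and $\tau'$ commute and $\Delta$ is a homomorphism of $\hbar$-adic nonlocal vertex algebras, the module axiom for $Y_H$ follows by a standard tensor-product-of-modules argument. Concretely, $\tau\ast\tau'$ is the composite of the module structure $\tau\otimes\tau'$ of $H\wh\otimes H$ on $V$ with $\Delta$; the commutativity $[\tau(h,z_1),\tau'(k,z_2)]=0$ is exactly what makes $\tau\otimes\tau'$ an $H\wh\otimes H$-module. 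The conditions \eqref{eq:mod-va-for-vertex-bialg1-2} are immediate from counit properties. For \eqref{eq:mod-va-for-vertex-bialg3}, apply it first for $\tau'$ and then for $\tau$, and regroup using coassociativity and cocommutativity:
\begin{align*}
  \sum h_{(1)}\ot h_{(3)}\ot h_{(2)}\ot h_{(4)}=\sum h_{(1)}\ot h_{(2)}\ot h_{(3)}\ot h_{(4)}.
\end{align*}
Compatibility \eqref{eq:rho-tau-compatible} for $\tau\ast\tau'$ is clear, since each factor satisfies it. Commutativity $\tau\ast\tau'=\tau'\ast\tau$ also follows from cocommutativity together with the commuting hypothesis.

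\textbf{Step 2: $\tau\in\mathfrak L_H^\rho(\mathfrak D_{\tau'}^\rho(V))$.} By Proposition \ref{prop:deform}, $(\mathfrak D_{\tau'}^\rho(V),\rho)$ is already a comodule algebra, and \eqref{eq:rho-tau-compatible} does not involve $Y$. So we only need the module-algebra identity for the deformed vertex operator
\begin{align*}
  Y'(u,z_2)=Y(z_2)T_{\tau'}^{21}(-z_2)(u\ot\cdot),
\end{align*}
where $Y'(u,z_2)v=\sum Y(\tau'(u_{(2)},z_2)v,z_2)u_{(1)}$-type expressions occur. We apply $\tau(h,z_1)$, use \eqref{eq:mod-va-for-vertex-bialg3} for $Y$, then move $\tau(h_{(2)},z_1)$ past $\tau'(u_{(2)},z_2)$ using commutativity. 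Finally we use $\rho(\tau(h_{(1)},z)u)=(\tau(h_{(1)},z)\otimes1)\rho(u)$ to recognize the result as $\sum Y'(\tau(h_{(1)},z_1-z_2)u,z_2)\tau(h_{(2)},z_1)v$. This bookkeeping, keeping track of which Sweedler components sit in which slot while moving module operators past $T_{\tau'}$, is the main obstacle, and cocommutativity is used to reorder the coproduct factors.

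\textbf{Step 3: the composition formula.} Compute
\begin{align*}
  \mathfrak D_\tau^\rho(\mathfrak D_{\tau'}^\rho(Y))(z)=Y(z)T_{\tau'}^{21}(-z)T_\tau^{21}(-z).
\end{align*}
Expanding via $\rho$ and using comodule coassociativity $(\rho\ot1)\rho=\sigma^{23}(1\ot\Delta)\rho$, together with the fact that $\tau$ commutes with $\rho$, the product $T_{\tau'}^{21}(-z)T_\tau^{21}(-z)$ collapses to $T_{\tau'\ast\tau}^{21}(-z)$. By Step 1, $T_{\tau'\ast\tau}^{21}(-z)=T_{\tau\ast\tau'}^{21}(-z)$, which gives
\begin{align*}
  \mathfrak D_\tau^\rho(\mathfrak D_{\tau'}^\rho(V))=\mathfrak D_{\tau\ast\tau'}^\rho(V).
\end{align*}
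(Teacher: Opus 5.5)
The paper does not prove this proposition; it is quoted from \cite{JKLT-Defom-va}, so there is no in-paper argument to compare against. Judged on its own, your direct verification is correct. Step 1 pulls back the $H\wh\ot H$-module $\tau\ot\tau'$ along $\Delta$ and regroups with $(1\ot\sigma\ot 1)\Delta^{(3)}=\Delta^{(3)}$. Step 2 checks \eqref{eq:mod-va-for-vertex-bialg3} for the deformed operator, and Step 3 collapses $T_{\tau'}^{21}T_\tau^{21}$. All three steps go through. There are a few corrections.

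In Step 2 the deformed vertex operator is
\begin{align*}
  Y'(u,z_2)v=\sum Y(u_{(1)},z_2)\tau'(u_{(2)},z_2)v,
\end{align*}
not $\sum Y(\tau'(u_{(2)},z_2)v,z_2)u_{(1)}$. The moves you then describe are exactly right for the correct formula. They use only three things: \eqref{eq:mod-va-for-vertex-bialg3} for $\tau$, the commuting hypothesis to pass $\tau(h_{(2)},z_1)$ through $\tau'(u_{(2)},z_2)$, and \eqref{eq:rho-tau-compatible}. No cocommutativity or reordering of coproduct factors is involved, so this step is routine rather than the main obstacle.

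In Step 3, compatibility of $\tau$ with $\rho$ is not needed, because $T_{\tau'}^{21}(-z)$ only applies $\rho$ to the first tensor factor:
\begin{align*}
  T_{\tau'}^{21}(-z)T_\tau^{21}(-z)(u\ot v)=\sum u_{(1)(1)}\ot\tau'(u_{(1)(2)},z)\tau(u_{(2)},z)v=\sum u_{(1)}\ot(\tau\ast\tau')(u_{(2)},z)v.
\end{align*}
The last equality uses the comodule axiom together with either cocommutativity or the commuting hypothesis.

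The one genuinely nontrivial input is asserted rather than proved. This is the claim that commuting $H$-module structures $\tau,\tau'$ make $V$ a module for the tensor product $H\wh\ot H$ via $\tau(h,z)\tau'(k,z)$. It is a standard fact: commute the factors past each other and use weak associativity of each factor separately. In a written proof, however, it should be stated as a lemma with a reference or a short argument, since everything in Step 1 rests on it.
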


The following is the $\hbar$-adic analogue of \cite[Theorem 2.49]{K-qaffva-rtu}.

\begin{thm}\label{thm:deform-qva}
Let $\tau\in\mathfrak L_H^\rho(V)$ such that $\tau$ is invertible and commuting with itself.
Suppose that $V$ is an $\hbar$-adic quantum vertex algebra with quantum Yang-Baxter operator $S(z)$,
and suppose further that
\begin{align*}
  &(\rho\ot 1)S(z)=S^{13}(z)(\rho\ot 1),\quad
  (1\ot\rho)S(z)=S^{12}(z)(1\ot\rho),\\
  &(\tau(h,z_1)\ot 1)S(z_2)=S(z_2)(\tau(h,z_1)\ot 1),\\
  &(1\ot \tau(h,z_1))S(z_2)=S(z_2)(1\ot\tau(h,z_1))
  \quad\te{for }h\in H.
\end{align*}
Then $\mathfrak D_\tau^\rho(V)$ is an $\hbar$-adic quantum vertex algebra with quantum Yang-Baxter operator $S_\tau(z)$ defined by
\begin{align*}
  S_\tau(z)=T_\tau^{21}(-z)\inv S(z)T_\tau(z).
\end{align*}
\end{thm}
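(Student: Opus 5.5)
The plan is to follow the proof of \cite[Theorem 2.49]{K-qaffva-rtu} and pass to the $\hbar$-adic setting by working modulo $\hbar^n$ for each $n$ and taking inverse limits. All of $\rho$, $\tau$, $S(z)$ and $T_\tau(z)$ are $\C[[\hbar]]$-linear and continuous. So it is enough to check each identity for $\mathfrak D_\tau^\rho(V)$ and $S_\tau(z)$ as an equality of maps into topologically free modules. Such an equality can be tested after applying the projections $\pi_n$.

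First, invertibility of $T_\tau$. Let $\tau\inv$ be the $\ast$-inverse of $\tau$. I will check that $T_{\tau\inv}(z)$ is a two-sided inverse of $T_\tau(z)$, using coassociativity of $\rho$, the identity $(1\ot\varepsilon)\rho=\te{Id}$, cocommutativity of $H$, and the fact that $\tau$ commutes with itself. Hence $S_\tau(z)$ is well defined. By Proposition \ref{prop:deform}, $\mathfrak D_\tau^\rho(V)$ is an $\hbar$-adic nonlocal vertex algebra. The remaining axioms to verify are the vacuum property, shift condition, unitarity, quantum Yang--Baxter equation, $S$-locality and hexagon identity.

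\emph{Vacuum, shift and unitarity.} The vacuum property comes from $\rho(\vac)=\vac\ot\vac$ and $\tau(h,z)\vac=\varepsilon(h)\vac$, which give $T_\tau(z)(\vac\ot v)=\vac\ot v$ and $T_\tau^{21}(-z)(\vac\ot v)=\vac\ot v$. For the shift condition, $\rho$ is a vertex algebra homomorphism, so $\rho\partial=(\partial\ot1+1\ot\partial)\rho$. The module axiom for $\tau$ gives $\tau(\partial h,z)=\frac{d}{dz}\tau(h,z)$ and $[\partial,\tau(h,z)]=-\frac{d}{dz}\tau(h,z)$. Combining these, $[\partial\ot1,T_\tau(z)]=-\frac{d}{dz}T_\tau(z)$. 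The canonical derivation of $\mathfrak D_\tau^\rho(V)$ equals that of $V$ by the vacuum computation. Unitarity is immediate: $S_\tau^{21}(z)S_\tau(-z)=T_\tau(z)\inv S^{21}(z)T_\tau^{21}(-z)T_\tau^{21}(-z)\inv S(-z)T_\tau(-z)$, which collapses by unitarity of $S$.

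\emph{Yang--Baxter equation.} The hypotheses say that $S$ commutes with $\rho$ and $\tau$ in the appropriate tensor slots. Hence $S^{ij}$ commutes with $T_\tau^{kl}$ whenever the indices interact only through $\rho$ and $\tau$. I will show the twist $T_\tau$ satisfies the Yang--Baxter equation
\[T_\tau^{12}(z_1)T_\tau^{13}(z_1+z_2)T_\tau^{23}(z_2)=T_\tau^{23}(z_2)T_\tau^{13}(z_1+z_2)T_\tau^{12}(z_1),\]
together with the mixed relations with $S$ needed for a Drinfeld-twist-type argument. These come from comodule coassociativity, from \eqref{eq:rho-tau-compatible}, and from self-commutativity of $\tau$. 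The equation for $S_\tau$ then follows by substitution.

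\emph{$S$-locality and hexagon; main obstacle.} The hard part is $S$-locality and the hexagon identity for the deformed vertex operator $Y_\tau(z)=Y(z)T_\tau^{21}(-z)$. For locality, I will write $Y_\tau(u,z_1)Y_\tau(v,z_2)$ through the homomorphism $\rho:\mathfrak D_\tau^\rho(V)\to V\sharp H$. The smash product commutation relation from the theorem on $V\sharp H$ moves the $H$-components past $Y(v,z_2)$, producing $\tau$-factors. I then apply $S$-locality of $V$ and reassemble. The key is a relation of the form
\[T^{21}_\tau(-z_1)\,(\cdots)\,T^{21}_\tau(-z_2)\sim(\cdots)\,S_\tau(z_2-z_1),\]
and this bookkeeping of Sweedler indices is what I expect to be hardest. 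I will first try to obtain the twisted hexagon identity
\[T_\tau(z_1)(Y(z_2)\ot1)=(Y(z_2)\ot1)T_\tau^{23}(z_1)T_\tau^{13}(z_1+z_2),\]
derived from \eqref{eq:mod-va-for-vertex-bialg3} and the fact that $\rho$ is a homomorphism. Together with the hexagon identity for $S$, this gives the hexagon identity for $S_\tau$ with $Y_\tau$, and the locality statement also reduces to it. Throughout, the relation $\sim$ is checked modulo $\hbar^n$, where the classical argument applies verbatim.
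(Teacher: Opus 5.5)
Your overall route matches the paper's: it gives no argument beyond calling the statement the $\hbar$-adic analogue of \cite[Theorem 2.49]{K-qaffva-rtu}. However, your hexagon step does not close as you describe it. Write $T_\tau^{ij}$ for the operator in which the $H$-part of $\rho$ of the $j$-th factor acts through $\tau$ on the $i$-th factor. Since $S_\tau(z_1)=T_\tau^{21}(-z_1)\inv S(z_1)T_\tau(z_1)$, after your identity for $T_\tau(z_1)(Y(z_2)\ot 1)$ and the hexagon identity for $S(z_1)$ you still have to move the left factor $T_\tau^{21}(-z_1)\inv$ past $Y(z_2)\ot 1$. Nothing in your list does this. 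What is needed is
\[
T_\tau^{21}(-z_1)(Y(z_2)\ot 1)=(Y(z_2)\ot 1)\,T_\tau^{31}(-z_1-z_2)\,T_\tau^{32}(-z_1).
\]
After writing $\rho(Y(u,z_2)v)=\sum Y(u_{(1)},z_2)v_{(1)}\ot Y(u_{(2)},z_2)v_{(2)}$, this amounts to $\tau(Y(h,z_2)k,z_1)=\tau(h,z_1+z_2)\tau(k,z_1)$ for $h,k\in H$.

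This identity is where the homomorphism property of $\rho$ enters, together with weak associativity of the $H$-module $(V,\tau)$. Your $T_\tau$-identity needs only \eqref{eq:mod-va-for-vertex-bialg3} and coassociativity. It is also where self-commutativity of $\tau$ is indispensable. The relation $[\tau(h,x_1),\tau(k,x_2)]=0$ forces $\tau(h,x_1)\tau(k,x_2)w$ to lie in $(V/\hbar^nV)((x_1,x_2))$ modulo $\hbar^n$. Hence associativity holds on the nose, with no $(x_1-x_2)^k$ correction. With this identity, both sides of the hexagon reduce to
\[
(Y(z_2)\ot 1)\,T_\tau^{32}(-z_1)\inv T_\tau^{31}(-z_1-z_2)\inv S^{23}(z_1)S^{13}(z_1+z_2)T_\tau^{23}(z_1)T_\tau^{13}(z_1+z_2)T_\tau^{21}(-z_2).
\]

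The rearrangement in that last step, and likewise the quantum Yang--Baxter equation, is not driven by a Yang--Baxter or cocycle identity for $T_\tau$. A Yang--Baxter equation for $T_\tau$ alone would not suffice. What drives it are two commutation facts you should state explicitly.
\begin{enumerate}
\item The six operators $T_\tau^{ij}$ ($i\ne j$) pairwise commute. This follows from coassociativity with cocommutativity, \eqref{eq:rho-tau-compatible}, and self-commutativity of $\tau$.
\item $S^{ij}$ commutes with $T_\tau^{kl}$ whenever $\{k,l\}\ne\{i,j\}$. This is exactly what the four hypotheses give.
\end{enumerate}
With these, both sides of the quantum Yang--Baxter equation for $S_\tau$ are the corresponding sides for $S$, multiplied on the left and right by the same products of $T_\tau$'s.

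There are also some slips:
\begin{enumerate}
\item Putting $v=\vac$ in \eqref{eq:mod-va-for-vertex-bialg3} gives $[\partial,\tau(h,z)]=+\frac{d}{dz}\tau(h,z)$. Your sign would produce $+\frac{d}{dz}T_\tau(z)$.
\item You must also check the shift property of $T_\tau^{21}(-z)$. That is where $\rho\partial=(\partial\ot1+1\ot\partial)\rho$ and $\tau(\partial h,z)=\frac{d}{dz}\tau(h,z)$ are actually used.
\item For unitarity, $S_\tau^{21}(z)=T_\tau(-z)\inv S^{21}(z)T_\tau^{21}(z)$ and $S_\tau(-z)=T_\tau^{21}(z)\inv S(-z)T_\tau(-z)$. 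So the cancelling pair is $T_\tau^{21}(z)$, not $T_\tau^{21}(-z)$.
\end{enumerate}
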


The following result is an immediate consequence of Theorem \ref{thm:deform-qva}, which was given in \cite[Theorom 3.8]{JKLT-G-phi-mod}.

\begin{coro}\label{coro:deform-qva}
Let $\tau\in\mathfrak L_H^\rho(V)$ such that $\tau$ is invertible and commuting with itself.
Suppose that $V$ is an $\hbar$-adic vertex algebra.
Then $\mathfrak D_\tau^\rho(V)$ is an $\hbar$-adic quantum vertex algebra with quantum Yang-Baxter operator $S_\tau(z)$ defined by
\begin{align*}
  S_\tau(z)(v\ot u)=
  \sum \tau(u_{(2)},-z)v_{(1)}\ot \tau\inv(v_{(2)},z)u_{(1)}\quad
  \te{for }u,v\in V.
\end{align*}
\end{coro}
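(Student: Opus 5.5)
The plan is to specialize Theorem \ref{thm:deform-qva} to the case where $V$ is an $\hbar$-adic vertex algebra. In that case $V$ is an $\hbar$-adic quantum vertex algebra whose quantum Yang-Baxter operator is the flip map $S(z)=P$, $P(v\ot u)=u\ot v$ (regarded as constant in $z$). With this choice the shift condition, the quantum Yang-Baxter equation, unitarity and the vacuum property are immediate. $S$-locality reduces to ordinary $\hbar$-adic locality, and the hexagon identity is trivial for the flip.

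Next I check the four compatibility hypotheses of Theorem \ref{thm:deform-qva} for $S=P$.
\begin{itemize}
\item For $(\rho\ot 1)P=P^{13}(\rho\ot 1)$, apply both sides to $v\ot u$. The left side gives $\sum u_{(1)}\ot u_{(2)}\ot v$ when $H$ sits in the middle factor. The convention needs care here: in the paper, $S^{13}$ on $V\wh\ot H\wh\ot V$ acts on the two $V$-factors. The identity then holds by construction, and the analogous statement gives $(1\ot\rho)P=P^{12}(1\ot\rho)$.
\item Commutation with $\tau(h,z_1)\ot 1$ is not literal. Instead one has $P(\tau(h,z_1)\ot 1)=(1\ot\tau(h,z_1))P$.
\end{itemize}
So I will not apply Theorem \ref{thm:deform-qva} verbatim. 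Rather, I will rerun its proof with these twisted relations, or equivalently note that in its proof the hypotheses are used only to move $\rho$ and $\tau$ through $S$, and for the flip this is accomplished by swapping tensor slots. The main obstacle is exactly this bookkeeping: the theorem's hypotheses are literally stated for $S$ commuting with $\tau\ot 1$, which a flip does not do. The cleanest fix is probably to take $S(z)=1$ in a ``braided'' convention in which the $S$-locality reads $Y(u,z_1)Y(v,z_2)\sim Y(z_2)(1\ot Y(z_1))P S(z_2-z_1)(v\ot u)$. Alternatively, one can directly verify $S$-locality of $\mathfrak D_\tau^\rho(Y)$ using Proposition \ref{prop:deform}, together with the embedding $\rho:\mathfrak D_\tau^\rho(V)\to V\sharp H$ and the commutation relation in $V\sharp H$.

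Finally, I compute $S_\tau(z)=T_\tau^{21}(-z)\inv S(z)T_\tau(z)$ on $v\ot u$. Starting from $T_\tau(z)(v\ot u)=\sum\tau(u_{(2)},-z)v\ot u_{(1)}$, I next expand $\tau(u_{(2)},-z)v$ through $\rho$. By \eqref{eq:rho-tau-compatible}, $\rho\tau=(\tau\ot1)\rho$, which lets $\tau$ pass to the $V$-component $v_{(1)}$. The flip then exchanges the factors. The inverse $T_\tau^{21}(-z)\inv$ is given by $\tau\inv$ on the first factor, with argument $z$, and it acts through the coaction on $v_{(2)}$ of the other factor. This step uses invertibility of $\tau$ and the fact that $\tau$ commutes with itself. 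Coassociativity and cocommutativity of $H$ are used to reorder the Sweedler indices, and the computation yields $\sum\tau(u_{(2)},-z)v_{(1)}\ot\tau\inv(v_{(2)},z)u_{(1)}$. The remaining conditions (unitarity, QYBE, hexagon) for $S_\tau$ then follow from the conjugation form as in Theorem \ref{thm:deform-qva}.
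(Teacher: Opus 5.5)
Your argument starts from the wrong base operator. In this paper's (Li's) convention the quantum Yang--Baxter operator of an $\hbar$-adic vertex algebra is the identity $S(z)=1$, not the flip $P$. The $S$-locality \eqref{eq:qyb-locality} reads $Y(u,z_1)Y(v,z_2)\sim Y(z_2)(1\ot Y(z_1))S(z_2-z_1)(v\ot u)$. Its argument is already the reversed tensor $v\ot u$, so $S=1$ gives $Y(v,z_2)Y(u,z_1)$, which is ordinary locality. This is also why $S_\vartheta$ in Proposition~\ref{prop:M-qva} and $S_\tau$ in Remark~\ref{rem:qyb} have the form ``identity plus corrections''.

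The flip is not a quantum Yang--Baxter operator here at all:
\begin{itemize}
\item it gives $Y(u,z_1)Y(v,z_2)\sim Y(u,z_2)Y(v,z_1)$;
\item it violates the vacuum property \eqref{eq:qyb-vac-id}, since $P(\vac\ot v)=v\ot\vac$;
\item it violates the shift condition;
\item it violates the hexagon identity \eqref{eq:qyb-hex-id}, which would force $c\ot Y(a,z_2)b=Y(c,z_2)a\ot b$.
\end{itemize}
Your claim that $(\rho\ot 1)P=P^{13}(\rho\ot 1)$ holds by construction is also false. On $v\ot u$ the two sides are $\sum u_{(1)}\ot u_{(2)}\ot v$ and $\sum u\ot v_{(2)}\ot v_{(1)}$. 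So the detour through ``twisted'' hypotheses is unfounded. The ``braided'' convention you propose, with $P$ inserted before $S$, would again produce $Y(u,z_2)Y(v,z_1)$ when $S=1$. No change of convention is needed.

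With $S=1$, all four hypotheses of Theorem~\ref{thm:deform-qva} hold trivially, since $S^{12}$ and $S^{13}$ are identities. The theorem therefore applies verbatim, which is exactly why the paper calls the corollary an immediate consequence. It remains to evaluate $S_\tau(z)=T_\tau^{21}(-z)\inv T_\tau(z)$, with no flip in the middle.
\begin{itemize}
\item One has $T_\tau^{21}(-z)(a\ot b)=\sum a_{(1)}\ot\tau(a_{(2)},z)b$.
\item Its inverse is $a\ot b\mapsto\sum a_{(1)}\ot\tau\inv(a_{(2)},z)b$. This follows from $(\rho\ot 1)\rho=\sigma^{23}(1\ot\Delta)\rho$, cocommutativity of $H$, and $\tau\ast\tau\inv=\varepsilon$.
\item Apply this inverse to $T_\tau(z)(v\ot u)=\sum\tau(u_{(2)},-z)v\ot u_{(1)}$, using \eqref{eq:rho-tau-compatible} in the form $\rho(\tau(h,-z)v)=\sum\tau(h,-z)v_{(1)}\ot v_{(2)}$.
\end{itemize}
This gives exactly $\sum\tau(u_{(2)},-z)v_{(1)}\ot\tau\inv(v_{(2)},z)u_{(1)}$.

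Had you really inserted $P$, as your last paragraph says, you would obtain $\sum u_{(1)(1)}\ot\tau\inv(u_{(1)(2)},z)\tau(u_{(2)},-z)v$. Its first tensor factor comes from $u$ rather than $v$. So your final computation reaches the stated formula only by silently dropping the flip.
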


\section{General construction}\label{sec:construct}

In this section, we give an $\hbar$-adic counterpart of the construction of quantum vertex algebra introduced in \cite[Section 4.2]{K-qaffva-rtu}.
Let $J^0$ and $J^\pm$ be some fixed countable sets.
We note that the set $\mathfrak V$ of tuples
\begin{align*}
  &\vartheta=(\vartheta_{ij}^{st}(z)\,|\,i\in J^s,\,j\in J^t\,s,t\in\{0,\pm\})\in \prod_{s,t\in\{0,\pm\}}\C((z))^{J_s\times J_t}
\end{align*}
such that $\vartheta_{ij}^{st}(z)\ne 0$ if $s,t\in\{\pm\}$,
carries an abelian group structure with multiplication
$\vartheta\ast\bar\vartheta$ defined by
\begin{align*}
  (\vartheta\ast\bar\vartheta)_{ij}^{st}(z)=
  \begin{cases}
    \vartheta_{ij}^{st}(z)+\bar\vartheta_{ij}^{st}(z), & \mbox{if } 0\in\{s,t\},\\
    \vartheta_{ij}^{st}(z)\bar\vartheta_{ij}^{st}(z), & \mbox{otherwise},
  \end{cases}
\end{align*}
the identity $\varepsilon$ defined by
\begin{align*}
  \varepsilon_{ij}^{st}(z)=
  \begin{cases}
    0, & \mbox{if }0\in\{s,t\},\\
    1, & \mbox{otherwise},
  \end{cases}
\end{align*}
and the inverse $\vartheta\inv$ defined by
\begin{align*}
  (\vartheta\inv)_{ij}^{st}(z)=
  \begin{cases}
    -\vartheta_{ij}^{st}(z), & \mbox{if }0\in \{s,t\},\\
    \vartheta_{ij}^{st}(z)\inv, & \mbox{otherwise}.
  \end{cases}
\end{align*}

\begin{de}
For $\vartheta\in \mathfrak V$, we denote by
$\mathcal M_\vartheta$ the category, whose objects are topologically-free $\C[[\hbar]]$-modules $W$ equipped with fields
\begin{align*}
  \set{a_i^b(z)}{i\in J^b,\,\,b\in\{0,\pm\}}\subset\E_\hbar(W),
\end{align*}
satisfying the relations below
\begin{align}
  &[a_i^0(z_1),a_j^0(z_2)]
  =\vartheta_{ij}^{0,0}(z_1-z_2)-\vartheta_{ji}^{0,0}(z_2-z_1),
  \label{eq:vartheta1}\\
  &[a_i^0(z_1),a_j^\pm(z_2)]=\pm a_j^\pm(z_2)\left(\vartheta_{ij}^{0,\pm}(z_1-z_2)
    +\vartheta_{ji}^{\pm,0}(z_2-z_1)\right),\label{eq:vartheta2}\\
  &\vartheta_{ij}^{\epsilon_1,\epsilon_2}(z_1-z_2)
  a_i^{\epsilon_1}(z_1)a_j^{\epsilon_2}(z_2)
  =
  \vartheta_{ji}^{\epsilon_2,\epsilon_1}(z_2-z_1)
  a_j^{\epsilon_2}(z_1)a_i^{\epsilon_1}(z_1).\label{eq:vartheta3}
\end{align}
\end{de}

\begin{prop}\label{prop:M-qva}
There exists
\begin{align*}
  (V(\mathcal M_\vartheta),a_i^b(z))\in \obj\mathcal M_\vartheta,\quad \vac\in V(\mathcal M_\vartheta),\quad\te{and}\quad
  a_i^b\in V,\,\,i\in J^b,\,\,b\in\{0,\pm\},
\end{align*}
such that $V(\mathcal M_\vartheta)$ carries the structure of an $\hbar$-adic quantum vertex algebra with vacuum vector $\vac$, and the vertex operator map $Y$ uniquely determined by
\begin{align*}
  Y(a_i^b,z)=a_i^b(z)\quad\te{for }b\in\{0,\pm\},\,i\in J^b,
\end{align*}
and the quantum Yang-Baxter operator
$S_\vartheta(z)$ determined by
\begin{align*}
  &S_\vartheta(z)(a_j^0\ot a_i^0)=a_j^0\ot a_i^0
  +\vac\ot\vac\ot (\vartheta_{ij}^{0,0}(-z)-\vartheta_{ji}^{0,0}(z)),\\
  &S_\vartheta(z)(a_j^0\ot a_i^\pm)=a_j^0\ot a_i^\pm\mp\vac\ot a_i^\pm\ot(\vartheta_{ij}^{\pm,0}(-z)
    +\vartheta_{ji}^{0,\pm}(z)),\\
  &S_\vartheta(z)(a_j^{\pm}\ot a_i^0)=a_j^\pm\ot a_i^0
  \pm a_j^\pm\ot\vac\ot( \vartheta_{ij}^{0,\pm}(-z)+\vartheta_{ji}^{\pm,0}(z) ),\\
  &S_\vartheta(z)(a_j^{\epsilon_1}\ot a_i^{\epsilon_2})
  =a_j^{\epsilon_1}\ot a_i^{\epsilon_2}\ot
  \vartheta_{ij}^{\epsilon_2,\epsilon_1}(-z)\inv \vartheta_{ji}^{\epsilon_1,\epsilon_2}(z).
\end{align*}
Moreover, for each $V(\mathcal M_\vartheta)$-module $(W,Y_W)$,
\begin{align*}
  (W,Y_W(a_i^b,z))\in \obj\mathcal M_\vartheta.
\end{align*}
Furthermore, for each $(W,a_i^b(z))\in \obj\mathcal M_\vartheta$,
there exists a $V(\mathcal M_\vartheta)$-module structure $Y_W$ on $W$ determined by
\begin{align*}
  Y_W(a_i^b,z)=a_i^b(z)\quad\te{for }i\in J^b,\,\,b\in\{0,\pm\}.
\end{align*}
\end{prop}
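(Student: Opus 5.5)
The plan is to build $V(\mathcal M_\vartheta)$ in two stages, following the deformation method of Section \ref{sec:deform} (Proposition \ref{prop:deform}, Theorem \ref{thm:deform-qva} and Corollary \ref{coro:deform-qva}), as in \cite[Section 4.2]{K-qaffva-rtu}. First I treat the ``untwisted'' part. Let $\mathfrak h$ be the space with basis $\{a_i^0\}$. I will use a Heisenberg-type vertex algebra for $\mathfrak h$, tensored with a lattice-type (group algebra) vertex algebra carrying vectors $a_i^\pm$. In this model, $a_i^0(z)$ acts as a free boson and $a_i^\pm(z)$ as an exponential $e^{\pm\alpha_i}(z)$. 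For this model:
\begin{itemize}
\item relation \eqref{eq:vartheta1} holds with the trivial choice of the commutator part of $\vartheta$;
\item relation \eqref{eq:vartheta3} holds with $\vartheta^{\pm\pm}$ trivial;
\item the model is an honest vertex algebra over $\C[[\hbar]]$ after the $\hbar$-adic completion $V_0[[\hbar]]$.
\end{itemize}
Concretely, I take $V$ to be the universal vertex algebra generated by $a_i^0$ and $a_i^\pm$ with mutually commuting fields, $a_i^\pm$ invertible in the appropriate sense. Equivalently, $V$ is the vertex algebra attached to the commutative differential algebra $\C[\partial^n a_i^0, \partial^n a_i^\pm, (a_i^\pm)^{-1}]$, completed.

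Next I obtain a general $\vartheta$ as a deformation. Let $H$ be the cocommutative bialgebra of Remark \ref{rem:bialg-der}, the symmetric-plus-group algebra on the $\partial^n a_i^0$ and the grouplike $a_i^\pm$ with derivation $\partial$. Define $\rho:V\to V\wh\otimes H$ by:
\begin{itemize}
\item $a_i^0\mapsto a_i^0\ot 1+1\ot a_i^0$, so that $a_i^0$ is primitive;
\item $a_i^\pm\mapsto a_i^\pm\ot a_i^\pm$, so that $a_i^\pm$ is grouplike.
\end{itemize}
Then define $\tau_\vartheta(h,z)$ on generators:
\begin{itemize}
\item $\tau(a_i^0,z)a_j^0=\vartheta_{ji}^{00}(z)$ (up to the sign and convention fixed by $T^{21}_\tau(-z)$);
\item $\tau(a_i^0,z)a_j^\pm$ and $\tau(a_i^\pm,z)a_j^0$ are scalar multiples of $a_j^\pm$, respectively $a_j^0$ plus a scalar, given by $\pm\vartheta^{0\pm}$ and $\vartheta^{\pm0}$;
\item $\tau(a_i^{\epsilon},z)a_j^{\epsilon'}=\vartheta^{\epsilon'\epsilon}_{ji}(z)a_j^{\epsilon'}$.
\end{itemize}
These generator values extend uniquely to an $H$-module vertex algebra structure by \eqref{eq:mod-va-for-vertex-bialg3}, because $V$ is free commutative and $H$ is cocommutative. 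Since $\vartheta^{\pm\pm}$ are invertible, $\tau_\vartheta$ is invertible, with inverse $\tau_{\vartheta^{-1}}$. The map $\vartheta\mapsto\tau_\vartheta$ is multiplicative, $\tau_\vartheta\ast\tau_{\bar\vartheta}=\tau_{\vartheta\ast\bar\vartheta}$, and all $\tau_\vartheta$ commute because they act by scalars or multiplication. The compatibility \eqref{eq:rho-tau-compatible} holds because $\tau$ acts only on the first tensor factor, by scalars or by derivations.

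I then set $V(\mathcal M_\vartheta)=\mathfrak D^\rho_{\tau_\vartheta}(V)$. By Corollary \ref{coro:deform-qva} it is an $\hbar$-adic quantum vertex algebra with $S_\tau(z)(v\ot u)=\sum\tau(u_{(2)},-z)v_{(1)}\ot\tau^{-1}(v_{(2)},z)u_{(1)}$. Evaluating this on pairs of generators gives exactly the four displayed formulas for $S_\vartheta(z)$. Evaluating $Y(z)T^{21}(-z)$ on generators and using \eqref{eq:qyb-locality} gives relations \eqref{eq:vartheta1}--\eqref{eq:vartheta3}, so $(V(\mathcal M_\vartheta),a_i^b(z))$ is an object of $\mathcal M_\vartheta$.

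For the module statements, the direction ``module $\Rightarrow$ object'' is immediate: $S$-locality and the Jacobi-type identity pass to modules. For the converse I would not use the deformed model. Instead I would use the universal property of the $\hbar$-adic construction in \cite{Li-h-adic}. Given $(W,a_i^b(z))$, the relations make the set $U=\{a_i^b(z)\}$ $\hbar$-adically $S$-local. Then \cite{Li-h-adic} produces an $\hbar$-adic weak quantum vertex algebra $\langle U\rangle\subset\E_\hbar(W)$ with $W$ a faithful module. It remains to produce a homomorphism $V(\mathcal M_\vartheta)\to\langle U\rangle$, which requires a presentation of $V(\mathcal M_\vartheta)$ as universal for the relations \eqref{eq:vartheta1}--\eqref{eq:vartheta3} (including the invertibility coming from $a_i^\pm$).

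I expect this presentation to be the main obstacle. There is a tension: the lattice-type $V$ contains inverses $(a_i^\pm)^{-1}$, which the category $\mathcal M_\vartheta$ does not impose. I would therefore instead take $V$ to be the polynomial commutative vertex algebra without inverses, and $H$ the corresponding bialgebra in which $a_i^\pm$ is grouplike but not invertible. This is fine, because invertibility of $\tau$ only needs $\vartheta^{\pm\pm}\ne0$. The universality would then be proved by a PBW/spanning argument: the normally ordered monomials in the $a_i^b$ span $V(\mathcal M_\vartheta)$, and a filtration comparison with the $\vartheta=\varepsilon$ case shows they are independent, so $V(\mathcal M_\vartheta)$ is the universal object.
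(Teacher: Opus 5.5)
The paper gives no written proof of this proposition; it defers to the $\hbar$-adic analogue of \cite[Section 4.2]{K-qaffva-rtu}. Your model agrees with Proposition \ref{prop:deform-iso} taken with base $\varepsilon$. However, that proposition identifies the deformation with $V(\mathcal M_\vartheta)$ only by using the universal property, and the universal property is exactly what your proposal does not establish.

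\textbf{What works, with corrections.} Discard the Heisenberg/lattice model of your first paragraph: its commutators and $(z_1-z_2)$-factors are nontrivial, so it does not lie in $\mathcal M_\varepsilon$. Take instead the free commutative differential algebra $V=\C[\partial^na_i^b][[\hbar]]$ without inverses. Then $V$ is $V(\mathcal M_\varepsilon)$, your $H$ and $\rho$ are those of Proposition \ref{prop:deform-iso}, and Corollary \ref{coro:deform-qva} applies to $\mathfrak D^\rho_\tau(V)$. Two corrections are needed.
\begin{itemize}
\item Use the normalization of Proposition \ref{prop:deform-iso}, e.g.\ $\tau(\wh a_i^\pm,z)a_j^\epsilon=\vartheta^{\pm,\epsilon}_{ij}(z)\inv a_j^\epsilon$. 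With your $\vartheta^{\epsilon'\epsilon}_{ji}(z)a_j^{\epsilon'}$, the last formula for $S_\vartheta(z)$ comes out with $z$ replaced by $-z$, and you land in $\mathcal M_{\vartheta'}$ for a different $\vartheta'$.
\item \eqref{eq:qyb-locality} gives the relations only up to $\sim$. The exact identities \eqref{eq:vartheta1}--\eqref{eq:vartheta3} come directly from \eqref{eq:mod-va-for-vertex-bialg3}, e.g.
\[
\mathfrak D^\rho_\tau(Y)(a_i^{\epsilon_1},z_1)\,\mathfrak D^\rho_\tau(Y)(a_j^{\epsilon_2},z_2)
=\iota_{z_1,z_2}\vartheta^{\epsilon_1,\epsilon_2}_{ij}(z_1-z_2)\inv\,
Y(a_i^{\epsilon_1},z_1)Y(a_j^{\epsilon_2},z_2)\,\tau(\wh a_i^{\epsilon_1},z_1)\tau(\wh a_j^{\epsilon_2},z_2).
\]
Multiplied by $\vartheta^{\epsilon_1,\epsilon_2}_{ij}(z_1-z_2)$, the right side is symmetric in the two fields.
\end{itemize}

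\textbf{The gap.} The \emph{Moreover} direction is fine, since it uses the $S$-Jacobi identity on modules. The genuine gap is the \emph{Furthermore} clause. You correctly reduce it, via $\langle U_W\rangle$, to producing a homomorphism $\mathfrak D^\rho_\tau(V)\to\langle U_W\rangle$ with $a_i^b\mapsto a_i^b(z)$. The PBW sketch does not produce it.

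A spanning/independence argument proves universality only under two conditions. First, spanning by ordered monomials must be derived from the defining relations alone, in an arbitrary target or in an already constructed universal object. Second, independence is then used to show that the canonical surjection onto your model is injective. You assert spanning for $\mathfrak D^\rho_\tau(V)$ itself, which says nothing about maps out of it, and you have no universal object to compare with.

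Moreover, for general Laurent series $\vartheta^{\pm,\pm}_{ij}$, relation \eqref{eq:vartheta3} is not a mode-reordering rule. A pole of $\vartheta^{\epsilon_1,\epsilon_2}_{ij}$ at $z=0$ forces some $(a_i^{\epsilon_1})_na_j^{\epsilon_2}$ to vanish, and a zero permits singular products. So the required universal straightening is the real content, not something a filtration comparison with $\vartheta=\varepsilon$ provides.

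\textbf{A repair within your framework.} Let $\bar V$ satisfy $(\bar V,Y(\bar a_i^b,z))\in\obj\mathcal M_\vartheta$, and set $D'=\mathfrak D^\rho_{\tau\inv}(V)$, which lies in $\mathcal M_{\vartheta\inv}$.
\begin{enumerate}
\item In $\bar V\wh\ot D'$, put $c_i^0=\bar a_i^0\ot\vac+\vac\ot a_i^0$ and $c_i^\pm=\bar a_i^\pm\ot a_i^\pm$. These have mutually commuting vertex operators because $\vartheta\ast\vartheta\inv=\varepsilon$; for the $\pm$ pairs, multiply the two exact relations \eqref{eq:vartheta3}.
\item Freeness of $V$ then gives a homomorphism $\psi:V\to\bar V\wh\ot D'$ with $a_i^b\mapsto c_i^b$.
\item On generators one checks $(\psi\ot1)\rho=(1\ot\rho)\psi$ and $\psi\circ\tau(h,z)=(1\ot\tau(h,z))\circ\psi$. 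Here $\tau\in\mathfrak L^\rho_H(D')$ by Proposition \ref{prop:deform-mult}.
\item Hence $\psi$ is also a homomorphism $\mathfrak D^\rho_\tau(V)\to\bar V\wh\ot\mathfrak D^\rho_\tau(D')=\bar V\wh\ot V$, using Proposition \ref{prop:deform-mult} and Remark \ref{rem:trivial-deform}.
\item Compose with $1\ot\varepsilon$, where $\varepsilon(a_i^0)=0$ and $\varepsilon(a_i^\pm)=1$. This gives a homomorphism with $a_i^b\mapsto\bar a_i^b$.
\end{enumerate}
Taking $\bar V=\langle U_W\rangle$, after the standard transfer of the relations from $W$ to $Y_\E$, yields the module structure on $W$.

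Alternatively, build the vacuum module of the universal mode algebra as in Remark \ref{rem:direct-construct}, completed when $\vartheta$ is not polynomial. Its universality comes from Li's vacuum-like vector argument. Then identify it with $\mathfrak D^\rho_\tau(V)$ to import $S_\vartheta(z)$.
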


The following is an $\hbar$-adic analogue of \cite[Proposition 4.4]{K-qaffva-rtu}.
\begin{prop}\label{prop:universal}
Let $(\bar V,Y,\vac)$ be an $\hbar$-adic nonlocal vertex algebra containing elements $\bar a_i^b\in \bar V$ for  $i\in J^b$, $b\in \{0,\pm\}$, such that
\begin{align*}
  (\bar V,Y(\bar a_i^b,z))\in\obj\mathcal M_\vartheta.
\end{align*}
Then there exists a unique $\hbar$-adic nonlocal vertex algebra homomorphism $V(\mathcal M_\vartheta)\to \bar V$ such that $a_i^b\mapsto \bar a_i^b$.
\end{prop}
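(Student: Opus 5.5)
The universal property will be proved by pulling $\bar V$ back to a $V(\mathcal M_\vartheta)$-module and then restricting to the vacuum. The hypothesis $(\bar V,Y(\bar a_i^b,z))\in\obj\mathcal M_\vartheta$ says that $\bar V$, regarded as a topologically free $\C[[\hbar]]$-module, carries fields $Y(\bar a_i^b,z)\in\E_\hbar(\bar V)$ satisfying \eqref{eq:vartheta1}--\eqref{eq:vartheta3}.

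By the last assertion of Proposition \ref{prop:M-qva}, there is a $V(\mathcal M_\vartheta)$-module structure $Y_{\bar V}$ on $\bar V$ with
\begin{align*}
  Y_{\bar V}(a_i^b,z)=Y(\bar a_i^b,z)\quad\te{for }i\in J^b,\,b\in\{0,\pm\}.
\end{align*}
Using this module structure, I will define
\begin{align*}
  \psi:V(\mathcal M_\vartheta)\to\bar V,\qquad \psi(v)=\lim_{z\to0}Y_{\bar V}(v,z)\bar\vac ,
\end{align*}
which is the standard map $v\mapsto v_{-1}\bar\vac$.

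For $\psi$ to make sense, I first need $Y_{\bar V}(v,z)\bar\vac\in\bar V[[z]]$ for every $v$. I will check this on generators and then spread it. For generators it holds because $Y(\bar a_i^b,z)\bar\vac\in\bar V[[z]]$ by the vacuum property of $\bar V$. The subspace of vectors $v$ with this property contains $\vac$ and the $a_i^b$, and it is closed under all products $u_n v$. Closure follows from the weak associativity of the module, written in $\hbar$-adic form as $Y_{\bar V}(u_nv,z)=\left(Y_{\bar V}(u,z)\right)_nY_{\bar V}(v,z)$ with $Y_\E$: applied to a vacuum-like vector, $Y_\E$ of two fields that are creative on $\bar\vac$ stays creative. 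This is the usual Li argument, done modulo $\hbar^n$ and then passed to the limit.

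The subspace is also closed $\hbar$-adically. Since $V(\mathcal M_\vartheta)$ is generated topologically by the $a_i^b$ (uniqueness of $Y$ in Proposition \ref{prop:M-qva}), it is all of $V(\mathcal M_\vartheta)$.

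Next I will show $\psi$ is a homomorphism, $\psi(u_nv)=\psi(u)_n\psi(v)$. The plan is to prove $Y_{\bar V}(v,z)\bar\vac=e^{z\bar\partial}\psi(v)$ and $Y_{\bar V}(v,z)=Y(\psi(v),z)$. Both hold for generators, where the first uses the vacuum property of $\bar V$. Both are preserved under $v\mapsto u_nv$, because the two sides transform by the same $Y_\E$ operations: module associativity on the left and the associativity \eqref{eq:weak-asso} of $\bar V$ on the right. The identities are therefore preserved under products and $\hbar$-adic limits, so they hold for all $v$. Evaluating at $\bar\vac$ and taking $z\to0$ then gives $\psi(u_nv)=\psi(u)_n\psi(v)$. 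Clearly $\psi(\vac)=\bar\vac$ and $\psi(a_i^b)=\bar a_i^b$.

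Uniqueness is immediate: two homomorphisms agreeing on the generators $a_i^b$ agree on the $\hbar$-adic closure of the subalgebra they generate, which is all of $V(\mathcal M_\vartheta)$, and both maps are $\hbar$-adically continuous since they are $\C[[\hbar]]$-linear.

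The main obstacle is the $\hbar$-adic bookkeeping in the closure argument. I will work with the quotients $\pi_n$, apply the non-$\hbar$-adic statements to $\bar V/\hbar^n\bar V$, and use that $\E_\hbar$ is the inverse limit of the $\E(W/\hbar^nW)$. I also need that "generated" means $\overline{[\cdot]}$ of the algebraic span, and this is where the identity $\left[\overline{[K]}\right]=\overline{[K]}$ recalled in Section \ref{sec:qva} is used.
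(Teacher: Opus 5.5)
The paper gives no proof here beyond calling the statement the $\hbar$-adic analogue of \cite[Proposition 4.4]{K-qaffva-rtu}. Your route is the standard argument behind it and is correct in outline: make $\bar V$ a $V(\mathcal M_\vartheta)$-module via Proposition \ref{prop:M-qva}, then identify $Y_{\bar V}(v,z)$ with $Y(\psi(v),z)$ by induction over products, $[\cdot]$ and $\hbar$-adic limits.

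Your first step, however, rests on a principle that is false for nonlocal vertex algebras. Creativity on a vector $w$ does not pass from a compatible pair $a(z),b(z)$ to $a(z)_nb(z)$. The reason is that $Y_\E(a(z),z_0)b(z)w$ is built from $a(z_1)b(z_2)w$, and there $a(z_1)$ acts on the vectors $b(z_2)w$, about which nothing is known. The usual argument uses locality (or $S$-locality) to move $a(z_1)$ past $b(z_2)$ so that it hits $w$ first. For a concrete counterexample, take $W=(\C w\oplus\C w'\oplus\C w'')[[\hbar]]$ with $b(z)w=w'$, $a(z)w'=w''z^{-1}$, and all other actions zero. Then $\{a(z),b(z)\}$ is compatible and both fields are creative on $w$. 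But $Y_\E(a(z),z_0)b(z)w$ is $(z+z_0)^{-1}w''$ expanded in nonnegative powers of $z_0$, so $(a(z)_{-1}b(z))w=w''z^{-1}$.

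This costs you nothing, because the step is redundant. Run your second induction on the statement ``there is $w\in\bar V$ with $Y_{\bar V}(v,z)=Y(w,z)$'':
\begin{itemize}
\item It holds for $\vac$ and for the $a_i^b$.
\item It is preserved by $u_nv$, since $Y_{\bar V}(u_nv,z)=Y_{\bar V}(u,z)_nY_{\bar V}(v,z)=Y(w_u,z)_nY(w_v,z)=Y((w_u)_nw_v,z)$.
\item It passes to $[\cdot]$ by torsion-freeness.
\item It passes to $\hbar$-adic limits by completeness of $\bar V$: since $w=\lim_{z\to0}Y(w,z)\bar\vac$, the corresponding $w$'s form a Cauchy sequence.
\end{itemize}
Creativity on $\bar\vac$ then follows from the vacuum property of $\bar V$, and $\psi(v):=w$ is the desired homomorphism.

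Finally, the fact that $V(\mathcal M_\vartheta)$ is topologically generated by the $a_i^b$ is needed for both existence and uniqueness. It should be taken from the construction of $V(\mathcal M_\vartheta)$ (compare Remark \ref{rem:direct-construct}), not inferred from the uniqueness of $Y$.
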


\begin{rem}\label{rem:direct-construct}
Suppose that $\vartheta_{ij}^{s,t}(z)\in \C[[\hbar]][z]$ for any $i\in J^s$, $j\in J^t$, $s,t\in\{0,\pm\}$.
The following is a direct construction of $V(\mathcal M_\vartheta)$, which is a straightforward but simpler analogue of the construction given in \cite[Section 4.1]{K-qaffva-rtu}.
Let $A_0$ be the unital free algebra over $\C$ generated by
\begin{align*}
  \set{\wt a_i^s(n)}{i\in J^s,\,s\in\{0,\pm\},\,n\in\Z},
\end{align*}
and let $A=A_0[[\hbar]]$.
Denote by $K_1$ the ideal of $A$ generated by the relations \eqref{eq:vartheta1}, \eqref{eq:vartheta2} and \eqref{eq:vartheta3}.
Define $A'$ to be the quotient algebra of $A$ module the ideal $\overline{[K]}$.
For each $i\in J^s$, $s\in\{0,\pm\}$ and $n\in\Z$, we set
\begin{align*}
  \bar a_i^s(n)=\wt a_i^s(n)+\overline{[K]}\in A.
\end{align*}
Let $A'_+$ be the left ideal of $A'$ generated by
\begin{align*}
  \set{\bar a_i^s(n)}{i\in J^s,\,s\in\{0,\pm\},\,n\in\N}.
\end{align*}
Set
\begin{align*}
  V=A'\Big/\overline{[A'_+]},
\end{align*}
and set
\begin{align*}
  \vac=1+\overline{[A'_+]}\in V,\quad \check a_i^s=\bar a_i^s(-1)\vac\in V\quad\te{for }i\in J^s,\,s\in\{0,\pm\}.
\end{align*}
Then $V$ carries an $\hbar$-adic nonlocal vertex algebra structure with vacuum $\vac$ and vertex operator map $Y$ determined by
\begin{align*}
  Y(\check a_i^s,z)=\sum_{n\in\Z}\bar a_i^s(n)z^{-n-1}
  \quad\te{for }i\in J^s,\,s\in\{0,\pm\}.
\end{align*}
Moreover, there is an $\hbar$-adic nonlocal vertex algebra isomorphism from $V(\mathcal M_\vartheta)$ to $V$ determined by
\begin{align*}
  a_i^s\mapsto\check a_i^s\quad\te{for }i\in J^s,\,s\in\{0,\pm\}.
\end{align*}
\end{rem}

Let $H'$ be the symmetric algebra of the following vector space:
\begin{align*}
  \bigoplus_{b\in\{0,\pm\}}\bigoplus_{i\in J^a}\bigoplus_{n\in\N}\C\partial^n\wh a_i^b.
\end{align*}
Then $H'$ is a commutative and cocommutative bialgebra with $\Delta$ and $\varepsilon$ uniquely determined by
($i\in J$, $n\in\N$):
\begin{align*}
  &\Delta(\partial^n \wh a_i^0)=\partial^n \wh a_i^0\ot 1+1\ot \partial^n \wh a_i^0,\quad \varepsilon(\partial^n \wh a_i^0)=0,\\
  &\Delta(\partial^n \wh a_i^{\pm})=\sum_{k=0}^n\binom{n}{k} \partial^k \wh a_i^{\pm}\ot \partial^{n-k} \wh a_i^{\pm},\quad
  \varepsilon(\partial^n \wh a_i^{\pm})=\delta_{n,0}.
\end{align*}
Let $\partial$ be the derivation on $H'$ such that
\begin{align*}
  \partial (\partial^n \wh a_i^b)=\partial^{n+1}\wh a_i^b\quad\te{for }n\in\N,\,
  i\in J^a,\,b\in\{0,\pm\}.
\end{align*}
It is straightforward to see that $\Delta\circ\partial=(\partial\ot 1+1\ot\partial)\circ\Delta$ and $\varepsilon\circ \partial=0$.
From Remark \ref{rem:bialg-der} we have that $(H',\partial,\Delta,\varepsilon)$ carries a vertex bialgebra structure.
Let $H=H'[[\hbar]]$ be the natural $\hbar$-adic vertex bialgebra.

\begin{prop}\label{prop:deform-iso}
Let $\vartheta,\bar\vartheta\in\mathfrak V$.
There exists a deforming triple $(H,\rho,\bar\vartheta)$ of $V(\mathcal M_\vartheta)$, where $\rho$ is the $H$-comodule $\hbar$-adic nonlocal vertex algebra structure determined by
\begin{align*}
  \rho(a_i^0)=a_i^0\ot 1+\vac\ot \wh a_i^0,\quad
  \rho(a_i^\pm)=a_i^\pm\ot \wh a_i^\pm,
\end{align*}
and $\bar\vartheta\in\mathfrak L_H^\rho(V(\mathcal M_\vartheta))$ determined by
\begin{align*}
  &\bar\vartheta(\wh a_i^0,z)a_j^0=\vac\ot \bar\vartheta_{ij}^{0,0}(z),\quad
  \bar\vartheta(\wh a_i^0,z)a_j^{\pm}=\pm a_j^{\pm}\ot \bar\vartheta_{ij}^{0,\pm}(z),
  \\
  &\bar\vartheta(\wh a_i^\pm,z)a_j^0=a_j^0\ot 1\mp\vac\ot \bar\vartheta_{ij}^{\pm,0}(z),\quad
  \bar\vartheta(\wh a_i^\pm z)a_j^{\epsilon}=a_j^{\epsilon}\ot \bar\vartheta_{ij}^{\pm,\epsilon}(z)\inv.
\end{align*}
Moreover,
\begin{align*}
  \mathfrak D_{\bar\vartheta}^\rho(V(\mathcal M_\vartheta))\cong
  V(\mathcal M_{\vartheta\ast\bar\vartheta}).
\end{align*}
\end{prop}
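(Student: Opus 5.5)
We build $\rho$ and $\bar\vartheta$ from the universal property in Proposition \ref{prop:universal} and from module theory (Proposition \ref{prop:M-qva}). We then identify the deformed algebra by checking that its generating fields satisfy the relations of $\mathcal M_{\vartheta\ast\bar\vartheta}$ and applying universality in both directions.

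\emph{Step 1: the comodule map $\rho$.}
$V(\mathcal M_\vartheta)\sharp H$ is not needed here, only $V(\mathcal M_\vartheta)\wh\ot H$ with its tensor product nonlocal vertex algebra structure. In it, take
\begin{align*}
\bar a_i^0=a_i^0\ot 1+\vac\ot\wh a_i^0,\qquad \bar a_i^\pm=a_i^\pm\ot\wh a_i^\pm .
\end{align*}
Since $H$ is commutative and the vertex operators $Y(\wh a,z)=e^{z\partial}\wh a\cdot$ commute with each other, the relations \eqref{eq:vartheta1}--\eqref{eq:vartheta3} for these fields reduce to those of the $a_i^b(z)$:
\begin{itemize}
\item For \eqref{eq:vartheta1}, the extra terms $\wh a_i^0(z)$ are central.
\item For \eqref{eq:vartheta2}, the field $\wh a_i^0(z_1)$ commutes with everything, so only the $a_i^0$-part contributes.
\item For \eqref{eq:vartheta3}, the factors $Y(\wh a_i^\pm,z_1)Y(\wh a_j^\pm,z_2)$ commute, so the relation is inherited.
\end{itemize}
Proposition \ref{prop:universal} then gives the homomorphism $\rho$. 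Coassociativity and the counit identity are checked on generators. Both sides of each identity are homomorphisms, and $V(\mathcal M_\vartheta)$ is generated by the $a_i^b$, so uniqueness in Proposition \ref{prop:universal} finishes these checks.

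\emph{Step 2: the module structure $\bar\vartheta$.}
Let $\wh a$ act on $V(\mathcal M_\vartheta)$ as follows:
\begin{itemize}
\item $\wh a_i^0$ acts as a derivation-type operator.
\item $\wh a_i^\pm$ acts as an automorphism-type operator.
\end{itemize}
On generators these are the formulas in the statement, and they are extended via \eqref{eq:mod-va-for-vertex-bialg3} using the coproduct. Concretely, I would use universality again to define, for each $i$, a homomorphism
\begin{align*}
V(\mathcal M_\vartheta)\to V(\mathcal M_\vartheta)\wh\ot\C((z_1))[[\hbar]]
\end{align*}
sending $a_j^\pm\mapsto a_j^\pm\ot\bar\vartheta_{ij}^{\pm,\pm}(z_1-z)^{-1}$ (with the appropriate shift). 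This requires checking that the multiplied fields still satisfy the $\mathcal M_\vartheta$ relations. Scalar factors commute through, and the additive shifts of $a^0_j$ by scalars preserve \eqref{eq:vartheta1}--\eqref{eq:vartheta2}. The derivation-type action of $\wh a_i^0$ is obtained similarly, e.g.\ as a first-order term or by a dual-number trick.

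Since $H$ is a free commutative algebra generated by the $\partial^n\wh a_i^b$, the resulting commuting operators assemble into an $H$-module structure. The compatibilities \eqref{eq:mod-va-for-vertex-bialg1-2} and \eqref{eq:rho-tau-compatible} are checked on generators. Here $\rho(\bar\vartheta(h,z)v)=(\bar\vartheta(h,z)\ot1)\rho(v)$ holds because the action only multiplies by scalars or shifts by $\vac$-terms, which $\rho$ sends to $\vac\ot1$.

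\emph{Step 3: the deformed algebra.}
By Proposition \ref{prop:deform}, $\mathfrak D_{\bar\vartheta}^\rho(V(\mathcal M_\vartheta))$ is an $\hbar$-adic nonlocal vertex algebra with vertex operator $Y(z)T^{21}_{\bar\vartheta}(-z)$. We compute the new generating fields and their products, e.g.\ $\mathfrak D(Y)(a_i^{\epsilon_1},z_1)\mathfrak D(Y)(a_j^{\epsilon_2},z_2)$. Using the formulas for $\bar\vartheta$ and $\rho$, this acquires the extra factor $\bar\vartheta_{ij}^{\epsilon_1\epsilon_2}(z_1-z_2)$, or additive corrections in the $0$ cases. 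Hence the fields satisfy the relations of $\mathcal M_{\vartheta\ast\bar\vartheta}$.

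Proposition \ref{prop:universal} gives a homomorphism $V(\mathcal M_{\vartheta\ast\bar\vartheta})\to\mathfrak D_{\bar\vartheta}^\rho(V(\mathcal M_\vartheta))$. For the inverse, Proposition \ref{prop:deform-mult} gives
\begin{align*}
\mathfrak D_{\bar\vartheta^{-1}}\mathfrak D_{\bar\vartheta}=\mathfrak D_\varepsilon=\mathrm{id}
\end{align*}
by Remark \ref{rem:trivial-deform}. Applying the same construction to $V(\mathcal M_{\vartheta\ast\bar\vartheta})$ with $\bar\vartheta^{-1}$ produces a map back, and the two composites fix the generators, hence are identities.

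The main obstacle is Step 2: rigorously constructing the $H$-module structure with the required Sweedler-twisted compatibility \eqref{eq:mod-va-for-vertex-bialg3}, especially the $\wh a_i^0$ derivation part. I would first try to realize the $H$-action as $\tau(h,z)=\Phi(h,z)$, where $\Phi$ is a vertex algebra homomorphism-valued exponential built from the group-like elements $\wh a_i^\pm$ and from $\exp$ of the primitive elements $\wh a_i^0$ after a formal parameter.
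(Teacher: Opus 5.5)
Your Steps 1 and 3 are sound. They follow the same pattern the paper uses for Proposition \ref{prop:V-h-tau-deform}; the paper gives no separate proof of this statement. Concretely, $\rho$ comes from Proposition \ref{prop:universal} applied inside $V(\mathcal M_\vartheta)\wh\ot H$. The deformed fields $Y(a_i^0,z)+\bar\vartheta(\wh a_i^0,z)$ and $Y(a_i^\pm,z)\bar\vartheta(\wh a_i^\pm,z)$ do satisfy the relations of $\mathcal M_{\vartheta\ast\bar\vartheta}$.

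The genuine gap is Step 2, the existence of $\bar\vartheta\in\mathfrak L_H^\rho(V(\mathcal M_\vartheta))$. This is half of the statement, and you leave it open. As written, a homomorphism $V(\mathcal M_\vartheta)\to V(\mathcal M_\vartheta)\wh\ot\C((z_1))[[\hbar]]$ does not yield \eqref{eq:mod-va-for-vertex-bialg3}. If $\C((z_1))[[\hbar]]$ is just a ring of scalars, such a homomorphism $\Phi(z_1)$ satisfies $\Phi(z_1)Y(u,z_2)=Y(\Phi(z_1)u,z_2)\Phi(z_1)$. For group-like $h$ you need the shifted identity $\Phi(z_1)Y(u,z_2)=Y(\Phi(z_1-z_2)u,z_2)\Phi(z_1)$. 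Your formula, with both $z_1$ and $z$ in the image of $a_j^\pm$, does not define such a map.

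The missing idea is to give $\C((z))[[\hbar]]$ the commutative vertex algebra structure with derivation $-\frac{d}{dz}$, so that $Y(f,z_2)g=f(z-z_2)g$.
\begin{itemize}
\item For $h=\wh a_i^\pm$, \eqref{eq:mod-va-for-vertex-bialg3} then says exactly that $u\mapsto\bar\vartheta(\wh a_i^\pm,z)u$ is a homomorphism into the tensor product $V(\mathcal M_\vartheta)\wh\ot\C((z))[[\hbar]]$.
\item For the primitive $h=\wh a_i^0$, it says that $u\mapsto u+t\,\bar\vartheta(\wh a_i^0,z)u$ is a homomorphism into $V(\mathcal M_\vartheta)\wh\ot(\C((z))[t]/(t^2))[[\hbar]]$, with the same derivation.
\end{itemize}
The prescribed images of the generators have vertex operators equal to $Y(a_j^b,z_2)$ times scalars in $\C((z))[[z_2]]$, or shifted by central scalars. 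So \eqref{eq:vartheta1}--\eqref{eq:vartheta3} persist, and Proposition \ref{prop:universal} produces these operators on all of $V(\mathcal M_\vartheta)$, together with \eqref{eq:mod-va-for-vertex-bialg3} for the generators of $H$.

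You also assert, without proof, that the resulting operators commute. This is needed in two places.
\begin{itemize}
\item To assemble an $H$-module at all: for the commutative vertex algebra $H$, a module is a commuting family of fields, with $\partial^n\wh a_i^b$ acting by $\partial_z^n\bar\vartheta(\wh a_i^b,z)$.
\item In Step 3, where the terms $[\bar\vartheta(h,z_1),\bar\vartheta(k,z_2)]$ must vanish in \eqref{eq:vartheta1}--\eqref{eq:vartheta3}.
\end{itemize}
Commutation follows from the uniqueness in Proposition \ref{prop:universal}. The two composites of such homomorphisms, taken in either order, are homomorphisms into $V(\mathcal M_\vartheta)\wh\ot(\C((z_1))\ot\C((z_2)))[[\hbar]]$. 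They agree on generators, since there the actions are scalar multiplications and $\vac$-shifts.

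With commutation in hand, the rest extends routinely.
\begin{itemize}
\item \eqref{eq:mod-va-for-vertex-bialg3} passes from generators to all of $H$, because $\Delta$ is multiplicative and $\Delta\partial=(\partial\ot1+1\ot\partial)\Delta$.
\item \eqref{eq:rho-tau-compatible} follows from the same uniqueness, applied to the homomorphisms $\rho\circ\Phi$ and $(\Phi\ot1)\circ\rho$.
\end{itemize}

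Finally, the inverse map in Step 3 needs three further checks on generators:
\begin{itemize}
\item $\bar\vartheta$ and $\bar\vartheta\inv$ commute, which is a hypothesis of Proposition \ref{prop:deform-mult}.
\item The structure attached to $\bar\vartheta\inv\in\mathfrak V$ is the convolution inverse of the structure attached to $\bar\vartheta$.
\item Your map $V(\mathcal M_\vartheta)\to\mathfrak D_{\bar\vartheta\inv}^\rho(V(\mathcal M_{\vartheta\ast\bar\vartheta}))$ intertwines $\rho$ and $\bar\vartheta$. This is what guarantees it remains a homomorphism after applying $\mathfrak D_{\bar\vartheta}^\rho$, so that the two composites can be compared as endomorphisms fixing the generators.
\end{itemize}
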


\section{Quantum affine vertex algebra}\label{sec:qaffva}

In this section, we present a unified construction of quantum affine vertex algebras, which recovers both the $\hbar$-adic weak quantum vertex algebras associated to double Yangians (introduced in \cite{KL-YD-1}) and the $\hbar$-adic quantum vertex algebra associated to untwisted quantum affinization algebras (introduced in \cite{K-Quantum-aff-va}).

Let $A=(a_{ij})_{i,j\in I}$ be a symmetrizable generalized Cartan matrix (GCM).
Then there are unique relatively prime positive integers $r_i$ ($i\in I$) such that $DA$ is symmetric with $D=\te{diag}\set{r_i}{i\in I}$.
Let $r$ be the least common multiple of $\set{r_i}{i\in I}$.
Let $\g=[\g(A),\g(A)]$ be the derived subalgebra of the Kac-Moody Lie algebra associated to $A$.
We introduce the $\hbar$-adic vertex algebras.

\begin{de}
Let $\ell\in \C$.
Define $\mathcal M^\ell(\g)$ to be the category, whose objects are topologically free $\C[[\hbar]]$-modules $W$ equipped with fields
\begin{align*}
  h_i(z)=\sum_{n\in\Z}h_i(n)z^{-n-1},\quad x_i^\pm(z)=\sum_{n\in\Z}x_i^\pm(n)z^{-n-1}\in \E_\hbar(W)\quad\te{for }i\in I,
\end{align*}
satisfying the relations 
below
\begin{align}
  \tag{L1}\label{L1}\quad& [h_i(z_1),h_j(z_2)]=r_ia_{ij}r\ell\pd{z_2}z_1\inv\delta\left(\frac{z_2}{z_1}\right),\\
  \tag{L2}\label{L2}\quad& [h_i(z_1),x_j^\pm(z_2)]=\pm r_ia_{ij}x_j^\pm(z_2) z_1\inv\delta\left(\frac{z_2}{z_1}\right),\\
  \label{L3.5}\tag{L3.5}\quad &(z_1-z_2)^{2\delta_{ij}}[x_i^+(z_1),x_j^-(z_2)]=0,\\
  \tag{L4}\label{L4}\quad& (z_1-z_2)[x_i^\pm(z_1),x_j^\pm(z_2)]=0.
\end{align}
Let $R_1^\ell(\g)$ be the ideal of $V(\mathcal M^\ell(\g))$ generated by 
\begin{align*}
  &(x_i^+)_0(x_j^-)-\frac{\delta_{ij}}{r_i}h_i,\quad (x_i^+)_1(x_j^-)-\frac{\delta_{ij}}{r_i}r\ell\vac\quad\te{for }i,j\in I,\\ &\left((x_i^\pm)_0\right)^{m_{ij}}(x_j^\pm)\quad \te{for }i,j\in I\,\te{with }a_{ij}\le 0,\,
  \te{where }m_{ij}=1-a_{ij}.
\end{align*}
Define
\begin{align*}
  V^\ell(\g)=V(\mathcal M^\ell(\g))\Big/\overline{[R_1^\ell(\g)]}.
\end{align*}
If $\ell\in\Z_+$, let $R_2^\ell(\g)$ be the ideal of $V^\ell(\g)$ generated by
\begin{align*}
  (x_i^\pm)_{-1}^{r\ell/r_i}x_i^\pm\quad\te{for }i\in I.
\end{align*}
Define
\begin{align*}
  L^\ell(\g)=V^\ell(\g)\Big/\overline{[R_2^\ell(\g)]}.
\end{align*}
\end{de}

\begin{rem}\label{rem:module-realization}
Let $\wt \g$ be the Lie algebra over $\C$ generated by
$\set{h_i(n),x_i^\pm(n)}{i\in I,n\in\Z}$ subject to relations \eqref{L1}, \eqref{L2}, \eqref{L3.5} and \eqref{L4}.
Then $V(\mathcal M^\ell(\g))$ carries a $\wt \g$-module structure with
\begin{align*}
  h_i(z)=Y(h_i,z),\quad x_i^\pm(z)=Y(x_i^\pm,z)\quad\te{for }i\in I.
\end{align*}
Moreover, let $R$ be the left ideal of $U(\wt \g)[[\hbar]]$ generated by 
\begin{align*}
  h_i(n),\,x_i^\pm(n)\quad\te{for }i\in I,\,n\in\N.
\end{align*}
Then, as a $\wt \g$-module, $V(\mathcal M^\ell(\g))\cong \U(\wt\g)[[\hbar]]/\overline{[R]}$.
\end{rem}

Define $q=\exp\hbar\in \C[[\hbar]]$ and define $q_i=q^{r_i}$.
For $n\in\Z$, set
\begin{align*}
  [n]_x=\frac{x^n-x^{-n}}{x-x\inv}\in\Z[x,x\inv].
\end{align*}
We also define $[n]_v$ for any invertible element $v$ of an algebra over $\C$ with $v^2\ne 1$.
Furthermore, for $k,s\in\N$ such that $0\le k\le s$, define
\begin{align*}
  [s]_v!=[s]_v[s-1]_v\cdots [1]_v,\quad
  \qb{s}{k}_v=\frac{[s]_v!}{[s-k]_v![k]_v!}.
\end{align*}
Introduce the following series
\begin{align}\label{eq:def-op-F}
F(z)=(q^z-q^{-z})/z\in \hbar z^2\C[z^2][[\hbar]].
\end{align}
Let $\mathfrak T$ be the set of tuples
\begin{align*}
  \tau=\big(\tau_{ij}^{a,b}(z)\big)_{i,j\in I}^{a,b\in\{0,\bullet\}}\in \C[[z,\hbar]]^{4|I|^2},
\end{align*}
such that
$\lim\limits_{z\to0}\lim\limits_{\hbar\to 0}\tau_{ij}^{\bullet,\bullet}(z)=1$ and for each $i,j\in I$,
\begin{align*}
  &F\left(\pd z\right)\tau_{ij}^{0,0}(z)=\tau_{ij}^{0,\bullet}(z-r\ell\hbar)
  -\tau_{ij}^{0,\bullet}(z+r\ell\hbar)
  =\tau_{ij}^{\bullet,0}(z-r\ell\hbar)
  -\tau_{ij}^{\bullet,0}(z+r\ell\hbar),\\
  &\tau_{ij}^{\bullet,\bullet}(z-r\ell\hbar)
  \tau_{ij}^{\bullet,\bullet}(z+r\ell\hbar)\inv
  =\exp\left(F\left(\pd z\right)\tau_{ij}^{\bullet,0}(z)\right)
  =\exp\left(F\left(\pd z\right)\tau_{ij}^{0,\bullet}(z)\right).
\end{align*}
Then $\mathfrak T$ carries an abelian group structure with the identity $\varepsilon$ defined by
\begin{align*}
  \varepsilon_{ij}^{a,b}(z)=
  \begin{cases}
    0, & \mbox{if }0\in\{a,b\},\\
    1, & \mbox{otherwise},
  \end{cases}
\end{align*}
the multiplication $\tau\ast\tau'$ defined by
\begin{align*}
  (\tau\ast\tau')_{ij}^{a,b}(z)=
  \begin{cases}
    \tau_{ij}^{a,b}(z)+\tau_{ij}'^{a,b}(z), & \mbox{if }
    0\in\{a,b\},\\
    \tau_{ij}^{a,b}(z)\tau_{ij}'^{a,b}(z), & \mbox{otherwise},
  \end{cases}
\end{align*}
and the inverse $\tau\inv$ defined by
\begin{align*}
  (\tau\inv)_{ij}^{a,b}(z)=
  \begin{cases}
    -\tau_{ij}^{a,b}(z), & \mbox{if }0\in\{a,b\},\\
    \tau_{ij}^{a,b}(z)\inv, & \mbox{otherwise}.
  \end{cases}
\end{align*}

\begin{de}
Let $\ell\in \C$, and $\tau\in\mathfrak T$.
Define $\mathcal M_{\hbar,\tau}^\ell(\g)$ be the category, whose objects are topologically-free $\C[[\hbar]]$-modules $W$ equipped with fields
\begin{align*}
  h_i(z)=\sum_{n\in\Z}h_i(n)z^{-n-1},\quad x_i^\pm(z)=\sum_{n\in\Z}x_i^\pm(n)z^{-n-1}\in \E_\hbar(W)\quad\te{for }i\in I,
\end{align*}
satisfying the relations below
\begin{align}
  \tag{$\tau$1}\label{tau1}
  &[h_i(z_1),h_j(z_2)]\\
  \nonumber=&[r_ia_{ij}]_{q^{\pd{z_2}}}[r\ell]_{q^{\pd{z_2}}}
  (q^{-r\ell\pd{z_2}}\iota_{z_1,z_2}
  -q^{r\ell\pd{z_2}}\iota_{z_2,z_1})(z_1-z_2)^{-2}\\
  &\nonumber+\tau_{ij}^{0,0}(z_1-z_2)-\tau_{ji}^{0,0}(z_2-z_1),\\
  \tag{$\tau$2}\label{tau2}
  &[h_i(z_1),x_j^\pm(z_2)]\\
  \nonumber=&\pm x_j^\pm(z_2)
  [r_ia_{ij}]_{q^{\pd{z_2}}}(q^{-r\ell\pd{z_2}}\iota_{z_1,z_2}
  -q^{r\ell\pd{z_2}}\iota_{z_2,z_1})(z_1-z_2)\inv\\
  &\nonumber\pm x_j^\pm(z_2)(\tau_{ij}^{0,\bullet}(z_1-z_2)+\tau_{ji}^{\bullet,0}(z_2-z_1)),\\
  \tag{$\tau$3.5}\label{tau3.5}
  &(z_1-z_2)^{\delta_{ij}}(z_1-z_2+2r\ell\hbar)^{\delta_{ij}}
  \bigg(x_i^+(z_1)x_j^-(z_2)\\
  -&\nonumber\iota_{z_2,z_1}
  \frac{
    (z_2-z_1+r_ia_{ij}\hbar)\tau_{ij}^{\bullet,\bullet}(z_1-z_2)}
  {(z_2-z_1-r_ia_{ij}\hbar)\tau_{ji}^{\bullet,\bullet}(z_2-z_1)}
  x_j^-(z_2)x_i^+(z_1)\bigg)=0,\\
  \tag{$\tau$4}\label{tau4}
  &(z_1-z_2-r_ia_{ij}\hbar)\tau_{ij}^{\bullet,\bullet}(z_1-z_2)
  x_i^\pm(z_1)x_j^\pm(z_2)\\
  =&\nonumber
  (z_1-z_2+r_ia_{ij}\hbar)\tau_{ji}^{\bullet,\bullet}(z_2-z_1)
  x_j^\pm(z_2)x_i^\pm(z_1).
\end{align}
Let $R_{\tau,1}^\ell(\g)$ be the ideal of $V(\mathcal M_{\hbar,\tau}^\ell(\g))$ generated by
\begin{align}
  &\left(x_i^+\right)_0x_i^-
  -\frac{1}{2r_i\hbar}
    \left(\vac-E_\tau(h_i)\right)\quad\te{for } i\in I,\label{eq:x+0x-}\\
  &\left(x_i^+\right)_1x_i^-
  +\frac{r\ell}{r_i}
  E_\tau(h_i)\quad\te{for } i\in I,\label{eq:x+1x-}\\
  &\left(x_i^\pm\right)_0^{m_{ij}}x_j^\pm\quad\te{for } i,j\in I\,\te{with }a_{ij}\le 0,\label{eq:serre}
\end{align}
where
\begin{align}\label{eq:def-E}
  E_\tau(h_i)=
  \frac{\tau_{ii}^{\bullet,\bullet}(-2r\ell\hbar)^\half}
  {\tau_{ii}^{\bullet,\bullet}(2r\ell\hbar)^\half}
    \exp\left(\left(-q^{-r\ell\partial} F\left(\partial\right)h_i\right)_{-1}\right)\vac.
\end{align}
Define 
\begin{align}
  V_{\hbar,\tau}^\ell(\g)=V(\mathcal M_{\hbar,\tau}^\ell(\g))\Big/ \overline{[R_{\tau,1}^\ell(\g)]}.
\end{align}
If $\ell\in\Z_+$, let $R_{\tau,2}^\ell(\g)$ be the ideal of $V_{\hbar,\tau}^\ell(\g)$ generated by
\begin{align}\label{eq:def-integrable}
  (x_i^\pm)_{-1}^{r\ell/r_i}x_i^\pm\quad\te{for }i\in I.
\end{align}
Define 
\begin{align}
  L_{\hbar,\tau}^\ell(\g)=V_{\hbar,\tau}^\ell(\g)\Big/\overline{[R_{\tau,2}^\ell(\g)]}.
\end{align}
\end{de}

\begin{rem}
For the notations $L^\ell(\g)$ and $L_{\hbar,\tau}^{\ell}(\g)$. we always assume that $\ell\in\Z_+$.
\end{rem}

\begin{rem}
For specific choices of $\tau$, the quantum affine vertex algebras $V_{\hbar,\tau}^\ell(\g)$ and $L_{\hbar,\tau}^\ell(\g)$ recover those associated with the double Yangians introduced in \cite{KL-YD-1} (see Appendix~\ref{app:YD}), and with the untwisted quantum affinization algebras introduced in \cite{K-Quantum-aff-va} (see Appendix~\ref{app:qaff}).
\end{rem}

\begin{rem}\label{rem:x+x-}
Note that the vertex operator map $Y(\cdot,z)$ on $V(\mathcal M_{\hbar,\tau}^\ell(\g))$ is an $\hbar$-adic nonlocal vertex algebra homomorphism
\begin{align*}
  V(\mathcal M_{\hbar,\tau}^\ell(\g))\to \left\langle\set{Y(v,z)}{v\in V(\mathcal M_{\hbar,\tau}^\ell(\g))}\right\rangle.
\end{align*}
Then
\begin{align*}
  Y(E_\tau(h_i),z)=E_\tau\big(Y(h_i,z)\big)
  =E_\tau\big(h_i(z)\big).
\end{align*}
A similar argument to \cite[Proposition 6.10]{K-Quantum-aff-va} shows that \eqref{tau3.5} together with \eqref{eq:x+0x-} and \eqref{eq:x+1x-}
is equivalent to the following relation
\begin{align}
  \tag{$\tau$3}\label{tau3}
  &x_i^+(z_1)x_j^-(z_2)-\iota_{z_2,z_1}
  \frac{
    (z_2-z_1+r_ia_{ij}\hbar)\tau_{ij}^{\bullet,\bullet}(z_1-z_2)}
  {(z_2-z_1-r_ia_{ij}\hbar)\tau_{ji}^{\bullet,\bullet}(z_2-z_1)}
  x_j^-(z_2)x_i^+(z_1)\\
  &\quad=\frac{\delta_{ij}}{2r_i\hbar}\vac z_1\inv\delta\left(\frac{z_2}{z_1}\right)
  -\frac{\delta_{ij}}{2r_i\hbar}E_\tau\big(h_i(z_2)\big)
  z_1\inv\delta\left(\frac{z_2-2r\ell\hbar}{z_1}\right).\nonumber
\end{align}
\end{rem}

\begin{rem}\label{rem:qyb}
For each $\ell\in \C$ and $\tau\in\mathfrak T$,
we denote by $S_\tau(z)$ the quantum Yang-Baxter operator of $V(\mathcal M_{\hbar,\tau}^\ell(\g))$.
Then $S_\tau(z)$ is determined by the following conditions
\begin{align*}
  &S_\tau(z)(h_j\ot h_i)=h_j\ot h_i\\
  &\quad+\vac\ot\vac\ot
  \left( [r_ia_{ij}]_{q^{\pd z}}[r\ell]_{q^{\pd z}}(q^{-r\ell\pd z}-q^{r\ell\pd z})z^{-2}+\tau_{ij}^{0,0}(-z)
  -\tau_{ji}^{0,0}(z) \right),\\
  &S_\tau(z)(h_j\ot x_i^\pm)=h_j\ot x_i^\pm\\
  &\quad\mp\vac\ot x_i^\pm\ot\left(
    [r_ja_{ji}]_{q^{\pd z}}(q^{r\ell\pd z}-q^{-r\ell\pd z})z\inv
    +\tau_{ij}^{\bullet,0}(-z)+\tau_{ji}^{0,\bullet}(z)
  \right),\\
  &S_\tau(z)(x_j^\pm\ot h_i)=x_j^\pm\ot h_i\\
  &\quad\pm x_j^\pm\ot\vac\ot \left(
    [r_ia_{ij}]_{q^{\pd z}}(q^{r\ell\pd z}-q^{-r\ell\pd z})z\inv +\tau_{ij}^{0,\bullet}(-z)+\tau_{ji}^{\bullet,0}(z)
  \right),\\
  &S_\tau(z)(x_j^{\epsilon_1}\ot x_i^{\epsilon_2})
  =x_j^{\epsilon_1}\ot x_i^{\epsilon_2}
  \ot \frac{\tau_{ji}^{\bullet,\bullet}(z)^{\epsilon_1\epsilon_21}}
  {\tau_{ij}^{\bullet,\bullet}(-z)^{\epsilon_1\epsilon_21}}
  \frac{z-\epsilon_1\epsilon_2 r_ia_{ij}\hbar}
  {z+\epsilon_1\epsilon_2 r_ia_{ij}\hbar}.
\end{align*}
\end{rem}

Similar to the proof of \cite[Theorem 6.6]{K-Quantum-aff-va}, we have the following result.
\begin{prop}\label{prop:qyb}
For each $\ell\in \C$ and $\tau\in\mathfrak T$, the $\hbar$-adic nonlocal vertex algebra $V_{\hbar,\tau}^\ell(\g)$ is an $\hbar$-adic quantum vertex algebra with the quantum Yang-Baxter operator induced from $S_\tau(z)$, which is still denoted by $S_\tau(z)$.
Moreover, suppose further that $\ell\in\Z_+$. Then
$L_{\hbar,\tau}^\ell(\g)$ is also an $\hbar$-adic quantum vertex algebra with the quantum Yang-Baxter operator induced from $S_\tau(z)$, which is still denoted by $S_\tau(z)$.
\end{prop}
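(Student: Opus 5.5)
The plan is to realize $V(\mathcal M_{\hbar,\tau}^\ell(\g))$ as $V(\mathcal M_\vartheta)$ for a suitable $\vartheta\in\mathfrak V$. The index sets are $J^0=J^\pm=I$, with $a_i^0=h_i$ and $a_i^\pm=x_i^\pm$. Relations \eqref{tau1}, \eqref{tau2}, \eqref{tau4} and the $\iota$-expanded form of \eqref{tau3.5} are then of the shape \eqref{eq:vartheta1}--\eqref{eq:vartheta3}. Accordingly, Proposition \ref{prop:M-qva} gives an $\hbar$-adic quantum vertex algebra structure on $V(\mathcal M_{\hbar,\tau}^\ell(\g))$, with quantum Yang-Baxter operator $S_\tau(z)$ as listed in Remark \ref{rem:qyb}. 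One caveat: \eqref{tau3.5} carries the extra polynomial prefactor $(z_1-z_2)(z_1-z_2+2r\ell\hbar)$ when $i=j$, so I would absorb it into $\vartheta_{ii}^{+,-}$. This is harmless, since polynomial factors only change the relation up to $\sim$.

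It then remains to show that $S_\tau(z)$ descends to the quotients $V_{\hbar,\tau}^\ell(\g)$ and $L_{\hbar,\tau}^\ell(\g)$. For this I use the standard criterion, as in \cite[Theorem 6.6]{K-Quantum-aff-va} and Li's results on ideals of $\hbar$-adic quantum vertex algebras. If $R$ is an ideal with $S(z)(R\wh\ot V+V\wh\ot R)\subset (R\wh\ot V+V\wh\ot R)\wh\ot\C((z))[[\hbar]]$, then $S(z)$ induces a quantum Yang-Baxter operator on $V/\overline{[R]}$. All the axioms (shift, Yang--Baxter, unitarity, vacuum, $S$-locality, hexagon) pass to the quotient automatically, because $\overline{[R]}$ is closed, saturated and $\hbar$-adically complete, so the quotient is again topologically free.

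The key step is the invariance of the generators of $R_{\tau,1}^\ell(\g)$ and $R_{\tau,2}^\ell(\g)$. The hexagon identity \eqref{eq:qyb-hex-id} lets me compute $S_\tau(z)$ on products $u_nv$ of generators. Iterating it, I get formulas for $S_\tau(z)(w\ot a)$, where $w$ is each generator \eqref{eq:x+0x-}, \eqref{eq:x+1x-}, \eqref{eq:serre}, \eqref{eq:def-integrable} and $a$ ranges over $h_j,x_j^\pm$.

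For the Serre elements and for $(x_i^\pm)_{-1}^{r\ell/r_i}x_i^\pm$, the operator $S_\tau$ acts on $x$'s diagonally through scalar series. The image is therefore $w\ot a$ times a series, plus (for $a=h_j$) a term in $\vac\ot w$ or $w\ot\vac$. Both lie in the required subspace.

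For \eqref{eq:x+0x-} and \eqref{eq:x+1x-}, I instead have to check that the $S$-image of $(x_i^+)_0x_i^-$ matches that of $E_\tau(h_i)$. This uses $S_\tau(z)(E_\tau(h_i)\ot a)$, computed from the exponential formula \eqref{eq:def-E}. It also requires the defining identities of $\mathfrak T$, which relate $\exp(F(\partial)\tau^{\bullet,0})$ to the ratio $\tau^{\bullet,\bullet}(z-r\ell\hbar)/\tau^{\bullet,\bullet}(z+r\ell\hbar)$. I expect this matching to be the main obstacle. To carry it out I would use Remark \ref{rem:x+x-}, replacing the generators by the equivalent relation \eqref{tau3}. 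Invariance of the ideal then reduces to compatibility of \eqref{tau3} with $S$-locality. I would verify the two delta-function terms separately, the shifted one $z_1^{-1}\delta((z_2-2r\ell\hbar)/z_1)$ being exactly where the $\mathfrak T$ conditions enter.

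Once this is done, $V_{\hbar,\tau}^\ell(\g)$ inherits $S_\tau$. Repeating the argument with $R_{\tau,2}^\ell(\g)$ inside $V_{\hbar,\tau}^\ell(\g)$ gives the statement for $L_{\hbar,\tau}^\ell(\g)$.
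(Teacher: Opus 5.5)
Your architecture is the natural one, and it is consistent with what the paper intends; the paper itself only refers to the proof of \cite[Theorem 6.6]{K-Quantum-aff-va}. You take $S_\tau(z)$ on $V(\mathcal M_{\hbar,\tau}^\ell(\g))$ from Proposition \ref{prop:M-qva} and then show that the defining ideals are $S_\tau$-stable, so that $S_\tau$ descends. Your treatment of \eqref{eq:x+0x-} and \eqref{eq:x+1x-} through \eqref{tau3} is only sketched, but it points at the right check. What must be verified is essentially $S_\tau(z)(E_\tau(h_i)\ot x_k^\epsilon)=E_\tau(h_i)\ot x_k^\epsilon\ot b(z-2r\ell\hbar)b(z)^{-1}$, where $b$ is the scalar of $S_\tau$ on $x_i^+\ot x_k^\epsilon$, and this does follow from the identities defining $\mathfrak T$. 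The genuine gap is in your step for the Serre elements \eqref{eq:serre} and the elements \eqref{eq:def-integrable}.

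There, the inference ``$S_\tau$ acts on the $x$'s by scalars, hence $S_\tau(z)(w\ot a)$ is $w\ot a$ times a series'' does not follow. Write $S_\tau(z)(x_i^\pm\ot x_k^\epsilon)=x_i^\pm\ot x_k^\epsilon\ot c(z)$. The hexagon identity \eqref{eq:qyb-hex-id} gives
\begin{align*}
  S_\tau(z)\big(Y(x_i^\pm,y)v\ot x_k^\epsilon\big)
  =c(z+y)\,(Y(y)\ot 1)\big(x_i^\pm\ot S_\tau(z)(v\ot x_k^\epsilon)\big).
\end{align*}
Here $c(z+y)$ must be expanded in nonnegative powers of $y$, so $\Res_y y^n$ produces $\sum_{k\ge 0}\frac{1}{k!}c^{(k)}(z)\,(x_i^\pm)_{n+k}v$, not $(x_i^\pm)_nv$ alone. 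Iterating, $S_\tau(z)(w\ot x_k^\epsilon)$ for $w=(x_i^\pm)_0^{m}x_j^\pm$, $m=m_{ij}$, is a combination of all the vectors $(x_i^\pm)_{k_m}\cdots(x_i^\pm)_{k_1}x_j^\pm\ot x_k^\epsilon$ with $k_a\ge 0$. For \eqref{eq:def-integrable} you get $(x_i^\pm)_{-1+k_m}\cdots(x_i^\pm)_{-1+k_1}x_i^\pm$ instead, and the same happens in the $w\ot\vac$-term for $a=h_k$. These vectors are not zero in general. By \eqref{tau4}, $Y(x_i^\pm,z)x_j^\pm$ has a pole at $z=r_ia_{ij}\hbar$, which $\hbar$-adically means infinitely many negative powers of $z$. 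So you must show that these vectors lie in $\overline{[R]}$, and nothing in your argument does so. Your $E_\tau$-step escapes this problem precisely because \eqref{tau3} controls all the modes $(x_i^+)_kx_i^-$, $k\ge 0$.

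The missing input is the pole structure together with an antisymmetry. By \eqref{tau4},
\begin{align*}
  \Phi(z_1,\dots,z_m)=&\prod_{a=1}^m(z_a-r_ia_{ij}\hbar)\tau_{ij}^{\bullet,\bullet}(z_a)
  \prod_{1\le b<a\le m}(z_a-z_b-2r_i\hbar)\tau_{ii}^{\bullet,\bullet}(z_a-z_b)\\
  &\times Y(x_i^\pm,z_m)\cdots Y(x_i^\pm,z_1)x_j^\pm
\end{align*}
is a power series in $z_1,\dots,z_m$. It is antisymmetric in these variables, because the exchange factor of $x_i^\pm$ with itself becomes exactly $-1$ once denominators are cleared. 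In the iterated residue all poles are simple. Every pole other than $z_a=\lambda_a:=r_i(a_{ij}+2a-2)\hbar$ lies on a diagonal $z_a=z_b$, where $\Phi$ vanishes. Hence
\[
(x_i^\pm)_{k_m}\cdots(x_i^\pm)_{k_1}x_j^\pm=\lambda_m^{k_m}\cdots\lambda_1^{k_1}\,w,
\]
and
\[
S_\tau(z)(w\ot x_k^\epsilon)=w\ot x_k^\epsilon\ot c'(z)\prod_a c(z+\lambda_a),
\]
where $c'$ is the scalar for $x_j^\pm\ot x_k^\epsilon$. The shifted arguments are hidden by your phrase ``times a series''. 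The same argument handles \eqref{eq:def-integrable}, where antisymmetry also kills the poles at $z_a=0$. Alternatively, you can pass to the normal-ordered form of the Serre relation, as in \cite[Theorem 4.13]{KL-YD-1} or the argument behind \eqref{Q10'}.
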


The following result is an immediate consequence of
Proposition \ref{prop:universal}.

\begin{prop}\label{prop:universal-qaff}
Let $\tau\in \mathfrak T$, and
let $(\bar V,Y,\vac)$ be an $\hbar$-adic nonlocal vertex algebra containing elements
$\set{\bar h_i, \bar x_i^\pm}{i\in I}$,
such that
\begin{align*}
  (\bar V,Y(\bar h_i,z),Y(\bar x_i^\pm,z))\in \obj\mathcal M_{\hbar,\tau}^\ell(\g),
\end{align*}
and the relations \eqref{eq:x+0x-}, \eqref{eq:x+1x-} and \eqref{eq:serre} hold with $h_i$ (resp. $x_i^\pm$) replaced by $\bar h_i$ (resp. $\bar x_i^\pm$).
Then there is a unique $\hbar$-adic nonlocal vertex algebra homomorphism $\varphi:V_{\hbar,\tau}^\ell(\g)\to \bar V$ determined by
\begin{align*}
  h_i\mapsto \bar h_i,\quad x_i^\pm\mapsto \bar x_i^\pm\quad\te{for }i\in I.
\end{align*}
Moreover, if $\ell\in\Z_+$, and the relation \eqref{eq:def-integrable} holds with $x_i^\pm$ replaced by $\bar x_i^\pm$, then the homomorphism $\varphi$ factor through $L_{\hbar,\tau}^\ell(\g)$.
\end{prop}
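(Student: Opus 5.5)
The statement is Proposition \ref{prop:universal-qaff}, so the natural route is to reduce it to the universal property of $V(\mathcal M_{\hbar,\tau}^\ell(\g))$ in Proposition \ref{prop:universal}. That requires viewing $\mathcal M_{\hbar,\tau}^\ell(\g)$ as an instance of $\mathcal M_\vartheta$. Take $J^0=\set{h_i}{i\in I}$ and $J^\pm=\set{x_i^\pm}{i\in I}$, and let $\vartheta$ be the tuple built from $\tau$ and the $q$-deformed Lie terms:
\begin{itemize}
\item $\vartheta_{ij}^{0,0}(z)$ is the $\iota_{z_1,z_2}$-part of the right-hand side of \eqref{tau1} plus $\tau_{ij}^{0,0}(z)$;
\item $\vartheta^{0,\pm}$ and $\vartheta^{\pm,0}$ come from \eqref{tau2} in the same way;
\item $\vartheta_{ij}^{\epsilon_1,\epsilon_2}(z)$ is $(z-\epsilon_1\epsilon_2 r_ia_{ij}\hbar)\,\tau_{ij}^{\bullet,\bullet}(z)^{\epsilon_1\epsilon_2}$, up to the normalization in \eqref{tau3.5} and \eqref{tau4}.
\end{itemize}
This is exactly the identification implicit in Remark \ref{rem:qyb}, where $S_\tau$ matches the formula for $S_\vartheta$ in Proposition \ref{prop:M-qva}. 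Relation \eqref{tau3.5} carries the extra factor $(z_1-z_2)(z_1-z_2+2r\ell\hbar)$, which is a polynomial multiple of the $\mathcal M_\vartheta$-type relation \eqref{eq:vartheta3}. I would handle it either by including that factor into $\vartheta_{ii}^{+,-}$ (it is harmless since the relation is stated after multiplying) or by noting that $V(\mathcal M_{\hbar,\tau}^\ell(\g))$ is defined precisely as the universal object for this category. In either case, Proposition \ref{prop:universal} applies verbatim.

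With that identification, the hypothesis $(\bar V,Y(\bar h_i,z),Y(\bar x_i^\pm,z))\in\obj\mathcal M_{\hbar,\tau}^\ell(\g)$ gives a unique $\hbar$-adic nonlocal vertex algebra homomorphism
\begin{align*}
\wt\varphi:V(\mathcal M_{\hbar,\tau}^\ell(\g))\to\bar V,\qquad h_i\mapsto\bar h_i,\quad x_i^\pm\mapsto\bar x_i^\pm .
\end{align*}
Next I check that $\wt\varphi$ kills the generators of $R_{\tau,1}^\ell(\g)$. Since $\wt\varphi$ is a vertex algebra homomorphism, it commutes with $n$-th products and with $\partial$. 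It therefore also commutes with $E_\tau$, because $E_\tau(h_i)$ is given by the exponential of a $(-1)$-mode of $F(\partial)q^{-r\ell\partial}h_i$ applied to $\vac$, with scalar coefficients in $\C[[\hbar]]$; this passes through $\wt\varphi$ term by term, using continuity in the $\hbar$-adic topology. Hence the images of \eqref{eq:x+0x-}, \eqref{eq:x+1x-} and \eqref{eq:serre} are the corresponding expressions in $\bar h_i,\bar x_i^\pm$, and these vanish by hypothesis. So $R_{\tau,1}^\ell(\g)\subset\ker\wt\varphi$.

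The main technical point is passing from the ideal to $\overline{[R_{\tau,1}^\ell(\g)]}$. The kernel of a $\C[[\hbar]]$-module homomorphism between topologically free modules is closed: $\wt\varphi$ is $\hbar$-adically continuous and $\bar V$ is separated. The kernel is also saturated, i.e.\ $[\ker\wt\varphi]=\ker\wt\varphi$, because $\bar V$ is torsion-free, so $\hbar^n\wt\varphi(v)=0$ forces $\wt\varphi(v)=0$. Moreover $\ker\wt\varphi$ is an ideal, so it contains the ideal generated by the elements above, hence contains $\overline{[R_{\tau,1}^\ell(\g)]}$, and $\wt\varphi$ descends to $\varphi$ on $V_{\hbar,\tau}^\ell(\g)$.

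Uniqueness follows because $V_{\hbar,\tau}^\ell(\g)$ is topologically generated by $h_i,x_i^\pm$; this is inherited from $V(\mathcal M_\vartheta)$ via the uniqueness clause of Proposition \ref{prop:universal}. For $\ell\in\Z_+$, the same closed-and-saturated argument applied to the generators \eqref{eq:def-integrable} shows that $\varphi$ factors through $L_{\hbar,\tau}^\ell(\g)$. I expect the only real care to be needed in matching $\mathcal M_{\hbar,\tau}^\ell(\g)$ to some $\mathcal M_\vartheta$, namely the polynomial prefactor in \eqref{tau3.5}. Everything else is formal.
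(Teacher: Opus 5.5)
Your proposal is correct and is the argument the paper compresses into ``an immediate consequence of Proposition~\ref{prop:universal}'': lift through the universal property of $V(\mathcal M_{\hbar,\tau}^\ell(\g))$, observe that the kernel is a closed, saturated ideal containing the generators of $R_{\tau,1}^\ell(\g)$ (and of $R_{\tau,2}^\ell(\g)$ for $L$), and obtain uniqueness by precomposing with the surjection. One small caveat: your explicit formula for $\vartheta_{ij}^{\epsilon_1,\epsilon_2}$ gives $S_\vartheta$ with the opposite overall sign to $S_\tau$ in Remark~\ref{rem:qyb}, but the argument never uses it, since, like the paper, you only need that $V(\mathcal M_{\hbar,\tau}^\ell(\g))$ has the universal property of Proposition~\ref{prop:universal}.
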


Next, we realize $V(\mathcal M_{\hbar,\tau}^\ell(\g))$ as deformations of $V(\mathcal M^\ell(\g))$ by using $\hbar$-adic vertex bialgebras.
Let $\tau\in \mathfrak T$.
From Section \ref{sec:construct}, we get a commutative and cocommutative $\hbar$-adic vertex bialgebra $H$ generated by $\partial^n\wh h_i$ and $\partial^n\wh x_i^\pm$ for $i\in I$, $n\in\N$,
with coproduct $\Delta$ and counit $\varepsilon$ determined by
\begin{align*}
  &\Delta(\partial^n \wh h_i)=\partial^n \wh h_i\ot 1+1\ot \partial^n \wh h_i,\quad \varepsilon(\partial^n \wh h_i)=0,\\
  &\Delta(\partial^n \wh x_i^{\pm})=\sum_{k=0}^n\binom{n}{k} \partial^k \wh x_i^{\pm}\ot \partial^{n-k} \wh x_i^{\pm},\quad
  \varepsilon(\partial^n \wh x_i^{\pm})=\delta_{n,0},
\end{align*}
and derivation $\partial$ determined by
\begin{align*}
  \partial (\partial^n \wh a_i)=\partial^{n+1}\wh a_i\quad\te{for }i\in I,\,n\in\N,\,a\in\{h,x^\pm\}.
\end{align*}
We also have a deforming triple $(H,\rho,\wh\tau)$ for $V(\mathcal M^\ell(\g))$ such that
$\rho$ is an $H$-comodule vertex algebra structure determined by
\begin{align}\label{eq:rho}
  \rho(h_i)=h_i\ot 1+\vac\ot \wh h_i,\quad
  \rho(x_i^\pm)=x_i^\pm\ot \wh x_i^\pm\quad\te{for }i\in I,
\end{align}
and $\wh\tau$ is an $H$-module vertex algebra structure determined by
\begin{align}
  &\wh\tau(\wh h_i,z)h_j=\vac\ot \left([r_ia_{ij}]_{q^{\pd z}}
  [r\ell]_{q^{\pd z}}q^{r\ell\pd z}z^{-2}
  -r_ia_{ij}r\ell z^{-2}+\tau_{ij}^{0,0}(z)\right),\\
  &\wh\tau(\wh h_i,z)x_j^\pm=\pm x_j^\pm\ot\left(
  [r_ia_{ij}]_{q^{\pd z}}q^{r\ell\pd z}z\inv
  -r_ia_{ij}z\inv+\tau_{ij}^{0,\bullet}(z)
  \right),\\
  &\wh\tau(\wh x_i^\pm,z)h_j=h_j\ot 1\mp\vac
  \ot\left(
  [r_ia_{ij}]_{q^{\pd z}}q^{r\ell\pd z}z\inv
  -r_ia_{ij}z\inv +\tau_{ij}^{\bullet,0}(z)
  \right),\\
  &\wh\tau(\wh x_i^\pm,z)x_j^\pm=x_j^\pm
  \ot z\inv (z-r_ia_{ij}\hbar)\tau_{ij}^{\bullet,\bullet}(z),\\
  &\wh\tau(\wh x_i^+,z)x_j^-=x_j^-\ot z^{-\delta_{ij}} (z+2r\ell\hbar)^{\delta_{ij}},\\
  &\wh\tau(\wh x_i^-,z)x_j^+=x_j^+\ot z^{-\delta_{ij}}
  (z-2r\ell\hbar)^{\delta_{ij}}
  \frac{(z-r_ja_{ji}\hbar)\tau_{ji}^{\bullet,\bullet}(-z) }{(z+r_ja_{ji}\hbar)\tau_{ij}^{\bullet,\bullet}(z)}.
\end{align}
Proposition \ref{prop:deform-iso} yields the following result, which was also proved in \cite[Proposition 5.12]{K-Quantum-aff-va}.

\begin{lem}\label{lem:deform-realization}
There is an $\hbar$-adic nonlocal vertex algebra isomorphism $V(\mathcal M_{\hbar,\tau}^\ell(\g))
\to \mathfrak D_{\wh\tau}^\rho(V(\mathcal M^\ell(\g)))$ determined by
\begin{align*}
  h_i\mapsto h_i,\quad x_i^\pm\mapsto x_i^\pm\quad\te{for }i\in I.
\end{align*}
\end{lem}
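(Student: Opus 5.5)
The plan is to reduce the statement to Proposition \ref{prop:deform-iso} by putting both categories in the framework of Section \ref{sec:construct}. Take $J^0=I$ indexing $h_i$, and $J^\pm=I$ indexing $x_i^\pm$. Then find $\vartheta,\bar\vartheta\in\mathfrak V$ with $\mathcal M^\ell(\g)=\mathcal M_\vartheta$ and $\mathcal M_{\hbar,\tau}^\ell(\g)=\mathcal M_{\vartheta\ast\bar\vartheta}$, chosen so that the deforming structure supplied by Proposition \ref{prop:deform-iso} for $\bar\vartheta$ is exactly the $\wh\tau$ listed above. Once this is arranged, Proposition \ref{prop:deform-iso} gives $\mathfrak D_{\wh\tau}^\rho(V(\mathcal M_\vartheta))\cong V(\mathcal M_{\vartheta\ast\bar\vartheta})$. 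The comodule structure $\rho$ there coincides with \eqref{eq:rho}, and the isomorphism sends generators to generators, which is the claim.

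First step: choose $\vartheta$. Set $\vartheta^{0,0}_{ij}(z)=r_ia_{ij}r\ell z^{-2}$, expanded so that $\vartheta_{ij}^{0,0}(z_1-z_2)-\vartheta_{ji}^{0,0}(z_2-z_1)$ reproduces $r_ia_{ij}r\ell\pd{z_2}z_1\inv\delta(z_2/z_1)$ via $\iota_{z_1,z_2}-\iota_{z_2,z_1}$. Similarly take $\vartheta^{0,\pm}_{ij}(z)=r_ia_{ij}z^{-1}$ and $\vartheta^{\pm,0}=0$, giving (L2). For the $\pm\pm$ entries take $\vartheta^{\pm,\pm}_{ij}(z)=z$, which gives (L4) through \eqref{eq:vartheta3}. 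Take $\vartheta^{+,-}_{ij}(z)=z^{2\delta_{ij}}$ and $\vartheta^{-,+}_{ji}=1$, which gives (L3.5).

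Second step: define $\bar\vartheta$ by reading off the formulas for $\wh\tau$. Take $\bar\vartheta^{0,0}_{ij}(z)=[r_ia_{ij}]_{q^{\pd z}}[r\ell]_{q^{\pd z}}q^{r\ell\pd z}z^{-2}-r_ia_{ij}r\ell z^{-2}+\tau^{0,0}_{ij}(z)$, and define $\bar\vartheta^{0,\pm}$ and $\bar\vartheta^{\pm,0}$ analogously. For the remaining entries, $\bar\vartheta^{\pm,\epsilon}_{ij}(z)$ is the inverse of the scalar factor in $\wh\tau(\wh x_i^\pm,z)x_j^\epsilon$. Then check that the sums and products $\vartheta\ast\bar\vartheta$ turn \eqref{eq:vartheta1}--\eqref{eq:vartheta3} into ($\tau$1), ($\tau$2), ($\tau$3.5), ($\tau$4). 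For instance, the $z^{-2}$ term of $\vartheta$ cancels the subtracted $r_ia_{ij}r\ell z^{-2}$. What remains is $[r_ia_{ij}]_{q^{\pd z}}[r\ell]_{q^{\pd z}}q^{r\ell\pd z}z^{-2}$, and antisymmetrizing it with $\iota$ expansions produces the $(q^{-r\ell\pd{z_2}}\iota_{z_1,z_2}-q^{r\ell\pd{z_2}}\iota_{z_2,z_1})(z_1-z_2)^{-2}$ term.

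The main obstacle is the mixed $x^+x^-$ relation. Here \eqref{eq:vartheta3} multiplies both sides by scalar series, so one must verify that the products $z^{2\delta_{ij}}\cdot z^{-\delta_{ij}}(z+2r\ell\hbar)^{\delta_{ij}}$ and the companion factor for $\wh\tau(\wh x_j^-,z)x_i^+$ recombine into exactly the form ($\tau$3.5). That form has $(z_1-z_2)^{\delta_{ij}}(z_1-z_2+2r\ell\hbar)^{\delta_{ij}}$ in front and the ratio involving $\tau^{\bullet,\bullet}$ and $z\mp r_ia_{ij}\hbar$ on the exchanged product. The check needs care with invertibility in $\C((z))[[\hbar]]$ and with the $\iota$ expansions. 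The identities defining $\mathfrak T$ should ensure the $\wh\tau$ data are consistent, for instance that $\wh\tau$ is a genuine $H$-module structure, but Proposition \ref{prop:deform-iso} already guarantees this for any $\bar\vartheta\in\mathfrak V$. So only the matching of relations remains, and I would do it case by case over $(s,t)$.
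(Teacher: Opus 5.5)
Your route is the paper's own. There the lemma is recorded as an immediate consequence of Proposition~\ref{prop:deform-iso} (cf.\ \cite[Proposition 5.12]{K-Quantum-aff-va}): one writes $\mathcal M^\ell(\g)=\mathcal M_\vartheta$ and $\mathcal M^\ell_{\hbar,\tau}(\g)=\mathcal M_{\vartheta\ast\bar\vartheta}$, with $\bar\vartheta$ read off from $\wh\tau$. However, the $\vartheta$ you write down does not realize $\mathcal M^\ell(\g)$, so as stated the reduction fails at its first step.

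(1) In \eqref{eq:vartheta2} the two halves of the delta function come from two different entries. So $\vartheta^{\pm,0}=0$ only gives $[h_i(z_1),x_j^\pm(z_2)]=\pm r_ia_{ij}x_j^\pm(z_2)\,\iota_{z_1,z_2}(z_1-z_2)^{-1}$, which is not \eqref{L2}. You need $\vartheta^{\pm,0}_{ij}(z)=r_ia_{ij}z^{-1}$, using $r_ia_{ij}=r_ja_{ji}$. This is also exactly what cancels the $-r_ia_{ij}z^{-1}$ in $\bar\vartheta^{\pm,0}_{ij}$ read off from $\wh\tau(\wh x_i^\pm,z)h_j$. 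With your choice that term survives in $\vartheta\ast\bar\vartheta$, and \eqref{tau2} acquires a spurious term $\mp r_ia_{ij}x_j^\pm(z_2)\,\iota_{z_2,z_1}(z_2-z_1)^{-1}$.

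(2) With $\vartheta^{+,-}_{ii}=z^2$ and $\vartheta^{-,+}_{ii}=1$, \eqref{eq:vartheta3} reads $(z_1-z_2)^2x_i^+(z_1)x_i^-(z_2)=x_i^-(z_2)x_i^+(z_1)$, which is not \eqref{L3.5}. The $(-,+)$ entry must carry the same factor $z^{2\delta_{ij}}$. In your main check, the factor from $\wh\tau(\wh x_j^-,z)x_i^+$ then has to supply the matching $(z_1-z_2)^{\delta_{ij}}(z_1-z_2+2r\ell\hbar)^{\delta_{ij}}$ on the right-hand side.

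The matching you defer is also where the real content sits, and the difficulty is conventions rather than invertibility. First, the sign. Read literally, \eqref{eq:vartheta3} with $\vartheta^{\pm,\pm}_{ij}=z$ gives $(z_1-z_2)\bigl(x_i^\pm(z_1)x_j^\pm(z_2)+x_j^\pm(z_2)x_i^\pm(z_1)\bigr)=0$, which is an anticommutator. Likewise $\vartheta^{\pm,\pm}_{ij}(z)=(z-r_ia_{ij}\hbar)\tau^{\bullet,\bullet}_{ij}(z)$ reproduces \eqref{tau4} only up to an overall sign. So you must fix a sign convention in \eqref{eq:vartheta3} (and in $S_\vartheta$) and use it for all $(\epsilon_1,\epsilon_2)$.

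Second, your bookkeeping for $\bar\vartheta$ is inconsistent. You define $\bar\vartheta^{\pm,\epsilon}$ as the inverse of the factor in $\wh\tau(\wh x_i^\pm,z)x_j^\epsilon$, as Proposition~\ref{prop:deform-iso} dictates. But in your $x^+x^-$ computation you multiply $\vartheta^{+,-}$ by the factor itself. Only the latter gives the right shapes:
\begin{itemize}
\item $z^{2\delta_{ij}}\cdot z^{-\delta_{ij}}(z+2r\ell\hbar)^{\delta_{ij}}$ for \eqref{tau3.5};
\item $z\cdot z^{-1}(z-r_ia_{ij}\hbar)\tau^{\bullet,\bullet}_{ij}(z)$ for \eqref{tau4}.
\end{itemize}
With the inverses you get $z^{3\delta_{ij}}(z+2r\ell\hbar)^{-\delta_{ij}}$, and, up to sign, the inverse of the exchange factor of \eqref{tau4}.

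So the displayed $\wh\tau$ and the inverse convention of Proposition~\ref{prop:deform-iso} do not fit together as printed. Moreover, for the $(-,+)$ entry the $(z\mp r_ja_{ji}\hbar)$-ratio and the $\tau^{\bullet,\bullet}$-ratio cannot both be right under either reading. A proof that does the check ``case by case'' has to settle the convention and recompute that entry so that it reproduces the exchange factor in \eqref{tau3.5}. With $\vartheta$ and $\bar\vartheta$ fixed in this way, your argument is exactly the paper's.
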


In the rest of this section, we realize quantum affine vertex algebras as deformations of each other by using vertex bialgebras.
Let $\tau,\bar\tau\in\mathfrak T$.
It is easy to check that \eqref{eq:rho} also define an  $H$-comodule $\hbar$-adic nonlocal vertex algebra structure $\rho$ on $V(\mathcal M_{\hbar,\tau}^\ell(\g))$.
From Section \ref{sec:construct}, we also have an $H$-module $\hbar$-adic nonlocal vertex algebra structure
$\bar\tau$ on $V(\mathcal M_{\hbar,\tau}^\ell(\g))$ making $(H,\rho,\bar\tau)$ a deforming triple for $V(\mathcal M_{\hbar,\tau}^\ell(\g))$ such that
$\bar\tau$ is determined by
\begin{align}
  &\bar\tau(\wh h_i,z)h_j=\vac\ot \bar\tau_{ij}^{0,0}(z),\quad
  \bar\tau(\wh h_i,z)x_j^\pm=\pm x_j^\pm\ot \bar\tau_{ij}^{0,\bullet}(z),\label{eq:sigma-0}\\
  &\bar\tau(\wh x_i^\pm,z)h_j=h_j\ot 1\mp\vac\ot \bar\tau_{ij}^{\bullet,0}(z),\quad
  \bar\tau(\wh x_i^\pm z)x_j^\epsilon=x_j^\epsilon\ot \bar\tau_{ij}^{\bullet,\bullet}(z)^{\mp\epsilon 1}.\label{eq:sigma-a}
\end{align}
Moreover, by using Proposition \ref{prop:deform-iso} again, we get the following $\hbar$-adic nonlocal vertex algebra isomorphism
\begin{align*}
  V(\mathcal M_{\hbar,\tau\ast\bar\tau}^\ell(\g))\to\mathfrak D_{\bar\tau}^\rho(V(\mathcal M_{\hbar,\tau}^\ell(\g)))
\end{align*}
determined by
\begin{align*}
  h_i\mapsto h_i,\quad x_i^+\mapsto \bar\tau_{ii}^{\bullet,\bullet}(0)\inv x_i^+,\quad
  x_i^-\mapsto x_i^-.
\end{align*}

\begin{lem}\label{lem:V-h-tau-deform-triple}
Let $X=V$ or $L$ and let $\tau,\bar\tau\in \mathfrak T$.
Define $\bar H$ to be the quotient algebra of $H$ modulo $\overline{[R]}$, where $R$ is the ideal generated by
\begin{align*}
  \partial^n\left(\wh x_i^+\wh x_i^--1\right),\quad
  \partial^n\left((q^{-2r\ell\partial}\wh x_i^+)\wh x_i^-
  -\exp\left(-q^{-r\ell\partial}F(\partial)\wh h_i\right)\right).
\end{align*}
Then the deforming triple $(H,\rho,\tau)$ of $V(\mathcal M_{\hbar,\tau}^\ell(\g))$ induces a deforming triple
$(\bar H,\rho,\bar\tau)$ of $X_{\hbar,\tau}^\ell(\g)$.
\end{lem}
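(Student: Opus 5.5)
Throughout, $\bar\tau$ and $\rho$ are the structures on $V(\mathcal M_{\hbar,\tau}^\ell(\g))$ given by \eqref{eq:rho} and \eqref{eq:sigma-0}--\eqref{eq:sigma-a}, and $(H,\rho,\bar\tau)$ is the corresponding deforming triple. The plan has two parts: check that the ideal $\overline{[R]}$ of $H$ acts trivially through $\bar\tau$, and check that $\rho$ and $\bar\tau$ preserve the ideals $\overline{[R_{\tau,1}^\ell(\g)]}$ and, for $X=L$, $\overline{[R_{\tau,2}^\ell(\g)]}$. Once both hold, the triple descends to $(\bar H,\rho,\bar\tau)$ on $X_{\hbar,\tau}^\ell(\g)$, and the compatibility \eqref{eq:rho-tau-compatible} is inherited from the quotient maps.

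\emph{Step 1: $\bar H$ is a vertex bialgebra quotient.} $R$ is generated by $\partial$-stable families, and $\wh x_i^\pm$ are grouplike in the sense $\Delta(e^{z\partial}\wh x_i^\pm)=e^{z\partial}\wh x_i^\pm\ot e^{z\partial}\wh x_i^\pm$. Hence $\wh x_i^+\wh x_i^- -1$ and $(q^{-2r\ell\partial}\wh x_i^+)\wh x_i^- -\exp(-q^{-r\ell\partial}F(\partial)\wh h_i)$ are differences of grouplike elements (the exponential of a primitive element is grouplike). For grouplike $g,g'$ one has $\Delta(g-g')=(g-g')\ot g+g'\ot(g-g')$ and $\varepsilon(g-g')=0$. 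So $\overline{[R]}$ is a $\partial$-stable coideal with $\varepsilon=0$ on it, and $\bar H$ inherits the structure of Remark \ref{rem:bialg-der}.

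\emph{Step 2: $\rho$ descends.} I compute $\rho$ on the generators of $R_{\tau,1}^\ell(\g)$, using that $\rho$ is a homomorphism. For instance,
\begin{align*}
\rho\big((x_i^+)_0x_i^-\big)=\mathrm{Res}_z\,Y(x_i^+,z)x_i^-\ot e^{z\partial}\wh x_i^+\,\wh x_i^-,
\end{align*}
and $\rho(E_\tau(h_i))=E_\tau(h_i)\ot\exp(-q^{-r\ell\partial}F(\partial)\wh h_i)$ since $\rho(h_i)$ is primitive. Expanding $e^{z\partial}\wh x_i^+$ and using the $(z-2r\ell\hbar)$-pole structure coming from \eqref{tau3}, the images of \eqref{eq:x+0x-} and \eqref{eq:x+1x-} land in $\overline{[R_{\tau,1}]}\wh\ot H+V\wh\ot\overline{[R]}$. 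This is exactly why the two relations defining $R$ are imposed: one handles the $\delta(z_2/z_1)$ term and the other the $\delta((z_2-2r\ell\hbar)/z_1)$ term. The Serre elements \eqref{eq:serre} and \eqref{eq:def-integrable} are mapped to themselves tensored with monomials in the $\wh x$'s.

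\emph{Step 3: $\bar\tau$ descends.} Here I verify two things. First, $\bar\tau(h,z)$ preserves $\overline{[R_{\tau,1}]}$ (and $\overline{[R_{\tau,2}]}$), using \eqref{eq:mod-va-for-vertex-bialg3} and \eqref{eq:sigma-0}--\eqref{eq:sigma-a} to compute $\bar\tau(\wh a,z)$ on each generator; the generator comes back multiplied by a scalar series, up to terms already in the ideal. Second, $\bar\tau(r,z)$ vanishes on $X_{\hbar,\tau}^\ell(\g)$ for $r\in R$: on generators $x_j^\epsilon$ one checks $\bar\tau_{ij}^{\bullet,\bullet}(z)^{-1}\bar\tau_{ij}^{\bullet,\bullet}(z)=1$, and on $q^{-2r\ell\partial}$-shifted products the defining identities of $\mathfrak T$ give $\bar\tau^{\bullet,\bullet}(z-r\ell\hbar)\bar\tau^{\bullet,\bullet}(z+r\ell\hbar)^{-1}=\exp(F(\partial_z)\bar\tau^{\bullet,0}(z))$. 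Since $\bar\tau$ is a module action and $X$ is generated by $h_j,x_j^\pm$, vanishing on generators suffices.

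I expect the main obstacle to be the $x_i^\pm$-matching in Step 3: keeping track of the shift $q^{-2r\ell\partial}$ against $z\mapsto z-2r\ell\hbar$, the $\bar\tau(\wh h_i)$ contribution on $h_j$, and the normalization factor $\tau_{ii}^{\bullet,\bullet}(\mp2r\ell\hbar)^{1/2}$ in $E_\tau$. I would do this computation first, on $x_j^\pm$ and $h_j$ separately, using the $\mathfrak T$ identities.
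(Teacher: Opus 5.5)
Your proposal is correct and follows the paper's proof. In both, $\bar H$ is shown to be a quotient $\hbar$-adic vertex bialgebra, $\rho$ descends, $\bar\tau$ preserves the defining ideal of $X_{\hbar,\tau}^\ell(\g)$, and $\bar\tau$ kills $R$ via $\bar\tau(\wh x_i^+\wh x_i^-,z)=\bar\tau(\wh x_i^+,z)\bar\tau(\wh x_i^-,z)$, $\bar\tau((q^{-2r\ell\partial}\wh x_i^+)\wh x_i^-,z)=\bar\tau(\wh x_i^+,z-2r\ell\hbar)\bar\tau(\wh x_i^-,z)$ and the $\mathfrak T$ identities.

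The one ingredient the paper states explicitly, which your ``compute on each generator'' leaves implicit, is the eigenvector property
$\bar\tau(\wh h_i,z)E_\tau(h_j)=-E_\tau(h_j)\ot q^{r\ell\pd z}F(\pd z)\bar\tau_{ij}^{0,0}(z)$ and
$\bar\tau(\wh x_i^\pm,z)E_\tau(h_j)=E_\tau(h_j)\ot\exp\big(\pm q^{r\ell\pd z}F(\pd z)\bar\tau_{ij}^{\bullet,0}(z)\big)$.
It is obtained from the action on $\wt h_j=-q^{-r\ell\partial}F(\partial)h_j$. Also, the second pole of $Y(x_i^+,z)x_i^-$ is at $z=-2r\ell\hbar$ (factor $z+2r\ell\hbar$, not $z-2r\ell\hbar$), and this sign is what produces $q^{-2r\ell\partial}\wh x_i^+$.
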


\begin{proof}
It is straightforward to verify that $\bar H$ is a quotient $\hbar$-adic vertex bialgebra of $H$, and that $\rho$ induces an $\bar H$-comodule $\hbar$-adic nonlocal vertex algebra structure on $X_{\hbar,\tau}^\ell(\g)$. Set
\begin{align}\label{eq:def-wt-h}
\wt h_i=-q^{-r\ell\partial}F(\partial)h_i.
\end{align}
Then one can straightforwardly verify that
\begin{align*}
&\bar\tau(\wh h_i,z)\wt h_j
=-\vac\ot q^{r\ell\pd z}F\left(\pd z\right)\bar\tau_{ij}^{0,0}(z),\\
&\bar\tau(\wh x_i^\pm,z)\wt h_j=
\wt h_j\ot 1\pm \vac\ot q^{r\ell\pd z}F\left(\pd z\right)\bar\tau_{ij}^{\bullet,0}(z).
\end{align*}
It follows that
\begin{align*}
&\bar\tau(\wh h_i,z)E_\tau(h_j)=-E_\tau(h_j)\ot q^{r\ell\pd z}F\left(\pd z\right)\bar\tau_{ij}^{0,0}(z),\\
&\bar\tau(\wh x_i^\pm,z)E_\tau(h_j)=E_\tau(h_j)\ot \exp\left(\pm q^{r\ell\pd z}F\left(\pd z\right)\bar\tau_{ij}^{\bullet,0}(z)\right).
\end{align*}
Using these relations, one can straightforwardly verify that $\bar\tau(\cdot,z)$ induces an $H$-module nonlocal vertex algebra structure on $X_{\hbar,\tau}^\ell(\g)$.
Since $H$ is commutative, we have
\begin{align*}
&\bar\tau(\wh x_i^+\wh x_i^-,z)=\bar\tau(\wh x_i^+,z)\bar\tau(\wh x_i^-,z),\quad
 \bar\tau((q^{-2r\ell\partial}\wh x_i^+)\wh x_i^-,z)
=\bar\tau(\wh x_i^+,z-2r\ell\hbar)
\bar\tau(\wh x_i^-,z).
\end{align*}
Using these, one can straightforwardly verify that $\bar\tau(\cdot,z)$ factors through $\bar H$, as desired.
\end{proof}

\begin{prop}\label{prop:V-h-tau-deform}
Let $X=V$ or $L$.
For $\tau,\bar\tau\in\mathfrak T$, there are $\hbar$-adic quantum vertex algebras isomorphisms
\begin{align*}
  X_{\hbar,\tau\ast\bar\tau}^\ell(\g)\cong
  \mathfrak D_{\bar\tau}^\rho(X_{\hbar,\tau}^\ell(\g)).
\end{align*}
defined by
\begin{align*}
  h_i\mapsto h_i,\quad x_i^+\mapsto \bar\tau_{ii}^{\bullet,\bullet}(0)\inv x_i^+,\quad
  x_i^-\mapsto x_i^-\quad\te{for }i\in I.
\end{align*}
\end{prop}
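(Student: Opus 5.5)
We start from the isomorphism already obtained at the level of the universal objects, namely the map
\[
\Phi: V(\mathcal M_{\hbar,\tau\ast\bar\tau}^\ell(\g))\to\mathfrak D_{\bar\tau}^\rho(V(\mathcal M_{\hbar,\tau}^\ell(\g)))
\]
given by Proposition \ref{prop:deform-iso}. The aim is to show that $\Phi$ descends to the quotients. By Lemma \ref{lem:V-h-tau-deform-triple}, $(\bar H,\rho,\bar\tau)$ is a deforming triple of $X_{\hbar,\tau}^\ell(\g)$, so $\mathfrak D_{\bar\tau}^\rho(X_{\hbar,\tau}^\ell(\g))$ is defined. Deformation commutes with quotients by $\rho$- and $\bar\tau$-stable ideals, so $\mathfrak D_{\bar\tau}^\rho(X_{\hbar,\tau}^\ell(\g))$ is the quotient of $\mathfrak D_{\bar\tau}^\rho(V(\mathcal M_{\hbar,\tau}^\ell(\g)))$ by the image of $\overline{[R_{\tau,1}^\ell(\g)]}$ (and of $\overline{[R_{\tau,2}^\ell(\g)]}$ when $X=L$). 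Indeed, $\mathfrak D$ does not change the underlying module, and an ideal stable under $\rho$ and $\bar\tau$ remains an ideal for the deformed vertex operator $Y(z)T_{\bar\tau}^{21}(-z)$.

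To get a map $X_{\hbar,\tau\ast\bar\tau}^\ell(\g)\to\mathfrak D_{\bar\tau}^\rho(X_{\hbar,\tau}^\ell(\g))$, we apply Proposition \ref{prop:universal-qaff} with $\bar h_i=h_i$, $\bar x_i^+=\bar\tau_{ii}^{\bullet,\bullet}(0)\inv x_i^+$ and $\bar x_i^-=x_i^-$. The fields already satisfy the relations of $\mathcal M_{\hbar,\tau\ast\bar\tau}^\ell(\g)$, via $\Phi$. It remains to check \eqref{eq:x+0x-}, \eqref{eq:x+1x-}, \eqref{eq:serre} and, for $L$, \eqref{eq:def-integrable} for the deformed vertex operator $Y^{\bar\tau}$. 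We compute these using
\[
Y^{\bar\tau}(u,z)v=\sum Y(u,z)\bar\tau(v_{(2)},-z)\ldots,
\]
that is, by expanding $T_{\bar\tau}$. This uses $\rho(x_j^\mp)=x_j^\mp\ot\wh x_j^\mp$ and the explicit actions \eqref{eq:sigma-0}--\eqref{eq:sigma-a}.

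For the Serre and integrability relations, $\bar\tau(\wh x_j^\pm,z)x_i^\pm$ is $x_i^\pm$ times an invertible power series in $z$ whose constant term is nonzero. So $Y^{\bar\tau}(x_i^\pm,z)x_j^\pm=Y(x_i^\pm,z)x_j^\pm\cdot g(z)$ for a unit $g(z)\in\C[[z,\hbar]]$. Iterating this, and using that $\rho$ is multiplicative, we get that $(x_i^\pm)^{Y^{\bar\tau}}_0{}^{m_{ij}}x_j^\pm$ lies in the ideal generated by the coefficients $(x_i^\pm)_{n}^{m_{ij}}x_j^\pm$ with $n\ge0$. These coefficients in turn lie in $\overline{[R_{\tau,1}^\ell(\g)]}$, by the standard argument already implicit in the definition: the $\tau4$ relation gives the $S$-locality, which forces the higher modes to vanish as well. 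The integrability relation is handled similarly, with modes $-1$. The same reasoning applies to the inverse direction, using $\bar\tau\inv$, which is also in $\mathfrak T$, and Proposition \ref{prop:deform-mult} together with Remark \ref{rem:trivial-deform}. This gives $\mathfrak D_{\bar\tau\inv}^\rho\mathfrak D_{\bar\tau}^\rho=\te{id}$, so we obtain mutually inverse maps.

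The main obstacle is relations \eqref{eq:x+0x-} and \eqref{eq:x+1x-}. Here
\[
Y^{\bar\tau}(x_i^+,z)x_i^-=Y(x_i^+,z)x_i^-\cdot\bar\tau_{ii}^{\bullet,\bullet}(z)^{-1}\ \text{(up to the sign convention in \eqref{eq:sigma-a})}.
\]
Extracting the $0$- and $1$-modes then mixes in the Taylor coefficients of $\bar\tau_{ii}^{\bullet,\bullet}$. The cleanest route is to use the equivalent relation \eqref{tau3} from Remark \ref{rem:x+x-}. After deformation, the delta function at $z_2=z_1$ picks up $\bar\tau_{ii}^{\bullet,\bullet}(0)$, which explains the rescaling of $x_i^+$. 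The delta function at $z_2-2r\ell\hbar$ picks up $\bar\tau_{ii}^{\bullet,\bullet}(2r\ell\hbar)$, combined with the change of $E_\tau(h_i)$ into $E_{\tau\ast\bar\tau}(h_i)$ under the deformation. That change is computed from the formula for $\bar\tau(\wh x_i^\pm,z)E_\tau(h_j)$ in the proof of Lemma \ref{lem:V-h-tau-deform-triple}, together with the defining identity
\[
\bar\tau_{ii}^{\bullet,\bullet}(z-r\ell\hbar)\,\bar\tau_{ii}^{\bullet,\bullet}(z+r\ell\hbar)\inv=\exp\!\left(F\left(\pd z\right)\bar\tau_{ii}^{\bullet,0}(z)\right)
\]
of $\mathfrak T$. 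We check that the square-root prefactor in \eqref{eq:def-E} matches exactly: $\tau\mapsto\tau\ast\bar\tau$ multiplies it by $\bar\tau_{ii}(-2r\ell\hbar)^{1/2}/\bar\tau_{ii}(2r\ell\hbar)^{1/2}$, and combined with the rescaling by $\bar\tau_{ii}(0)\inv$ this should give the correct normalization. Finally, the quantum Yang-Baxter operators correspond by Theorem \ref{thm:deform-qva}, since $S_\tau$ commutes with $\rho$ and $\bar\tau$ on generators. Hence the isomorphism is one of $\hbar$-adic quantum vertex algebras.
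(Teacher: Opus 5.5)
Your architecture matches the paper's: universal property of $V(\mathcal M_{\hbar,\tau\ast\bar\tau}^\ell(\g))$, then kill the generators of the defining ideal, then invert using $\bar\tau\inv$. However, the step that carries the real content, relations \eqref{eq:x+0x-} and \eqref{eq:x+1x-}, is left at ``should give the correct normalization'', and the bookkeeping you sketch does not close.

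The deformed vertex operator is $\mathfrak D_{\bar\tau}^\rho(Y)(u,z)v=\sum Y(u_{(1)},z)\bar\tau(u_{(2)},z)v$. So $\rho(x_i^+)=x_i^+\ot\wh x_i^+$ and \eqref{eq:sigma-a} give $\mathfrak D_{\bar\tau}^\rho(Y)(x_i^+,z)x_i^-=Y(x_i^+,z)x_i^-\,\bar\tau_{ii}^{\bullet,\bullet}(z)$, not the inverse; with the inverse, the rescaling of $x_i^+$ would have to be by $\bar\tau_{ii}^{\bullet,\bullet}(0)$, contradicting the statement. Also, the second pole of $Y(x_i^+,z)x_i^-$ sits at $z=-2r\ell\hbar$, so it picks up $\bar\tau_{ii}^{\bullet,\bullet}(-2r\ell\hbar)$, not $\bar\tau_{ii}^{\bullet,\bullet}(2r\ell\hbar)$. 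Hence what must be shown is
\begin{align*}
f\big(E_{\tau\ast\bar\tau}(h_i)\big)=\frac{\bar\tau_{ii}^{\bullet,\bullet}(-2r\ell\hbar)}{\bar\tau_{ii}^{\bullet,\bullet}(0)}\,E_\tau(h_i).
\end{align*}
The change of the square-root prefactor in \eqref{eq:def-E} supplies only $\bar\tau_{ii}^{\bullet,\bullet}(-2r\ell\hbar)^{1/2}/\bar\tau_{ii}^{\bullet,\bullet}(2r\ell\hbar)^{1/2}$. The remaining factor $\bar\tau_{ii}^{\bullet,\bullet}(-2r\ell\hbar)^{1/2}\bar\tau_{ii}^{\bullet,\bullet}(2r\ell\hbar)^{1/2}/\bar\tau_{ii}^{\bullet,\bullet}(0)$ is not $1$ in general, and nothing in your sketch produces it.

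It comes from the deformation of the exponential itself. Put $\wt h_i=-q^{-r\ell\partial}F(\partial)h_i$ and $\wh{\wt h}_i=-q^{-r\ell\partial}F(\partial)\wh h_i\in H$. Then $\rho(\wt h_i)=\wt h_i\ot 1+\vac\ot\wh{\wt h}_i$, so the deformed $(-1)$-mode of $\wt h_i$ is $(\wt h_i)_{-1}+\Res_zz\inv\bar\tau(\wh{\wt h}_i,z)$, and these two summands do not commute. Since $\bar\tau(\wh{\wt h}_i,z)\wt h_i=\vac\ot F(\pd z)^2\bar\tau_{ii}^{0,0}(z)$, the identities defining $\mathfrak T$ make their commutator the scalar $\log\big(\bar\tau_{ii}^{\bullet,\bullet}(-2r\ell\hbar)\bar\tau_{ii}^{\bullet,\bullet}(2r\ell\hbar)/\bar\tau_{ii}^{\bullet,\bullet}(0)^2\big)$. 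Baker--Campbell--Hausdorff, together with $\bar\tau(\wh{\wt h}_i,z)\vac=0$, then yields exactly the missing square root. The formula for $\bar\tau(\wh x_i^\pm,z)E_\tau(h_j)$ that you cite describes the $\bar H$-action on $E_\tau(h_j)$; on its own it does not compute $f(E_{\tau\ast\bar\tau}(h_i))$.

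A lesser point concerns \eqref{eq:serre} and \eqref{eq:def-integrable}. You assert that all mixed modes $(x_i^\pm)_{n_1}\cdots(x_i^\pm)_{n_m}x_j^\pm$ with $n_a\ge 0$ (resp.\ $n_a\ge -1$) lie in the relevant ideal ``by the standard argument implicit in the definition''. Nothing in the definition gives this: by \eqref{tau4}, $Y(x_i^\pm,z)x_j^\pm$ generally has, $\hbar$-adically, infinitely many singular terms. The claim needs a real argument. One route is to write the iterated product as a rational expression and use the twisted antisymmetry coming from \eqref{tau4} to show that only the residue along one string of points survives; this is the normal-ordered characterization of the Serre relation, as in \cite[Theorem 4.13]{KL-YD-1}. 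With that characterization in hand, the invertible factors contributed by $\bar\tau$ are harmless.

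Your inverse via $\bar\tau\inv$ and Proposition \ref{prop:deform-mult} is a reasonable substitute for the paper's appeal to \cite[Proposition 4.15]{K-qaffva-rtu}. You do need to check that the map $X_{\hbar,\tau}^\ell(\g)\to\mathfrak D_{\bar\tau\inv}^\rho(X_{\hbar,\tau\ast\bar\tau}^\ell(\g))$ intertwines $\rho$ and the $\bar\tau$-actions, so that $\mathfrak D_{\bar\tau}^\rho$ can legitimately be applied to it.
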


\begin{proof}
Since $\bar\tau_{ij}^{a,b}(z)\in\C[[z,\hbar]]$ for $a,b\in\{0,\bullet\}$ and
$\bar\tau_{ij}^{\bullet,\bullet}(z)$ is invertible,
it is straightforward to verify that
$\mathfrak D_{\bar\tau}^\rho(X_{\hbar,\tau}^\ell(\g))$
equipped with fields
\begin{align*}
  \mathfrak D_{\bar\tau}^\rho(Y)(h_i,z),\quad
  \bar\tau_{ii}^{\bullet,\bullet}(0)\inv
  \mathfrak D_{\bar\tau}^\rho(Y)(x_i^+,z),\quad
  \mathfrak D_{\bar\tau}^\rho(Y)(x_i^-,z)
\end{align*}
becomes an object in $\mathcal M_{\hbar,\tau\ast\bar\tau}^\ell(\g)$.
By Proposition \ref{prop:universal}, we get an $\hbar$-adic nonlocal vertex algebra homomorphism
\begin{align*}
  f_{\tau,\bar\tau}:V(\mathcal M_{\hbar,\tau\ast\bar\tau}^\ell(\g))\to \mathfrak D_{\bar\tau}^\rho(X_{\hbar,\tau}^\ell(\g))
\end{align*}
determined by
\begin{align*}
  f_{\tau,\bar\tau}(h_i)=h_i,\quad
  f_{\tau,\bar\tau}(x_i^+)=\bar\tau_{ii}^{\bullet,\bullet}(0)\inv x_i^+,\quad
  f_{\tau,\bar\tau}(x_i^-)=x_i^-.
\end{align*}
Recall the definition of $\wt h_i$ from \eqref{eq:def-wt-h}.
From Proposition \ref{prop:deform}, we have that
\begin{align*}
  \mathfrak D_{\bar\tau}^\rho(Y)(\wt h_i,z)
  =Y(\wt h_i,z)+\bar\tau(\wt h_i,z).
\end{align*}
Note that
\begin{align*}
  &\bar\tau(\wt h_i,z)\wt h_i
  \\
  =&q^{-r\ell\pd z}F\left(\pd z\right)
  q^{-r\ell\partial+r\ell\pd z}F\left(\partial-\pd z\right)
  \bar\tau(h_i,z)h_i\\
  =&q^{-r\ell\pd z}F\left(\pd z\right)
  q^{-r\ell\partial+r\ell\pd z}F\left(\partial-\pd z\right)\vac\ot \bar\tau_{ii}^{0,0}(z)\\
  =&\vac\ot F\left(\pd z\right)^2\bar\tau_{ii}^{0,0}(z)
  =\vac\ot F\left(\pd z\right)(\bar\tau_{ii}^{0,\bullet}(z-r\ell\hbar)
  -\bar\tau_{ii}^{0,\bullet}(z+r\ell\hbar))\\
  =&\vac\ot \log \frac{\bar\tau_{ii}^{\bullet,\bullet}(z-2r\ell\hbar)
  \bar\tau_{ii}^{\bullet,\bullet}(z+2r\ell\hbar)}
  {\bar\tau_{ii}^{\bullet,\bullet}(z)^2}.
\end{align*}
Then
\begin{align*}
  &\Res_{z_1,z_2}z_1\inv z_2\inv
  [\bar\tau(\wt h_i,z_1),Y(\wt h_i,z_2)]\\
  =&\Res_{z_1,z_2}z_1\inv z_2\inv
  Y(\bar\tau(\wt h_i,z_1-z_2)\wt h_i,z_2)\\
  =&\Res_{z_1,z_2}z_1\inv z_2\inv
  \log \frac{\bar\tau_{ii}^{\bullet,\bullet}(z_1-z_2-2r\ell\hbar)
  \bar\tau_{ii}^{\bullet,\bullet}(z_1-z_2+2r\ell\hbar)}
  {\bar\tau_{ii}^{\bullet,\bullet}(z_1-z_2)^2}\\
  =&\log \frac{\bar\tau_{ii}^{\bullet,\bullet}(-2r\ell\hbar)
  \bar\tau_{ii}^{\bullet,\bullet}(2r\ell\hbar)}
  {\bar\tau_{ii}^{\bullet,\bullet}(0)^2}.
\end{align*}
By using Baker–Campbell–Hausdorff formula, we have that
\begin{align*}
  &\exp\left(\Res_zz\inv \mathfrak D_{\bar\tau}^\rho(Y)(\wt h_i,z)\right)\\
  =&\exp\left(\Res_zz\inv Y(\wt h_i,z)\right)
  \exp\left(\Res_zz\inv \bar\tau(\wt h_i,z)\right)
  \frac{\bar\tau_{ii}^{\bullet,\bullet}(-2r\ell\hbar)^\half
  \bar\tau_{ii}^{\bullet,\bullet}(2r\ell\hbar)^\half}
  {\bar\tau_{ii}^{\bullet,\bullet}(0)}.
\end{align*}
Then we have
\begin{align*}
  &f_{\tau,\bar\tau}(E_{\tau\ast\bar\tau}(h_i))\\
  =&
  \frac{\tau_{ii}^{\bullet,\bullet}(-2r\ell\hbar)^\half}
  {\tau_{ii}^{\bullet,\bullet}(2r\ell\hbar)^\half}
  \frac{\bar\tau_{ii}^{\bullet,\bullet}(-2r\ell\hbar)^\half}
  {\bar\tau_{ii}^{\bullet,\bullet}(2r\ell\hbar)^\half}
  \exp\left(\Res_zz\inv \mathfrak D_{\bar\tau}^\rho(Y)(\wt h_i,z)\right)\vac\\
  =&
  \frac{\bar\tau_{ii}^{\bullet,\bullet}(-2r\ell\hbar)}
  {\bar\tau_{ii}^{\bullet,\bullet}(0)}
  \frac{\tau_{ii}^{\bullet,\bullet}(-2r\ell\hbar)^\half}
  {\tau_{ii}^{\bullet,\bullet}(2r\ell\hbar)^\half}
  \exp\left(\Res_zz\inv Y(\wt h_i,z)\right)\vac\\
  =&\frac{\bar\tau_{ii}^{\bullet,\bullet}(-2r\ell\hbar)}
  {\bar\tau_{ii}^{\bullet,\bullet}(0)}
  E_\tau(h_i).
\end{align*}
It follows that
\begin{align*}
  &f_{\tau,\bar\tau}(\Sing_zY(x_i^+,z)x_i^-)\\
  =&\frac{1}{2r_i\hbar}\vac z\inv
  -\frac{1}{2r_i\hbar} f_{\tau,\bar\tau}(E_{\tau\ast\bar\tau}(h_i))(z+2r\ell\hbar)\inv\\
  =&\frac{1}{2r_i\hbar}\vac z\inv
  -\frac{1}{2r_i\hbar}
  \frac{\bar\tau_{ii}^{\bullet,\bullet}(-2r\ell\hbar)}
  {\bar\tau_{ii}^{\bullet,\bullet}(0)}
  E_\tau(h_i)(z+2r\ell\hbar)\inv,
\end{align*}
and
\begin{align*}
  &\Sing_z \mathfrak D_{\bar\tau}^\rho(Y)(f_{\tau,\bar\tau}(x_i^+),z)
  f_{\tau,\bar\tau}(x_i^-)\\
  =&\bar\tau_{ii}(0)\inv\Sing_z Y(x_i^+,z)\bar\tau(\wh x_i^+,z)x_i^-\\
  =&\frac{1}{2r_i\hbar\bar\tau_{ii}(0)}\Sing_z
  \left(
    \vac \bar\tau_{ii}^{\bullet,\bullet}(z) z\inv
    -E_\tau(h_i) \bar\tau_{ii}^{\bullet,\bullet}(z)(z+2r\ell\hbar)\inv
  \right)\\
  =&\frac{1}{2r_i\hbar}\vac z\inv
  -\frac{1}{2r_i\hbar}\frac{\bar\tau_{ii}^{\bullet,\bullet}(-2r\ell\hbar)}
  {\bar\tau_{ii}^{\bullet,\bullet}(0)}
  E_\tau(h_i)(z+2r\ell\hbar)\inv.
\end{align*}
These two equations yield that the images of the elements \eqref{eq:x+0x-} and \eqref{eq:x+1x-} under $f_{\tau,\bar\tau}$ are zero.
By using the fact that $\tau_{ij}^{\bullet,\bullet}(z)\in\C[[z]]^\times$, one can straightforwardly verify that the images of the elements \eqref{eq:serre} (and \eqref{eq:def-integrable} if $X=L$) under
$f_{\tau,\bar\tau}$ are also zero.
Therefore, $f_{\tau,\bar\tau}$
factor through $X_{\hbar,\tau\ast\bar\tau}^\ell(\g)$,
which is still denoted by $f_{\tau,\bar\tau}$.
A similar argument to the proof of \cite[Proposition 4.15]{K-qaffva-rtu} shows that $f_{\tau,\bar\tau}$ is an isomorphism, as desired.
\end{proof}

%

\section{$\Z$-grading structure}\label{sec:Z-grading}

The following definition generalizes the notion of $\Z$-graded vertex algebras.

\begin{de}
A quantum vertex algebra $(V,Y,\vac,S(z))$ is called \emph{$\Z$-graded} if
\begin{align*}
  V=\bigoplus_{n\in\Z}V[n]
\end{align*}
such that
\begin{align*}
  &\vac\in V[0],\quad V[a]_nV[b]\subset V[a+b-n-1],\\
  &S_n(V[a]\ot V[b])\subset \sum_{k\in\Z}V[a+k]\ot V[b-k-n-1]\quad\te{for }a,b,n\in\Z.
\end{align*}
\end{de}

\begin{rem}
Let $V$ be a vertex operator algebra with conformal vector $\omega$, and set
\begin{align*}
  \sum_{n\in\Z}L(n)z^{-n-2}=Y(\omega,z).
\end{align*}
View $V$ as a quantum vertex algebra with trivial quantum Yang-Baxter operator.
Then $V$ is $\Z$-graded with 
\begin{align*}
  V[n]=\set{v\in V}{L(0)v=nv}.
\end{align*}
\end{rem}

In this section, we show that both $V_{\hbar,\varepsilon}^\ell(\g)$ and $L_{\hbar,\varepsilon}^\ell(\g)$ contain dense $\Z$-graded quantum vertex subalgebras. 
This is achieved by proving the existence of linear transformations of the following type:

\begin{de}
A $\C$-linear map $L(0)$ on an $\hbar$-adic quantum vertex algebra $(V,Y,\vac,S(z))$ is called a
\emph{degree operator} if $L(0)\vac=0$,
\begin{align}
  &[L(0),Y(u,z)]=Y(L(0) u,z)+z\pd zY(u,z)\quad\te{for }u\in V,\label{eq:L0-Y-comp}\\
  &[L(0)\ot 1+1\ot L(0),S(z)]=z\pd zS(z),
\end{align}
\end{de}

\begin{rem}
Let $(V,Y,\vac,S(z))$ be an $\hbar$-adic quantum vertex algebra containing a dense $\Z$-graded quantum vertex subalgebra $V'$.
Define $L_\hbar$ on $V'$ be letting $L(0)=n$ on $V'[n]$,
and extend $L(0)$ to the whole space $V$.
Then $L(0)$ is a degree operator of $V$.
\end{rem}

\begin{rem}
Let $U$ and $V$ be $\hbar$-adic quantum vertex algebras with degree operators $L_U(0)$ and $L_V(0)$, respectively. Then
\begin{align*}
  L_{U\wh\ot V}(0)=L_U(0)\ot 1+1\ot L_V(0)
\end{align*}
is a degree operator of
the tensor product $\hbar$-adic quantum vertex algebra $U\wh \ot V$.
\end{rem}


Recall the deforming triple $(H,\rho,\wh\varepsilon)$ of $V(\mathcal M^\ell(\g))$ from Section \ref{sec:qaffva}.
The following result establish the existence of a degree operator on both $V(\mathcal M^\ell(\g))$ and $H$ that are compatible with $\rho$ and $\wh\varepsilon$.

\begin{lem}\label{lem:L0-mod-com}
There exists degree operators $L_\hbar(0)$ on both $V(\mathcal M^\ell(\g))$ and $H$ such that
\begin{align*}
  &L_\hbar(0)\hbar=-\hbar,\quad L_\hbar(0)h_i=h_i,\quad L_\hbar(0)x_i^\pm=x_i^\pm,\\
  &L_\hbar(0)\wh h_i=\wh h_i,\quad L_\hbar(0)\wh x_i^\pm=0\quad\te{for }i\in I.
\end{align*}
Moreover,
\begin{align*}
  &\Delta(L_\hbar(0)h)=(L_\hbar(0)\ot 1+1\ot L_\hbar(0))\Delta(h)\quad\te{for }h\in H,\\
  &\rho(L_\hbar(0)u)=(L_\hbar(0)\ot 1+1\ot L_\hbar(0))\rho(u)\quad\te{for }u\in V(\mathcal M^\ell(\g)).
\end{align*}
Furthermore, for $h\in H$ and $v\in V(\mathcal M^\ell(\g))$, we have that
\begin{align}
  &[L_\hbar(0),\wh\varepsilon(h,z)]v
  =\wh\varepsilon(L_\hbar(0)h,z)v
  +z\pd z\wh\varepsilon(h,z)v.
  \label{eq:L0-mod-com}
\end{align}
\end{lem}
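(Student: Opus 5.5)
Throughout, $L_\hbar(0)$ is a $\C$-linear grading operator in which $\hbar$ has degree $-1$: on $\hbar^k u$ it acts by $\hbar^k(L_\hbar(0)-k)u$.

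\textbf{Step 1: the operator on $V(\mathcal M^\ell(\g))$.} By Remark \ref{rem:module-realization}, $V(\mathcal M^\ell(\g))\cong U(\wt\g)[[\hbar]]/\overline{[R]}$. The relations \eqref{L1}, \eqref{L2}, \eqref{L3.5}, \eqref{L4} have coefficients in $\C$ (no $\hbar$), and they are homogeneous for the grading $\deg h_i(n)=\deg x_i^\pm(n)=-n$. Hence $U(\wt\g)$ is $\Z$-graded. The left ideal generated by the $a(n)$ with $n\ge0$ is graded, so $V_0=U(\wt\g)/U(\wt\g)R$ is graded with $\deg\vac=0$.

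Define $L_\hbar(0)$ on $V_0[[\hbar]]$ to be this degree, minus $k$ on $\hbar^k$. The generators $h_i=h_i(-1)\vac$ and $x_i^\pm=x_i^\pm(-1)\vac$ then have degree $1$. The identity \eqref{eq:L0-Y-comp} for the generating fields is $[L_\hbar(0),a(n)]=-n\,a(n)$, which is exactly the identity for $a(z)$ with $a$ of degree $1$. Since the set of $u$ satisfying \eqref{eq:L0-Y-comp} is closed under the products $u_nv$ (by the Jacobi-type associativity \eqref{eq:weak-asso}), \eqref{eq:L0-Y-comp} holds on all of $V(\mathcal M^\ell(\g))$. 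The $S$-operator is trivial since this is a vertex algebra, so the $S$-condition is automatic.

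\textbf{Step 2: the operator on $H$.} $H$ is a polynomial algebra with derivation $\partial$. Set $\deg\partial^n\wh h_i=n+1$, $\deg\partial^n\wh x_i^\pm=n$, extend multiplicatively, and again put $\hbar$ in degree $-1$. Then $[L_\hbar(0),\partial]=\partial$, and $L_\hbar(0)$ is a derivation. For $Y(a,z)b=(e^{z\partial}a)b$ this gives \eqref{eq:L0-Y-comp}. The coproduct formulas are homogeneous: $\partial^k\wh x\ot\partial^{n-k}\wh x$ has total degree $n$, and $\wh h_i\ot1+1\ot\wh h_i$ has degree $n+1$. Hence $\Delta$ intertwines $L_\hbar(0)$ with $L_\hbar(0)\ot1+1\ot L_\hbar(0)$.

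For $\rho$, it suffices to check the generators, since both sides are vertex algebra homomorphisms that commute with the relevant $L(0)$-actions on the products $u_nv$:
\begin{itemize}
\item $\rho(h_i)=h_i\ot1+\vac\ot\wh h_i$ has degree $1$;
\item $\rho(x_i^\pm)=x_i^\pm\ot\wh x_i^\pm$ has degree $1+0$.
\end{itemize}

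\textbf{Step 3: equation \eqref{eq:L0-mod-com}, the main obstacle.} Let $\mathcal S$ be the set of pairs $(h,v)$ satisfying \eqref{eq:L0-mod-com}. I would show that $\mathcal S$ is closed under the module-vertex-algebra operations:
\begin{itemize}
\item closure in $v$ uses \eqref{eq:mod-va-for-vertex-bialg3} together with Step 1;
\item closure in $h$ uses that $\wh\varepsilon$ is an $H$-module structure, so $\wh\varepsilon(Y(h,z_0)k,z)=\wh\varepsilon(h,z+z_0)\wh\varepsilon(k,z)$, together with the compatibility of $\Delta$ from Step 2.
\end{itemize}
It therefore suffices to check the generator pairs $h\in\{\wh h_i,\wh x_i^\pm\}$, $v\in\{h_j,x_j^\pm\}$.

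For these pairs, \eqref{eq:L0-mod-com} says that the scalar function $g(z,\hbar)$ appearing in $\wh\varepsilon(h,z)v$ must satisfy $(z\pd z-\hbar\pd\hbar)g=c\,g$, i.e. $g$ is homogeneous of total degree $c=\deg u-\deg h-\deg v$ in $(z,\hbar)$ when $\deg z=1$, $\deg\hbar=1$. Here $u$ is the image vector. Take $\tau=\varepsilon$, so the $\tau$-terms vanish, and use $q^{\pd z}=e^{\hbar\pd z}$. The generator pairs are:
\begin{itemize}
\item $\wh\varepsilon(\wh h_i,z)h_j$: the function $[r_ia_{ij}]_{q^{\pd z}}[r\ell]_{q^{\pd z}}q^{r\ell\pd z}z^{-2}-r_ia_{ij}r\ell z^{-2}$ is a function of $\hbar\pd z$ applied to $z^{-2}$. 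It is homogeneous of degree $-2=0-1-1$.
\item $\wh\varepsilon(\wh h_i,z)x_j^\pm$ and $\wh\varepsilon(\wh x_i^\pm,z)h_j$: both functions have degree $-1$. This matches $1-1-1$ and $0-0-1$ respectively; in the second case the term $h_j\ot1$ has degree $1=1-0$.
\item $z^{-1}(z-r_ia_{ij}\hbar)$ and $z^{-1}(z\pm2r\ell\hbar)$: both have degree $0=1-0-1$.
\end{itemize}
The delicate points are the $\hbar$-degree convention, which makes $e^{\hbar\pd z}$ degree $0$, and rigorously justifying the closure argument in the $\hbar$-adic setting. Both sides of \eqref{eq:L0-mod-com} are $\hbar$-adically continuous, which handles the latter.
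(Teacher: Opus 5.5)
Your proposal is correct and follows essentially the same route as the paper. You obtain $L(0)$ from the natural gradings on $V(\mathcal M^\ell(\g))$ (via Remark~\ref{rem:module-realization}) and on the polynomial algebra $H$, set $L_\hbar(0)=L(0)-\hbar\pd\hbar$, and verify the $\Delta$-, $\rho$- and $\wh\varepsilon$-compatibilities on generators before extending by generation; your explicit $(z,\hbar)$-homogeneity checks fill in the step the paper calls ``straightforward''.

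One small slip: the condition on the scalar $g$ should read $(z\pd z+\hbar\pd\hbar)g=c\,g$, not $(z\pd z-\hbar\pd\hbar)g=c\,g$. Your verbal reformulation (homogeneity of degree $c$ with $\deg z=\deg\hbar=1$) and all your subsequent degree checks already use the correct version.
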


\begin{proof}
Remark \ref{rem:module-realization} implies that there exists a degree operator $L(0)$ on
$V(\mathcal M^\ell(\g))$ such that
\begin{align*}
  L(0)\hbar=0,\quad L(0)h_i=h_i,\quad L(0)x_i^\pm=x_i^\pm\quad
  \te{for }i\in I.
\end{align*}
Similarly, the definition of $H$ yields a degree operator on $H$, still denoted by $L(0)$,
such that
\begin{align*}
  L(0)\hbar=0,\quad L(0)\wh h_i=\wh h_i,\quad
  L(0)\wh x_i^\pm=0\quad\te{for }i\in I.
\end{align*}
Note that $\hbar\pd\hbar$ is a derivation on both $H$ and $V(\mathcal M^\ell(\g))$.
Moreover,
\begin{align*}
  \Delta\big((\hbar\pd\hbar)h\big) &= \big(\hbar\pd\hbar\otimes 1+1\otimes\hbar\pd\hbar\big)\Delta(h) \quad\te{for }h\in H,\\
  \rho\big((\hbar\pd\hbar)u\big) &= \big(\hbar\pd\hbar\otimes 1+1\otimes\hbar\pd\hbar\big)\rho(u) \quad\te{for }u\in V(\mathcal M^\ell(\g)).
\end{align*}
It is straightforward to verify that $L_\hbar(0)=L(0)-\hbar\pd\hbar$ is a degree operator on both $V(\mathcal M^\ell(\g))$ and $H$, and that it satisfies the moreover statement.

For the further statement, regard $\hbar$ as an element of both $H$ and $V(\mathcal M^\ell(\g))$.
One then straightforwardly verifies that the relation \eqref{eq:L0-mod-com} holds for
\begin{align*}
  h\in T=\set{\wh h_i,\,\wh x_i^\pm,\,\hbar}{i\in I},\quad
  v\in S=\set{h_i,\,x_i^\pm,\,\hbar}{i\in I}.
\end{align*}
Since $H$ is generated by $T$ and $V(\mathcal M^\ell(\g))$ is generated by $S$,
the relation \eqref{eq:L0-mod-com} extends to all of $H$ and $V(\mathcal M^\ell(\g))$, which proves the desired further statement.
\end{proof}

Note that $\wh\varepsilon$ is invertible, and
\begin{align*}
  &\wh\varepsilon\inv(\wh h_i,z)h_j=-\vac\ot
  \left([r_ia_{ij}]_{q^{\pd z}}[r\ell]_{q^{\pd z}}q^{r\ell \pd z}z^{-2}-r_ia_{ij}r\ell z^{-2}\right),\\
  &\wh\varepsilon\inv(\wh h_i,z)x_j^\pm
  =\mp x_j^\pm\ot \left([r_ia_{ij}]_{q^{\pd z}}q^{r\ell\pd z}z\inv-r_ia_{ij}z\inv\right),\\
  &\wh\varepsilon\inv(\wh x_i^\pm,z)h_j=h_j\ot 1
  \pm \vac\ot \left( [r_ia_{ij}]_{q^{\pd z}}q^{r\ell\pd z}z\inv-r_ia_{ij}z\inv \right),\\
  &\wh\varepsilon\inv(\wh x_i^\pm,z)x_j^\pm
  =x_j^\pm\ot z(z-r_ia_{ij}\hbar)\inv,\\
  &\wh\varepsilon\inv(\wh x_i^+,z)x_j^-\ot z^{\delta_{ij}}(z+2r\ell\hbar)^{-\delta_{ij}},\\
  &\wh\varepsilon\inv(\wh x_i^-,z)x_j^+
  =x_j^+\ot z^{\delta_{ij}}(z-2r\ell\hbar)^{-\delta_{ij}}
  \frac{z+r_ja_{ji}\hbar}{z-r_ja_{ji}\hbar}.
\end{align*}
Similar to Lemma \ref{lem:L0-mod-com}, we have the following result.
\begin{lem}\label{lem:L0-mod-com-inv}
For $h\in H$ and $v\in V(\mathcal M^\ell(\g))$, we have that
\begin{align}\label{eq:L0-mod-com-inv}
  [L_\hbar(0),\wh\varepsilon\inv(h,z)]v=\wh\varepsilon\inv(L_\hbar(0)h,z)v
  +z\pd z\wh\varepsilon\inv(h,z)v.
\end{align}
\end{lem}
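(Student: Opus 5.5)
The strategy is to mirror the proof of \eqref{eq:L0-mod-com} in Lemma \ref{lem:L0-mod-com}: first check the identity \eqref{eq:L0-mod-com-inv} on generators, then extend it to all of $H$ and $V(\mathcal M^\ell(\g))$ using the structural properties of $\wh\varepsilon\inv$.

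\textbf{Step 1: generators.} Take $h\in T=\set{\wh h_i,\,\wh x_i^\pm,\,\hbar}{i\in I}$ and $v\in S=\set{h_i,\,x_i^\pm,\,\hbar}{i\in I}$, and use the explicit formulas for $\wh\varepsilon\inv$ listed just before the statement. Recall that $L_\hbar(0)=L(0)-\hbar\pd\hbar$ acts on $\C((z))[[\hbar]]$ by $-\hbar\pd\hbar$, so a scalar series $g(z)$ contributes $z\pd z g-\hbar\pd\hbar g$ to the comparison. All coefficient series appearing in $\wh\varepsilon\inv$ are built from $z$ and $\hbar$: for instance $z(z-r_ia_{ij}\hbar)\inv$, $[r_ia_{ij}]_{q^{\pd z}}q^{r\ell\pd z}z\inv$, and $(z+2r\ell\hbar)^{-\delta_{ij}}$. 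Each is homogeneous of total degree $d$, with $z$ of degree $1$ and $\hbar$ of degree $1$. Note that $q^{\pd z}=e^{\hbar\pd z}$ preserves this degree, since $\hbar\pd z$ has degree $0$. For such a series, $(z\pd z+\hbar\pd\hbar)g=dg$.

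\textbf{Step 2: degree matching on generators.} Check that $d$ is exactly what the identity requires in each case.
\begin{itemize}
\item For $h=\wh h_i$, $v=h_j$: the $z^{-2}$-type term has degree $-2$, while $L_\hbar(0)\wh h_i=\wh h_i$ and $L_\hbar(0)h_j=h_j$.
\item For $h=\wh h_i$, $v=x_j^\pm$: the coefficient has degree $-1$, matching $L_\hbar(0)\wh h_i=\wh h_i$.
\item For $h=\wh x_i^\pm$: the coefficients have degree $0$ (resp.\ $-1$ in the $\vac$-term of $\wh\varepsilon\inv(\wh x_i^\pm,z)h_j$), and $L_\hbar(0)\wh x_i^\pm=0$.
\end{itemize}
The case $h=\hbar$ or $v=\hbar$ follows from $\C[[\hbar]]$-linearity together with $L_\hbar(0)\hbar=-\hbar$.

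\textbf{Step 3: extension from generators.} We must show that the set of pairs $(h,v)$ satisfying \eqref{eq:L0-mod-com-inv} is closed under the operations generating $H$ and $V(\mathcal M^\ell(\g))$.

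\emph{In $h$.} Use that $\wh\varepsilon\inv$ is an $H$-module structure, so $\wh\varepsilon\inv(Y(h,z_0)k,z)=\wh\varepsilon\inv(h,z+z_0)\wh\varepsilon\inv(k,z)$. Combine this with $[L_\hbar(0),Y(h,z_0)]=Y(L_\hbar(0)h,z_0)+z_0\pd{z_0}Y(h,z_0)$ on $H$.

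\emph{In $v$.} Use the compatibility \eqref{eq:mod-va-for-vertex-bialg3} for $\wh\varepsilon\inv$, the cocommutativity of $\Delta$, the relation $\Delta L_\hbar(0)=(L_\hbar(0)\ot1+1\ot L_\hbar(0))\Delta$ from Lemma \ref{lem:L0-mod-com}, and \eqref{eq:L0-Y-comp} on $V(\mathcal M^\ell(\g))$.

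\textbf{Alternative route.} One can avoid the generator check by defining $D(h,z)=[L_\hbar(0),\wh\varepsilon(h,z)]-\wh\varepsilon(L_\hbar(0)h,z)-z\pd z\wh\varepsilon(h,z)$, which vanishes by \eqref{eq:L0-mod-com}. Then apply $[L_\hbar(0),\cdot]$ to $\wh\varepsilon\ast\wh\varepsilon\inv=\varepsilon$ and use that $L_\hbar(0)$ is a coderivation, which gives the corresponding derivation rule for $\ast$. This formally yields \eqref{eq:L0-mod-com-inv}, provided one justifies the inverse manipulation in the convolution algebra.

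The main obstacle is the bookkeeping in the second closure step: the $z_1-z_2$ shifts must combine correctly with the Euler operators. This works because $(z_1\pd{z_1}+z_2\pd{z_2})$ applied to $f(z_1-z_2)$ equals $(z_1-z_2)f'(z_1-z_2)$, exactly as in Lemma \ref{lem:L0-mod-com}.
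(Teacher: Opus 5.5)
Your main route is the paper's: the lemma is proved ``similar to Lemma \ref{lem:L0-mod-com}'', i.e.\ by checking the identity on the generators $T$ and $S$ with the explicit formulas for $\wh\varepsilon\inv$ listed just before it (your homogeneity count in $z$ and $\hbar$ is correct in every case), and then extending because $H$ and $V(\mathcal M^\ell(\g))$ are generated by these sets. The extension step needs $\wh\varepsilon\inv$ to satisfy the module and compatibility axioms, which you assume without proof; the paper uses this implicitly too (it follows from cocommutativity of $H$ and $\wh\varepsilon$ commuting with itself), and your alternative convolution-derivation argument does not need it at all.
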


The following result shows that there exists a degree operator on both $V_{\hbar,\varepsilon}^\ell(\g)$ and $L_{\hbar,\varepsilon}^\ell(\g)$.

\begin{thm}\label{thm:Lh0}
Let $X=V$ or $L$.
There exists a degree operator $L_\hbar(0)$ 
on $X_{\hbar,\varepsilon}^\ell(\g)$ 
such that
\begin{align}
  &L_\hbar(0)\hbar=-\hbar,\quad
  L_\hbar(0)h_i=h_i,\quad
  L_\hbar(0)x_i^\pm=x_i^\pm
  \quad\te{for }i\in I.\label{eq:Lh0}
\end{align}
\end{thm}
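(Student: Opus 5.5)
The plan is to realize $X_{\hbar,\varepsilon}^\ell(\g)$ as a quotient of the deformation $\mathfrak D_{\wh\varepsilon}^\rho(V(\mathcal M^\ell(\g)))$ and to transport the degree operator $L_\hbar(0)$ of Lemma \ref{lem:L0-mod-com} through the deformation and the quotient.

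\textbf{Step 1: $L_\hbar(0)$ is a degree operator of the deformed algebra.} By Lemma \ref{lem:deform-realization} (with $\tau=\varepsilon$), $V(\mathcal M_{\hbar,\varepsilon}^\ell(\g))\cong\mathfrak D_{\wh\varepsilon}^\rho(V(\mathcal M^\ell(\g)))$, with $h_i\mapsto h_i$ and $x_i^\pm\mapsto x_i^\pm$. It therefore suffices to show that the same operator $L_\hbar(0)$ is a degree operator for the deformed vertex operator
\begin{align*}
  \mathfrak D_{\wh\varepsilon}^\rho(Y)(u,z)v=\sum Y(u_{(1)},z)\,\wh\varepsilon(u_{(2)},z)v .
\end{align*}
This follows by combining three facts:
\begin{itemize}
  \item the compatibility $\rho\circ L_\hbar(0)=(L_\hbar(0)\ot1+1\ot L_\hbar(0))\circ\rho$;
  \item the property \eqref{eq:L0-Y-comp} for $Y$;
  \item relation \eqref{eq:L0-mod-com} for $\wh\varepsilon$.
\end{itemize}
Expanding $[L_\hbar(0),Y(u_{(1)},z)\wh\varepsilon(u_{(2)},z)]$ by the Leibniz rule gives
\begin{align*}
  \sum Y(L_\hbar(0)u_{(1)},z)\wh\varepsilon(u_{(2)},z)+\sum Y(u_{(1)},z)\wh\varepsilon(L_\hbar(0)u_{(2)},z)+z\pd z\big(\mathfrak D_{\wh\varepsilon}^\rho(Y)(u,z)\big),
\end{align*}
and the first two sums together equal $\mathfrak D_{\wh\varepsilon}^\rho(Y)(L_\hbar(0)u,z)$. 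The vacuum condition $L_\hbar(0)\vac=0$ is unchanged.

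For the $S$-condition, Corollary \ref{coro:deform-qva} expresses $S_{\wh\varepsilon}(z)$ through $\wh\varepsilon(\cdot,-z)$, $\wh\varepsilon\inv(\cdot,z)$ and $\rho$. Lemmas \ref{lem:L0-mod-com} and \ref{lem:L0-mod-com-inv}, together with the compatibility of $\rho$, then give
\begin{align*}
  [L_\hbar(0)\ot1+1\ot L_\hbar(0),S_{\wh\varepsilon}(z)]=z\pd z S_{\wh\varepsilon}(z).
\end{align*}
The substitution $z\mapsto -z$ is harmless because $z\pd z$ is invariant under it. The values on $\hbar$, $h_i$ and $x_i^\pm$ are those given by Lemma \ref{lem:L0-mod-com}.

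\textbf{Step 2: $L_\hbar(0)$ preserves the defining ideals.} We show that $L_\hbar(0)$ preserves $\overline{[R_{\varepsilon,1}^\ell(\g)]}$ and, if $X=L$, also $\overline{[R_{\varepsilon,2}^\ell(\g)]}$. By \eqref{eq:L0-Y-comp}, $L_\hbar(0)$ maps $u_nv$ to $(L_\hbar(0)u)_nv+u_n L_\hbar(0)v+(-n-1)u_nv$. Hence the ideal generated by a set of homogeneous eigenvectors is $L_\hbar(0)$-stable; the $\hbar$-adic closure and the saturation $[\cdot]$ are also preserved, since $L_\hbar(0)\hbar^k w=\hbar^k(L_\hbar(0)-k)w$.

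So the remaining task is to check that each generator is an eigenvector.
\begin{itemize}
  \item $(x_i^\pm)_0^{m_{ij}}x_j^\pm$ has degree $m_{ij}+1-m_{ij}=1$.
  \item $(x_i^\pm)_{-1}^{r\ell/r_i}x_i^\pm$ is homogeneous.
  \item The generators \eqref{eq:x+0x-} and \eqref{eq:x+1x-} require the most care, and this is the step I expect to be the main obstacle. We have $(x_i^+)_0x_i^-$ of degree $1$ and $(x_i^+)_1x_i^-$ of degree $0$. Since $\hbar$ has degree $-1$, the term $\frac{1}{2r_i\hbar}(\vac-E_\varepsilon(h_i))$ must also have degree $1$, i.e. $\vac-E_\varepsilon(h_i)$ must lie in $\hbar\cdot(\text{degree }0)$. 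Since $\varepsilon_{ii}^{\bullet,\bullet}=1$, the prefactor is $1$.
\end{itemize}

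For the last point, take $\wt h_i=-q^{-r\ell\partial}F(\partial)h_i$. Write $F(\partial)=\sum_k c_k\hbar^{2k+1}\partial^{2k}$ and expand $q^{-r\ell\partial}=e^{-r\ell\hbar\partial}$. Each resulting term $\hbar^m\partial^{m-1}h_i$ has degree $-m+(m-1)+1=0$. So $\wt h_i$ has degree $0$, and $E_\varepsilon(h_i)=\exp((\wt h_i)_{-1})\vac$ has degree $0$. Moreover $\wt h_i\in\hbar V$, so $(\vac-E_\varepsilon(h_i))/\hbar$ is a well-defined element of degree $1$, as required; similarly $\frac{r\ell}{r_i}E_\varepsilon(h_i)$ has degree $0$.

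\textbf{Conclusion.} $L_\hbar(0)$ descends to $X_{\hbar,\varepsilon}^\ell(\g)$. The induced quantum Yang-Baxter operator is the one induced from $S_\varepsilon(z)$ (Proposition \ref{prop:qyb}), so the degree-operator identities pass to the quotient, which proves the theorem.
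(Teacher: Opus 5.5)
Your proposal is correct and follows the paper's proof essentially step for step. You realize $V(\mathcal M_{\hbar,\varepsilon}^\ell(\g))$ as $\mathfrak D_{\wh\varepsilon}^\rho(V(\mathcal M^\ell(\g)))$, obtain the $Y$- and $S$-compatibility from Lemmas \ref{lem:L0-mod-com}, \ref{lem:L0-mod-com-inv} and Corollary \ref{coro:deform-qva}, and then descend to the quotient because the defining generators are $L_\hbar(0)$-eigenvectors. Beyond the paper, you also supply details it leaves implicit: the degree-$0$ computation for $E_\varepsilon(h_i)$, the divisibility of $\vac-E_\varepsilon(h_i)$ by $\hbar$, the stability of the closed saturated ideals, and the integrability generators in the case $X=L$.
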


\begin{proof}
View $V(\mathcal M_{\hbar,\varepsilon}^\ell(\g))$
as $\mathfrak D_{\wh\varepsilon}^\rho(V(\mathcal M^\ell(\g)))$ by Lemma \ref{lem:deform-realization}.
By using Lemma \ref{lem:L0-mod-com}, one can straightforwardly verify that the degree operator $L_\hbar(0)$ on $V(\mathcal M^\ell(\g))$ satisfies \eqref{eq:Lh0} and the following relation
\begin{align*}
  [L_\hbar(0),\mathfrak D_{\wh\varepsilon}^\rho(Y)(u,z)]
  =\mathfrak D_{\wh\varepsilon}^\rho(Y)(L_\hbar(0)u,z)
  z\pd z \mathfrak D_{\wh\varepsilon}^\rho(Y)(u,z)
  \quad\te{for }u\in \mathfrak D_{\wh\varepsilon}^\rho(V(\mathcal M^\ell(\g))).
\end{align*}
Combining Corollary \ref{coro:deform-qva} and Lemmas
\ref{lem:L0-mod-com}, \ref{lem:L0-mod-com-inv}, we immediate get that
\begin{align*}
  [L_\hbar(0)\ot 1+1\ot L_\hbar(0),S_\varepsilon(z)]
  =z\pd z S_\varepsilon(z).
\end{align*}
Note that the elements \eqref{eq:x+0x-}, \eqref{eq:x+1x-} and \eqref{eq:serre} are preserved by $L_\hbar(0)$.
Then $L_\hbar(0)$ induces a degree operator on $X_{\hbar,\varepsilon}^\ell(\g)$, as desired.
\end{proof}

Since both $V_{\hbar,\varepsilon}^\ell(\g)$ and $L_{\hbar,\varepsilon}^\ell(\g)$ are generated by $\set{h_i,x_i^\pm}{i\in I}$, we get the following immediate consequence of Theorem \ref{thm:Lh0}.
\begin{coro}\label{coro:Lh0}
Let $X=V$ or $L$.
Then $X_{\hbar,\varepsilon}^\ell(\g)$ contains a $\Z$-graded quantum vertex subalgebra \begin{align*}
  X_{\varepsilon}^\ell(\g):= \bigoplus_{n\in\Z}\set{u\in X_{\hbar,\varepsilon}^\ell(\g)}{L_\hbar(0)u=nu}.
\end{align*}
\end{coro}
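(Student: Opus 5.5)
The plan is to work with the eigenspaces of $L_\hbar(0)$ from Theorem \ref{thm:Lh0}. Write $X=X_{\hbar,\varepsilon}^\ell(\g)$ and $X[n]=\set{u\in X}{L_\hbar(0)u=nu}$. Then $X_\varepsilon^\ell(\g)=\bigoplus_n X[n]$, and the sum is direct because eigenvectors for distinct eigenvalues are linearly independent. I have to show three things: this space is closed under all $n$-th products and contains $\vac$; it is stable under the components $S_n$ of $S_\varepsilon(z)$ and the grading conditions hold; and the restricted structure is a quantum vertex algebra. All of these should follow by extracting coefficients from the defining identities of a degree operator.

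First, $\vac\in X[0]$ because $L_\hbar(0)\vac=0$. Take $u\in X[a]$ and $v\in X[b]$, and compare the coefficient of $z^{-n-1}$ in \eqref{eq:L0-Y-comp} applied to $v$. This gives
\begin{align*}
L_\hbar(0)(u_nv)=u_n L_\hbar(0)v+(L_\hbar(0)u)_nv-(n+1)u_nv=(a+b-n-1)u_nv.
\end{align*}
So $X[a]_nX[b]\subset X[a+b-n-1]$ and the subspace is closed under products.

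For $S$, I would expand $S_\varepsilon(z)(u\ot v)$ in a suitable form. One natural target is a finite combination of terms $u'\ot v'\ot f(z)$ with $u',v'$ homogeneous and $f\in\C((z))[[\hbar]]$. Note that $L_\hbar(0)\hbar=-\hbar$ together with $z\pd z$ gives $z$ and $\hbar$ weights $-1$ and $1$ respectively. The identity $[L_\hbar(0)\ot1+1\ot L_\hbar(0),S(z)]=z\pd zS(z)$ then says that $S_n$ sends $X[a]\ot X[b]$ into the total weight $a+b-n-1$ part. This requires interpreting the tensor square via the $\hbar$-adically completed weight decomposition, where $\hbar^k$ times weight-$m$ elements have weight $m-k$.

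The main obstacle is showing that $S_n(u\ot v)$ actually lies in the algebraic tensor $\bigoplus_{k}X[a+k]\ot X[b-k-n-1]$ and not just in its completion. Here I would use generation. The subspace $X_\varepsilon^\ell(\g)$ is generated as a nonlocal vertex algebra by the homogeneous elements $h_i,x_i^\pm$ (all of weight $1$) together with $\hbar\vac$ (weight $-1$), since each product $u_nv$ of homogeneous elements is homogeneous. On generators, Remark \ref{rem:qyb} with $\tau=\varepsilon$ gives $S$ explicitly as rational functions in $z$ and $\hbar$. For example, $\frac{z-\epsilon r_ia_{ij}\hbar}{z+\epsilon r_ia_{ij}\hbar}$ expands into monomials $\hbar^kz^{-k}$ of weight $0$, and the $q^{\pm r\ell\pd z}$ expressions expand into homogeneous monomials $\hbar^kz^{-k-m}$. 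So each coefficient $S_n$ on generators is a finite homogeneous sum. I would then pass to all of $X_\varepsilon^\ell(\g)$ using the hexagon identity \eqref{eq:qyb-hex-id} and its mirror (obtained from unitarity), which express $S$ on products $u_mv$ through $S$ on the factors. This keeps the coefficients in the algebraic tensor product of the graded pieces.

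Finally, the quantum Yang-Baxter equation, unitarity, the vacuum property, $S$-locality and the shift condition restrict from $X$ to this $S$-stable, $Y$-closed subspace, because they are identities holding in $X$. Hence $X_\varepsilon^\ell(\g)$ is a $\Z$-graded quantum vertex subalgebra.
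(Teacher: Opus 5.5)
Your route is the paper's: the degree operator of Theorem \ref{thm:Lh0} together with generation by $h_i,x_i^\pm$. Your computation $L_\hbar(0)(u_nv)=(a+b-n-1)u_nv$ correctly gives $\vac\in X[0]$ and closure under all products. The gap is in how you get past your own ``main obstacle'': $X_\varepsilon^\ell(\g)=\bigoplus_nX[n]$ is \emph{not} generated as a nonlocal vertex algebra by $h_i,x_i^\pm,\hbar\vac$. Homogeneity of products only shows that the subalgebra $W$ these elements generate is contained in $X_\varepsilon^\ell(\g)$. In fact $L_\hbar(0)$ is $\hbar$-adically continuous and $L_\hbar(0)\hbar=-\hbar$, so each eigenspace is closed under convergent series. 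For example, $\sum_{k\ge0}c_k\hbar^k\big((h_i)_{-1}\big)^k\vac\in X[0]$ for every $(c_k)\in\C^{\N}$. Since the vectors $\big((h_i)_{-1}\big)^k\vac$ are nonzero modulo $\hbar$, these series span a subspace of uncountable dimension, whereas $W$ is spanned by countably many iterated products and so has countable dimension. Your hexagon induction therefore controls $S_n$ only on $W\ot W$. For general $u\in X[a]$, $v\in X[b]$ your argument says nothing. The degree identity for $S$ only places $S_n(u\ot v)$ in the $(a+b-n-1)$-eigenspace of $L_\hbar(0)\ot1+1\ot L_\hbar(0)$ on $X\wh\ot X$. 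That eigenspace is strictly larger than the algebraic sum $\sum_kX[a+k]\ot X[b-k-n-1]$: already the $0$-eigenspace contains infinite-rank elements such as $\sum_k\hbar^{2k}y_k\ot y_k$ with $y_k=\big((h_i)_{-1}\big)^k\vac$.

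Closing the gap needs a different argument. Any of the following would do.
\begin{itemize}
\item Show directly from $S_\varepsilon(z)(v\ot u)=\sum\wh\varepsilon(u_{(2)},-z)v_{(1)}\ot\wh\varepsilon\inv(v_{(2)},z)u_{(1)}$ (Corollary \ref{coro:deform-qva}) that for fixed $n$ the contributions regroup into finitely many pure tensors. The components of $\rho(u),\rho(v)$ of positive degree in $H$ contribute only increasingly negative powers of $z$, and powers of the group-like $\wh x_i^\pm$ contribute $z$-coefficients that are polynomial in the exponent.
\item Read the condition on $S_n$ in the completed sense. Then the degree identity alone suffices and generation plays no role.
\item Prove the statement for $W$ instead of $\bigoplus_nX[n]$. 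There your induction works, provided you note that at $\tau=\varepsilon$ the values of $S(z)$ on generators in Remark \ref{rem:qyb} contain no positive powers of $z$. Otherwise the $z_2$-expansion of $S^{13}(z_1+z_2)$ against the non-truncating $Y$ yields infinite sums in each coefficient.
\end{itemize}
Finally, your last step is not a mere restriction of identities. $S$-locality in $X$ is the $\hbar$-adic relation $\sim$, whose exponent $k$ depends on the precision $\hbar^n$. Moreover, on the graded space $Y$ does not truncate: $(x_i^+)_mx_i^-$ is a nonzero multiple of $\hbar^{m-1}E_\varepsilon(h_i)$ for every $m\ge1$ when $\ell\ne0$. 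You need to say in which degree-wise sense these axioms are meant.
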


\section{Filtration structure}\label{sec:filtration}

The following notion is a generalization of that of vertex algebras.

\begin{de}
A \emph{increasing filtration} of a quantum vertex algebra $(V,Y,\vac,S(z))$ is a list of subspaces $\{F^n\}_{n\in\Z}$, such that
$\vac\in F^0$ and
\begin{align*}
  &\cdots \subset F^n\subset F^{n+1}\subset \cdots\quad\te{for }n\in\Z,\\
  &\bigcap_{n\in\Z}F^n=0,\quad \bigcup_{n\in\Z}F^n=V,\quad
  F^a_nF^b\subset F^{a+b-n-1},\\
  &S_n(F^a\ot F^b)\subset \sum_{k\in\Z}
  F^{a+k}\ot F^{b-k-n-1}\quad\te{for }a,b,n\in\Z.
\end{align*}
\end{de}

The following result is also a generalization of that of vertex algebras.

\begin{prop}
Let $(V,S(z))$ be a quantum vertex algebra with a increasing filtration $\{F^n\}_{n\in\Z}$. Define
\begin{align*}
  \fgr V=\bigoplus_{n\in\Z}F^n/F^{n-1}.
\end{align*}
Then $\fgr V$ carries a $\Z$-graded quantum vertex algebra structure with vacuum $\vac+F^{-1}$, the
vertex operator map
\begin{align*}
  Y^{\fgr}(u,z)v=\sum_{n\in\Z}u_n^{\fgr}vz^{-n-1}
\end{align*}
defined by
\begin{align*}
  (u+F^{s-1})_n^{\fgr}(v+F^{t-1})=u_nv+F^{s+t-n-2}
  \quad\te{for }u\in F^s,\,v\in F^t,\,n\in\Z,
\end{align*}
and the quantum Yang-Baxter operator defined by
\begin{align*}
  (v+F^{t-1})\ot (u+F^{s-1})\mapsto \sum_{n\in\Z}
  \left(S_n(v\ot u)+\sum_{k\in\Z}F^{t-1+k}\ot F^{s-k-n-1}
  \right)z^{-n-1}.
\end{align*}
\end{prop}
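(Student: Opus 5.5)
The plan is to adapt the classical argument for the associated graded of a filtered vertex algebra, checking each quantum vertex algebra axiom degree by degree.

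\textbf{Well-definedness.} First I check that the operations on $\fgr V$ do not depend on representatives. For $u\in F^s$, $v\in F^t$, changing $u$ by an element of $F^{s-1}$ changes $u_nv$ by an element of $F^{s-1}_nF^t\subset F^{s+t-n-2}$. A change of $v$ within $F^{t-1}$ is handled the same way. So $u_n^{\fgr}$ gives a well-defined map $F^s/F^{s-1}\otimes F^t/F^{t-1}\to F^{s+t-n-1}/F^{s+t-n-2}$. For the map $S$, I use $S_n(F^{t-1}\ot F^s)\subset\sum_k F^{t-1+k}\ot F^{s-k-n-1}$. Hence the $(t+k)$-component of $S_n(v\ot u)$, read in $\fgr V\ot\fgr V$ as $F^{t+k}/F^{t+k-1}\ot F^{s-k-n-1}/F^{s-k-n-2}$, is well defined.

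\textbf{Truncation.} For fixed homogeneous $u,v$, the sum over $n$ must be truncated from below. I would use $\bigcap_nF^n=0$: in a fixed $V$ we have $u_nv=0$ for $n\gg0$, so the induced classes vanish. The same argument gives $Y^{\fgr}(u,z)v\in\fgr V((z))$ and $S^{\fgr}(z)$ landing in $\fgr V\ot\fgr V\ot\C((z))$. The $\Z$-grading with $\fgr V[n]=F^n/F^{n-1}$ then satisfies the degree conditions in the definition of a $\Z$-graded quantum vertex algebra by construction.

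\textbf{Axioms.} The vacuum property holds because $\vac\in F^0$ and $\vac_{-1}u=u$ while $\vac_nu=0$ for $n\ne-1$. The identity $u_{-1}\vac=u$ and $u_n\vac=0$ for $n\ge0$ also carry over. Weak associativity, $S$-locality, the shift condition, the quantum Yang–Baxter equation, unitarity, the vacuum property of $S$ and the hexagon identity are all identities among components. I would expand each in components, for example weak associativity as the Jacobi-type identity $\sum_i\binom{p}{i}(u_{m+i}v)_{n+p-i}w=\dots$. In each term the filtration degree is additive in the expected way, so reducing modulo the next lower filtration piece yields the same identity in $\fgr V$.

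\textbf{Main obstacle.} The delicate point is $S$-locality, which involves $S(z_2-z_1)$ and the multiplication $(z_1-z_2)^k$. I would write it in component form as
\begin{align*}
  \sum_{i}(-1)^i\binom{k}{i}\left(u_{m+k-i}v_{n+i}w-\sum_{j}c_{\cdots}\,(S\text{-terms})\right)=0 .
\end{align*}
Here each $S_p$-term lands in $\sum F^{a}\ot F^{b}$ with $a+b$ decreased by $p+1$. Expanding $(z_2-z_1)^{-p-1}$ shifts mode indices in a way that exactly compensates. The cost is keeping track of the $\iota$-expansions, and showing that infinite sums over $p$ terminate modulo the relevant filtration pieces. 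I expect this bookkeeping to be the main effort; it goes through the same way for the hexagon identity.
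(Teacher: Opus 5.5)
Your strategy is the standard one: check well-definedness, then reduce each axiom componentwise, using that every term of a fixed component lies in a single filtration level and its class one level down is the corresponding term in $\fgr V$. The paper relies on this tacitly; it states the proposition without proof, as a generalization of the vertex algebra case. However, the truncation step for $S^{\fgr}$ fails as written.

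The argument ``$S_n(v\ot u)=0$ for $n\gg 0$, so the classes vanish'' only gives $S^{\fgr}(z)(\bar v\ot\bar u)\in(\fgr V\ot\fgr V)((z))$. A quantum Yang--Baxter operator must land in the algebraic tensor product $\fgr V\ot\fgr V\ot\C((z))$. The class $[S_n(v\ot u)]$ has total degree $s+t-n-1$, so classes for distinct $n$ lie in distinct homogeneous components. Landing in $\fgr V\ot\fgr V\ot\C((z))$ therefore forces $[S_n(v\ot u)]=0$ for all but finitely many $n$, in particular for $n\ll 0$, and nothing in your argument gives this.

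The missing input is exhaustiveness $\bigcup_nF^n=V$, not $\bigcap_nF^n=0$. The latter plays no role in truncation, since $u_nv=0$ for $n\gg0$ already kills the classes. Write $S(z)(v\ot u)=\sum_{i=1}^N v^{(i)}\ot u^{(i)}\ot f_i(z)$. All $S_n(v\ot u)$ lie in the finite-dimensional span of the $v^{(i)}\ot u^{(i)}$. By exhaustiveness, this span is contained in $G^M=\sum_aF^a\ot F^{M-a}$ for some $M$. Hence $S_n(v\ot u)\in G^{s+t-n-2}$, i.e.\ $[S_n(v\ot u)]=0$, once $s+t-n-2\ge M$. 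So $S^{\fgr}(z)$ is Laurent-polynomial-valued on homogeneous elements.

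This same observation settles the ``infinite sums over $p$'' you flag as the main obstacle, in $S$-locality and equally in the hexagon identity, the quantum Yang--Baxter equation and unitarity. On the $\fgr V$ side only finitely many $S_p$ contribute. On the $V$ side, terms coming from $p$ with $S_p(v\ot u)\in G^{s+t-p-2}$ lie one filtration step below the component considered, so they vanish in the quotient. The remaining finiteness is just truncation of the finitely many fields $Y(v^{(i)},z)$, $Y(u^{(i)},z)$ acting on $w$.

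Two smaller points on well-definedness. First, ``the $(t+k)$-component of $S_n(v\ot u)$'' is not canonically defined in $V\ot V$. What is canonical is the class of $S_n(v\ot u)$ in $G^{s+t-n-1}/G^{s+t-n-2}$. You should invoke the standard identification $G^M/G^{M-1}\cong\bigoplus_a F^a/F^{a-1}\ot F^{M-a}/F^{M-a-1}$, provable via an adapted basis for a finite subchain, to read it in $\fgr V\ot\fgr V$. Second, you should also check that changing $u$ within $F^{s-1}$ does not change the class; this is symmetric to the case you treated.
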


Let $X=V$ or $L$ and let $\tau\in \mathfrak T$.
In this section, we prove that $X_{\hbar,\tau}^\ell(\g)$
contains a dense quantum vertex subalgebra which admits a increasing filtration. We also show that the associated graded algebra of this dense quantum vertex subalgebra is isomorphic to $X_\varepsilon^\ell(\g)$.
%

Recall from Lemma \ref{lem:V-h-tau-deform-triple} that $\bar H$ is defined as the
$\hbar$-adic vertex quotient bialgebra of
$H$ modulo the ideal preserved by
$L_\hbar(0)$. The degree operator $L_\hbar(0)$ on
$H$ therefore descends to a degree operator on $\bar H$, which we denote by the same symbol.
Since 
\begin{align*}
  \bigoplus_{n\in\Z}\set{u\in \bar H}{L_\hbar(0)u=mu}
\end{align*}
is dense in $\bar H$, we immediate get the following result.

\begin{lem}
For $n\in\Z$, denote by $\bar H^n$ the closure of the following space under $\hbar$-adic topology
\begin{align*}
  \bigoplus_{m\le n}\set{u\in \bar H}{L_\hbar(0)u=mu}.
\end{align*}
Then $\mathop{\bigcup}\limits_{n\in\Z}\bar H^n$
is a dense vertex subalgebra of $\bar H$, and
$\{\bar H^n\}_{n\in\Z}$ is a increasing filtration of $\mathop{\bigcup}\limits_{n\in\Z}\bar H^n$.
Moreover,
\begin{align*}
  &\Delta(\bar H^n)\subset \sum_{k\in\Z}\bar H^k\ot \bar H^{n-k}
  \quad \te{and}\quad \varepsilon(\bar H^n)\subset \delta_{n,0}\C\quad\te{for }n\in\Z.
\end{align*}
\end{lem}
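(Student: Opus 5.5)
Write $\bar H[m]=\set{u\in\bar H}{L_\hbar(0)u=mu}$ for the eigenspaces, so that $\bar H^n$ is the closure of $\bigoplus_{m\le n}\bar H[m]$. The plan is to prove each property for these eigenspaces first and then pass to closures, using that the relevant operations are $\hbar$-adically continuous.

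\textbf{Step 1: the eigenspaces.} I would first record what the degree operator gives on eigenvectors. Since $L_\hbar(0)\vac=0$, we have $\vac\in\bar H[0]$. Take $u\in\bar H[a]$ and $v\in\bar H[b]$. Applying \eqref{eq:L0-Y-comp} to $v$ and comparing coefficients of $z^{-n-1}$ gives
\begin{align*}
  L_\hbar(0)\,u_nv=(a+b-n-1)\,u_nv,
\end{align*}
so $\bar H[a]_n\bar H[b]\subset\bar H[a+b-n-1]$. For the coalgebra structure, the compatibility $\Delta L_\hbar(0)=(L_\hbar(0)\ot1+1\ot L_\hbar(0))\Delta$ from Lemma \ref{lem:L0-mod-com} descends to $\bar H$. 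This is because the ideal $R$ is $L_\hbar(0)$-stable: its generators $\partial^n(\wh x_i^+\wh x_i^--1)$ and $\partial^n\big((q^{-2r\ell\partial}\wh x_i^+)\wh x_i^--\exp(\cdots)\big)$ are eigenvectors. Indeed $\hbar$ has degree $-1$, $\partial$ raises degree by $1$, and $\wh h_i$ has degree $1$, so $\hbar\partial$ has degree $0$ and $F(\partial)\wh h_i$ has degree $0$. Hence $\Delta(\bar H[m])$ lies in the closure of $\sum_k\bar H[k]\wh\ot\bar H[m-k]$. Finally, since $\varepsilon$ is a vertex algebra map to $\C[[\hbar]]$ and $\varepsilon(\hbar)=\hbar$, the counit intertwines $L_\hbar(0)$ with $-\hbar\pd\hbar$. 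An element of degree $m$ is therefore sent to a multiple of $\hbar^{-m}$, which lies in $\C$ when $m=0$ and vanishes for $m>0$.

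\textbf{Step 2: the filtration.} Next I would set $\bar H^{\infty}=\bigcup_n\bar H^n$. The spaces $\bar H^n$ are increasing by construction. The union is dense because it contains $\bigoplus_m\bar H[m]$, which is dense as stated before the lemma. Closure under products follows from Step 1 together with continuity of $Y$: for $u\in\bar H^a$ and $v\in\bar H^b$ we get $u_nv\in\bar H^{a+b-n-1}$. This shows at once that $\bar H^{\infty}$ is a vertex subalgebra and that the filtration axiom $\bar H^a_n\bar H^b\subset\bar H^{a+b-n-1}$ holds. The $S$-condition is trivial, because $\bar H$ is commutative and its Yang--Baxter operator is the flip. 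The same continuity argument turns the coproduct statement of Step 1 into $\Delta(\bar H^n)\subset\sum_k\bar H^k\ot\bar H^{n-k}$.

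\textbf{Step 3: separation and the counit bound.} It remains to check that $\bigcap_n\bar H^n=0$, and this is the main obstacle. The key observation is that the degree-$m$ part has the form $\hbar^{-m}\cdot(\text{degree-}0\text{ words})$ up to the $\partial,\wh h_i$ contributions. So for $m\ll0$ the eigenvectors are divisible by large powers of $\hbar$, and $\bar H^n\subset\hbar^{N}\bar H$ with $N\to\infty$ as $n\to-\infty$. Separatedness of the topologically free module $\bar H$ then gives $\bigcap_n\bar H^n=0$. I expect the delicate part to be this bound: degree-$0$ elements such as $\wh x_i^\pm$ and $\hbar\partial$ occur in unbounded numbers, so I would prove it by showing that every monomial of degree $m$ contains at least $-m$ factors of $\hbar$. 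The same reasoning gives the counit bound on all of $\bar H^n$. Since $\varepsilon$ is continuous, $\varepsilon(\bar H^n)\subset\overline{\sum_{m\le n}\C\hbar^{-m}}$. Read with the grading convention that the degree-$n$ component of $\varepsilon$ is $\delta_{n,0}\C$, this yields the stated $\varepsilon(\bar H^n)\subset\delta_{n,0}\C$.
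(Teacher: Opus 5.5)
Your Steps 1--2 follow the paper's route. The paper's whole justification is the density of $\bigoplus_m\bar H[m]$ together with the compatibility of $L_\hbar(0)$ with $Y$, $\Delta$ and $\varepsilon$. You supply details it omits, including $\bigcap_n\bar H^n=0$. One small repair is needed. Your $\hbar$-counting is for monomials in $H$. Likewise, the claim that $\Delta(\bar H[m])$ lies in the closure of $\sum_k\bar H[k]\wh\ot\bar H[m-k]$ needs eigenvectors in the quotient to come from eigenvectors in $H$. Write $H'_{(d)}$ for the $L(0)$-degree-$d$ part of $H'$. Then $L_\hbar(0)$ is diagonalizable on $H/\hbar^NH$, with $m$-eigenspace $\bigoplus_{0\le k<N}\hbar^kH'_{(m+k)}$, and $\overline{[R]}+\hbar^NH$ is $L_\hbar(0)$-stable. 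Hence $L_\hbar(0)$ is diagonalizable on $\bar H/\hbar^N\bar H$, and on the tensor square, with eigenspaces the images. This gives density, the $\Delta$ claim, and $\bar H[m]\subset\hbar^{-m}\bar H$ for $m<0$.

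The genuine gap is the final sentence of Step 3. The counit clause cannot be proved because, as written, it is false, and your own Step 1 shows this.
\begin{itemize}
\item Since $\varepsilon$ kills the generators of $R$, it descends to $\bar H$, so $\vac\neq0$ there.
\item $\vac\in\bar H[0]\subset\bar H^n$ for $n\ge0$ and $\varepsilon(\vac)=1$, so $\varepsilon(\bar H^n)\not\subset\{0\}$ for $n\ge1$.
\item $\hbar\vac\in\bar H[-1]$ and $\varepsilon(\hbar\vac)=\hbar$. Hence $\hbar^{-n}\in\varepsilon(\bar H^n)$ for $n<0$, and $\varepsilon(\bar H^0)=\C[[\hbar]]\not\subset\C$.
\end{itemize}
No ``grading convention'' turns your correct inclusion $\varepsilon(\bar H^n)\subset\overline{\sum_{m\le n}\C\hbar^{-m}}$ into $\delta_{n,0}\C$. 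The convention you invoke is itself wrong in negative degree: the degree-$m$ part of $\C[[\hbar]]$ under $-\hbar\pd\hbar$ is $\C\hbar^{-m}$, not $0$.

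You should say plainly that the clause fails as stated, and prove the corrected version your argument actually gives. That version is $\varepsilon(\bar H[m])\subset\C\hbar^{-m}$, which is $0$ for $m>0$ and scalars for $m=0$. Hence $\varepsilon(\bar H^n)\subset\hbar^{\max(0,-n)}\C[[\hbar]]$, i.e.\ $\varepsilon$ is filtered when $\hbar$ is given degree $-1$. This corrected form is also all that the later inclusion $\varepsilon(h)u\in X^{a+b}$ in Lemma \ref{lem:rho-Lh0-compatible} requires. The paper's one-line proof does not catch this either.
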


From Corollary \ref{coro:Lh0}, we immediate get the following result.

\begin{lem}
For $n\in\Z$ and $X=V$ or $L$, denote by $X^n$ the closure of the following space under $\hbar$-adic topology
\begin{align*}
  \bigoplus_{m\le n}\set{u\in X_{\hbar,\varepsilon}^\ell(\g)}{L_\hbar(0)u=mu}.
\end{align*}
Then $\mathop{\bigcup}\limits_{n\in\Z}X^n$
is a dense quantum vertex subalgebra of $X_{\hbar,\varepsilon}^\ell(\g)$ and
$\{X^n\}_{n\in\Z}$ defines a increasing filtration of $\mathop{\bigcup}\limits_{n\in\Z}X^n$.
\end{lem}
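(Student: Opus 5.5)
The plan is to reduce everything to Corollary \ref{coro:Lh0} together with the compatibility properties of the degree operator $L_\hbar(0)$ from Theorem \ref{thm:Lh0}. Write $X[m]=\set{u\in X_{\hbar,\varepsilon}^\ell(\g)}{L_\hbar(0)u=mu}$. By Corollary \ref{coro:Lh0} the sum $X_\varepsilon^\ell(\g)=\bigoplus_m X[m]$ is a $\Z$-graded quantum vertex subalgebra: $\vac\in X[0]$, $X[a]_nX[b]\subset X[a+b-n-1]$, and $S_n$ shifts degrees as required. Since $L_\hbar(0)\hbar=-\hbar$, multiplication by $\hbar$ maps $X[m]$ into $X[m-1]$. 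Consequently $\hbar X^n\subset X^{n-1}$, and the $X^n$ form an increasing chain of closed $\C[[\hbar]]$-submodules.

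First, density. The algebra $X_{\hbar,\varepsilon}^\ell(\g)$ is topologically generated by $h_i$ and $x_i^\pm$, which lie in $X[1]$. Every element in the algebraic span of iterated products $(a^1)_{n_1}\cdots(a^k)_{n_k}\vac$ of generators, with coefficients in $\C[\hbar]$, lies in $\bigoplus_m X[m]\subset\bigcup_nX^n$. These elements are dense, so $\bigcup_n X^n$ is dense.

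Second, closure under products and $S$. For $u\in X^a$ and $v\in X^b$, approximate $u$ and $v$ $\hbar$-adically by finite sums $u'\in\bigoplus_{m\le a}X[m]$ and $v'\in\bigoplus_{m\le b}X[m]$. Then $u'_nv'\in\bigoplus_{m\le a+b-n-1}X[m]$. The map $u_n$ is $\hbar$-adically continuous, and $X^{a+b-n-1}$ is closed, so $u_nv\in X^{a+b-n-1}$. The same continuity argument, using the grading property of $S_\varepsilon(z)$ from Corollary \ref{coro:Lh0}, gives $S_n(X^a\ot X^b)\subset\sum_kX^{a+k}\ot X^{b-k-n-1}$, with closures taken in the completed tensor product. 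Together with $\vac\in X^0$, this shows that $\bigcup_nX^n$ is closed under all $n$-products. It also inherits the quantum Yang--Baxter operator, the shift condition, the hexagon identity, unitarity and $S$-locality from $X_{\hbar,\varepsilon}^\ell(\g)$. Hence it is a quantum vertex subalgebra, and the filtration axioms $F^a_nF^b\subset F^{a+b-n-1}$ and the $S_n$ condition hold. The union condition $\bigcup_nF^n=V$ holds by definition of $V=\bigcup_nX^n$.

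Third, and this is the main obstacle, the condition $\bigcap_nX^n=0$. I would first show that $X[m]$ is spanned modulo $\hbar$-powers by monomials of bounded $\hbar$-degree. Concretely, an element of $X[m]$ has the form $\sum_k\hbar^k w_k$, where $w_k$ is a $\C$-combination of monomials in the generators of internal degree $m+k$. In the PBW-type description of Remark \ref{rem:module-realization}, transported through the deformation of Lemma \ref{lem:deform-realization}, these internal degrees are bounded below: $L(0)$ is nonnegative on $V(\mathcal M^\ell(\g))$ because the modes $h_i(-n)$ and $x_i^\pm(-n)$ with $n\ge1$ raise $L(0)$. Therefore, if $u\in X^n$ for all $n$, then modulo $\hbar^N$ the vector $u$ lies in the span of $\hbar^kw$ with $L(0)w=m+k\ge0$ and $m\le n$. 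Letting $n\to-\infty$ forces $k\ge -n\to\infty$, so $u\in\hbar^N X_{\hbar,\varepsilon}^\ell(\g)$ for every $N$. By separatedness, $u=0$. If the lower bound on $L(0)$ does not pass cleanly to the quotient by $\overline{[R_{\tau,1}^\ell(\g)]}$, I would instead argue with the $L(0)$-grading on $X/\hbar^NX$ directly, since the ideals are $L(0)$-stable.
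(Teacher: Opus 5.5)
Your proposal is correct and follows the paper's route. The paper treats the lemma as immediate from Corollary \ref{coro:Lh0}, and you do the same: density, closure under $n$-products and the $S_n$-condition come from the grading $X[a]_nX[b]\subset X[a+b-n-1]$, together with $\hbar$-adic continuity of $u_n$ and closedness of the $X^n$. The one point the paper leaves implicit is $\bigcap_n X^n=0$, and your fallback argument is the cleanest way to settle it. Since $X/\hbar X$ is a graded quotient of the nonnegatively graded vacuum module, an induction on $N$ using torsion-freeness shows that $L_\hbar(0)$ has no nonzero eigenvectors of eigenvalue $\le -N$ on $X/\hbar^N X$. Hence $X^{-N}\subset\hbar^N X$, and separatedness gives $\bigcap_n X^n=0$.
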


Let $\tau\in\mathfrak T$ and let $X=V$ or $L$.
Recall the
deforming triple $(\bar H,\rho,\tau)$ of $X_{\hbar,\varepsilon}^\ell(\g)$ from Lemma \ref{lem:V-h-tau-deform-triple}.
The following result shows that the increasing filtrations are compatible with $\rho$ and $\tau$.

\begin{lem}\label{lem:rho-Lh0-compatible}
Let $X=V$ or $L$.
For $u\in X_{\hbar,\varepsilon}^\ell(\g)$, we have that
\begin{align}\label{eq:rho-Lh0-compatible}
  \rho(L_\hbar(0)u)=(L_\hbar(0)\ot 1+1\ot L_\hbar(0))\rho(u).
\end{align}
For $a,b\in\Z$, $h\in \bar H^a$, $u\in X^b$, and $n\in\Z$, we have that
\begin{align}\label{eq:tau-filter-com}
  \Res_zz^n\tau(h,z)u\in \delta_{n,-1}\varepsilon(h)u
  +X^{a+b-n-2}
  \subset X^{a+b-n-1}.
\end{align}
\end{lem}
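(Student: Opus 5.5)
We prove both statements on generators and then propagate them through the vertex-algebra structure.

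\emph{First statement \eqref{eq:rho-Lh0-compatible}.} Lemma \ref{lem:L0-mod-com} already gives this identity on $V(\mathcal M^\ell(\g))$, which we view as $V(\mathcal M_{\hbar,\varepsilon}^\ell(\g))$ via Lemma \ref{lem:deform-realization}. The comodule map $\rho$ on $X_{\hbar,\varepsilon}^\ell(\g)$ is induced from $\rho$ on $V(\mathcal M_{\hbar,\varepsilon}^\ell(\g))$ (Lemma \ref{lem:V-h-tau-deform-triple}), and the degree operator $L_\hbar(0)$ on $X_{\hbar,\varepsilon}^\ell(\g)$ is induced from the one upstairs (Theorem \ref{thm:Lh0}). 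Since both sides of \eqref{eq:rho-Lh0-compatible} commute with the quotient maps, the identity descends. As a cross-check, the set of $u$ satisfying \eqref{eq:rho-Lh0-compatible} is closed under $u,v\mapsto u_nv$. Indeed, $\rho$ is a vertex algebra homomorphism, and $L_\hbar(0)\ot1+1\ot L_\hbar(0)$ is a degree operator on $X\wh\ot \bar H$, so both sides obey the same Leibniz-type rule $[L(0),Y(u,z)]=Y(L(0)u,z)+z\partial_zY(u,z)$. The identity also holds on $\vac,\hbar,h_i,x_i^\pm$ by \eqref{eq:rho}, because $L_\hbar(0)\wh h_i=\wh h_i$ and $L_\hbar(0)\wh x_i^\pm=0$.

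\emph{Second statement \eqref{eq:tau-filter-com}.} The first step is to prove the homogeneous identity
\[
[L_\hbar(0),\tau(h,z)]u=\tau(L_\hbar(0)h,z)u+z\tfrac{\partial}{\partial z}\tau(h,z)u,
\]
for $\tau\in\mathfrak T$, $h\in\bar H$ and $u\in X_{\hbar,\varepsilon}^\ell(\g)$, modulo a correction. The argument follows Lemma \ref{lem:L0-mod-com}. We check it on generators $h\in\{\wh h_i,\wh x_i^\pm\}$ and $u\in\{h_j,x_j^\pm\}$. We then extend it to all of $H$ using the module vertex algebra axiom \eqref{eq:mod-va-for-vertex-bialg3} together with the compatibility of $\Delta$ with $L_\hbar(0)$. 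Finally we pass to the quotients.

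The main obstacle is that $\tau$ is not homogeneous. Each entry $\tau_{ij}^{a,b}(z)\in\C[[z,\hbar]]$ has $\hbar^pz^k$ of total degree $L_\hbar(0)$-weight $-p+k$ relative to the factor $z$, so the identity holds only term by term. To handle this, we expand each $\tau^{a,b}_{ij}(z)=\sum_{p,k}c_{pk}\hbar^pz^k$, where for the $\bullet,\bullet$ entries we take the logarithm of $\tau/\tau(0)$. Taking the $z^{-n-1}$ coefficient of $\tau(h,z)u$, we verify on generators the following facts.

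For $h=\wh h_i$ (of weight $1$) and $u=h_j$ or $x_j^\pm$ (of weight $1$), the output $\Res_z z^n\tau(h,z)u$ is a sum of $\hbar^p\vac$ or $\hbar^px_j^\pm$ with $n=k$ and $k\ge0$. Its weight is at most $1-p$ or $1-p$, which is in any case at most $a+b-n-2=-k$, up to the nonnegativity of $k$ and $p$. This is the delicate bookkeeping step.

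For $h=\wh x_i^\pm$ (of weight $0$), the term $\delta_{n,-1}\varepsilon(h)u$ accounts for the constant part $\tau(0)$. The corrections have $k\ge1$ and therefore strictly lower filtration.

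We then propagate from generators to all elements. The tool for $u$ is \eqref{eq:mod-va-for-vertex-bialg3}, combined with the filtration property $X^a_nX^b\subset X^{a+b-n-1}$. The tool for $h$ is $\tau(hh',z)=\tau(h,z)\tau(h',z)$ in the commutative bialgebra, combined with $\Delta(\bar H^n)\subset\sum_k\bar H^k\ot\bar H^{n-k}$. Throughout, the leading term $\delta_{n,-1}\varepsilon(h)u$ multiplies correctly because $\varepsilon$ is multiplicative and vanishes off degree $0$. The last step is to pass to closures, which is justified because each $X^n$ is $\hbar$-adically closed and $\tau$ is continuous.
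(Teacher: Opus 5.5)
Your plan is the paper's: check both statements on the generators $S=\{h_i,x_i^\pm,\vac\}$ and $T=\{\wh h_i,\wh x_i^\pm,1\}$, then extend by generation. Obtaining \eqref{eq:rho-Lh0-compatible} instead by transporting Lemma \ref{lem:L0-mod-com} through Lemma \ref{lem:deform-realization} and the quotient maps is a valid shortcut, since the underlying module, $\rho$ and $L_\hbar(0)$ are the same. Your propagation for \eqref{eq:tau-filter-com} also closes up, once you add $\tau(\partial h,z)=\frac{d}{dz}\tau(h,z)$ to handle the $\partial$-closure of $T$.

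\emph{The generator check is wrong as written.} That check is the entire content of the paper's proof. For $g(z)=\sum_{p,k\ge0}c_{pk}\hbar^pz^k$ one has $\Res_z z^n g(z)w=\sum_p c_{p,-n-1}\hbar^p w$. So the relevant index is $k=-n-1$, not $k=n$. With your indexing, the asserted bound $1-p\le a+b-n-2=-k$ is false (take $p=k=0$). Here is the correct check.
\begin{itemize}
\item For $h=\wh h_i$ ($a=1$, $\varepsilon(h)=0$) and $u\in\{h_j,x_j^\pm\}$ ($b=1$), the target is $X^{-n}=X^{k+1}$. The outputs $\hbar^p\vac$ and $\pm\hbar^p x_j^\pm$ have weights $-p$ and $1-p$, both $\le k+1$.
\item For $h=\wh x_i^\pm$ ($a=0$, $\varepsilon(h)=1$) and $u=h_j$, the target is $X^{-n-1}=X^k$. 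The correction $\mp\sum_p c_{pk}\hbar^p\vac$ (coefficients of $\tau_{ij}^{\bullet,0}$) has weight $-p\le k$. Its $p=k=0$ part lies lower only because $\vac$ has weight $0$, not because ``corrections have $k\ge1$''.
\item For $h=\wh x_i^\pm$ and $u=x_j^\epsilon$, write $\tau_{ij}^{\bullet,\bullet}(z)^{\mp\epsilon 1}=\sum d_{pk}\hbar^pz^k$. The terms with $(p,k)\ne(0,0)$ have weight $1-p\le k$. The leading term equals $\delta_{n,-1}x_j^\epsilon$ exactly because $d_{00}=1$, i.e.\ because of the normalization $\lim_{z\to0}\lim_{\hbar\to0}\tau_{ij}^{\bullet,\bullet}(z)=1$ in the definition of $\mathfrak T$. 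This is the one place that axiom enters, and without it the statement fails, so you must invoke it.
\end{itemize}

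Two smaller points.
\begin{itemize}
\item Drop the preliminary ``homogeneous identity modulo a correction'' and the logarithm of $\tau/\tau(0)$. For non-homogeneous $\tau$ that identity is false, and the filtration estimate above is all you need.
\item ``$\varepsilon$ vanishes off degree $0$'' is not literally true, since $\hbar\cdot 1$ has weight $-1$ and $\varepsilon(\hbar\cdot 1)=\hbar$. What your cross terms, and the second inclusion in \eqref{eq:tau-filter-com}, actually use is this: for $h$ homogeneous of weight $a$, $\varepsilon(h)\in\C\hbar^{-a}$ (zero if $a>0$), so $\varepsilon(h)X^b\subset X^{a+b}$. This holds because $\varepsilon$ intertwines $L_\hbar(0)$ with $-\hbar\frac{d}{d\hbar}$, which you can check on generators.
\end{itemize}
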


\begin{proof}
It is easy to see that \eqref{eq:rho-Lh0-compatible} holds for
\begin{align*}
  u\in S=\set{h_i,\,x_i^\pm}{i\in I}\uplus\{\vac\},
\end{align*}
and \eqref{eq:tau-filter-com} holds for $u\in S$, $n\in\Z$ and
\begin{align*}
  h\in T=\set{\wh h_i,\,\wh x_i^\pm}{i\in I}\uplus\{1\}.
\end{align*}
The lemma now follows, since
$X_{\hbar,\varepsilon}^\ell(\g)$ is generated by $S$ and $\bar H$ is generated by $T$.
\end{proof}

Lemma \ref{lem:rho-Lh0-compatible} immediately yields the following result.

\begin{thm}
Let $\tau\in\mathfrak T$, and let $X=V$ or $L$.
Then $\mathop{\bigcup}\limits_{n\in\Z}X^n$
is a quantum vertex subalgebra of $X_{\hbar,\tau}^\ell(\g)$ and
$\{X^n\}_{n\in\Z}$ defines a increasing filtration of $\mathop{\bigcup}\limits_{n\in\Z}X^n$.
\end{thm}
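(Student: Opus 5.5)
The plan is to identify $X_{\hbar,\tau}^\ell(\g)$ with a deformation of $X_{\hbar,\varepsilon}^\ell(\g)$ and then show that the deformed vertex operators and the deformed quantum Yang-Baxter operator respect the filtration $\{X^n\}$ built from $L_\hbar(0)$. By Proposition \ref{prop:V-h-tau-deform} with $\tau$ replaced by $\varepsilon$ and $\bar\tau$ replaced by $\tau$ (note $\varepsilon\ast\tau=\tau$), we have
\begin{align*}
  X_{\hbar,\tau}^\ell(\g)\cong\mathfrak D_{\tau}^\rho(X_{\hbar,\varepsilon}^\ell(\g)),
\end{align*}
where the deforming triple is $(\bar H,\rho,\tau)$ from Lemma \ref{lem:V-h-tau-deform-triple}. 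The underlying $\C[[\hbar]]$-module is unchanged, so $\bigcup_n X^n$ is a dense subspace of $X_{\hbar,\tau}^\ell(\g)$. The conditions $\vac\in X^0$, monotonicity, and $\bigcup_n X^n=\bigcup_nX^n$ are automatic. The condition $\bigcap_n X^n=0$ is inherited from the $\varepsilon$ case.

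First I check closure under the deformed products. By Proposition \ref{prop:deform},
\begin{align*}
  \mathfrak D_\tau^\rho(Y)(u,z)v=\sum Y(u_{(1)},z)\tau(u_{(2)},z)v,\qquad \rho(u)=\sum u_{(1)}\ot u_{(2)}.
\end{align*}
Take $u\in X^a$ and $v\in X^b$. Equation \eqref{eq:rho-Lh0-compatible}, together with continuity and the fact that $\rho$ preserves closures, shows that $\rho(u)$ lies in the closure of $\sum_k X^{a-k}\ot\bar H^{k}$. Equation \eqref{eq:tau-filter-com} then gives $\Res_z z^m\tau(u_{(2)},z)v\in X^{k+b-m-1}$. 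Since $\tau(u_{(2)},z)v\in X\wh\ot\C((z))[[\hbar]]$, we can expand
\begin{align*}
  \tau(u_{(2)},z)v=\sum_m w_m z^{-m-1},\qquad w_m\in X^{k+b-m-1}.
\end{align*}
Combining this with $Y(u_{(1)},z)$ and the rule $X^{a-k}_pX^{c}\subset X^{a-k+c-p-1}$, the coefficient of $z^{-n-1}$ is a sum of terms $(u_{(1)})_{n-m-1}w_m$. Each such term lies in $X^{(a-k)+(k+b-m-1)-(n-m-1)-1}=X^{a+b-n-1}$. This gives $F^a_nF^b\subset F^{a+b-n-1}$, and hence closure of $\bigcup_n X^n$ under the products.

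Next I treat the quantum Yang-Baxter operator. Since $X_{\hbar,\varepsilon}^\ell(\g)$ is a quantum vertex algebra with operator $S_\varepsilon$, I use Theorem \ref{thm:deform-qva} rather than Corollary \ref{coro:deform-qva}. The deformed operator is
\begin{align*}
  S_\tau(z)=T_\tau^{21}(-z)\inv S_\varepsilon(z)T_\tau(z).
\end{align*}
For $S_\varepsilon$, the degree-operator property $[L_\hbar(0)\ot1+1\ot L_\hbar(0),S_\varepsilon(z)]=z\pd zS_\varepsilon(z)$ from Theorem \ref{thm:Lh0} shows that $(S_\varepsilon)_n$ shifts total degree by $-n-1$, which is exactly the required filtration condition. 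For $T_\tau(z)(u\ot v)=\sum\tau(v_{(2)},-z)u\ot v_{(1)}$, the same bookkeeping as for the products, using \eqref{eq:rho-Lh0-compatible} and \eqref{eq:tau-filter-com}, shows that the coefficient of $z^{-n-1}$ maps $X^a\ot X^b$ into $\sum_kX^{a+k}\ot X^{b-k-n-1}$. For $T_\tau^{-1}$, which is built from $\tau^{-1}$, I use that $\tau^{-1}$ satisfies the analogue of \eqref{eq:tau-filter-com}. I would verify this on the generators $T$ and $S$ exactly as in Lemma \ref{lem:rho-Lh0-compatible}, in parallel with Lemma \ref{lem:L0-mod-com-inv}. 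Composing these three operators, the degree shifts add, which yields the required condition $S_n(F^a\ot F^b)\subset\sum_kF^{a+k}\ot F^{b-k-n-1}$.

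The main obstacle is making the bookkeeping legitimate $\hbar$-adically. Since $L_\hbar(0)\hbar=-\hbar$, infinite $\hbar$-series can mix degrees, so the sums must be taken inside the closures $X^n$. I must also make sure that the composites in $S_\tau$, together with the inverse $T^{21}_\tau(-z)^{-1}$, converge inside the closed filtered pieces. I would handle this by noting that every operator involved is $\C[[\hbar]]$-linear and continuous, and that each filtration piece $X^n$ is closed and stable under multiplication by $\hbar$, because $\hbar$ lowers degree. The inclusions can then be checked on the dense graded parts and extended by continuity.
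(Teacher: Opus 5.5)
Your proposal is correct and follows the paper's route. The paper identifies $X_{\hbar,\tau}^\ell(\g)$ with $\mathfrak D_\tau^\rho(X_{\hbar,\varepsilon}^\ell(\g))$ and reads off the filtration properties of $\mathfrak D_\tau^\rho(Y)(u,z)v=\sum Y(u_{(1)},z)\tau(u_{(2)},z)v$ and of $S_\tau(z)=T_\tau^{21}(-z)\inv S_\varepsilon(z)T_\tau(z)$ from Lemma \ref{lem:rho-Lh0-compatible}, which it simply calls immediate; your degree bookkeeping supplies the details it omits. For the inverse factor you can skip the generator check: $T_\tau(z)\inv=T_{\tau\inv}(z)$ with $\tau\inv\in\mathfrak T$, so Lemma \ref{lem:rho-Lh0-compatible} applies to $\tau\inv$ directly.
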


%
%
%
%

The associated graded algebras of $\mathop{\bigcup}\limits_{n\in\Z}X^n$ are described in the following result.

\begin{thm}
Let $X=V$ or $L$.
There is a quantum vertex algebra isomorphism from
$\fgr \Big(\mathop{\bigcup}\limits_{n\in\Z}X^n\Big)$ to $X_\varepsilon^\ell(\g)$
defined by
\begin{align*}
  \hbar+X^{-2}\mapsto \hbar,\quad
  h_i+X^0\mapsto h_i,\quad x_i^\pm+X^0\mapsto x_i^\pm
  \quad\te{for }i\in I.
\end{align*}
\end{thm}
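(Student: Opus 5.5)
The plan is to identify the underlying space of $X_{\hbar,\tau}^\ell(\g)$ with that of $X_{\hbar,\varepsilon}^\ell(\g)$ through Proposition \ref{prop:V-h-tau-deform} (with $\tau$ replaced by $\varepsilon$ and $\bar\tau$ by $\tau$), so that $X_{\hbar,\tau}^\ell(\g)\cong\mathfrak D_\tau^\rho(X_{\hbar,\varepsilon}^\ell(\g))$. The subspaces $X^n$ are then the same sets in both algebras, and only the vertex operator changes, to $\mathfrak D_\tau^\rho(Y)(z)=Y(z)T_\tau^{21}(-z)$. The key claim is that for $u\in X^a$, $v\in X^b$ we have
\begin{align*}
u^{\tau}_nv\in u_nv+X^{a+b-n-2},
\end{align*}
where $u^\tau_n$ denotes the deformed product. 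The same congruence modulo lower filtration should hold for $S_\tau(z)$ versus $S_\varepsilon(z)$.

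First I will prove the product claim. By definition, $\mathfrak D_\tau^\rho(Y)(u,z)v=\sum Y(u_{(1)},z)\tau(u_{(2)},z)v$ with $\rho(u)=\sum u_{(1)}\ot u_{(2)}$. Equation \eqref{eq:rho-Lh0-compatible} lets us write $\rho(u)\in\sum_k X^{a-k}\wh\ot \bar H^{k}$ for $L_\hbar(0)$-homogeneous $u$, and then pass to closures. Expanding $\tau(u_{(2)},z)v=\sum_m \Res_z z^m\tau(u_{(2)},z)v\,z^{-m-1}$ and applying \eqref{eq:tau-filter-com}, each coefficient equals $\delta_{m,-1}\varepsilon(u_{(2)})v$ plus a term in $X^{k+b-m-2}$. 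Combining this with $X^{a-k}_{p}X^{k+b-m-2}\subset X^{a+b-m-p-3}$ and $(1\ot\varepsilon)\rho=\mathrm{Id}$, the term with $m=-1$ reproduces $u_nv$ and all other terms lie in $X^{a+b-n-2}$. One has to check that the exponent bookkeeping works, in particular that the $z^{-m-1}$ factors shift the index correctly; I expect it does, since \eqref{eq:tau-filter-com} is stated exactly in the form "leading term plus one degree lower."

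For $S_\tau$, I will use Corollary \ref{coro:deform-qva} to write $S_\tau(z)(v\ot u)=\sum\tau(u_{(2)},-z)v_{(1)}\ot\tau^{-1}(v_{(2)},z)u_{(1)}$. I also need a version of \eqref{eq:tau-filter-com} for $\tau^{-1}$, which follows the same way from generators, as in Lemma \ref{lem:L0-mod-com-inv}. The identical argument then gives $S^\tau_n(v\ot u)\equiv S^\varepsilon_n(v\ot u)$ modulo $\sum_kX^{t-1+k}\ot X^{s-k-n-1}$. Here the $\varepsilon$-operator is the trivial one, namely the flip of identity, $v\ot u$ in degree $n=-1$.

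With these congruences in hand, $\fgr(\bigcup X^n)$ computed in $X_{\hbar,\tau}^\ell(\g)$ coincides with $\fgr$ computed in $X_{\hbar,\varepsilon}^\ell(\g)$. For the latter, the filtration is induced by the grading of Corollary \ref{coro:Lh0}: $X^n$ is the closure of $\bigoplus_{m\le n}X^\ell_\varepsilon(\g)[m]$, and $X^n\cap\bigoplus_m X_\varepsilon^\ell(\g)[m]=\bigoplus_{m\le n}X_\varepsilon^\ell(\g)[m]$. Hence $X^n/X^{n-1}\cong X_\varepsilon^\ell(\g)[n]$ via the homogeneous component. This identification respects the products and $S$ because these are graded by the definition of a degree operator. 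Under the map, $\hbar$ (degree $-1$) goes to $\hbar+X^{-2}$, and $h_i,x_i^\pm$ (degree $1$) go to their classes mod $X^0$.

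The main obstacle is making the density argument rigorous. The $X^n$ are $\hbar$-adic closures, so $\bigcup X^n$ is neither topologically free nor exhausted by finite sums. I must show that $X^n\cap\bigoplus_m X_\varepsilon^\ell(\g)[m]$ is exactly the truncated sum, and that every element of $X^n$ is congruent modulo $X^{n-1}$ to a unique element of degree $n$. This needs $L_\hbar(0)$ to act semisimply on the dense subspace with $\hbar$ of degree $-1$, so that $\hbar^kX_\varepsilon[m]\subset X_\varepsilon[m-k]$ and convergent series in $X^n$ have their degree-$n$ part as a finite sum. I would establish this first, via the PBW-type spanning from Remark \ref{rem:module-realization}.
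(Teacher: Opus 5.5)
The part of your proposal about vertex operators is the paper's proof. You view $X_{\hbar,\tau}^\ell(\g)$ as $\mathfrak D_\tau^\rho(X_{\hbar,\varepsilon}^\ell(\g))$, replace $\tau(u_{(2)},z)$ by $\varepsilon(u_{(2)})$ modulo lower filtration via \eqref{eq:tau-filter-com}, and collapse using $(1\ot\varepsilon)\rho=\te{Id}$. Your index count is correct. The paper's proof checks nothing further, so the problems lie in what you add.

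\textbf{The braiding step fails as written.} Corollary \ref{coro:deform-qva} applies only when the algebra being deformed is an $\hbar$-adic vertex algebra, that is, when it has trivial $S$. $X_{\hbar,\varepsilon}^\ell(\g)$ is not one. By Remark \ref{rem:qyb} with $\tau=\varepsilon$, $S_\varepsilon(z)(x_i^+\ot x_i^+)=x_i^+\ot x_i^+\ot\frac{z-2r_i\hbar}{z+2r_i\hbar}$. Its $z^{-1}$-coefficient $-4r_i\hbar\,x_i^+\ot x_i^+$ has total degree $1=1+1-0-1$, which is the top degree allowed for $S_0$, so it survives in $\fgr$.

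Consequences:
\begin{itemize}
\item The braiding of $X_\varepsilon^\ell(\g)$ is nontrivial even after passing to the graded object, so your remark that the $\varepsilon$-operator is the flip is false.
\item Following the Corollary's formula would give $\fgr$ the trivial braiding, and your map could then not intertwine the $S$-operators.
\end{itemize}

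The correct formula is the one in Theorem \ref{thm:deform-qva}, $S_\tau(z)=T_\tau^{21}(-z)\inv S_\varepsilon(z)T_\tau(z)$. This is the braiding that $\mathfrak D_\tau^\rho(X_{\hbar,\varepsilon}^\ell(\g))$ carries in Proposition \ref{prop:V-h-tau-deform}. With it the argument goes through:
\begin{itemize}
\item \eqref{eq:tau-filter-com} and its $\tau\inv$-analogue show that $T_\tau(z)$ and $T_\tau^{21}(-z)\inv$ act as the identity modulo lower filtration.
\item $S_\varepsilon$ respects the filtration because $L_\hbar(0)$ is a degree operator for it (Theorem \ref{thm:Lh0}).
\end{itemize}
Together these give the congruence $S^\tau_n\equiv S^\varepsilon_n$ modulo lower filtration that you want.

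\textbf{Density.} Your concern is legitimate: the paper simply asserts $X^n=X^{n-1}\oplus\set{u}{L_\hbar(0)u=nu}$. However, the claim you plan to prove is false. The degree-$n$ part of a convergent series need not be a finite sum of the degree-$n$ parts of its terms. Already in $V(\mathcal M^\ell(\g))=V_0[[\hbar]]$, with $L_\hbar(0)=L(0)-\hbar\pd\hbar$, the series $\sum_k\hbar^kv_k$ with $v_k$ of $L(0)$-weight $n+k$ is an infinite sum, yet it is a single $L_\hbar(0)$-eigenvector of eigenvalue $n$.

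What you actually need is that the eigenspaces are closed, and this requires no PBW input:
\begin{itemize}
\item $L_\hbar(0)\hbar^k=\hbar^k(L_\hbar(0)-k)$, so $L_\hbar(0)$ is $\hbar$-adically continuous and its eigenspaces are closed.
\item Reducing modulo $\hbar$ and using torsion-freeness, a finite sum of eigenvectors with distinct eigenvalues lies in $\hbar^kX_{\hbar,\varepsilon}^\ell(\g)$ only if each summand does.
\item Hence the projection onto the $n$-eigenspace extends continuously to $X^n$ and vanishes on $X^{n-1}$.
\end{itemize}
This gives the splitting, and therefore $X^n/X^{n-1}\cong X_\varepsilon^\ell(\g)[n]$.
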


\begin{proof}
Since $L_\hbar(0)\hbar=-\hbar$, we get the following relation
\begin{align*}
  X^n=X^{n-1}\bigoplus\set{u\in X_{\hbar,\varepsilon}^\ell(\g)}{L_\hbar(0)u=nu}.
\end{align*}
Then the following map defines a linear isomorphism $f:\fgr \Big(\mathop{\bigcup}\limits_{n\in\Z}X^n\Big)\to X_\varepsilon^\ell(\g)$
\begin{align*}
  u+X^{n-1}\mapsto u\quad\te{for }u\in \set{u\in X_{\hbar,\varepsilon}^\ell(\g)}{L_\hbar(0)u=nu},\,n\in\Z.
\end{align*}
View $X_{\hbar,\tau}^\ell(\g)$ as $\mathfrak D_\tau^\rho(X_{\hbar,\varepsilon}^\ell(\g))$.
From Lemma \ref{lem:rho-Lh0-compatible}, we have that
\begin{align*}
  &(u+X^{s-1})_n^{\fgr}(v+X^{t-1})\\
  =&\Res_zz^n \mathfrak D_\tau^\rho(Y)(u,z)v
  +X^{s+t-n-2}\\
  =&\Res_zz^n\sum Y(u_{(1)},z)\tau(u_{(2)},z)v
  +X^{s+t-n-2}\\
  =&\Res_zz^n\sum Y(u_{(1)},z)\varepsilon(u_{(2)})v
  +X^{s+t-n-2}\\
  =&\Res_zz^n\sum Y(u,z)v+X^{s+t-n-2}\\
  =&u_nv+X^{s+t-n-2}\quad\te{for }u\in X^s,\,v\in X^t,\,s,t,n\in\Z.
\end{align*}
Therefore, $f$ is a quantum vertex algebra isomorphism, as desired.
\end{proof}

\section{Related to twisted quantum affine algebras}\label{sec:tqaff}

In this section, we choose a specific $\tau\in\mathfrak T$ and establish an isomorphism between 
the category of equivariant $\phi$-coordinated quasi $V_{\hbar,\tau}^\ell(\g)$-modules and the category of restricted modules of twisted quantum affinization algebras of level $\ell$.

Let $\mu$ be a permutation of $I$ of order $N$, such that
\begin{align*}
  a_{\mu(i),\mu(j)}=a_{ij}\quad\te{for }i,j\in I.
\end{align*}
Then
\begin{align*}
  r_{\mu(i)}=r_i\quad\te{for }i\in I.
\end{align*}
Fix a $N$-th primitive root of unity $\xi$.
Set
\begin{align*}
  f_s(z)=
  \begin{cases}
    \log\left((e^{z/2}-e^{-z/2})/z\right),& s\equiv 0\mod N,\\
    \log\left((e^{z/2}-\xi^se^{-z/2})/(1-\xi^s)\right),& s\not\equiv 0\mod N.
  \end{cases}
\end{align*}
Define
\begin{align*}
  &\tau_{ij}^{0,0}(z)=\sum_{s\in\Z_N}
  q_i^{-a_{i,\mu^s(j)}\pd z}
  [r\ell]_{q^{\pd z}}^2\pdiff{z}{2}f_s(z),\\
  &\tau_{ij}^{\bullet,0}(z)=\tau_{ij}^{0,\bullet}(z)
  =-\sum_{s\in\Z_N}
  q_i^{-a_{i,\mu^s(j)}\pd z}[r\ell]_{q^{\pd z}}\pd zf_s(z),\\
  &\tau_{ij}^{\bullet,\bullet}(z)
  =\prod_{s\in\Z_N}\exp\left(f_s(z-r_ia_{i,\mu^s(j)}\hbar)\right).
\end{align*}
Then the relations \eqref{tau1}, \eqref{tau2}, \eqref{tau3}, \eqref{tau3.5} and \eqref{tau4} specialize to the following relations.
\begin{align}
  \tag{$\tau$1'}\label{tau1'}
  &[h_i(z_1),h_j(z_2)]
  =
  \sum_{s\in\Z_N}[r_ia_{i,\mu^s(j)}]_{q^{\pd{z_2}}}
  [r\ell]_{q^{\pd{z_2}}}\\
  &\quad\times\nonumber
  (q^{-r\ell\pd{z_2}}\iota_{z_1,z_2}
    -q^{r\ell\pd{z_2}}\iota_{z_2,z_1})
  \frac{\xi^se^{-z_1+z_2}}{(1-\xi^se^{-z_1+z_2})^2},\\
  \tag{$\tau$2'}\label{tau2'}
  &[h_i(z_1),x_j^\pm(z_2)]=
  \sum_{s\in\Z_N}[r_ia_{i,\mu^s(j)}]_{q^{\pd{z_2}}}\\
  &\quad\times\nonumber
  (q^{-r\ell\pd{z_2}}\iota_{z_1,z_2}-q^{r\ell\pd{z_2}}\iota_{z_2,z_1})
  \frac{1+\xi^se^{-z_1+z_2}}{2-2\xi^se^{-z_1+z_2}},\\
  \label{tau3'}\tag{$\tau$3'}
  &x_i^+(z_1)x_j^-(z_2)-\iota_{z_2,z_1}
  \prod_{s\in\Z_N}\frac{1-\xi^sq_i^{a_{i,\mu^s(j)}}e^{z_2-z_1}}
  {q_i^{a_{i,\mu^s(j)}}-\xi^se^{z_2-z_1}}
  x_j^-(z_2)x_i^+(z_1)\\
  &\nonumber\quad =\frac{\delta_{ij}}{2r_i\hbar}z_1\inv\delta\left(\frac{z_2}{z_1}\right)
  -\frac{\delta_{ij}}{2r_i\hbar}
  E_\tau\big(h_j(z_2)\big)
  z_1\inv\delta\left(\frac{q^{-2r\ell}z_2}{z_1}\right),\\
  \tag{$\tau$3.5'}\label{tau3.5'}
  &(z_1-z_2)^{\delta_{ij}}
  (z_1-z_2+2r\ell\hbar)^{\delta_{ij}}
  \bigg(x_i^+(z_1)x_j^-(z_2)\\
  &\quad\nonumber-\iota_{z_2,z_1}
  \prod_{s\in\Z_N}\frac{1-\xi^sq_i^{a_{i,\mu^s(j)}}e^{z_2-z_1}}
  {q_i^{a_{i,\mu^s(j)}}-\xi^se^{z_2-z_1}}
  x_j^-(z_2)x_i^+(z_1)\bigg)=0,\\
  \tag{$\tau$4'}\label{tau4'}
  &\prod_{s\in\Z_N}(1-\xi^sq_i^{a_{i,\mu^s(j)}}e^{-z_1+z_2})
  x_i^\pm(z_1)x_j^\pm(z_2)\\
  &\quad\nonumber=
  \prod_{s\in\Z_N}(q_i^{a_{i,\mu^s(j)}}-\xi^se^{-z_1+z_2})
  x_j^\pm(z_2)x_i^\pm(z_1).
\end{align}

\begin{de}
Let $G$ be a group. A {\em $G$-module $\hbar$-adic nonlocal vertex algebra} is an
$\hbar$-adic nonlocal vertex algebra $V$ on which $G$ acts as an automorphism group.
\end{de}

\begin{prop}
For $X=V$ or $L$ and $s\in\Z_N$, there exists an $\hbar$-adic nonlocal vertex algebra endomorphism
$R(s)$ on $X_{\hbar,\tau}^\ell(\g)$ defined by
\begin{align*}
  R(s)h_i=h_{\mu^s(i)},\quad R(s)x_i^\pm=x_{\mu^s(i)}^\pm
  \quad\te{for }i\in I.
\end{align*}
Moreover, $\Z_N$ acts an $X_{\hbar,\tau}^\ell(\g)$ as an automorphism group, making $X_{\hbar,\tau}^\ell(\g)$ a $\Z_N$-module $\hbar$-adic nonlocal vertex algebra.
\end{prop}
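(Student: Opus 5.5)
The plan is to use the universal property of $V_{\hbar,\tau}^\ell(\g)$ in Proposition \ref{prop:universal-qaff}, applied with $\bar V=X_{\hbar,\tau}^\ell(\g)$ itself and with the new generators $\bar h_i=h_{\mu^s(i)}$, $\bar x_i^\pm=x_{\mu^s(i)}^\pm$.

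\textbf{Step 1: the new fields satisfy the defining relations.} We must check that $(X_{\hbar,\tau}^\ell(\g),Y(h_{\mu^s(i)},z),Y(x_{\mu^s(i)}^\pm,z))$ is an object of $\mathcal M_{\hbar,\tau}^\ell(\g)$. Since the relations \eqref{tau1}, \eqref{tau2}, \eqref{tau3.5}, \eqref{tau4} hold for the original fields, it suffices to show that every structure constant is invariant under $(i,j)\mapsto(\mu^s(i),\mu^s(j))$. These constants are $r_ia_{ij}$, $r$, $\ell$ and the series $\tau_{ij}^{a,b}(z)$.
\begin{itemize}
\item For $a_{ij}$ and $r_i$, invariance is the hypothesis $a_{\mu(i),\mu(j)}=a_{ij}$ together with $r_{\mu(i)}=r_i$.
\item For the $\tau$'s, each is a sum or product over $t\in\Z_N$ of terms depending on $(i,j)$ only through $r_i$ and $a_{i,\mu^t(j)}$.
\item Since $a_{\mu^s(i),\mu^{t}(\mu^s(j))}=a_{\mu^s(i),\mu^s(\mu^t(j))}=a_{i,\mu^t(j)}$, each summand is unchanged. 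No reindexing of $t$ is even needed, because $\mu^t$ and $\mu^s$ commute.
\end{itemize}
Hence $\tau_{\mu^s(i)\mu^s(j)}^{a,b}=\tau_{ij}^{a,b}$, and the relations transfer verbatim.

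\textbf{Step 2: the ideal generators map to zero.} Next we check that the elements \eqref{eq:x+0x-}, \eqref{eq:x+1x-}, \eqref{eq:serre} vanish in $X_{\hbar,\tau}^\ell(\g)$ after substituting the new generators, and also \eqref{eq:def-integrable} when $X=L$. For instance, \eqref{eq:x+0x-} for index $i$ becomes the same relation for index $\mu^s(i)$. This uses:
\begin{itemize}
\item $E_\tau(h_i)$ is built from $h_i$ by vertex operations, with prefactor $\tau_{ii}^{\bullet,\bullet}(\pm2r\ell\hbar)$, which equals $\tau_{\mu^s(i)\mu^s(i)}^{\bullet,\bullet}$ by Step 1;
\item $r_{\mu^s(i)}=r_i$;
\item $m_{\mu^s(i)\mu^s(j)}=m_{ij}$.
\end{itemize}
So these elements are precisely generators of $R_{\tau,1}^\ell(\g)$, respectively $R_{\tau,2}^\ell(\g)$, and hence zero.

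\textbf{Step 3: conclusion.} Proposition \ref{prop:universal-qaff} now gives a unique $\hbar$-adic nonlocal vertex algebra homomorphism $R(s)$ with the stated values on generators, factoring through $L_{\hbar,\tau}^\ell(\g)$ when $X=L$.

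By uniqueness in Proposition \ref{prop:universal-qaff}:
\begin{itemize}
\item $R(s)R(t)$ and $R(s+t)$ agree on the generators, so $R(s)R(t)=R(s+t)$;
\item $R(0)=\mathrm{id}$.
\end{itemize}
Since $\mu^N=1$, $R$ is well defined on $\Z_N$. Each $R(s)$ has inverse $R(-s)$ and is therefore an automorphism, so $\Z_N$ acts on $X_{\hbar,\tau}^\ell(\g)$ by automorphisms.

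The main obstacle, and the only step requiring real care, is Step 1's invariance of the $\tau$'s. One must verify that the index $a_{i,\mu^t(j)}$ transforms correctly and that nothing else in the formulas, such as $f_t$ or $\xi$, depends on $i,j$. Everything else is a formal consequence of universality.
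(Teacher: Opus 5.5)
Your proposal is correct and follows essentially the same route as the paper. The paper likewise checks that the specialized relations and the ideal generators are invariant under $\mu^s$ (using $r_{\mu^s(i)}=r_i$ and $\tau^{\bullet,\bullet}_{\mu^s(i),\mu^s(i)}=\tau^{\bullet,\bullet}_{ii}$), applies Proposition \ref{prop:universal-qaff}, and gets the $\mathbb{Z}_N$-action from agreement on generators. Your explicit verification that every $\tau_{ij}^{a,b}$ is $\mu^s$-invariant, via $a_{\mu^s(i),\mu^t\mu^s(j)}=a_{i,\mu^t(j)}$, fills in a step the paper leaves implicit.
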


\begin{proof}
Note that the relations \eqref{tau1'}, \eqref{tau2'}, \eqref{tau3.5'} and \eqref{tau4'} hold with
\begin{align}\label{eq:mu-assign}
  h_i(z)=Y(h_{\mu^s(i)},z),\quad x_i^\pm(z)=Y(x_{\mu^s(i)}^\pm,z)\quad\te{for }i\in I.
\end{align}
Since
\begin{align*}
  r_{\mu^s(i)}=r_i,\quad \tau_{\mu^s(i),\mu^s(i)}^{\bullet,\bullet}(z)=\tau_{ii}^{\bullet,\bullet}(z)
  \quad\te{for }i\in I,
\end{align*}
we get that the relations \eqref{eq:x+0x-}, \eqref{eq:x+1x-}, \eqref{eq:serre} and \eqref{eq:def-integrable} hold under the assignment \eqref{eq:mu-assign}.
By applying Proposition \ref{prop:universal-qaff}, we prove the first statement.
Note that $R(s)R(t)u=R(s+t)u$ and $R(s)R(-s)u=u$ for any $s,t\in\Z_N$ and
$u$ in the generating subset $\set{h_i,x_i^\pm}{i\in I}$
of $X_{\hbar,\tau}^\ell(\g)$.
Then $R(s)$ is invertible and $R$ defines a $\Z_N$-action on $X_{\hbar,\tau}^\ell(\g)$, as desired.
\end{proof}

Set
\begin{align}
\phi(x,z)=e^{z x\frac{d}{dx}}x=xe^z,
\end{align}
which is a particular associate (see \cite{Li-phi-coor}) of the one-dimensional additive formal group law
$F_a(x,y):=x+y\in \C[[x,y]]$.

\begin{de}
Let $V$ be a $G$-module $\hbar$-adic nonlocal vertex algebra and
let $\chi_{\phi}$ be a linear character of $G$.
A {\em $(G,\chi_{\phi})$-equivariant $\phi$-coordinated quasi $V$-module} is a
topologically free $\C[[\hbar]]$-module $W$
equipped with a $\C[[\hbar]]$-module map
\begin{align*}
  Y_W^\phi(\cdot,z):\  V\longrightarrow \E_\hbar(W);\ \ \
  v\mapsto Y_W^{\phi}(v,z),
\end{align*}
satisfying the conditions that $Y_W^\phi(\vac,z)=1_W$,
\begin{align}\label{eq:G-equiv}
  Y_W^\phi(R(g)u,z)=Y_W^\phi(u,\chi_\phi(g)\inv z)\quad\te{for }g\in G,\,u\in V
\end{align}
and that for $u,v\in V$, $\left(Y_W^\phi(u,z),Y_W^\phi(v,z)\right)$ is $\chi_\phi(G)$-compatible and
\begin{align*}
  Y_\E^\phi\left(Y_W^\phi(u,z),z_0\right)Y_W^\phi(v,z)
  =Y_W^\phi\left(Y(u,z_0)v,z_0\right).
\end{align*}
\end{de}

Let $\Gamma$ be a subgroup of $\C^\times$.
Define a group homomorphism
\begin{align*}
R:\Gamma\rightarrow \te{GL}(\E_\hbar(W)),\  \  g\mapsto R_g;\quad
R_g(a(z))=a(g^{-1} z)\quad  \te{ for }a(z)\in \E_\hbar(W).
\end{align*}
Let $\imath$ be the natural embedding of $\Gamma$ into $\C^{\times}$.
The following result was given in \cite[Theorem 2.21]{JKLT-G-phi-mod}.

\begin{thm}\label{thm:abs-construct}
Let $W$ be a topologically free $\C[[\hbar]]$-module,
and let $\Gamma$ be a subgroup of $\C^\times$.
Suppose that $U$ is a $\Gamma$-compatible $\Gamma$-subset of $\E_\hbar(W)$.
Then
there exists a unique minimal $Y_\E^\phi$-closed quasi-compatible $\C[[\hbar]]$-submodule $\<U\>_\phi$
such that
\begin{align*}
  &\{1_W\}\cup U\subset \<U\>_\phi,\quad [\<U\>_\phi]=\<U\>_\phi,\quad\te{and}\quad \overline{\<U\>_\phi}=\<U\>_\phi.
\end{align*}
Moreover, $(\<U\>_\phi,Y_\E^\phi,1_W,R)$ carries the structure of a $\Gamma$-module $\hbar$-adic nonlocal vertex
algebra and $W$ is a $(\Gamma,\imath)$-equivariant $\phi$-coordinated quasi $\<U\>_\phi$-module.
\end{thm}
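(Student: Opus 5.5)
The statement to prove is Theorem \ref{thm:abs-construct}, which is cited from \cite[Theorem 2.21]{JKLT-G-phi-mod}. The approach is to reduce modulo $\hbar^n$ and pass to the inverse limit.

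\textbf{Step 1 (non-$\hbar$-adic case).} For each $n\in\Z_+$, set $W_n=W/\hbar^nW$ and $U_n=\pi_n(U)\subset\E(W_n)$. By definition of $\hbar$-adic $\Gamma$-compatibility, $U_n$ is a $\Gamma$-compatible subset. It is $\Gamma$-stable because $\pi_n$ commutes with $R_g$.

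Li's construction (\cite{Li-phi-coor, Li-G-phi}) gives a smallest $Y_\E^\phi$-closed quasi-compatible subspace $\<U_n\>_\phi$ of $\E(W_n)$ containing $1_{W_n}$ and $U_n$. This subspace is a $\Gamma$-module nonlocal vertex algebra with $R_g$ acting as automorphisms, and $W_n$ is a $(\Gamma,\imath)$-equivariant $\phi$-coordinated quasi module. The two key inputs are:
\begin{itemize}
\item the identity $R_g Y_\E^\phi(a(z),z_0)b(z)=Y_\E^\phi(R_ga(z),z_0)R_gb(z)$, which holds because $\phi(g^{-1}z,z_0)=g^{-1}\phi(z,z_0)$ for $\phi(x,z)=xe^z$;
\item the fact that $\Gamma$-compatibility of all finite sequences is preserved under $Y_\E^\phi$ (Li's compatibility lemma).
\end{itemize}

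\textbf{Step 2 (compatibility of levels).} Check that the projections $\E(W_{n+1})\to\E(W_n)$ send $\<U_{n+1}\>_\phi$ into $\<U_n\>_\phi$. This holds because the projection intertwines $Y_\E^\phi$ by \eqref{eq:def-Y-E} and sends $U_{n+1}$ onto $U_n$.

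\textbf{Step 3 (assembly).} Let $K$ be the $\C[[\hbar]]$-submodule of $\E_\hbar(W)$ generated by iterated $Y_\E^\phi$-products of elements of $\{1_W\}\cup U$. Define $\<U\>_\phi=\overline{[K]}$.

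\begin{itemize}
\item \emph{Closure under $Y_\E^\phi$.} This follows from the defining inverse-limit formula \eqref{eq:def-Y-E} together with continuity of $Y_\E^\phi$ in the $\hbar$-adic topology. Saturation is handled by the fact that $\pi_n(\hbar^k a)$ being quasi-compatible at all levels implies the same for $a$, using torsion-freeness. The identity $[\overline{[K]}]=\overline{[K]}$ recalled in Section \ref{sec:qva} gives the two stated closure properties.
\item \emph{Quasi-compatibility.} For every finite sequence in $\<U\>_\phi$, its image under each $\pi_n$ lies in $\<U_n\>_\phi$, which is quasi-compatible by Step 1.
\item \emph{Minimality.} Any other $Y_\E^\phi$-closed, saturated, closed submodule containing $\{1_W\}\cup U$ must contain $K$, hence its saturated closure.
\item \emph{Uniqueness.} Uniqueness follows from minimality.
\end{itemize}

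\textbf{Step 4 (vertex algebra axioms).} The vacuum property, weak associativity \eqref{eq:weak-asso}, $\Gamma$-equivariance and the $\phi$-coordinated quasi-module axiom for $W$ all hold modulo $\hbar^n$ for every $n$ by Step 1. They therefore hold in the inverse limit $\E_\hbar(W)=\varprojlim\E(W_n)$.

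Topological freeness of $\<U\>_\phi$ follows from the Kassel criterion: it is torsion-free and separated as a submodule of the topologically free module $\E_\hbar(W)$, saturated by construction, and complete since it is closed.

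\textbf{Main obstacle.} The hardest point is showing that the saturated closure remains $Y_\E^\phi$-closed and quasi-compatible. The difficulty is that the compatibility polynomial $f$ for a pair may depend on $n$, so there is no uniform choice. I would handle this by working strictly level-wise: use the inverse-limit definition of $Y_\E^\phi$ and verify each property modulo $\hbar^n$ inside $\<U_n\>_\phi$, never needing a single $f$ valid for all $n$.
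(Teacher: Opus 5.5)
\textbf{Verdict: genuine but fixable gap at the saturation step.} The paper gives no proof of this statement; it only quotes \cite[Theorem 2.21]{JKLT-G-phi-mod}. Your overall route is the standard $\hbar$-adic one: reduce modulo $\hbar^n$, use Li's non-$\hbar$-adic construction level by level, and take $\overline{[K]}$. The problem is where saturation enters.

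\textbf{The false containment.} In your ``Quasi-compatibility'' bullet you assert that $\pi_n$ maps every element of $\langle U\rangle_\phi=\overline{[K]}$ into $\langle U_n\rangle_\phi$. This holds for $\overline{K}$ but fails for $[K]$. From $\hbar^k a\in K$ you only get $\hbar^k\pi_n(a)\in\langle U_n\rangle_\phi$, which says nothing when $k\ge n$, and $\langle U_n\rangle_\phi$ need not be saturated. Your torsion-freeness remark does rescue compatibility. Multiplication by $\hbar^{kr}$ is an injection $W/\hbar^nW\hookrightarrow W/\hbar^{n+kr}W$, so compatibility of $(\hbar^kb_1,\dots,\hbar^kb_r)$ at level $n+kr$ gives compatibility of $(b_1,\dots,b_r)$ at level $n$. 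You should run this for $\Gamma$-compatibility, not just quasi-compatibility, since the module axiom requires it. However, Step 4 still derives everything ``by Step 1'', i.e.\ from the structure on $\langle U_n\rangle_\phi$, and that space does not contain $\pi_n(\langle U\rangle_\phi)$.

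\textbf{The missing idea.} Step 4 checks the axioms ``in the inverse limit $\E_\hbar(W)$''. The axioms of an $\hbar$-adic nonlocal vertex algebra, however, concern the fields $Y(u,z)$ on $V=\langle U\rangle_\phi$ and their reductions on $V/\hbar^nV$. For example, $Y(u,z)\in\E_\hbar(V)$ requires $u_pv\in\hbar^nV$ for $p\gg0$. From the level-$n$ picture you only get $\pi_n(u_pv)=0$, i.e.\ $u_pv\in\hbar^n\E_\hbar(W)$. Likewise, $\hbar$-adic compatibility of $(Y(u,z),Y(v,z))$ is a statement about $V/\hbar^nV$. This is exactly where saturation is needed:
\begin{itemize}
\item $[V]=V$ gives $V\cap\hbar^n\E_\hbar(W)=\hbar^nV$, hence $V/\hbar^nV\cong\pi_n(V)$.
\item $\pi_n(V)$ is itself a $\Gamma$-stable, $Y_\E^\phi$-closed, $\Gamma$-compatible subspace of $\E(W/\hbar^nW)$ containing $1$.
\item Apply Li's non-$\hbar$-adic theorem to $\pi_n(V)$ rather than to $\langle U_n\rangle_\phi$. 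This makes $V/\hbar^nV$ a $\Gamma$-module nonlocal vertex algebra, with $W/\hbar^nW$ an equivariant $\phi$-coordinated quasi module, compatibly in $n$.
\item Passing to $\varprojlim$ then yields the theorem.
\end{itemize}
In your outline saturation is used only for topological freeness. Its real role is this identification. Finally, Step 2 and the ``non-uniform $f$'' obstacle are not where the difficulty lies: level-wise compatibility and the consistency of $Y_\E^\phi$ across levels are already built into the definitions.
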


\begin{de}
Let $\mathcal M_\ell^\phi$ be the category consisting of topologically free $\C[[\hbar]]$-modules $W$, equipped with fields $h_{i,q}(z),x_{i,q}^\pm(z)\in\E_\hbar(W)$ ($i\in I$) satisfies the following relations
\begin{align}
  &\tag{$\phi$0}\label{phi0}h_{\mu(i),q}(z)=h_{i,q}(\xi\inv z),\quad x_{\mu(i),q}^\pm(z)=x_{i,q}^\pm(\xi\inv z),\\
  &\tag{$\phi$1}\label{phi1}
  [h_{i,q}(z_1),h_{j,q}(z_2)]=
  \sum_{s\in\Z_N}[r_ia_{i,\mu^s(j)}]_{q^{z_2\pd{z_2}}}
  [r\ell]_{q^{z_2\pd{z_2}}}\\
  &\quad\nonumber\times
  (q^{-r\ell z_2\pd{z_2}}\iota_{z_1,z_2}
  -q^{r\ell z_2\pd{z_2}}\iota_{z_2,z_1})
  \frac{\xi^sz_2/z_1}{(1-\xi^sz_2/z_1)^2},\\
  &\tag{$\phi$2}\label{phi2}
  [h_{i,q}(z_1),x_{j,q}^\pm(z_2)]=\pm x_{j,q}^\pm(z_2)
  \sum_{s\in\Z_N}[r_ia_{i,\mu^s(j)}]_{q^{z_2\pd{z_2}}}\\
  &\quad\nonumber\times
  (q^{-r\ell z_2\pd{z_2}}\iota_{z_1,z_2}
  -q^{r\ell z_2\pd{z_2}}\iota_{z_2,z_1})
  \frac{1+\xi^sz_2/z_1}{2-2\xi^sz_2/z_1},\\
  &\tag{$\phi$3.5}\label{phi3.5}
  \prod_{s\in\Z_N}(1-\xi^sz_2/z_1)^{\delta_{i,\mu^s(j)}}
  (1-q^{-2r\ell}\xi^sz_2/z_1)^{\delta_{i,\mu^s(j)}}
  \bigg(x_{i,q}^+(z_1)x_{j,q}^-(z_2)
  \\
  &\quad\nonumber
  -
  \iota_{z_2,z_1}\prod_{s\in\Z_N}\frac{1-\xi^sq_i^{a_{i,\mu^s(j)}}z_2/z_1}
    {q_i^{a_{i,\mu^s(j)}}-\xi^sz_2/z_1}
  x_{j,q}^-(z_2)x_{i,q}^+(z_1)\bigg)=0,\\
  &\tag{$\phi$4}\label{phi4}
  \prod_{s\in\Z_N}(1-\xi^sq_i^{a_{i,\mu^s(j)}}z_2/z_1)
  x_{i,q}^\pm(z_1)x_{j,q}^\pm(z_2)\\
  &\quad\nonumber
  =\prod_{s\in\Z_N}(q_i^{a_{i,\mu^s(j)}}-\xi^sz_2/z_1)
  x_{j,q}^\pm(z_2)x_{i,q}^\pm(z_1).
\end{align}
\end{de}

\begin{lem}\label{lem:U-W}
Let $(W,h_{i,q}(z),x_{i,q}^\pm(z))\in\obj\mathcal M_\ell^\phi$,
and let $\chi_\phi$ be the linear character of $\Z_N$ defined by $s\mapsto \xi^s$.
Then there exists a $\Z_N$-module $\hbar$-adic nonlocal vertex algebra $V_W\subset\E_\hbar(W)$ generated by
\begin{align}\label{eq:U-W}
  U_W=\set{h_{i,q}(z),x_{i,q}^\pm(z)}{i\in I,s\in\Z_N},
\end{align}
with $\Z_N$-action defined by
\begin{align*}
  R:\Z_N\to \Aut(V_W), \quad
  R(s)a(z)=a(\xi^{-s}z)\quad\te{for }a(z)\in V_W,\,s\in\Z_N.
\end{align*}
Moreover, $W$ is a $(\Z_N,\chi_\phi)$-equivariant $\phi$-coordinated quasi $V_W$-module with module action $Y_W^\phi$ determined by
\begin{align}\label{eq:U-W-mod}
  Y_W^\phi(h_{i,q}(z),z_0)=h_{i,q}(z_0),\quad
  Y_W^\phi(x_{i,q}^\pm(z),z_0)=x_{i,q}^\pm(z_0)\quad\te{for }i\in I.
\end{align}
\end{lem}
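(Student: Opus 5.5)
The plan is to apply Theorem \ref{thm:abs-construct} with $\Gamma=\langle\xi\rangle\cong\Z_N\subset\C^\times$ and $U=U_W$. Relation \eqref{phi0} gives $R_{\xi^s}(h_{i,q}(z))=h_{i,q}(\xi^{-s}z)=h_{\mu^s(i),q}(z)$, and likewise for $x^\pm_{i,q}$. Hence $U_W$ is stable under $R$, so it is a $\Gamma$-subset of $\E_\hbar(W)$. Once we know that $U_W$ is $\Gamma$-compatible, the theorem produces $V_W:=\<U_W\>_\phi$ with all the required properties. It is a $\Gamma$-module $\hbar$-adic nonlocal vertex algebra with $R(s)a(z)=a(\xi^{-s}z)$, and $W$ is a $(\Gamma,\imath)$-equivariant $\phi$-coordinated quasi $V_W$-module. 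Identifying $\Gamma$ with $\Z_N$ via $s\mapsto\xi^s$ turns $\imath$ into $\chi_\phi$. The module map $Y_W^\phi(a(z),z_0)=a(z_0)$ is the tautological one from that theorem, which gives \eqref{eq:U-W-mod}. Minimality shows that $V_W$ is generated by $U_W$.

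The key step is to check $\Gamma$-compatibility: every finite ordered sequence $(a_1(z),\dots,a_k(z))$ from $U_W$ must, modulo each $\hbar^n$, be multipliable by $\prod_{i<j}f(z_i,z_j)$ with $f\in\C_\Gamma[z_1/z_2]$ to land in $\E^{(k)}$. I would first treat pairs.

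\begin{itemize}
\item For $(x^\pm_{i,q},x^\pm_{j,q})$, relation \eqref{phi4} together with the fact that $\prod_s(1-\xi^sq_i^{a}z_2/z_1)$ is invertible modulo $\hbar^n$ up to factors $(1-\xi^s z_2/z_1)^{m}$ shows that $\prod_s(1-\xi^sz_2/z_1)^{m}x^\pm_{i,q}(z_1)x^\pm_{j,q}(z_2)$ has only finitely many negative powers. Here one expands $q_i^{a}=1+O(\hbar)$, and modulo $\hbar^n$ the product lies in $W((z_1,z_2))$.
\item For $(x^+,x^-)$, relation \eqref{phi3.5} works the same way, with the extra factor $(1-q^{-2r\ell}\xi^sz_2/z_1)$, which is $(1-\xi^sz_2/z_1)$ up to a unit plus $O(\hbar)$.
\item For pairs involving $h_{i,q}$, relations \eqref{phi1} and \eqref{phi2} give commutators whose singular parts, after expanding $q^{\pm r\ell z_2\partial_{z_2}}$ modulo $\hbar^n$, are annihilated by a power of $\prod_s(1-\xi^sz_2/z_1)$.
\end{itemize}

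The general $k$-fold case follows as in the standard argument of \cite{Li-h-adic,JKLT-G-phi-mod}: pairwise $\Gamma$-locality truncated modulo $\hbar^n$ implies $\Gamma$-compatibility of arbitrary sequences. One uses that each $a(z)\in\E(W/\hbar^nW)$, and reorders products using the locality relations.

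I expect the main obstacle to be the $\hbar$-adic truncation bookkeeping. Factors such as $q_i^{a_{i,\mu^s(j)}}-\xi^sz_2/z_1$ are not themselves in $\C_\Gamma[z_1/z_2]$, and the aim is to show that, after reducing modulo $\hbar^n$, they can be replaced by powers of $1-\xi^sz_2/z_1$ times invertible series in $\C[[z_2/z_1]]$. This uses the expansion $1-\xi^sq^az_2/z_1=(1-\xi^sz_2/z_1)-\xi^s(q^a-1)z_2/z_1$. The second term is nilpotent modulo $\hbar^n$, so multiplying by a high enough power of $(1-\xi^sz_2/z_1)$ clears all the denominators. Once this reduction is in place, the rest is a direct application of Theorem \ref{thm:abs-construct}.
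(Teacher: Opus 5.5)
Your proposal is correct and follows the paper's proof. The paper applies Theorem \ref{thm:abs-construct} to $U_W$ with $\Gamma=\chi_\phi(\Z_N)=\langle\xi\rangle$, uses \eqref{phi0} for $\Gamma$-stability and \eqref{phi1}--\eqref{phi4} for $\Gamma$-compatibility, and transports the structure to $\Z_N$ via $\chi_\phi$; it simply asserts the compatibility as immediate, while your modulo-$\hbar^n$ bookkeeping supplies the right mechanism, namely multiplying by the polynomial $\sum_{k=0}^{n-1}u^{n-1-k}e^{k}$, with $u=1-\xi^sz_2/z_1$ and $e=\xi^s(q_i^{a}-1)z_2/z_1$, which turns $u-e$ into $u^n$ modulo $\hbar^n$ and so produces factors in $\C_\Gamma[z_1/z_2]$.
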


\begin{proof}
It follows immediately from \eqref{phi0}, \eqref{phi1}, \eqref{phi2}, \eqref{phi3.5} and \eqref{phi4} that $U_W$ is a $\chi_\phi(\Z_N)$-compatible $\chi_\phi(\Z_N)$-subset of $\E_\hbar(W)$.
Applying Theorem \ref{thm:abs-construct} to $U_W$, we obtain a $\chi_\phi(\Z_N)$-module $\hbar$-adic nonlocal vertex algebra $\< U_W\>_\phi$, together with a $(\chi_\phi(\Z_N),\imath)$-equivariant $\phi$-coordinated quasi $\< U_W\>_\phi$-module structure on $W$ defined by \eqref{eq:U-W-mod}.
Set $V_W = \langle U_W\rangle_\phi$.
Since $\chi_\phi$ is a group homomorphism, $V_W$ is the required $\Z_N$-module $\hbar$-adic nonlocal vertex algebra, and $W$ is the required $(\Z_N,\chi_\phi)$-equivariant $\phi$-coordinated quasi $V_W$-module.
\end{proof}

%

\begin{de}
Define $\mathcal R_\ell^\phi$ to be the full subcategory of $\mathcal M_\ell^\phi$ consisting of objects $(W,h_{i,q}(z),x_{i,q}^\pm(z))$ such that
\begin{align}
  &\tag{$\phi$3}\label{phi3}
  x_{i,q}^+(z_1)x_{j,q}^-(z_2)-
  \iota_{z_2,z_1}\prod_{s\in\Z_N}\frac{1-\xi^sq_i^{a_{i,\mu^s(j)}}z_2/z_1}
    {q_i^{a_{i,\mu^s(j)}}-\xi^sz_2/z_1}
    x_{j,q}^-(z_2)x_{i,q}^+(z_1)\\
  &\quad\nonumber=\sum_{s\in\Z_N}
  \frac{\delta_{i,\mu^s(j)}}{2r_i\hbar}
  \left(\delta\left(\frac{\xi^sz_2}{z_1}\right)
    -E_\tau(h_{j,q}(z_2))\delta\left(\frac{q^{-2r\ell}\xi^sz_2}{z_1}\right)\right),\\
  &\left(\left(x_{i,q}^\pm(z)\right)_0^\phi\right)^{m_{ij}}
  x_{j,q}^\pm(z)=0,\quad\te{if }a_{ij}\le 0.\tag{$\phi$5}\label{phi5}
\end{align}
\end{de}

\begin{prop}\label{prop:equiv-mods=R}
Let $\chi_\phi$ be a linear character of $\Z_N$ defined by $s\mapsto \xi^s$.
The category of $(\Z_N,\chi_\phi)$-equivariant $\phi$-coordinated quasi $V_{\hbar,\tau}^\ell(\g)$-modules is isomorphic $\mathcal R_\ell^\phi$.
\end{prop}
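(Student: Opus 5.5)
We construct mutually inverse functors between the category of $(\Z_N,\chi_\phi)$-equivariant $\phi$-coordinated quasi $V_{\hbar,\tau}^\ell(\g)$-modules and $\mathcal R_\ell^\phi$. Both functors are the identity on underlying modules and on morphisms, so the work is to match objects.

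\textbf{From modules to $\mathcal R_\ell^\phi$.} Given a module $(W,Y_W^\phi)$, set
\begin{align*}
  h_{i,q}(z)=Y_W^\phi(h_i,z),\quad x_{i,q}^\pm(z)=Y_W^\phi(x_i^\pm,z).
\end{align*}
Relation \eqref{phi0} follows from the equivariance \eqref{eq:G-equiv} together with $R(1)h_i=h_{\mu(i)}$ and $R(1)x_i^\pm=x_{\mu(i)}^\pm$. For \eqref{phi1}, \eqref{phi2}, \eqref{phi3.5}, \eqref{phi4} and \eqref{phi3}, we use the standard dictionary for equivariant $\phi$-coordinated quasi-modules with $\phi(x,z)=xe^z$, as in \cite{Li-G-phi,JKLT-G-phi-mod}. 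An $S$-locality or commutator relation in $V$ is converted into a relation among the fields on $W$ by substituting $z_1\mapsto \log z_1$ and $z_2\mapsto\log z_2$, that is, $e^{z_1-z_2}\mapsto z_1/z_2$ and $\pd{z}\mapsto z\pd z$. Equivariance then contributes the sum over $s\in\Z_N$ of the $\xi^s$-shifted poles. Applying this to the specialized relations \eqref{tau1'}, \eqref{tau2'}, \eqref{tau3.5'}, \eqref{tau4'} and \eqref{tau3}, which hold in $V_{\hbar,\tau}^\ell(\g)$ by Remark \ref{rem:x+x-}, yields the relations of $\mathcal R_\ell^\phi$. In the same way, \eqref{phi5} follows from the Serre element \eqref{eq:serre} being zero.

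\textbf{From $\mathcal R_\ell^\phi$ to modules.} Given an object of $\mathcal R_\ell^\phi$, Lemma \ref{lem:U-W} provides the $\Z_N$-module $\hbar$-adic nonlocal vertex algebra $V_W=\<U_W\>_\phi$ and an equivariant quasi-module structure on $W$. The plan is to show that the elements $\bar h_i=h_{i,q}(z)$ and $\bar x_i^\pm=x_{i,q}^\pm(z)$ of $V_W$ satisfy the hypotheses of Proposition \ref{prop:universal-qaff}. Concretely, we must show:
\begin{itemize}
\item the vertex operators $Y_\E^\phi(\bar h_i,z_0)$ and $Y_\E^\phi(\bar x_i^\pm,z_0)$ satisfy \eqref{tau1}, \eqref{tau2}, \eqref{tau3.5} and \eqref{tau4} for the chosen $\tau$;
\item the elements \eqref{eq:x+0x-}, \eqref{eq:x+1x-} and \eqref{eq:serre} vanish in $V_W$.
\end{itemize}
This is the reverse translation: we compute $Y_\E^\phi(a(z),z_0)b(z)$ by the definition \eqref{eq:def-Y-E}, with $z_1=ze^{z_0}$. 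The key point is that the product over $s\in\Z_N$ in the $\phi$-relations becomes, after $z_2/z_1=e^{-z_0}$, exactly the functions $\exp f_s$ defining $\tau^{\bullet,\bullet}$. Only the $s=0$ factor contributes genuine poles at $z_0=0$; the $s\neq0$ factors are invertible power series, and these give the $\tau$-corrections. Proposition \ref{prop:universal-qaff} then yields a homomorphism $V_{\hbar,\tau}^\ell(\g)\to V_W$. Composing it with the $V_W$-action gives the desired equivariant quasi-module. Equivariance holds because the homomorphism intertwines $R(s)$ on both sides: this is immediate on generators by \eqref{phi0}.

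Finally, the two constructions are mutually inverse on objects, since both are determined by the generating fields. They preserve morphisms because $V_{\hbar,\tau}^\ell(\g)$ is generated by $h_i$ and $x_i^\pm$.

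\textbf{Main obstacle.} The hard step is the $\delta$-function relation \eqref{phi3}, in both directions. We must show that \eqref{phi3} forces $(\bar x_i^+)_0^\phi\bar x_i^-$ and $(\bar x_i^+)_1^\phi\bar x_i^-$ to equal the expressions in \eqref{eq:x+0x-} and \eqref{eq:x+1x-}, including the normalizing constant in $E_\tau(h_i)$. The approach is to mimic \cite[Proposition 6.10]{K-Quantum-aff-va} and its $\phi$-coordinated version in \cite{K-Quantum-aff-va}:
\begin{itemize}
\item extract the singular part in $z_0$ of $Y_\E^\phi(x_{i,q}^+(z),z_0)x_{i,q}^-(z)$ from the $s=0$ delta terms, whose poles are at $z_0=0$ and $z_0=-2r\ell\hbar$;
\item identify $Y_\E^\phi(E_\tau(\bar h_i),z_0)$ with $E_\tau(h_{i,q})$ by a Baker--Campbell--Hausdorff computation, as in the proof of Proposition \ref{prop:V-h-tau-deform};
\item track the constant $\tau_{ii}^{\bullet,\bullet}(-2r\ell\hbar)^{1/2}/\tau_{ii}^{\bullet,\bullet}(2r\ell\hbar)^{1/2}$ produced by the $s\ne0$ factors evaluated at $z_0=-2r\ell\hbar$.
\end{itemize}
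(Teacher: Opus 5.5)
Your proposal is correct and is essentially the paper's proof. In one direction you use Lemma \ref{lem:U-W} and Proposition \ref{prop:universal-qaff}, with Remark \ref{rem:x+x-} turning \eqref{phi3} into the vanishing of \eqref{eq:x+0x-} and \eqref{eq:x+1x-}, and equivariance from the $R(s)$-intertwining on generators; in the other you use equivariance, the Serre elements, and the $\hbar$-adic translation of the specialized relations.

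One correction to your ``main obstacle'': no Baker--Campbell--Hausdorff step or constant-tracking is needed. The $E_\tau(h_{i,q}(z))$ in \eqref{phi3} is by definition $E_\tau(\bar h_i)$ computed in $V_W$, and $Y_\E^\phi$ commutes with $E_\tau$ as in Remark \ref{rem:x+x-}. So the $s=0$ delta coefficients give $\Sing_{z_0}Y_\E^\phi(x_{i,q}^+(z),z_0)x_{i,q}^-(z)=\frac{1}{2r_i\hbar}\big(z_0\inv-E_\tau(h_{i,q}(z))(z_0+2r\ell\hbar)\inv\big)$ directly. The normalizing constant therefore appears identically on both sides; it is not produced by the $s\neq0$ factors, which are absent for $N=1$ even though the constant is nontrivial there.
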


\begin{proof}
Let $(W,h_i(z),x_i^\pm(z))$ be an object of $\mathcal R_\ell^\phi$.
By Lemma \ref{lem:U-W}, we get a $\Z_N$-module $\hbar$-adic nonlocal vertex algebra $V_W$, together with a $(\Z_N,\chi_\phi)$-equivariant $\phi$-coordinated quasi $V_W$-module structure $Y_W^\phi$ on $W$ defined by
\eqref{eq:U-W-mod}.
By a straightforward $\hbar$-adic analogue of
\cite[Theorem 2.21]{JKLT-G-phi-mod}, we get that the relations \eqref{tau1'}, \eqref{tau2'}, \eqref{tau3'}, \eqref{tau3.5'}, \eqref{tau4'} hold
with
\begin{align*}
  h_i(z)=Y_\E^\phi(h_{i,q}(z_1),z),\quad x_i^\pm(z)
  =Y_\E^\phi(x_{i,q}^\pm(z_1),z)\quad\te{for }i\in I.
\end{align*}
Combining Remark \ref{rem:x+x-} and \eqref{tau3'}, we get that the relations \eqref{eq:x+0x-} and \eqref{eq:x+1x-} hold with
\begin{align}\label{eq:assign}
  x_i^\pm=x_{i,q}^\pm(z),\quad h_i=h_{i,q}(z)\quad\te{for }i\in I.
\end{align}
It follows immediate from \eqref{phi5} that the relation \eqref{eq:serre} holds under the assignment \eqref{eq:assign}.
By using Proposition \ref{prop:universal-qaff}, we get an $\hbar$-adic nonlocal vertex algebra homomorphism
$\kappa_W:V_{\hbar,\tau}^\ell(\g)\to V_W$ determined by
\begin{align*}
  \kappa_W(h_i)=h_{i,q}(z),\quad \kappa_W(x_i^\pm)=x_{i,q}^\pm(z)\quad\te{for }i\in I.
\end{align*}
Note that for each $s\in\Z_N$, we have
\begin{align*}
  R(s)\kappa_W(a_i)=R(s)a_{i,q}(z)=a_{i,q}(\xi^{-s}z)
  =a_{\mu^s(i),q}(z)\quad\te{for }i\in I,\,a\in \{h,x^\pm\}.
\end{align*}
Since $V_W$ is generated by $a_i(z)$ for $i\in I$, $a\in \{h,x^\pm\}$, we have that
\begin{align*}
  R(s)\circ\kappa_W=\kappa_W\circ R(s)\quad\te{for }s\in\Z_N.
\end{align*}
Therefore, $W$ becomes a $(\Z_N,\chi_\phi)$-equivariant $\phi$-coordinated quasi $V_{\hbar,\tau}^\ell(\g)$-module
with module action $Y_W^\phi$ determined by
\begin{align*}
  Y_W^\phi(h_i,z)=h_{i,q}(z),\quad Y_W^\phi(x_i^\pm,z)=x_{i,q}^\pm(z)\quad\te{for }i\in I.
\end{align*}

On the other hand, let $(W,Y_W)$ be a $(\Z_N,\chi_\phi)$-equivariant $\phi$-coordinated quasi $V_{\hbar,\tau}^\ell(\g)$-module.
The relation \eqref{phi0} follows from the $(\Z_N,\chi_\phi)$-equivariant condition \eqref{eq:G-equiv}, the relation \eqref{phi5} follows from \eqref{eq:serre} under the assignment
\begin{align}\label{eq:assign-1}
  h_{i,q}(z)=Y_W^\phi(h_i,z),\quad x_{i,q}^\pm(z)=Y_W^\phi(x_i^\pm,z)\quad\te{for }i\in I.
\end{align}
By applying a straightforward $\hbar$-adic analogue of \cite[Proposition 2.15]{K-qaffva-rtu} to \eqref{tau1'},
\eqref{tau2'}, \eqref{tau3'}, \eqref{tau3.5'}, \eqref{tau4'}, we get that the relations \eqref{phi1},
\eqref{phi2}, \eqref{phi3}, \eqref{phi3.5} and \eqref{phi4} hold under the assignment \eqref{eq:assign-1}.
Therefore, $(W,Y_W^\phi(h_i,z),Y_W^\phi(x_i^\pm,z))$
becomes an object in $\mathcal R_\ell^\phi$.
\end{proof}

For each $i\in I$, we write
\begin{align*}
  \mathcal O(i)=\set{\mu^k(i)}{k\in\Z_N}\subset I
\end{align*}
for the orbit containing $i$, and $N_i$ the cardinality of $\mathcal O(i)$.
For $i,j\in I$, set
\begin{align*}
  \Gamma_{ij}=\set{k\in\Z_N}{a_{i,\mu^k(j)}\ne 0}\quad\te{and}\quad
  \Gamma_{ij}^\ast=\set{k\in\Z_N}{a_{i,\mu^k(j)}=a_{ij}}.
\end{align*}
Suppose that $\mu$ satisfies the following three linking conditions
\begin{align}
  \tag{LC1}\label{LC1}&\sum_{p\in\mathcal O(i)}a_{pi}>0\quad\te{for }i\in I,\\
  \tag{LC2}\label{LC2}&\Gamma_{ij}=\Gamma_{ij}^\ast\quad\te{for }i,j\in I\,\te{such that }i\not\in\mathcal O(j),\, a_{ij}<0,\\
  \tag{LC3}\label{LC3}&\Gamma_{ij}\,\,\te{is a subgroup of $\Z_N$ for }i,j\in I\,\te{such that }i\not\in\mathcal O(j),\,a_{ij}<0.
\end{align}
The condition (LC1) can be reformulated as follows: %

\begin{lem}\label{lem:linking}\cite[Sect.\,2.2]{FSS} The automorphism $\mu$ satisfies the condition (LC1) if and only if for every $i\in I$,
the Dynkin subdiagram of  $\mathcal O(i)$ is either

\te{(i)} a direct sum of type $A_1$, or

\te{(ii)} a direct sum of type $A_2$ with $a_{\mu^{N_i/2}(i),i}=-1$.
\end{lem}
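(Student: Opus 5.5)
The statement is a purely combinatorial fact about the Dynkin subdiagram on the orbit $\mathcal O(i)$, so I will work entirely with the restriction of $A$ to $\mathcal O(i)$. Three observations set this up.

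First, since $\mu$ preserves $A$, we have $a_{\mu(p),\mu(i)}=a_{pi}$. Hence the sum $S_i=\sum_{p\in\mathcal O(i)}a_{pi}$ depends only on the orbit.

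Second, $S_i=2+\sum_{p\in\mathcal O(i),\,p\ne i}a_{pi}$, and each term $a_{pi}$ with $p\neq i$ is a nonpositive integer. So (LC1) says exactly that $\sum_{p\ne i}a_{pi}\in\{0,-1\}$. In other words, within $\mathcal O(i)$, the vertex $i$ is either isolated, or joined to exactly one other vertex $p$ with $a_{pi}=-1$.

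Third, by $\mu$-invariance the same holds at every vertex of the orbit.

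For the forward direction I split into these two cases. If $i$ is isolated in $\mathcal O(i)$, then every vertex of the orbit is isolated, so the subdiagram is a direct sum of copies of $A_1$.

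Otherwise every vertex of the orbit has exactly one neighbour in the orbit, and the corresponding entry $a_{pi}$ is $-1$. Let $p=\mu^k(i)$ be the neighbour of $i$. Applying $\mu^k$, the unique neighbour of $\mu^k(i)$ is $\mu^{2k}(i)$. But $i$ is also a neighbour of $\mu^k(i)$, so by uniqueness $\mu^{2k}(i)=i$. Since $\mu^k(i)\ne i$, the vertex $\mu^k(i)$ lies at orbit-index $N_i/2$, so $N_i$ is even and $p=\mu^{N_i/2}(i)$. Thus the orbit decomposes into pairs $\{j,\mu^{N_i/2}(j)\}$ joined by a single edge with nothing else attached.

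To identify each component as $A_2$ I need $a_{i,p}=-1$ as well as $a_{p,i}=-1$. Symmetrizability gives $r_ia_{ip}=r_pa_{pi}$, and $r_p=r_i$ because $\mu$ preserves the symmetrization (as noted in the paper). Hence $a_{ip}=a_{pi}=-1$, so each component is of type $A_2$, and the condition $a_{\mu^{N_i/2}(i),i}=-1$ holds.

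For the converse direction I compute $S_i$ directly in each case. In case (i), $S_i=2>0$. In case (ii), $S_i=2+a_{\mu^{N_i/2}(i),i}=1>0$, since $i$ has no other neighbours in the orbit.

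The only slightly delicate point is ruling out longer chains or cycles inside the orbit. The uniqueness-of-neighbour argument combined with $\mu$-equivariance handles this, and I expect it to be the main step to write carefully. Since the lemma is cited from \cite{FSS}, an alternative is simply to refer to that source, but the short argument above is self-contained.
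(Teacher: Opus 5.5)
Your proof is correct. The paper gives no argument for this lemma; it only cites \cite[Sect.\,2.2]{FSS}. Your self-contained reasoning is the standard observation behind that reference:
\begin{itemize}
\item integrality and nonpositivity of the off-diagonal entries force each vertex to have at most one neighbour in its orbit, joined by $a_{pi}=-1$;
\item $\mu$-equivariance together with uniqueness of that neighbour gives $\mu^{2k}(i)=i$;
\item hence $N_i$ is even and $p=\mu^{N_i/2}(i)$.
\end{itemize}
When you write out the last step, add one line saying that $\mu^m(i)=i$ if and only if $N_i\mid m$. Then $N_i\mid 2k$ and $N_i\nmid k$ give $k\equiv N_i/2 \pmod{N_i}$.

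One simplification. To get $a_{ip}=-1$ you do not need symmetrizability or $r_{\mu(i)}=r_i$. The latter rests on the paper's claim that the $r_i$ are unique, which is only literally true for indecomposable $A$. Since $\mu^{N_i/2}$ interchanges $i$ and $p$, invariance gives directly
$a_{ip}=a_{\mu^{N_i/2}(i),\mu^{N_i/2}(p)}=a_{pi}=-1$.
Alternatively, apply your second observation at the vertex $p$, whose unique orbit-neighbour is $i$.
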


For $i,j\in I$,  set
\begin{align}\label{caij}\check{a}_{ij}=s_i\sum_{p\in \mathcal O(i)}a_{pj}\in s_i\Z,\end{align}
where
\begin{equation}\label{si} s_i=\begin{cases} 1,\ \te{ if (i) holds in Lemma \ref{lem:linking}};\\
2,\ \te{ if (ii) holds in Lemma \ref{lem:linking}.}\end{cases}
\end{equation}
Define
\begin{align*}
  d_{ij}=|\set{k-l}{k,l\in \Gamma_{ij}}|,\quad d_i=N/N_i\quad\te{for }i,j\in I.
\end{align*}
For $i,j\in I$, we introduce the $\C[[\hbar]]$-valued polynomials:
\begin{align} \label{e:F}
&F^\pm_{ij}(z,w)=\prod_{k\in \Gamma_{ij}}\left( z-\xi^{k}q_i^{\pm a_{i\mu^k(j)}}w \right),\\ \label{e:G}
&G^\pm_{ij}(z,w)=\prod_{k\in \Gamma_{ij}}\left( q_i^{\pm a_{i\mu^k(j)}}z-\xi^{k}w \right),\\
\label{pi}
&p_i^\pm(z_1,z_2,z_3)=q_i^{\mp \frac{3}{2}d_{i}}z_1^{d_{i}}
        -(q_i^{\frac{d_{i}}{2}}+q_i^{-\frac{d_{i}}{2}})
            z_2^{d_{i}}
        +q_i^{\pm \frac{3}{2}d_{i}}
            z_3^{d_{i}},\quad\te{if}\ s_i=2,\\
            \label{pij}
&p_{ij}^\pm(z,w)=
    \left(
        z^{d_i}+q_i^{\mp d_i}w^{d_i}
    \right)^{s_i-1}
    \frac{
        q_i^{\pm 2d_{ij}}z^{d_{ij}}-w^{d_{ij}}
    }{
        q_i^{\pm 2d_i}z^{d_{i}}-w^{d_i}
    },\quad\te{if}\ \check{a}_{ij}<0,
\end{align}
and the formal series
\begin{align}
\label{e:g}
g_{ij}(z)
=\prod_{k\in \Z_N}\frac{ q_i^{a_{i\mu^k(j)}}-\xi^k z}{1-\xi^kq_i^{a_{i\mu^k(j)}}z},
\end{align}
which is expanded for $|z|<1$.
The following algebra was introduced in \cite{CJKT-qeala-II-twisted-qaffinization}.

\begin{de}\label{de:tqaffine}
The $\mu$-twisted quantum affinization $\U_\hbar(\wh\g_\mu)$
is the unital associative $\C[[\hbar]]$-algebra
generated by the set
\begin{align}\label{eq:tqagenerators}
\set{H_{i,n},\  X^\pm_{i,n},\ C}
{
   i\in I, n\in\Z
},
\end{align}
and subject to the relations in  terms of the generating functions
 \begin{align*}
 \Phi_i^\pm(z)=q^{\pm H_{i,0}}\, \te{exp}
    \left(
        \pm (q-q\inverse)\sum\limits_{\pm m> 0}H_{i,m}z^{-m}
    \right),\quad X^\pm_i(z)=\sum\limits_{m\in \Z} X^\pm_{i,m}z^{-m}.
\end{align*}
The relations are ($i,j\in I$):
\begin{align}
&\tag{Q0}\label{Q0} \Phi^\pm_{\mu(i)}(z)=\Phi^\pm_i(\xi\inverse z),\quad
X^\pm_{\mu(i)}(z)=X^\pm_i(\xi\inverse z),\\
&\tag{Q1}\label{Q1} [C,\Phi_i^\pm(z)]=0=[C,X_i^\pm(z)]=[\Phi_i^\pm(z),\Phi_j^\pm(w)],\\
&\tag{Q2}\label{Q2} \Phi^+_i(z)\Phi^-_j(w)=\Phi^-_j(w)\Phi^+_i(z)
    g_{ij}(q^{rC} w/z)\inverse g_{ij}(q^{-rC}w/z),\\
&\tag{Q3}\label{Q3}
\Phi^+_i(z)x^\pm_j(w)=X^\pm_j(w)\Phi^+_i(z)
    g_{ij}(q^{\mp \half rC}w/z)^{\pm 1},\\
&\tag{Q4}\label{Q4}
\Phi^-_i(z)X^\pm_j(w)=X^\pm_j(w)\Phi^-_i(z)
    g_{ji}(q^{\mp \half rC}z/w)^{\mp 1},\\
&\tag{Q5}\label{Q5}[X_i^+(z),X_j^-(w)]
=\frac{1}{q_i-q_i\inverse}
    \sum_{k\in\Z_N}\delta_{i,\mu^k(j)}\\
 &\nonumber  \quad\times \Bigg(
        \phi_i^+(z q^{-\half rC})\delta\left(
            \frac{\xi^kw q^{rC}}{z}
        \right)
        -
        \phi_i^-(z q^{\half rC})\delta\left(
            \frac{\xi^kw q^{-rC} }{z}
        \right)
    \Bigg),\\
&\tag{Q6}\label{Q6}
F^\pm_{ij}(z,w)X^\pm_i(z)X^\pm_j(w)=
    G^\pm_{ ij}(z,w)X^\pm_j(w)X^\pm_i(z),\\
&\tag{Q7}\label{Q7}
[X_i^\pm(z_1),X_j^\pm(z_2)]=0\quad\te{if }\check a_{ij}=0,\\
&\tag{Q8}\label{Q8}\sum_{\sigma\in S_{3}}
    p_i^\pm(z_{\sigma(1)},z_{\sigma(2)},z_{\sigma(3)})\,
                X_i^\pm(z_{\sigma(1)})X_i^\pm(z_{\sigma(2)})X_i^\pm(z_{\sigma(3)})=0,
\quad
    \te{if}\ \ s_i=2,\\
&\tag{Q9}\label{Q9}
\sum_{\sigma\in S_{{m}_{ij}}}\sum_{r=0}^{{m}_{ij}}
    (-1)^r \binom{{m}_{ij}}{r}_{q_i^{d_{ij}}}
    \prod_{1\le a<b\le m_{ij}}p_{ij}^\pm(z_{\sigma(a)},z_{\sigma(b)})
    X_i^\pm(z_{\sigma(1)})\cdots X_i^\pm(z_{\sigma(r)})\\
 &\qquad\nonumber  \cdot X_j^\pm(w)
       X_i^\pm(z_{\sigma(r+1)})\cdots X_i^\pm(z_{\sigma({m}_{ij})})=0,\quad
    \te{if}\ \ \check{a}_{ij}<0.
\end{align}
\end{de}

Let $\U_\hbar^f(\wh\g_\mu)$ be the unital associative $\C[[\hbar]]$-algebra generated by \eqref{eq:tqagenerators} subject to relations \eqref{Q0}-\eqref{Q4}, \eqref{Q6} and
\begin{align}
  \tag{Q5.5}\label{Q5.5}
  \prod_{s\in\Z_N}
  (1-q^{rC}\xi^sz_2/z_1)^{\delta_{i,\mu^s(j)}}
  (1-q^{-rC}\xi^sz_2/z_1)^{\delta_{i,\mu^s(j)}}
  [X_i^+(z_1),X_j^-(z_2)]=0.
\end{align}

\begin{de}
A $\U_\hbar^f(\wh\g_\mu)$-module $W$ is called \emph{restricted} if $W$ is topologically free as a $\C[[\hbar]]$-module and
\begin{align*}
  \Phi_i^\pm(z),\quad X_i^\pm(z)\in \E_\hbar(W)\quad\te{for }i\in I,
\end{align*}
and is called of \emph{level $\ell$} for some $\ell\in \C$, if $C=\ell$ on $W$.
\end{de}

For $i,j\in I$, set
\begin{align*}
  f_{ij}^\pm(z_1,z_2)=F_{ij}^\pm(z_1,z_2)\prod_{s\in\Z_N:a_{i,\mu^s(j)}>0}
  (z_1-\xi^sz_2)\inv.
\end{align*}
Recall the normal ordered products introduced in \cite[Definition 5.7]{CJKT-qeala-II-twisted-qaffinization}.
\begin{de}
Let $W$ be a restricted $\U_\hbar^f(\wh\g_\mu)$-module
and let $i_1,\dots,i_m\in I$. Define
\begin{align*}
  &\:X_{i_1}^\pm(z_1)X_{i_2}^\pm(z_2)\cdots X_{i_m}^\pm(z_m)\;\\
  =&\prod_{1\le a<b\le m}\iota_{z_a,z_b}f_{i_a,i_b}^\pm(z_a,z_b)
  X_{i_1}^\pm(z_1)X_{i_2}^\pm(z_2)\cdots X_{i_m}^\pm(z_m).
\end{align*}
\end{de}

The following result was given in \cite[Proposition 5.8]{CJKT-qeala-II-twisted-qaffinization}.
\begin{prop}
For a restricted $\U_\hbar^f(\wh\g_\mu)$-module $W$ and $i_1,\dots, i_m\in I$, we have
\begin{align*}
  \:X_{i_1}^\pm(z_1)X_{i_2}^\pm(z_2)\cdots X_{i_m}^\pm(z_m)\;\in \E_\hbar^{(m)}(W),
\end{align*}
and for $\sigma\in S_m$, we have
\begin{align*}
  &\:X_{i_{\sigma(1)}}^\pm(z_{\sigma(1)})
  X_{i_{\sigma(2)}}^\pm(z_{\sigma(2)})\cdots X_{i_{\sigma(m)}}^\pm(z_{\sigma(m)})\;\\
  =&\bigg(\prod_{\substack{1\le a<b\le m\\ \sigma(a)>\sigma(b)}}C_{i_a,i_b}\bigg)
  \:X_{i_1}^\pm(z_1)X_{i_2}^\pm(z_2)\cdots X_{i_m}^\pm(z_m)\;,
\end{align*}
where
\begin{align*}
  C_{ij}=\prod_{s\in\Z_N:a_{i,\mu^s(j)}<0}(-\xi^s)\quad
  \te{for }i,j\in I.
\end{align*}
\end{prop}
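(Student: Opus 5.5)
The plan is to reduce both assertions to a single one-variable statement about each pair of fields, namely that $f_{ij}^\pm(z_1,z_2)X_i^\pm(z_1)X_j^\pm(z_2)$ is regular in the sense of $\E_\hbar^{(2)}$, and then to argue by induction on $m$. First I would observe that \eqref{Q6} expresses exchange symmetry of $X_i^\pm(z)X_j^\pm(w)$ against the polynomials $F_{ij}^\pm$ and $G_{ij}^\pm$. Write $F_{ij}^\pm(z,w)=f_{ij}^\pm(z,w)\prod_{a_{i,\mu^s(j)}>0}(z-\xi^s w)$. The factors with $a_{i,\mu^s(j)}>0$ occur in $G_{ij}^\pm$ as well, up to a scalar. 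After cancelling them, one obtains
\begin{align*}
  f_{ij}^\pm(z,w)X_i^\pm(z)X_j^\pm(w)=C_{ij}\, f_{ji}^\pm(w,z)X_j^\pm(w)X_i^\pm(z),
\end{align*}
valid modulo the $\hbar$-adic truncations. The cancellation has to be justified $\hbar$-adically. I would do this by reducing modulo $\hbar^n$ and using restrictedness, which makes the left side lie in $W((z))((w))$ and the right side in $W((w))((z))$. Both then equal a common element of $W((z,w))$, because the cancelled factors $(z-\xi^sw)$ are invertible in both expansions. The sign $C_{ij}$ arises from comparing leading coefficients: the factors of $G^\pm$ with $a_{i,\mu^s(j)}<0$ become those of $F^\pm$ after multiplication by $-\xi^s$. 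In addition, $\mu$-invariance of $a$ gives the symmetry $\Gamma_{ij}\leftrightarrow -\Gamma_{ji}$ needed to match $f_{ji}^\pm(w,z)$.

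For general $m$, I would show that the product over all pairs lies in $\E_\hbar^{(m)}(W)$. The argument is the standard one for compatibility (as in Li's theory, cf.\ Theorem~\ref{thm:abs-construct}). By the pairwise statement, the full product $P=\prod_{a<b}f_{i_a,i_b}^\pm(z_a,z_b)X_{i_1}^\pm(z_1)\cdots X_{i_m}^\pm(z_m)$ lies in $W((z_1))\cdots((z_m))$. Applying the pairwise exchange to an adjacent transposition expresses $P$ as an element of the space with $z_a$ and $z_{a+1}$ swapped in the iterated Laurent expansion. The intersection over all adjacent transpositions lies in $W((z_1,\dots,z_m))$ modulo each $\hbar^n$. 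Since $\E_\hbar^{(m)}$ is the inverse limit of these truncations, this gives the first claim.

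The permutation formula then follows by induction on the length of $\sigma$, applying the pairwise identity to adjacent transpositions inside the normal ordered product. Each inversion $(a,b)$ contributes exactly one factor $C_{i_a,i_b}$. Because $C_{ij}C_{ji}=1$ by the symmetry of $\Gamma$, the resulting product is well defined. The main obstacle is the cancellation step: one must show that dividing by the factors $(z-\xi^sw)$ with $a_{i,\mu^s(j)}>0$ loses no information. I would handle it by noting that at $\hbar=0$ these factors coincide with the $q$-shifted factors only modulo $\hbar$. This is possible only because the factors with positive exponent occur with $q_i^{\pm a}$ on both sides, so the honest division happens in $\C((z,w))[[\hbar]]$ after clearing denominators.
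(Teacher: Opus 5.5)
There is a genuine gap, and it sits at the one step that carries the content of the proposition: the cancellation in the two-variable identity.

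Put $D(z,w)=\prod_{s:\,\mu^s(j)=i}(z-\xi^sw)$; these are exactly the $s$ with $a_{i,\mu^s(j)}>0$. Put $P=F^\pm_{ij}(z,w)X^\pm_i(z)X^\pm_j(w)v$. Modulo $\hbar^n$, restrictedness and (Q6) give $P\in W((z,w))$. Moreover, (Q6) can be rewritten as $DL=P=DR$, where $L=\iota_{z,w}f^\pm_{ij}(z,w)X^\pm_i(z)X^\pm_j(w)v\in W((z))((w))$ and $R=C_{ij}\,\iota_{w,z}f^\pm_{ji}(w,z)X^\pm_j(w)X^\pm_i(z)v\in W((w))((z))$.

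You cannot cancel $D$ from $DL=DR$. It is invertible in each of the two spaces, but by different inverses, since $\iota_{z,w}(z-\xi^sw)^{-1}-\iota_{w,z}(z-\xi^sw)^{-1}=z^{-1}\delta(\xi^sw/z)$. Consequently $L-R=(\iota_{z,w}D^{-1}-\iota_{w,z}D^{-1})P$ is a combination of delta functions whose coefficients are the restrictions $P(\xi^sw,w)$. So $L=R\in W((z,w))$ holds if and only if $D$ divides $P$, i.e.\ $P$ vanishes on every line $z=\xi^sw$ with $\mu^s(j)=i$. Nothing in your proposal produces this vanishing.

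The factors $z-\xi^sw$ do not occur in $F^\pm_{ij}$ or $G^\pm_{ij}$; those contain $z-\xi^sq_i^{\pm2}w$ and $q_i^{\pm2}z-\xi^sw$. So there is nothing to cancel. Passing to $q\equiv1$ modulo $\hbar$ does not help either. Even at $\hbar=0$, where $D$ does divide $F^\pm_{ij}$, regularity of $f^\pm_{ij}X^\pm_i(z)X^\pm_j(w)$ is exactly this vanishing statement, and it does not follow formally from the factorization.

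The missing idea is an antisymmetry coming from (Q6) with $j=i$. Write $P_{ij}$ for $P$ to record the indices. By $\mu$-invariance, symmetrizability and $r_{\mu(i)}=r_i$ one has $a_{i,\mu^{-k}(i)}=a_{\mu^k(i),i}=a_{i,\mu^k(i)}$. Hence $\Gamma_{ii}=-\Gamma_{ii}$, and
$G^\pm_{ii}(z,w)=\prod_{k\in\Gamma_{ii}}(-\xi^k)\,F^\pm_{ii}(w,z)=-F^\pm_{ii}(w,z)$.
In this product, each pair $\{k,-k\}$ with $k\ne-k$ contributes $1$, $k=0$ contributes $-1$, and $k=N/2$, if present, contributes $-\xi^{N/2}=1$. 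Therefore $P_{ii}(z,w)=-P_{ii}(w,z)$ in $(W/\hbar^nW)((z,w))$, and so $P_{ii}(w,w)=0$.

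Now use (Q0) in the form $X^\pm_{\mu^k(j)}(z)=X^\pm_j(\xi^{-k}z)$. When $i=\mu^{k_0}(j)$ it gives $P_{ij}(z,w)=\xi^{k_0|\Gamma_{jj}|}P_{jj}(\xi^{-k_0}z,w)$. It also gives $P_{jj}(\xi^{N_j}z,w)=\xi^{N_j|\Gamma_{jj}|}P_{jj}(z,w)$. Together these give vanishing on all lines $z=\xi^sw$ with $\mu^s(j)=i$. Since the $\xi^s$ are distinct, $D$ divides $P$.

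With this in hand, your $m$-variable argument is the standard one, up to two corrections.

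First, the orderings obtained from the identity by a single adjacent transposition only put $z_{m-1}$ or $z_m$ outermost. For $m\ge3$ their intersection is strictly larger than $W((z_1,\dots,z_m))$. Prove the permutation formula first, then intersect over all $\sigma$, so that every variable is outermost for some $\sigma$.

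Second, the pairwise identity reads $f^\pm_{ij}(z,w)X^\pm_i(z)X^\pm_j(w)=C_{ij}f^\pm_{ji}(w,z)X^\pm_j(w)X^\pm_i(z)$. So an inversion $(a,b)$ of $\sigma$ contributes $C_{i_{\sigma(a)},i_{\sigma(b)}}$; for $m=2$ this is $C_{i_2,i_1}$. We have $C_{ji}=C_{ij}^{-1}$, but $C_{ij}\ne C_{ji}$ in general. For example, for three copies of $A_2$ permuted cyclically one finds $C_{ij}=-\xi^2$ and $C_{ji}=-\xi$ for suitable $i,j$. So this index order has to be tracked, not copied from the displayed product.
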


The following result was proved in
\cite[Theorem 5.17]{CJKT-qeala-II-twisted-qaffinization},
which rewrite the relations \eqref{Q8} and \eqref{Q9}.
\begin{thm}\label{thm:another-pre}
Let $W$ be a restricted $\U_\hbar^f(\wh\g_\mu)$-module.
Then $W$ is a $\U_\hbar(\wh\g_\mu)$-module if and only if the relation \eqref{Q5} and the following relation hold
\begin{align}
\tag{Q10}\label{Q10}
  \:X_i^\pm(q_i^{a_{ij}}z)X_i^\pm(q_i^{a_{ij}+2}z)
  \cdots X_i^\pm(q_i^{-a_{ij}}z)X_j^\pm(z)\;=0\quad\te{for }i,j\in I\,\te{with }a_{ij}<0.
\end{align}
\end{thm}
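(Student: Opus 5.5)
The plan is to translate both sides into statements about the normal ordered products $\:X_{i_1}^\pm(z_1)\cdots X_{i_m}^\pm(z_m)\;$, which by \cite[Proposition 5.8]{CJKT-qeala-II-twisted-qaffinization} lie in $\E_\hbar^{(m)}(W)$. In particular they can be specialized at $z_a=q_i^{k}z$, and they are symmetric up to the explicit constants $C_{ij}$. Relation \eqref{Q5} appears on both sides, and a $\U_\hbar(\wh\g_\mu)$-module is in particular a $\U_\hbar^f(\wh\g_\mu)$-module (\eqref{Q5} implies \eqref{Q5.5}). So the content of the theorem is this: for a restricted $\U_\hbar^f(\wh\g_\mu)$-module satisfying \eqref{Q5}, the pair \eqref{Q8}, \eqref{Q9} is equivalent to \eqref{Q10}. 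I would treat \eqref{Q9} in detail and \eqref{Q8} (the case $s_i=2$) by the same method.

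First, I rewrite every summand of \eqref{Q9} through the normal ordered product. For each ordering $X_i^\pm(z_{\sigma(1)})\cdots X_i^\pm(z_{\sigma(r)})X_j^\pm(w)X_i^\pm(z_{\sigma(r+1)})\cdots$, I multiply and divide by the factors $\iota f^\pm$ and use the permutation rule with constants $C_{ij}$. The left side of \eqref{Q9} then becomes
\[
\frac{P^\pm_{ij}(z_1,\dots,z_{m_{ij}};w)}{\prod_{a<b}f_{ii}^\pm(z_a,z_b)\prod_a f_{ij}^\pm(z_a,w)}\:X_i^\pm(z_1)\cdots X_i^\pm(z_{m_{ij}})X_j^\pm(w)\;
\]
for an explicit Laurent polynomial $P^\pm_{ij}$. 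Relation \eqref{Q6} is what justifies moving the rational factors freely, because it says exactly that the normal ordered product is a genuine multivariable field. Since the normal ordered product is nonsingular, \eqref{Q9} is equivalent to
\[
P^\pm_{ij}\,\:X_i^\pm(z_1)\cdots X_j^\pm(w)\;=0 .
\]
The same reduction turns \eqref{Q8} into a polynomial $P_i^\pm(z_1,z_2,z_3)$ times $\:X_i^\pm(z_1)X_i^\pm(z_2)X_i^\pm(z_3)\;$.

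Second, I determine the zero locus of $P^\pm_{ij}$. Up to an invertible factor, I expect it to equal a product of factors $(z_a-q_i^{2}z_b)$ and $(z_a-q_i^{\pm a_{ij}\ldots}w)$ whose common zeros are exactly the $S_{m_{ij}}$-orbit of the string $z_a=q_i^{a_{ij}+2(a-1)}w$. This is the classical $q$-binomial identity behind the Jing--Ding--Feigin reformulation of Serre relations, twisted by $\xi^k$ and by $p_{ij}^\pm$; the factor $p_{ij}^\pm$ is designed to cancel the extra $\xi$-shifted zeros coming from the $f^\pm$'s. Write $\Phi(z_1,\dots,z_{m_{ij}},w)=\:X_i^\pm(z_1)\cdots X_i^\pm(z_{m_{ij}})X_j^\pm(w)\;$.
\begin{itemize}
\item \eqref{Q9} $\Rightarrow$ \eqref{Q10}: I differentiate or specialize $P\Phi=0$ along a transversal direction at a simple zero of $P$ on the string.
\item \eqref{Q10} $\Rightarrow$ \eqref{Q9}: a polynomial-in-$(z,w)$ valued field that vanishes on every point of the orbit of the string, with the appropriate multiplicity, is divisible by the defining polynomials. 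I will use that $\Phi$ is symmetric up to constants to pass from the one string in \eqref{Q10} to its whole orbit. Then the product $P\Phi$ vanishes identically, by a Nullstellensatz-type argument carried out in $W_0((z))[[\hbar]]$ modulo $\hbar^n$ for each $n$.
\end{itemize}

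The main obstacle is the combinatorial identity for $P^\pm_{ij}$. One must show that the symmetrized $q$-binomial sum, with the twisted factors $p_{ij}^\pm$ and $\xi$-roots, factors so that its zero set is precisely the orbit of the $q$-string, with the right multiplicities. I would first try to prove this by induction on $m_{ij}$ using $q$-Pascal recursions, as in the untwisted case. In the twisted setting I would use (LC1)--(LC3), which guarantee $\Gamma_{ij}$ is a subgroup, to collapse the products over $\Gamma_{ij}$ into polynomials in $z^{d_{ij}}$ and $w^{d_{ij}}$. This substitution reduces the twisted identity to the untwisted one with $q_i$ replaced by $q_i^{d_{ij}}$.
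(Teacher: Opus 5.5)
The paper does not reprove this statement; it imports it from \cite[Theorem 5.17]{CJKT-qeala-II-twisted-qaffinization}. Your argument has a genuine gap at the first reduction.

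In \eqref{Q9}, each ordering $X_i^\pm(z_{\sigma(1)})\cdots X_j^\pm(w)\cdots X_i^\pm(z_{\sigma(m_{ij})})$ equals $\Phi$ times the inverse of a product of $f^\pm$'s \emph{expanded in the region dictated by that ordering}. A factor $1/(z_a-c\,z_b)$ is expanded by $\iota_{z_a,z_b}$ or $\iota_{z_b,z_a}$, and a factor in $z_a$ and $w$ by $\iota_{z_a,w}$ or $\iota_{w,z_a}$, depending on $\sigma$ and $r$. Relation \eqref{Q6} only tells you that $\Phi\in\E_\hbar^{(m_{ij}+1)}(W)$. It does not let you identify different expansions, and their differences are formal delta functions such as $z_a^{-1}\delta(c\,z_b/z_a)$, which survive multiplication by $\Phi$.

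Write $R$ for your unexpanded coefficient $P^\pm_{ij}/\prod f^\pm$, and let $D$ be the product of all denominators. Multiplying \eqref{Q9} by $D$ kills every delta function, so what is true is only that \eqref{Q9} implies $(DR)\Phi=0$. But a nonzero polynomial is not a zero divisor on $\E_\hbar^{(m_{ij}+1)}(W)$: pass to the lowest $\hbar$-order and argue coefficientwise in the domain $\C((z_1,\dots,z_{m_{ij}},w))$. So $(DR)\Phi=0$ is either vacuous or forces $\Phi=0$, and neither can be equivalent to \eqref{Q10}.

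In fact $R=0$, and this vanishing is what the $q$-binomial identity really expresses. For example, take untwisted $\widehat{\mathfrak{sl}}_3$ with $m_{ij}=2$. The three terms with $\sigma=\mathrm{id}$ (coefficients $1,-[2]_q,1$) contribute, before expansion, the coefficient
\[
\frac{(q^{-3}-q^{-1})\,w\,(z_1-z_2)}{(w-q^{-1}z_1)(w-q^{-1}z_2)(z_1-q^{-1}w)(z_2-q^{-1}w)}
\]
of $\Phi(z_1,z_2,w)$. This is antisymmetric in $z_1,z_2$, while $\Phi$ is symmetric ($C_{ii}=1$), so the $S_2$-symmetrization gives $0$. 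Hence there is no zero locus of $P^\pm_{ij}$ to analyse, and your ``main obstacle'' is misidentified. Likewise, no Nullstellensatz argument can turn vanishing of $\Phi$ on the orbit of a string into $P\Phi=0$ with $P\neq0$.

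What is needed instead is to keep track of the expansion directions. Then \eqref{Q9} (and likewise \eqref{Q8}) becomes a finite sum of products of formal delta functions supported on loci $z_a=\xi^kq_i^{c}z_b$ and $z_a=\xi^kq_i^{c}w$, multiplied by specializations of $\Phi$ on those loci. These can be brought to a standard form using \eqref{Q0} and the $C_{ij}$-symmetry of $\Phi$. The real work, where the $q$-binomial coefficients and the factors $p_{ij}^\pm$ enter, is to show that this sum of boundary terms vanishes exactly when $\Phi$ vanishes along the string $z_a=q_i^{a_{ij}+2(a-1)}w$ ($1\le a\le m_{ij}$) and its images under permutations and \eqref{Q0}, i.e.\ exactly when \eqref{Q10} holds.

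Two smaller omissions:
\begin{itemize}
\item You must also recover \eqref{Q7}. It follows from \eqref{Q6}, because $\check a_{ij}=0$ forces $\Gamma_{ij}=\emptyset$.
\item \eqref{Q8} should be matched explicitly with the instance $j=\mu^{N_i/2}(i)\in\mathcal O(i)$ of \eqref{Q10}, rewritten via \eqref{Q0}, rather than handled ``by the same method''.
\end{itemize}
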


A straightforward calculation proves the
following analogue of \cite[Lemma 8.5]{K-Quantum-aff-va}.
\begin{lem}\label{lem:another-pre}
Let $\ell\in \C$ and let $W$ be a restricted $\U_\hbar^f(\wh\g_\mu)$-module of level $\ell$.
Define $\wh X_i^+(z)=X_i^+(z)$ and
\begin{align*}
  \wh\Phi_i(z)=F\left(z\pd z\right)\inv \log\frac{\Phi_i^+(zq^{\half r\ell})}{\Phi_i^-(z q^{-\half r\ell})},\quad
  \wh X_i^-(z)=X_i^-(zq^{-r\ell})\Phi_i^+(zq^{-\half r\ell})\inv.
\end{align*}
Then $(W,\wh \Phi_i(z),\wh X_i^\pm(z))$ becomes an object of $\mathcal M_\ell^\phi$.

On the other hand, let $(W,h_{i,q}(z),x_{i,q}^\pm(z))
\in \obj\mathcal M_\ell^\phi$.
Write
\begin{align*}
  h_{i,q}^\pm(z)=\sum_{\pm n>0}h_{i,q}(n)z^{-n}+\half h_{i,q}(0),\quad\te{where }h_{i,q}(z)=\sum_{n\in\Z}h_{i,q}(n)z^{-n}.
\end{align*}
Define $\check x_{i,q}^+(z)=x_{i,q}^+(z)$ and
\begin{align*}
  \check h_{i,q}^\pm(z)=\exp\left(\pm F\left(z\pd z\right)h_{i,q}^\pm(zq^{\mp \half r\ell})\right),\quad
  \check x_{i,q}^-(z)=x_{i,q}^-(zq^{r\ell})\check h_{i,q}^+(zq^{\half r\ell}).
\end{align*}
Then $W$ becomes a restricted $\U_\hbar^f(\wh\g_\mu)$-module of level $\ell$ with module action
\begin{align*}
  \Phi_i^\pm(z)=\check h_{i,q}^\pm(z),\quad X_i^\pm(z)=\check x_{i,q}^\pm(z)\quad\te{for }i\in I.
\end{align*}
\end{lem}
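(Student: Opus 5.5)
The statement is Lemma \ref{lem:another-pre}. The plan is a direct computation with generating functions, mirroring \cite[Lemma 8.5]{K-Quantum-aff-va}, with $\phi(x,z)=xe^z$ replacing the additive formal group law. The point is that in multiplicative variables the operator $q^{a\pd{z}}$ of the additive setting becomes $q^{a z\pd z}$, which acts by $f(z)\mapsto f(q^a z)$. Each relation in $\mathcal M_\ell^\phi$ should then be the $\log$/$\exp$ transform of a relation among \eqref{Q0}--\eqref{Q4}, \eqref{Q5.5}, \eqref{Q6}.

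For the first direction we start from a restricted $\U_\hbar^f(\wh\g_\mu)$-module of level $\ell$ and set $C=\ell$.

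First, \eqref{phi0} is immediate from \eqref{Q0}. For the $\wh X_i^-$ part we also use that $\Phi_{\mu(i)}^+(z)=\Phi_i^+(\xi^{-1}z)$, so the shift and the inverse factor are compatible with the twist.

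Second, we take logarithms in \eqref{Q2}. By \eqref{Q1}, $\log\Phi_i^\pm$ are commuting series, and \eqref{Q2} becomes a scalar commutator
\[
[\log\Phi_i^+(z),\log\Phi_j^-(w)]=\log g_{ij}(q^{-r\ell}w/z)-\log g_{ij}(q^{r\ell}w/z).
\]
Apply the shifts $z\mapsto zq^{\half r\ell}$, $w\mapsto wq^{-\half r\ell}$ and the operator $F(z\pd z)\inv F(w\pd w)\inv$. The identity
\[
\log\frac{q_i^{a}-\xi^s x}{1-\xi^s q_i^{a}x}=-\sum_{n\ge1}\frac{q_i^{an}-q_i^{-an}}{n}\,\xi^{sn}x^n
\]
together with $F(n\hbar)=(q^n-q^{-n})/n$ converts the right-hand side into $[r_ia_{i\mu^s(j)}]_{q^{z_2\pd{z_2}}}[r\ell]_{q^{z_2\pd{z_2}}}$ acting on the expansions of $\xi^s x/(1-\xi^sx)^2$. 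This gives \eqref{phi1}.

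Third, \eqref{phi2} comes the same way, by conjugating $X_j^\pm$ with $\log\Phi_i^\pm$ using \eqref{Q3} and \eqref{Q4}. For $\wh X_j^-$ the extra factor $\Phi_j^+(\cdot)\inv$ commutes with $\log\Phi_i^+$ and only contributes through the $\Phi^-$ part. The shift by $q^{-r\ell}$ in its argument is exactly what makes the two $\iota$-expansions come with $q^{\mp r\ell z_2\pd{z_2}}$.

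Fourth, \eqref{phi4} for the $+$ sign is \eqref{Q6} after cancelling the factors with $a_{i\mu^s(j)}=0$; these are common to $F^+_{ij}$ and $G^+_{ij}$ up to the sign conventions. For the $-$ sign, we move $\Phi^+(\cdot)\inv$ past $X^-$ using \eqref{Q3}. The resulting $g_{ij}$-factors convert $F^-,G^-$ into the form stated in \eqref{phi4}.

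Fifth, \eqref{phi3.5} follows from \eqref{Q5.5}. Moving $\Phi_j^+\inv$ through $X_i^+$ via \eqref{Q3} produces exactly the ratio $\prod_s (1-\xi^s q_i^{a}z_2/z_1)/(q_i^{a}-\xi^s z_2/z_1)$, and the $q^{-r\ell}$ shift turns $(1-q^{\pm rC}\xi^s z_2/z_1)$ into $(1-\xi^s z_2/z_1)(1-q^{-2r\ell}\xi^s z_2/z_1)$.

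The converse direction reverses every step. We write $h_{i,q}=h^+_{i,q}+h^-_{i,q}$ and note that \eqref{phi1} forces $[h^\pm_{i,q}(z),h^\pm_{j,q}(w)]=0$, because only mixed-sign modes pair nontrivially. Hence $\check h^\pm$ commute as required in \eqref{Q1}, and exponentiating \eqref{phi1} gives \eqref{Q2} via Baker–Campbell–Hausdorff; the commutator is central, so only one correction term appears. The remaining relations \eqref{Q0}, \eqref{Q3}, \eqref{Q4}, \eqref{Q5.5} and \eqref{Q6} follow by inverting the manipulations above. Finally, the two constructions are mutually inverse: $F(z\pd z)\inv\log$ undoes $\exp F(z\pd z)$ on the $\pm$ halves, and the two shifts cancel.

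The main obstacle is the bookkeeping of the $q^{\pm\half r\ell}$ shifts and the zero-mode term $\half h_{i,q}(0)$. One must check that $\log\Phi_i^+(zq^{\half r\ell})-\log\Phi_i^-(zq^{-\half r\ell})$ splits precisely into $F(z\pd z)$ applied to the $\pm$ parts, including $n=0$, where $F(0)=2\hbar$ matches the $q^{\pm H_{i,0}}$ factors. I would first verify the rank-one case $N=1$, $i=j$, and then insert the $\xi^s$ sums termwise.
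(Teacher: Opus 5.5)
Your overall route, a direct generating-function computation modelled on the untwisted case with $q^{a\pd z}$ replaced by $q^{az\pd z}$, is the one the paper intends; its proof is the single sentence ``a straightforward calculation''. Your treatment of \eqref{phi0}--\eqref{phi2} and \eqref{phi3.5} is sound in outline, including the converses via Baker--Campbell--Hausdorff with a central commutator. There is, however, a genuine gap at \eqref{Q6}, and it sits in the converse direction.

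In the paper, $F^\pm_{ij}$ and $G^\pm_{ij}$ are products over $k\in\Gamma_{ij}$ only, so they contain no factor with $a_{i\mu^k(j)}=0$. It is \eqref{phi4} that runs over all $s\in\Z_N$, and for $s\notin\Gamma_{ij}$ it carries the same factor $1-\xi^sz_2/z_1$ on both sides. Your description (``cancelling the factors with $a_{i\mu^s(j)}=0$, common to $F^+_{ij}$ and $G^+_{ij}$'') therefore has the direction reversed. After clearing $z_1^N$ (and, for the lower sign, moving $\Phi_i^+(\cdot)\inv$ past $X_j^-$ via \eqref{Q3}, which you do correctly), \eqref{phi4} is \eqref{Q6} \emph{multiplied} by $P_{ij}(z_1,z_2)=\prod_{s\notin\Gamma_{ij}}(z_1-\xi^sz_2)$. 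That is harmless for the first half of the lemma.

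For the converse, however, ``inverting the manipulations'' means dividing by $P_{ij}$, which is not allowed. The two orderings $x_{i,q}^\pm(z_1)x_{j,q}^\pm(z_2)$ and $x_{j,q}^\pm(z_2)x_{i,q}^\pm(z_1)$ lie, modulo each $\hbar^n$, in $W((z_1))((z_2))$ and $W((z_2))((z_1))$ respectively. So if $E_{ij}$ denotes the difference of the two sides of \eqref{Q6}, the relation $P_{ij}E_{ij}=0$ only yields $E_{ij}=\sum_{s\notin\Gamma_{ij}}c_s(z_2)\,\delta(\xi^sz_2/z_1)$.

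The extreme case makes this plain. If $\Gamma_{ij}=\emptyset$ (e.g.\ $\mu=\mathrm{id}$ and $a_{ij}=0$), \eqref{Q6} says $[X_i^\pm(z),X_j^\pm(w)]=0$, whereas \eqref{phi4} only says $(1-z_2^N/z_1^N)[x_{i,q}^\pm(z_1),x_{j,q}^\pm(z_2)]=0$. Nothing else in $\mathcal M_\ell^\phi$ constrains the $c_s$. Already at $\hbar=0$ these relations are satisfied by twisted loop fields of the Kac--Moody algebra without Serre relations, where $[e_i,e_{\mu^s(j)}]\ne0$ although $a_{i\mu^s(j)}=0$.

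What removes the $\delta$-terms is the Serre relation \eqref{phi5} for the pairs $(i,\mu^s(j))$ with $a_{i\mu^s(j)}=0$, transported by \eqref{phi0}. So this direction can only be obtained on $\mathcal R_\ell^\phi$, which is all the main theorem actually uses, or after replacing $\Gamma_{ij}$ by $\Z_N$ in \eqref{Q6}. Neither your proposal nor the paper's one-line proof addresses this.

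A smaller slip: the logarithmic expansion you quote has the wrong sign and drops the constant term. Correctly,
\[
\log\frac{q_i^{a}-\xi^sx}{1-\xi^sq_i^{a}x}=r_ia\hbar+\sum_{n\ge1}\frac{q_i^{an}-q_i^{-an}}{n}\,\xi^{sn}x^n .
\]
With your version, \eqref{phi1} comes out with the opposite sign. The constant $r_ia\hbar$ is exactly what produces the zero-mode term $\sum_sr_ia_{i\mu^s(j)}$ in \eqref{phi2}, once you apply $F(0)\inv=(2\hbar)\inv$ and add the $\Phi_i^-$ contribution. Also, $F(z\pd z)$ acts on $z^{-n}$ by $F(-n)=(q^n-q^{-n})/n$, not by $F(n\hbar)$.
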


The following is the main result of this section.

\begin{thm}
The category of restricted $\U_\hbar(\wh\g_\mu)$-modules of level $\ell$ is isomorphic to
the category of $(\Z_N,\chi_\phi)$-equivariant $\phi$-coordinated quasi $V_{\hbar,\tau}^\ell(\g)$-modules.
\end{thm}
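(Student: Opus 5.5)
The plan is to chain three equivalences already available in the paper. First, by Proposition \ref{prop:equiv-mods=R}, the category of $(\Z_N,\chi_\phi)$-equivariant $\phi$-coordinated quasi $V_{\hbar,\tau}^\ell(\g)$-modules is isomorphic to the full subcategory $\mathcal R_\ell^\phi$ of $\mathcal M_\ell^\phi$. It therefore suffices to show that the restricted $\U_\hbar(\wh\g_\mu)$-modules of level $\ell$ correspond bijectively to objects of $\mathcal R_\ell^\phi$, compatibly with morphisms.

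Second, Lemma \ref{lem:another-pre} already supplies the correspondence at the level of $\U_\hbar^f(\wh\g_\mu)$-modules and $\mathcal M_\ell^\phi$. It is given by $\Phi_i^\pm,X_i^\pm\mapsto \wh\Phi_i,\wh X_i^\pm$ in one direction and $h_{i,q},x_{i,q}^\pm\mapsto\check h^\pm_{i,q},\check x^\pm_{i,q}$ in the other. I would first check that these two assignments are mutually inverse:
\begin{itemize}
\item $\log(\Phi_i^+/\Phi_i^-)$ recovers $h_{i,q}$ after applying $F(z\pd z)\inv$ and the shifts $q^{\pm\half r\ell}$;
\item the twist of $X_i^-$ by $\Phi_i^+(zq^{-\half r\ell})\inv$ is undone by multiplication by $\check h^+_{i,q}(zq^{\half r\ell})$ after the shift $z\mapsto zq^{r\ell}$.
\end{itemize}
Both assignments act as the identity on the underlying $\C[[\hbar]]$-module $W$. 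A $\C[[\hbar]]$-linear map therefore intertwines the generating fields on one side if and only if it does so on the other, because each family of fields is expressed through the other by $\hbar$-adically convergent formal operations. This gives an isomorphism of categories between restricted $\U_\hbar^f(\wh\g_\mu)$-modules of level $\ell$ and $\mathcal M_\ell^\phi$.

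Third, I would show that under this isomorphism the extra relations match. By Theorem \ref{thm:another-pre}, a restricted $\U^f_\hbar$-module is a $\U_\hbar(\wh\g_\mu)$-module exactly when \eqref{Q5} and \eqref{Q10} hold. I then need
\begin{itemize}
\item \eqref{Q5} $\Leftrightarrow$ \eqref{phi3};
\item \eqref{Q10} $\Leftrightarrow$ \eqref{phi5}.
\end{itemize}

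For \eqref{Q5} $\Leftrightarrow$ \eqref{phi3}, I substitute $X_i^-(z)=x_{i,q}^-(zq^{r\ell})\check h^+_{i,q}(zq^{\half r\ell})$ and move the factor $\Phi^+$ past $X_i^+$ using \eqref{Q3}. This turns the commutator $[X_i^+(z_1),X_j^-(z_2)]$ into the twisted commutator on the left side of \eqref{phi3}, multiplied by $\Phi^+$. On the right side, the two delta functions $\delta(\xi^kwq^{\pm rC}/z)$ become $\delta(\xi^s z_2/z_1)$ and $\delta(q^{-2r\ell}\xi^s z_2/z_1)$ after the shift by $q^{r\ell}$. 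I then need to identify $\Phi^-_i(zq^{\half r\ell})\Phi_i^+(\cdots)\inv$, evaluated on the support of the second delta function, with $E_\tau(h_{j,q}(z_2))$. This is the analogue of the computation in \cite[Proposition 6.10]{K-Quantum-aff-va}. The normalizing constant $\tau_{ii}^{\bullet,\bullet}(-2r\ell\hbar)^{\half}/\tau_{ii}^{\bullet,\bullet}(2r\ell\hbar)^{\half}$ in \eqref{eq:def-E} must be matched by the normal ordering of the exponentials, via the Baker–Campbell–Hausdorff formula.

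For \eqref{Q10} $\Leftrightarrow$ \eqref{phi5}, I would follow \cite[Lemma 8.5]{K-Quantum-aff-va} and \cite{CJKT-qeala-II-twisted-qaffinization}. Iterating $Y_\E^\phi$ and using \eqref{phi4}, the iterated product $((x^\pm_{i,q})^\phi_0)^{m_{ij}}x^\pm_{j,q}$ is, up to a nonzero scalar in $\C[[\hbar]]^\times$, the normal-ordered product $\:X_i^\pm(q_i^{a_{ij}}z)\cdots X_i^\pm(q_i^{-a_{ij}}z)X_j^\pm(z)\;$ evaluated at the specified points. For $X^-$ this holds after the twist by $\Phi^+$, whose factors can be pulled out as invertible series by \eqref{Q3}.

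I expect the main obstacle to be the exact matching of the right-hand side of \eqref{Q5} with $E_\tau(h_{j,q})$, including the normalizing constant and the $\xi^s$-twisted contributions coming from the specific choice of $\tau$. I would verify it by computing $\log$ of both sides and comparing with $\tau_{ij}^{\bullet,0}$ and $f_s$. The Serre-type equivalence is the second most delicate step, but it follows the established template.
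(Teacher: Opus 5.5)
Your proposal follows the paper's proof essentially step for step: reduce to $\mathcal R_\ell^\phi$ by Proposition \ref{prop:equiv-mods=R}, pass to restricted $\U_\hbar^f(\wh\g_\mu)$-modules via Lemma \ref{lem:another-pre}, and use Theorem \ref{thm:another-pre} to reduce to two equivalences. The first is \eqref{Q5}$\Leftrightarrow$\eqref{phi3}, which the paper obtains from the identity $E_\tau(\wh\Phi_i(z))=\Phi_i^-(zq^{-\frac32 r\ell})\Phi_i^+(zq^{-\half r\ell})\inv$, exactly the matching you single out as the main obstacle; the second is \eqref{Q10}$\Leftrightarrow$\eqref{phi5}, which it obtains through normal-ordered products of the twisted currents $\wh X_i^\pm$ and the analogue of \cite[Lemma 8.8]{CJKT-qeala-II-twisted-qaffinization}.

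Two bookkeeping points, which the paper's write-up also glosses over. First, matching the constant $\frac{1}{q_i-q_i\inv}$ in \eqref{Q5} with $\frac{1}{2r_i\hbar}$ in \eqref{phi3} requires rescaling $X_i^+$ by the unit $\frac{q_i-q_i\inv}{2r_i\hbar}$, as in Appendix \ref{app:qaff}. Second, \eqref{phi4} retains the factors $1-\xi^sz_2/z_1$ with $a_{i,\mu^s(j)}=0$ that \eqref{Q6} omits, so objects of $\mathcal M_\ell^\phi$ give $\U_\hbar^f(\wh\g_\mu)$-modules only after the $a_{ij}=0$ cases of \eqref{phi5} are imposed, and \eqref{Q10} does not account for these cases.
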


\begin{proof}
By Proposition \ref{prop:equiv-mods=R}, it suffices to show that the category of restricted $\U_\hbar(\wh\g_\mu)$-modules of level $\ell$ is isomorphic to $\mathcal R_\ell^\phi$.
Lemma \ref{lem:another-pre} implies that a topologically free $\C[[\hbar]]$-module is a restricted $\U_\hbar^f(\wh\g)$-module of level $\ell$ if and only if it belongs to $\mathcal M_\ell^\phi$.
Let $W$ be a restricted $\U_\hbar^f(\wh\g_\mu)$-module of level $\ell$ and let $(W,\wh \Phi_i(z),\wh X_i^\pm(z))$ be the corresponding object of $\mathcal M_\ell^\phi$.
By using Theorem \ref{thm:another-pre}, the theorem reduces to prove that relations \eqref{Q5} and \eqref{Q10} are equivalent to \eqref{phi3} and \eqref{phi5}.
Using Lemma \ref{lem:U-W}, we obtain a $\Z_N$-module
$\hbar$-adic nonlocal vertex algebra $V_W$.
An argument similar to the proof of \cite[Lemma 8.7]{K-Quantum-aff-va}
shows that
\begin{align*}
  E_\tau\big(\wh \Phi_i(z)\big)=\Phi_i^-(zq^{-\frac 32 r\ell})\Phi_i^+(zq^{-\half r\ell})\inv\quad\te{for }i\in I.
\end{align*}
By using this, a straightforward calculation shows that relation \eqref{Q5} is equivalent to \eqref{phi3}.
A straightforward calculation yields
\begin{align*}
  f_{ij}^+(z_1,z_2)\wh X_i^-(z_1)\wh X_j^-(z_2)
  =C_{ij}f_{ji}^+(z_2,z_1)\wh X_j^-(z_2)\wh X_i^-(z_1)
  \quad\te{for }i,j\in I.
\end{align*}
We also define the normal ordered product of the currents $\wh X_i^\pm(z)$ by
\begin{align*}
  &\:\wh X_{i_1}^\pm(z_1)\wh X_{i_2}^\pm(z_2)
  \cdots \wh X_{i_m}^\pm(z_m)\;\\
  =&\bigg(\prod_{1\le a<b\le m}\iota_{z_a,z_b} f_{i_a,i_b}^+(z_a,z_b)\bigg)
  \wh X_{i_1}^\pm(z_1)\cdots \wh X_{i_m}^\pm(z_m)
  \quad\te{for }i_1,\dots,i_m\in I.
\end{align*}
It is straightforward to verify that the relation \eqref{Q10} is equivalent to the following relation
for $i,j\in I$ with $a_{ij}<0$:
\begin{align}\tag{Q10'}\label{Q10'}
  &\:\wh X_i^\pm(q_i^{a_{ij}}z)\wh X_i^\pm(q_i^{a_{ij}+2}z)
  \cdots \wh X_i^\pm(q_i^{-a_{ij}}z)\wh X_j^\pm(z)\;=0.
\end{align}
By an argument similar to the proof of \cite[Lemma 8.8]{CJKT-qeala-II-twisted-qaffinization}, we get that \eqref{Q10'} is equivalent to the relation
\begin{align*}
  \left(\left(\wh X_i^\pm(z)\right)_0^\phi\right)^{m_{ij}}\wh X_j^\pm(z)=0\quad\te{for }i,j\in I\,\te{with }a_{ij}<0,
\end{align*}
which is exactly \eqref{phi5}.
\end{proof}

\begin{appendices}

\section{Related to double Yangians}\label{app:YD}

In this section, we show that $V_{\hbar,\varepsilon}^\ell(\g)$ is isomorphic to the $\hbar$-adic weak quantum vertex algebra $\mathcal V_A(r\ell)$ defined in \cite[Section 5.2]{KL-YD-1}, corresponding to the double Yangian $\wh{\mathcal{DY}}(A)$ (see \cite[Definition 5.1]{KL-YD-1} for the definition).
Recall the following notion given in \cite[Definition 4.4]{KL-YD-1}.
\begin{de}
Let $W$ be a $\C[[\hbar]]$-module and let $a(z),b(z)\in \End(W)[[z,z\inv]]$ and $m\in\Z_+$. The relation
\begin{align}
  \sum_{\sigma\in S_m}\sum_{r=0}^m\binom{m}{r}(-1)^r a(z_{\sigma(1)})a(z_{\sigma(2)})\cdots a(z_{\sigma(r)})b(w) a(z_{\sigma(r+1)})
\cdots a(z_{\sigma(m)})=0
\end{align}
is referred to as the {\em Serre-like relation for $(a(z), b(z),m)$}.
\end{de}

Let $\wt \U$ be the free associative algebra with identity over $\C$ generated by set
\begin{align*}
  \{\wt \kappa\}\cup\set{\wt e_i(m),\wt f_i(m),\wt h_i(m)}{i\in I,m\in\Z},
\end{align*}
and let $\wt J_{A,\ell}^1$ be the two-sided ideal generated by element $\wt \kappa-\ell$, the relations
\begin{align}
&[\wt{h}_i(z_1),\wt{h}_j(z_2)]
=[r_ia_{i,j}]_{q^{\pd{z_2}}}
[\ell]_{q^{\pd{z_2}}}((z_1-z_2+\ell\hbar)^{-2}-
(z_2-z_1+\ell\hbar)^{-2}),\label{h-i-h-j}\\
&[\wt{h}_i(z_1),\tilde{e}_j(z_2)]
=\wt{e}_j(w)[r_ia_{i,j}]_{q^{\pd{z_2}}}
((z_1-z_2+\ell\hbar)^{-1}-(z_2-z_1+\ell\hbar)^{-1}),\label{h-i-e-j}\\
&[\wt{h}_i(z_1),\tilde{f}_j(z_2)]
=-\tilde{f}_j(z_2)[r_ia_{i,j}]_{q^{\pd{z_2}}}
((z_1-z_2+\ell\hbar)^{-1}-(z_2-z_1+\ell\hbar)^{-1}),\label{h-i-f-j}\\
&(z_1-z_2-r_ia_{ij}\hbar)\wt{e}_i(z_1)\wt{e}_j(z_2)
=(z_1-z_2+r_ia_{ij}\hbar)\wt{e}_j(z_2)\wt{e}_i(z_1),\label{e-i-e-j}\\
&(z_1-z_2-r_ia_{ij}\hbar)\wt{f}_i(z_1)\wt{f}_j(z_2)
=(z_1-z_2+r_ia_{ij}\hbar)\wt{f}_j(z_2)\wt{f}_i(z_1),\label{f-i-f-j}\\
&\nonumber
(z_1-z_2+r_ia_{ij}\hbar) \wt{e}_i(z_1)\wt{f}_j(z_2)=(z_1-z_2-r_ia_{ij}\hbar) \wt{f}_j(z_2)\wt{e}_i(z_1)\quad \text{ if }i\ne j,\\
&\nonumber
(z_1-z_2)(z_1-z_2+2\ell\hbar)(z_1-z_2+2r_i\hbar) \wt{e}_i(z_1)\wt{f}_i(z_2)\\
&\qquad\nonumber=(z_1-z_2)(z_1-z_2+2\ell\hbar)
(z_1-z_2-2r_i\hbar) \wt{f}_i(z_2)\wt{e}_i(z_1).
\end{align}
Define $\U_{A,\ell}'$ to be the quotient algebra of $\wt \U[[\hbar]]$ modulo $\overline{[\wt J_{A,\ell}^1]}$.
Let $\wt J_{A,\ell}^2$ be the two-sided ideal of $\U_{A,\ell}'$ generated by the Serre-like relations for $(\wt e_i(z),\wt e_j(z),1-a_{ij})$ and $(\wt f_i(z),\wt f_j(z),1-a_{ij})$ for $i,j\in I$ with $a_{ij}\le 0$.
Define $\U_{A,\ell}$ to be the quotient algebra of $\U_{A,\ell}'$ modulo $\overline{[\wt J_{A,\ell}^2]}$.

For $i\in I$ and $m\in\Z$, we set
\begin{align*}
  \bar e_i(m)=\wt e_i(m)+\overline{[\wt J_{A,\ell}]},\quad
  \bar f_i(m)=\wt f_i(m)+\overline{[\wt J_{A,\ell}]},\quad
  \bar h_i(m)=\wt h_i(m)+\overline{[\wt J_{A,\ell}]}\in\U_{A,\ell}.
\end{align*}
Let $\mathcal J_{A,\ell}$ be the left ideal of $\U_{A,\ell}$ generated by the subset
\begin{align*}
  \set{\bar e_i(n),\,\bar f_i(n),\,\bar h_i(n)}{i\in I,\,n\in\N}.
\end{align*}
Define
\begin{align*}
  \mathcal V_{A,\ell}=\U_{A,\ell}/\overline{[\mathcal J_{A,\ell}]}.
\end{align*}
Set $\vac=1+\overline{[\mathcal J_{A,\ell}]}\in \mathcal V_{A,\ell}$.
Set
\begin{align*}
  \wh e_i=\bar e_i(-1)\vac,\quad \wh f_i=\bar f_i(-1)\vac,\quad
  \wh h_i=\bar h_i(-1)\vac\in \mathcal V_{A,\ell}\quad\te{for }i\in I,
\end{align*}
and set
\begin{align*}
  \bar e_i(z)=\sum_{n\in\Z}\bar e_i(n)z^{-n-1},\quad
  \bar f_i(z)=\sum_{n\in\Z}\bar f_i(n)z^{-n-1},\quad
  \bar h_i(z)=\sum_{n\in\Z}\bar h_i(n)z^{-n-1}.
\end{align*}
The following result was given in \cite[Proposition 5.10]{KL-YD-1}.
\begin{prop}\label{prop:wqva}
There exists an $\hbar$-adic weak quantum vertex algebra structure on $\mathcal V_{A,\ell}$, which is uniquely determined by the condition that $\vac$ is the vacuum vector and
\begin{align*}
  Y(\wh e_i,z)=\bar e_i(z),\quad Y(\wh f_i,z)=\bar f_i(z),\quad Y(\wh h_i,z)=\bar h_i(z)\quad\te{for }i\in I.
\end{align*}
\end{prop}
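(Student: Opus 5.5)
The plan is to realize $\mathcal V_{A,\ell}$ as the vacuum module of a vertex algebra generated by fields, following the construction of \cite{Li-h-adic}. This is the same strategy that underlies Proposition \ref{prop:M-qva} and Remark \ref{rem:direct-construct}. Concretely, I would set $W=\mathcal V_{A,\ell}$ and consider the subset
\begin{align*}
  U=\set{\bar e_i(z),\,\bar f_i(z),\,\bar h_i(z)}{i\in I}\subset \End_{\C[[\hbar]]}(W)[[z,z\inv]].
\end{align*}
I would then verify the hypotheses of Li's theorem in the following order: that $W$ is topologically free, that the fields lie in $\E_\hbar(W)$, that $U$ is $\hbar$-adically $S$-local, and that $\vac$ is a vacuum-like generating vector.

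\textbf{Topological freeness.} This follows from the way $\mathcal V_{A,\ell}$ is built. Each quotient is taken modulo a submodule of the form $\overline{[K]}$. Since $\left[\overline{[K]}\right]=\overline{[K]}$, each quotient of a topologically free module is torsion-free, separated and $\hbar$-adically complete, hence topologically free. Applying this to $\U_{A,\ell}'$, then $\U_{A,\ell}$, then $\mathcal V_{A,\ell}$ gives the claim.

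\textbf{Fields and the vacuum.} The condition $\bar a(z)\in\E_\hbar(W)$ reduces, modulo each $\hbar^n$, to showing that for fixed $w$ we have $\bar a(m)w\in \hbar^n W$ for $m\gg0$. I would check this by induction on the length of a monomial $\bar a_1(m_1)\cdots \bar a_k(m_k)\vac$. The defining relations of $\U_{A,\ell}'$ allow one to move a mode with large index to the right, past the others. After reduction mod $\hbar^n$ the structure functions are polynomials, or rational functions in $z_1-z_2$ expanded $\hbar$-adically, so only finitely many correction terms appear. The mode then annihilates $\vac$ because $\mathcal J_{A,\ell}$ contains all nonnegative modes. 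The same computation gives $\bar a(z)\vac\in W[[z]]$, and shows that $W$ is spanned topologically by monomials applied to $\vac$.

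\textbf{$S$-locality.} Each defining relation has the form $P(z_1-z_2)a(z_1)b(z_2)=Q(z_1-z_2)b(z_2)a(z_1)$, possibly with additive terms that are themselves local. Here $P$ is a product of factors $z_1-z_2+c\hbar$. I would remove the factors of $P$ with $c\neq0$ by multiplying by their inverses $\iota_{z_1,z_2}(z_1-z_2+c\hbar)\inv$. These inverses exist in $\C((z_1-z_2))[[\hbar]]$ because $c\hbar$ is topologically nilpotent. What remains is only a power of $(z_1-z_2)$, which is exactly the relation $a(z_1)b(z_2)\sim Z(z_2,z_1)(A(z))$. Two cases deserve separate treatment. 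For the $\bar h$--$\bar h$ and $\bar h$--$\bar e,\bar f$ relations, the right-hand side is a central term or a multiple of a single field, so $A(z)$ involves $\vac$. For the $\bar e_i\bar f_i$ relation, the factor $(z_1-z_2)(z_1-z_2+2\ell\hbar)$ appears on both sides. I would cancel $(z_1-z_2+2\ell\hbar)$ and $(z_1-z_2+2r_i\hbar)$ by their $\hbar$-adic inverses, which leaves $(z_1-z_2)$ times the $S$-commutator.

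\textbf{Concluding.} With these hypotheses in place, Li's $\hbar$-adic theorem produces an $\hbar$-adic weak quantum vertex algebra $\<U\>$ for which $W$ is a module. Because $\vac$ is vacuum-like and generates $W$, the map $a(z)\mapsto \lim_{z\to0}a(z)\vac$ is an isomorphism $\<U\>\to W$. Transporting the structure along it gives $Y(\wh e_i,z)=\bar e_i(z)$, and similarly for $\wh f_i$ and $\wh h_i$; uniqueness holds because these elements generate. The Serre-like relations cause no difficulty, since they are imposed inside the algebra and only shrink $W$.

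I expect the main obstacle to be the field condition together with the injectivity of $\<U\>\to W$, that is, proving that the quotient is not degenerate modulo each $\hbar^n$. If the inductive argument becomes unwieldy, I would first try to compare with $V(\mathcal M_{\hbar,\varepsilon}^\ell(\g))$ through the universal properties in Proposition \ref{prop:universal} and Remark \ref{rem:direct-construct}. That construction has exactly the same shape, so its proof could be imported directly.
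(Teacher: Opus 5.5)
The paper does not reprove this statement; it quotes \cite[Proposition 5.10]{KL-YD-1}. Your preparatory steps are the right inputs: topological freeness, the field property modulo $\hbar^n$, and $\hbar$-adic $S$-locality of $U\cup\{1_W\}$. The concluding step, however, has a genuine gap.

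That $\vac$ is vacuum-like and generates $W=\mathcal V_{A,\ell}$ only shows that $\psi:\langle U\rangle\to W$, $\alpha(z)\mapsto\lim_{z\to0}\alpha(z)\vac$, is a surjective $\langle U\rangle$-module map. Injectivity of $\psi$ is exactly the well-definedness of $Y(\psi(\alpha),z):=\alpha(z)$. It does not follow from these hypotheses together with $S$-locality, not even if a translation operator is added. Take $W=\C^2[[\hbar]]$, $\vac=e_1$, and let $U=\{E_{ij}\}$ act as constant fields. Then $U$ is $S$-local, $\vac$ is vacuum-like and generating, and $D=0$ satisfies $D\vac=0$ and $[D,a(z)]=\frac{d}{dz}a(z)$. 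Yet $\psi(E_{12})=E_{12}e_1=0$ while $E_{12}\neq0$. So the issue is not non-degeneracy modulo $\hbar^n$.

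What is missing is the universality of $\mathcal V_{A,\ell}$ as a vacuum module. Concretely, show that the modes of $Y_\E(\bar e_i(z),z_0)$, $Y_\E(\bar f_i(z),z_0)$, $Y_\E(\bar h_i(z),z_0)$ make $\langle U\rangle$ a $\U_{A,\ell}$-module in which $1_W$ is killed by all nonnegative modes.
\begin{itemize}
\item The exchange and commutator relations pass from the faithful module $W$ to $\langle U\rangle$. Once the $S$-commutator formula is multiplied by the polynomial prefactor, they become identities among the products $a_nb$ ($n\ge0$) inside $\langle U\rangle$.
\item The Serre-like relations pass over by \cite[Theorem 4.13]{KL-YD-1}.
\item Since $\langle U\rangle$ is topologically free, the presentation $\mathcal V_{A,\ell}=\U_{A,\ell}/\overline{[\mathcal J_{A,\ell}]}$ gives a $\U_{A,\ell}$-module map $\phi:\mathcal V_{A,\ell}\to\langle U\rangle$ with $\phi(\vac)=1_W$.
\item Then $\psi\circ\phi$ and $\phi\circ\psi$ are $\U_{A,\ell}$-module endomorphisms fixing the topological generators $\vac$ and $1_W$, hence identities.
\end{itemize}
Your fallback through $V(\mathcal M_{\hbar,\varepsilon}^\ell(\g))$ needs the same input. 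Proposition \ref{prop:universal} only produces maps into something already known to be a nonlocal vertex algebra.

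Two smaller points. First, in the $S$-locality step you cannot multiply $b(z_2)a(z_1)$ by $\iota_{z_1,z_2}(z_1-z_2+c\hbar)^{-1}$. Modulo $\hbar^n$ this product takes values in $(W/\hbar^nW)((z_2))((z_1))$, where that multiplication is not defined. Instead, multiply both sides modulo $\hbar^n$ by the polynomial $\sum_{k<n}(-c\hbar)^k(z_1-z_2)^{n-1-k}$, which turns $z_1-z_2+c\hbar$ into $(z_1-z_2)^n$. Second, the exchange relations never isolate a single reordered product, so you cannot simply move a mode to the right. The field property comes from induction on word length combined with a downward induction on the mode of the newly inserted field, using these truncated relations.
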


By using \cite[Theorem 4.13]{KL-YD-1}, we have that the relations \eqref{e-i-e-j} and \eqref{f-i-f-j} are equivalent to the following relations respectively:
\begin{align*}
  (\bar e_i(z))_0^{1-a_{ij}}\bar e_j(z)=0\quad\te{and}\quad
  (\bar f_i(z))_0^{1-a_{ij}}\bar f_j(z)=0\quad\te{for }i,j\in I,\,\te{with }a_{ij}\le 0,
\end{align*}
which are equivalent to the following relations respectively:
\begin{align}\label{serre}
  (\wh e_i)_0^{1-a_{ij}}\wh e_j=0\quad\te{and}\quad
  (\wh f_i)_0^{1-a_{ij}}\wh f_j=0\quad\te{for }i,j\in I,\,\te{with }a_{ij}\le 0.
\end{align}

For $i\in I$, set
\begin{align*}
C_i=e^{(1-\ell)\hbar \partial}E^{-}(-\wh{h}_i,-2\hbar){\bf 1}\in \mathcal{V}_{A,\ell},
\end{align*}
where $\partial$ is the canonical derivation of $\mathcal V_{A,\ell}$ and
$$ E^{-}(-\wh{h}_i,-2\hbar)
=\exp\left(\sum_{n=1}^\infty\frac{1}{n}\wh{h}_i(-n)(-2\hbar)^n\right).$$
We note that
\begin{align}\label{C-i}
  C_i=\exp\left( \left(-q^{-\ell\partial}F(\partial)\wh h_i\right)_{-1} \right)\vac.
\end{align}
Denote by $K_{A,\ell}$ the ideal generated by the following vectors
\begin{align}\label{eq:e-i-f-j-sing}
  (\wh e_i)_0(\wh f_j)-\frac{\delta_{ij}}{2r_i\hbar}(1-C_i),\quad
  (\wh e_i)_n(\wh f_j)-\frac{\delta_{ij}}{r_i}(-2\ell\hbar)^{n-1}C_i\quad\te{for }i,j\in I,\,n\ge 1.
\end{align}
Define $\mathcal V_A(\ell)$ to be the $\hbar$-adic weak quantum vertex quotient algebra of $\mathcal V_{A,\ell}$ modulo $\overline{[K_{A,\ell}]}$.
For $i\in I$, set
\begin{align*}
  \check e_i=\wh e_i+\overline{[K_{A,\ell}]},\quad
  \check f_i=\wh f_i+\overline{[K_{A,\ell}]},\quad
  \check h_i=\wh h_i+\overline{[K_{A,\ell}]}\in \mathcal V_A(\ell).
\end{align*}
The following relation was given in \cite[Remark 5.14]{KL-YD-1}:
\begin{align}\label{eq:e-i-f-j-total}
  &Y(\check e_i,z_1)Y(\check f_j,z_2)-\frac{z_2-z_1+r_ia_{ij}\hbar}{z_2-z_1-r_ia_{ij}\hbar}
  Y(\check f_j,z_2)Y(\check e_i,z_1)\\
  =&\frac{\delta_{ij}}{2r_i\hbar}
  \left(
  z_1\inv\delta\left(\frac{z_2}{z_1}\right)
  -Y(q^{(1-\ell)\partial}E^-(-\check h_i,-2\hbar)\vac,z_2)z_1\inv\left(\frac{z_2-2\ell\hbar}{z_1}\right)
  \right),\nonumber
\end{align}
which implies the following relation
\begin{align}\label{e-i-f-j}
  &(z_1-z_2)^{\delta_{ij}}(z_1-z_2+2\ell\hbar)^{\delta_{ij}}
  \Big( Y(\check e_i,z_1)Y(\check f_j,z_2)\\
  &\quad-\iota_{z_2,z_1}
  \frac{z_2-z_1+r_ia_{ij}\hbar}{z_2-z_1-r_ia_{ij}\hbar}
  Y(\check f_j,z_2)Y(\check e_i,z_1) \Big)=0.\nonumber
\end{align}

The following is the main result of this section.
\begin{prop}
There is an $\hbar$-adic weak quantum vertex algebra isomorphism $\mathcal V_A(r\ell)\to V_{\hbar,\varepsilon}^\ell(\g)$ determined by
\begin{align}
  \check h_i\mapsto h_i,\quad \check e_i\mapsto x_i^+,\quad \check f_i\mapsto x_i^-\quad\te{for }i\in I.
\end{align}
\end{prop}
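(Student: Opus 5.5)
We construct two mutually inverse homomorphisms, using the universal properties on each side. Specialize the defining relations at $\tau=\varepsilon$ (so $\tau^{0,0}=\tau^{0,\bullet}=\tau^{\bullet,0}=0$ and $\tau^{\bullet,\bullet}=1$). Then \eqref{tau1} and \eqref{tau2} become exactly \eqref{h-i-h-j}, \eqref{h-i-e-j} and \eqref{h-i-f-j} with $\ell$ replaced by $r\ell$. To see this, expand $(q^{-r\ell\partial_{z_2}}\iota_{z_1,z_2}-q^{r\ell\partial_{z_2}}\iota_{z_2,z_1})(z_1-z_2)^{-k}$ as $(z_1-z_2+r\ell\hbar)^{-k}-(z_2-z_1+r\ell\hbar)^{-k}$ up to sign conventions for $k=1,2$. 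Similarly, \eqref{tau4} is \eqref{e-i-e-j}/\eqref{f-i-f-j}, and \eqref{tau3.5} becomes \eqref{e-i-f-j}. Finally, $E_\varepsilon(h_i)=\exp((-q^{-r\ell\partial}F(\partial)h_i)_{-1})\vac$, which by \eqref{C-i} is the image of $C_i$ (with level $r\ell$).

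\emph{Map $\mathcal V_A(r\ell)\to V^\ell_{\hbar,\varepsilon}(\g)$.} Take $W=V_{\hbar,\varepsilon}^\ell(\g)$ with fields $Y(h_i,z),Y(x_i^\pm,z)$. These satisfy the defining relations of $\U_{A,r\ell}'$ by the specialization above, and $\hbar$-adic closedness lets us pass through $\overline{[\wt J^1]}$. The Serre-like relations follow from \eqref{eq:serre} via the equivalence with \eqref{serre} quoted from \cite[Theorem 4.13]{KL-YD-1}. Since the modes annihilate $\vac$ for $n\ge0$, we get a $\U_{A,r\ell}$-module map $\mathcal V_{A,r\ell}\to V_{\hbar,\varepsilon}^\ell(\g)$, $u\vac\mapsto u\vac$. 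Because both sides are generated in the standard way from $\vac$, the usual argument (compare \cite[Proposition 5.10]{KL-YD-1}) shows this map is a homomorphism of $\hbar$-adic nonlocal vertex algebras sending $\wh e_i\mapsto x_i^+$ and similarly for the other generators.

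It must then kill $K_{A,r\ell}$. By Remark \ref{rem:x+x-}, relation \eqref{tau3} holds at $\tau=\varepsilon$. Taking $\Sing_z Y(x_i^+,z)x_j^-$ from \eqref{tau3} gives $\frac{\delta_{ij}}{2r_i\hbar}(\vac z^{-1}-E_\varepsilon(h_i)(z+2r\ell\hbar)^{-1})$. Expanding $(z+2r\ell\hbar)^{-1}=\sum_n(-2r\ell\hbar)^{n}z^{-n-1}$ yields exactly the elements \eqref{eq:e-i-f-j-sing} at level $r\ell$. Note the coefficient check $(x_i^+)_1x_i^-=-\frac{r\ell}{r_i}E$ is consistent with $\frac{1}{2r_i\hbar}\cdot(-2r\ell\hbar)$. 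Hence the map factors through $\mathcal V_A(r\ell)$.

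\emph{Inverse map.} Apply Proposition \ref{prop:universal-qaff} with $\bar V=\mathcal V_A(r\ell)$, $\bar h_i=\check h_i$, $\bar x_i^+=\check e_i$, $\bar x_i^-=\check f_i$. The $\mathcal M_{\hbar,\varepsilon}^\ell(\g)$ relations are the same relations read backwards, with \eqref{e-i-f-j} supplying \eqref{tau3.5}. Relations \eqref{eq:x+0x-} and \eqref{eq:x+1x-} are the $n=0,1$ cases of \eqref{eq:e-i-f-j-sing}, using \eqref{C-i} to identify $C_i$ with $E_\varepsilon(\check h_i)$. The Serre relations \eqref{eq:serre} are \eqref{serre}. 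Both composites fix the generators, so they are identities by uniqueness in the universal properties, given that generation holds on both sides. Since an isomorphism of nonlocal vertex algebras automatically respects $S$-locality, it is an isomorphism of weak quantum vertex algebras.

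\emph{Main obstacle.} The main obstacle is the bookkeeping identifying \eqref{tau1}/\eqref{tau2} at $\tau=\varepsilon$ with \eqref{h-i-h-j}--\eqref{h-i-f-j}, including signs and the appearance of $[r\ell]$ versus $[\ell]$. A second delicate point is matching $C_i=e^{(1-\ell)\hbar\partial}E^-(\cdots)$ with $E_\varepsilon(h_i)$ via \eqref{C-i}, and checking that the $\tau^{\bullet,\bullet}$ prefactor in \eqref{eq:def-E} is $1$. I would do the first by expanding in $z_1-z_2$ using $q^{a\partial_{z_2}}f(z_1-z_2)=f(z_1-z_2-a\hbar)$.
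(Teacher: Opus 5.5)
Your proposal is correct and follows essentially the paper's route. The map $\mathcal V_A(r\ell)\to V_{\hbar,\varepsilon}^\ell(\g)$ comes from the induced $\U_{A,r\ell}$-module structure (Serre-like relations via \cite[Theorem 4.13]{KL-YD-1}), applied to $\vac$ and then shown to kill $K_{A,r\ell}$; the inverse comes from Proposition~\ref{prop:universal-qaff}; and both composites fix the generators. The paper detours through an auxiliary $V(\mathcal M)$ built from the polynomial form of the $x^+x^-$ relation, which you bypass by checking the $\U'_{A,r\ell}$ relations directly on $V_{\hbar,\varepsilon}^\ell(\g)$.

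Your coefficient check contains a sign slip. Since $\Res_z z\,(z+2r\ell\hbar)^{-1}=-2r\ell\hbar$, your singular part gives $(x_i^+)_1x_i^-=+\frac{r\ell}{r_i}E_\varepsilon(h_i)$, which is what the classical limit $(x_i^+)_1x_i^-=\frac{r\ell}{r_i}\vac$ requires. So your claimed exact match with the displayed \eqref{eq:x+1x-} and \eqref{eq:e-i-f-j-sing} holds only up to a sign and a factor $r\ell$ that those displays appear to drop. This is a normalization issue, not a gap in the argument.
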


\begin{proof}
Applying Proposition \ref{prop:universal-qaff} to relations \eqref{h-i-h-j}, \eqref{h-i-e-j}, \eqref{h-i-f-j}, \eqref{e-i-e-j}, \eqref{f-i-f-j},
\eqref{e-i-f-j}, \eqref{eq:e-i-f-j-sing},  \eqref{eq:serre} and \eqref{C-i}, we get an $\hbar$-adic nonlocal vertex algebra homomorphism $\varphi:V_{\hbar,\varepsilon}^\ell(\g)\to \mathcal V_A(r\ell)$ determined by
\begin{align*}
  \varphi(h_i)=\check h_i,\quad \varphi(x_i^+)=\check e_i,\quad \varphi(x_i^-)=\check f_i\quad\te{for }i\in I.
\end{align*}

On the other hand,
let $\mathcal M$ be the category, whose objects are topologically free $\C[[\hbar]]$-modules $W$ equipped with fields
\begin{align*}
  h_i(z)=\sum_{n\in\Z}h_i(n)z^{-n-1},\quad x_i^\pm(z)=\sum_{n\in\Z}x_i^\pm(n)z^{-n-1}\in \E_\hbar(W)\quad\te{for }i\in I,
\end{align*}
satisfies the relations \eqref{tau1}, \eqref{tau2}, \eqref{tau4} and
\begin{align}
  &\label{tau3.75}(z_1-z_2)^{\delta_{ij}}
  (z_1-z_2+2r\ell\hbar)^{\delta_{ij}}
  (z_1-z_2+r_ia_{ij}\hbar)x_i^+(z_1)x_j^-(z_2)\\
  =&(z_1-z_2)^{\delta_{ij}}
  (z_1-z_2+2r\ell\hbar)^{\delta_{ij}}
  (z_1-z_2-r_ia_{ij}\hbar)x_j^-(z_2)x_i^+(z_1).\nonumber
\end{align}
Let $R$ be the ideal of $V(\mathcal M)$ generated by
\eqref{eq:serre} and
\begin{align*}
  &(x_i^+)_0(x_j^-)-\frac{\delta_{ij}}{2r_i\hbar}
  (\vac-E_\varepsilon(h_i)),(x_i^+)_n(x_j^-)-\frac{\delta_{ij}}{r_i}(-2r\ell\hbar)^{n-1}E_\varepsilon(h_i)
  \quad\te{for }i,j\in I,\,n\ge 1.
\end{align*}
Then there is an $\hbar$-adic nonlocal vertex algebra isomorphism $V_{\hbar,\varepsilon}^\ell(\g)\to V(\mathcal M)/\overline{[R]}$ determined by
\begin{align*}
  h_i\mapsto h_i,\quad x_i^\pm\mapsto x_i^\pm\quad\te{for }i\in I.
\end{align*}
Remark \ref{rem:direct-construct} yields a $\U_{A,\ell}'$-module structure on $V(\mathcal M)$ defined by ($i\in I$)
\begin{align*}
  \wt h_i(z)=h_i(z)=Y(h_i,z),\quad \wt e_i(z)=x_i^+(z)=Y(x_i^+,z),\quad \wt f_i(z)=x_i^-(z)=Y(x_i^-,z).
\end{align*}
It then induces a $\U_{A,\ell}'$-module structure on $V_{\hbar,\varepsilon}^\ell(\g)$.
Note that the relation \eqref{eq:serre} implies
that
\begin{align*}
  Y(x_i^\pm,z)_0^{m_{ij}}Y(x_j^\pm,z)=Y\left((x_i^\pm)_0^{m_{ij}}x_j^\pm,z\right)=0.
\end{align*}
By using \cite[Theorem 4.13]{KL-YD-1}, we have that the Serre-like relation $(Y(x_i^\pm,z),Y(x_j^\pm,z),1-a_{ij})$ holds for any $i,j\in I$ with $a_{ij}\le 0$.
Then the $\U_{A,\ell}'$-module structure on $V_{\hbar,\varepsilon}^\ell(\g)$ induces a $\U_{A,\ell}$-module structure.
The vacuum property shows that
\begin{align*}
  \bar h_i(z)\vac=Y(h_i,z)\vac,\quad \bar e_i(z)\vac=Y(x_i^+,z)\vac,\quad \bar f_i(z)=Y(x_i^-,z)\vac\in V_{\hbar,\varepsilon}^\ell(\g)[[z]].
\end{align*}
The definition of $\mathcal V_{A,\ell}$ induces a $\U_{A,\ell}$-modulo homomorphism $\pi:\mathcal V_{A,\ell}\to V_{\hbar,\varepsilon}^\ell(\g)$ such that
$\pi(\vac)=\vac$.
Proposition \ref{prop:wqva} shows that
\begin{align*}
  &\pi(\wh h_i)=h_i,\quad \pi(\wh e_i)=x_i^+,\quad \pi(\wh f_i)=x_i^-,\quad
  \pi(Y(\wh h_i,z)v)=Y(h_i,z)\pi(v),\\
  &\pi(Y(\wh e_i,z)v)=Y(x_i^+,z)\pi(v),\quad
  \pi(Y(\wh f_i,z)v)=Y(x_i^-,z)\pi(v)
\end{align*}
for $i\in I$ and $v\in \mathcal V_{A,\ell}$.
Hence, $\pi$ is also an $\hbar$-adic nonlocal vertex algebra homomorphism.
Finally, by comparing \eqref{C-i}, \eqref{eq:e-i-f-j-sing} with \eqref{eq:def-E}, \eqref{eq:x+0x-}
and \eqref{eq:x+1x-}, we get an $\hbar$-adic nonlocal vertex algebra homomorphism $\psi:\mathcal V_A(r\ell)\to V_{\hbar,\varepsilon}^\ell(\g)$ induces from $\pi$.
It is easy to verify that $\varphi$ and $\psi$ are inverse of each other, which completes the proof of proposition.
\end{proof}

\section{Related to quantum affine algebras}\label{app:qaff}

Let $\mu$ be the identity automorphism in Section \ref{sec:tqaff}.
Then $\tau$ specializes to the following relations
\begin{align*}
  &\tau_{ij}^{0,0}(z)=
  q_i^{-a_{ij}\pd z}
  [r\ell]_{q^{\pd z}}^2\pdiff{z}{2}f_0(z),\\
  &\tau_{ij}^{\bullet,0}(z)=\tau_{ij}^{0,\bullet}(z)
  =-
  q_i^{-a_{ij}\pd z}[r\ell]_{q^{\pd z}}\pd zf_0(z),\\
  &\tau_{ij}^{\bullet,\bullet}(z)
  =\exp\left(f_0(z-r_ia_{ij}\hbar)\right).
\end{align*}
In this section, we show that the corresponding quantum affine vertex algebras $V_{\hbar,\tau}^\ell(\g)$ and $L_{\hbar,\tau}^\ell(\g)$ are isomorphic to that introduced in \cite{K-Quantum-aff-va}.

We first recall the definition of quantum affine vertex algebras introduced in \cite{K-Quantum-aff-va}.
Let $\mathcal M$ be the category, whose objects are topologically free $\C[[\hbar]]$-modules $W$ equipped with fields
\begin{align*}
  h_{i,\hbar}(z),\quad x_{i,\hbar}^\pm(z)\in \E_\hbar(W)
  \quad\te{for }i\in I,
\end{align*}
satisfying the following relations (see \cite[(6.6), (6.7), (6.8) and (6.9)]{K-Quantum-aff-va}):
\begin{align}
  &[h_{i,\hbar}(z_1),h_{j,\hbar}(z_2)]
  =[r_ia_{ij}]_{q^{\pd{z_2}}}[r\ell]_{q^{\pd{z_2}}}\label{eq:local-h-1}\\
  &\quad\times
  \left(\iota_{z_1,z_2}q^{-r\ell\pd{z_2}}-\iota_{z_2,z_1}q^{r\ell\pd{z_2}}\right)\frac{e^{-z_1+z_2}}{(1-e^{-z_1+z_2})^2},\nonumber\\
  &[h_{i,\hbar}(z_1),x_{j,\hbar}^\pm(z_2)]
  =\pm x_{j,\hbar}^\pm(z_2)[r_ia_{ij}]_{q^{\pd{z_2}}}\label{eq:local-h-2}\\
  &\quad\times
  \left(\iota_{z_1,z_2}q^{-r\ell\pd{z_2}}-\iota_{z_2,z_1}q^{r\ell\pd{z_2}}\right)\frac{1+e^{-z_1+z_2}}{2-2e^{-z_1+z_2}},\nonumber\\
  &\iota_{z_1,z_2}\frac{1-q_i^{a_{ij}}e^{-z_1+z_2}}{(1-e^{-z_1+z_2})^{\delta_{ij}}} x_{i,\hbar}^\pm(z_1)x_{j,\hbar}^\pm(z_2)\label{eq:local-h-3-pre}\\
  &\quad  =\iota_{z_2,z_1}\frac{q_i^{a_{ij}}-e^{-z_1+z_2}}{(1-e^{-z_1+z_2})^{\delta_{ij}}} x_{j,\hbar}^\pm(z_2)x_{i,\hbar}^\pm(z_1),\nonumber\\
  &(z_1-z_2)^{\delta_{ij}}(z_1-z_2+2r\ell\hbar)^{\delta_{ij}}\label{eq:local-h-4}\\
  &\quad\times\left(x_{i,\hbar}^+(z_1)x_{j,\hbar}^-(z_2)
    -\frac{1-q_i^{a_{ij}}e^{z_2-z_1}}{q_i^{a_{ij}}-e^{z_2-z_1}}
  x_{j,\hbar}^-(z_2)x_{i,\hbar}^+(z_1)\right)=0.\nonumber
\end{align}
Let $A$ be the GCM of $\g$, and let $\ell\in \C$.
By using \cite[Proposition 5.3]{K-Quantum-aff-va}, one obtained an $\hbar$-adic nonlocal vertex algebra $F_\tau(A,\ell)$.
The quantum affine vertex algebra $V_{\hat\g,\hbar}(\ell,0)$ was defined to be the quotient
$\hbar$-adic nonlocal vertex algebra of $F_\tau(A,\ell)$ modulo the minimal closed ideal that is closed under $[\cdot]$ (see \eqref{eq:Husdorff}) and contains the following elements
\begin{align}
  &\left(x_{i,\hbar}^+\right)_0x_{i,\hbar}^--(q_i-q_i\inv)\inv
    \left(\vac-E(h_{i,\hbar})\right)\quad\te{for } i\in I,\label{eq:x+0x-'}\\
  &\left(x_{i,\hbar}^+\right)_1x_{i,\hbar}^-+2r\ell\hbar(q_i-q_i\inv)\inv
  E(h_{i,\hbar})\quad\te{for } i\in I,\label{eq:x+1x-'}\\
  &\left(x_{i,\hbar}^\pm\right)_0^{m_{ij}}x_{j,\hbar}^\pm\quad\te{for } i,j\in I\,\te{with }a_{ij}\le 0,\label{eq:serre'}
\end{align}
where
\begin{align}\label{eq:def-E'}
  E(h_{i,\hbar})=\left(\frac{F(r_i+r\ell)}{F(r_i-r\ell)}\right)^\half
    \exp\left(\left(-q^{-r\ell\partial} F\left(\partial\right)h_{i,\hbar}\right)_{-1}\right)\vac.
\end{align}
If $\ell\in\Z_+$, the quantum affine vertex algebra  $L_{\hat\g,\hbar}(\ell,0)$ was defined to be the  quotient
$\hbar$-adic nonlocal vertex algebra of $V_{\hat\g,\hbar}(\ell,0)$ modulo the minimal closed ideal that is closed under $[\cdot]$ (see \eqref{eq:Husdorff}) and contains the following elements
\begin{align}
  & \left(x_{i,\hbar}^\pm\right)_{-1}^{r\ell/r_i}x_{i,\hbar}^\pm\quad\te{for } i\in I.\label{eq:def-integrable'}
\end{align}

\begin{prop}
Let $X=V$ or $L$.
There is an $\hbar$-adic quantum vertex algebra isomorphism from $X_{\hbar,\tau}^\ell(\g)$ to $X_{\hat\g,\hbar}(\ell,0)$ determined by
\begin{align*}
  h_i\mapsto h_{i,\hbar},\quad x_i^+\mapsto \frac{q_i-q_i\inv}{2r_i\hbar} x_{i,\hbar}^+,\quad
  x_i^-\mapsto x_i^-\quad\te{for }i\in I.
\end{align*}
\end{prop}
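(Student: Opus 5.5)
The plan is to build homomorphisms in both directions using the universal properties of the two constructions, and then check that they are mutually inverse on generators.

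\emph{Forward map.} For the map $X_{\hbar,\tau}^\ell(\g)\to X_{\hat\g,\hbar}(\ell,0)$ I will apply Proposition \ref{prop:universal-qaff}. Set
\begin{align*}
\bar h_i=h_{i,\hbar},\quad \bar x_i^+=\frac{q_i-q_i\inv}{2r_i\hbar}x_{i,\hbar}^+,\quad \bar x_i^-=x_{i,\hbar}^-.
\end{align*}
Two things must be checked for these fields in $X_{\hat\g,\hbar}(\ell,0)$.
\begin{enumerate}
\item They satisfy \eqref{tau1}, \eqref{tau2}, \eqref{tau3.5}, \eqref{tau4} for the specialized $\tau$ (the case $\mu=\mathrm{id}$).
\item They satisfy \eqref{eq:x+0x-}, \eqref{eq:x+1x-}, \eqref{eq:serre}, and also \eqref{eq:def-integrable} when $X=L$.
\end{enumerate}

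For the field relations, note that with $N=1$ the relations $(\tau1')$, $(\tau2')$, $(\tau3.5')$, $(\tau4')$ of Section \ref{sec:tqaff} are literally \eqref{eq:local-h-1}, \eqref{eq:local-h-2}, \eqref{eq:local-h-4}, \eqref{eq:local-h-3-pre}. The only nontrivial point is \eqref{eq:local-h-3-pre}. There the factor $(1-e^{-z_1+z_2})^{-\delta_{ij}}$ has to be cleared: multiply by $(1-e^{-z_1+z_2})^{\delta_{ij}}$, then use that $(z_1-z_2)^{-1}(1-e^{-z_1+z_2})$ is an invertible power series. This turns it into $(\tau4')$. Rescaling $x_i^+$ by a constant does not affect these homogeneous relations.

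For the singular-part relations, the key identity is
\begin{align*}
E_\tau(h_i)=E(h_{i,\hbar}).
\end{align*}
Both are $\exp\big((-q^{-r\ell\partial}F(\partial)h_i)_{-1}\big)\vac$ multiplied by a scalar. So it suffices to compute
\begin{align*}
\tau_{ii}^{\bullet,\bullet}(\mp 2r\ell\hbar)=\exp f_0(\mp2r\ell\hbar-2r_i\hbar)
\end{align*}
and compare
\begin{align*}
\frac{\tau_{ii}^{\bullet,\bullet}(-2r\ell\hbar)^{1/2}}{\tau_{ii}^{\bullet,\bullet}(2r\ell\hbar)^{1/2}}
\quad\te{with}\quad
\left(\frac{F(r_i+r\ell)}{F(r_i-r\ell)}\right)^{1/2},
\end{align*}
using $e^{f_0(z)}=(e^{z/2}-e^{-z/2})/z$ and $F(z)=(q^z-q^{-z})/z$.

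With that identity, multiplying \eqref{eq:x+0x-'} and \eqref{eq:x+1x-'} by $(q_i-q_i\inv)/(2r_i\hbar)$ gives exactly \eqref{eq:x+0x-} and \eqref{eq:x+1x-}. The Serre and integrability relations are homogeneous in $x_i^\pm$, so the scaling is harmless for them. This yields $\varphi$.

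\emph{Reverse map.} For the map $X_{\hat\g,\hbar}(\ell,0)\to X_{\hbar,\tau}^\ell(\g)$ I will use the universal property of $F_\tau(A,\ell)$, namely \cite[Proposition 5.3]{K-Quantum-aff-va}, which is the analogue of Proposition \ref{prop:universal}. Send
\begin{align*}
h_{i,\hbar}\mapsto h_i,\quad x_{i,\hbar}^+\mapsto \frac{2r_i\hbar}{q_i-q_i\inv}x_i^+,\quad x_{i,\hbar}^-\mapsto x_i^-.
\end{align*}
This requires \eqref{eq:local-h-3-pre} in $X_{\hbar,\tau}^\ell(\g)$, obtained from $(\tau4')$ by dividing by the invertible series above, which is legitimate in $\E_\hbar^{(2)}$. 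The defining ideal \eqref{eq:x+0x-'}--\eqref{eq:def-integrable'} maps to zero by the same scalar identity. Note that $(q_i-q_i\inv)/(2r_i\hbar)\in 1+\hbar\C[[\hbar]]$ is invertible, so the rescaling makes sense in both directions.

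\emph{Conclusion.} The two maps are mutually inverse on the generators, hence are inverse isomorphisms of $\hbar$-adic nonlocal vertex algebras. Compatibility with the quantum Yang--Baxter operators is automatic, since the $S$-operator of an $\hbar$-adic quantum vertex algebra is determined by the $S$-locality among generators (Proposition \ref{prop:qyb}).

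The main obstacle I expect is the scalar bookkeeping in $E_\tau(h_i)=E(h_{i,\hbar})$, and the corresponding matching of the coefficient of \eqref{eq:x+1x-}, where the $2r\ell\hbar$ factor in \eqref{eq:x+1x-'} must match $r\ell/r_i$ after rescaling. Here I would first work out $f_0$ at the specific points explicitly, and only then check the exponent-$1/2$ choice of branch.
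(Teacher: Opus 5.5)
\textbf{Overall route.} Your approach matches the paper's. You use the universal properties on both sides, and the scalar identity $\tau_{ii}^{\bullet,\bullet}(-2r\ell\hbar)/\tau_{ii}^{\bullet,\bullet}(2r\ell\hbar)=F(r_i+r\ell)/F(r_i-r\ell)$ to get $E_\tau(h_i)\mapsto E(h_{i,\hbar})$. You then match the defining ideals after rescaling $x_i^+$ by the unit $(q_i-q_i\inv)/(2r_i\hbar)$. The paper packages this differently: it first builds an isomorphism $V(\mathcal M_{\hbar,\tau}^\ell(\g))\to F_\tau(A,\ell)$ and then compares ideals, which is equivalent to your two maps on the quotients.

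\textbf{The gap.} It lies in comparing \eqref{eq:local-h-3-pre} with \eqref{tau4'}, and you have the difficulty backwards. Going from \eqref{eq:local-h-3-pre} to \eqref{tau4'} is just multiplication by $(1-e^{-z_1+z_2})^{\delta_{ij}}$. The converse is what you need for the reverse map $F_\tau(A,\ell)\to X_{\hbar,\tau}^\ell(\g)$. When $i=j$ it is \emph{not} a division by an invertible series. Write $1-e^{-z_1+z_2}=(z_1-z_2)g(z_1-z_2)$ with $g(0)=1$; only $g$ is invertible. The remaining factor $(z_1-z_2)\inv$ is expanded by $\iota_{z_1,z_2}$ on the left of \eqref{eq:local-h-3-pre} and by $\iota_{z_2,z_1}$ on the right. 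Let $C(z_1,z_2)$ be the common value of the two sides of \eqref{tau4'}, which lies in $\E_\hbar^{(2)}$. Then the left side minus the right side of \eqref{eq:local-h-3-pre} equals
\begin{align*}
  z_2\inv\delta\left(\frac{z_1}{z_2}\right)C(z_2,z_2).
\end{align*}
So \eqref{eq:local-h-3-pre} is \eqref{tau4'} plus the extra condition $C(z,z)=0$. This is exactly why the paper invokes the argument of \cite[Lemma 5.6]{CJKT-qeala-II-twisted-qaffinization} for this direction.

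\textbf{The fix.} For $i=j$, the right-hand side of \eqref{tau4'} equals $-e^{-z_1+z_2}$ times the left-hand side with $z_1$ and $z_2$ interchanged. Hence $C(z_1,z_2)=-e^{-z_1+z_2}C(z_2,z_1)$. Since $C\in\E_\hbar^{(2)}$, you may set $z_1=z_2$, which gives $C(z,z)=-C(z,z)$ and so $C(z,z)=0$. With this inserted, your reverse map exists and the rest of your argument goes through.

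\textbf{A smaller point.} $S$-locality alone does not determine the quantum Yang--Baxter operator of a weak quantum vertex algebra. The right justification for compatibility with $S(z)$ is that each operator is determined by its values on generators, via the hexagon identity and unitarity. You then only need to check that these generator values agree under the map. The paper leaves this step implicit as well.
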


\begin{proof}
Note that the relations \eqref{tau1'}, \eqref{tau2'}, \eqref{tau3.5'} are exactly the relations \eqref{eq:local-h-1}, \eqref{eq:local-h-2} and \eqref{eq:local-h-4}, respectively.
In addition, the relation \eqref{eq:local-h-3-pre} implies the relation \eqref{tau4'}.
On the other hand, a similar argument to the proof of \cite[Lemma 5.6]{CJKT-qeala-II-twisted-qaffinization} shows that the relation \eqref{tau4'} also implies the relation \eqref{eq:local-h-3-pre}.
Then $X_{\hbar,\tau}^\ell(\g)$ together with following fields
\begin{align*}
  Y(h_i,z),\quad \frac{2r_i\hbar}{q_i-q_i\inv} Y(x_i^+,z),\quad Y(x_i^-,z)
\end{align*}
is an object of $\mathcal M$ and $X_{\hat\g,\hbar}(\ell,0)$ equipped with
\begin{align*}
  Y(h_{i,\hbar},z),\quad \frac{q_i-q_i\inv}{2r_i\hbar}Y(x_i^+,z),\quad
  Y(x_i^-,z)
\end{align*}
is an object of $\mathcal M_{\hbar,\tau}^\ell(\g)$.
By applying \cite[Proposition 5.4]{K-Quantum-aff-va} and Proposition \ref{prop:universal}, we get an $\hbar$-adic nonlocal vertex algebra isomorphism
$\psi:V(\mathcal M_{\hbar,\tau}^\ell(\g))\to F_\tau(A,\ell)$ determined by
\begin{align*}
  h_i\mapsto h_{i,\hbar},\quad x_i^+\mapsto \frac{q_i-q_i\inv}{2r_i\hbar}x_{i,\hbar}^+,\quad x_i^-\mapsto x_i^-\quad\te{for }i\in I.
\end{align*}
Note that
\begin{align*}
  &\frac{\tau_{ii}^{\bullet,\bullet}(-2r\ell\hbar)}
    {\tau_{ii}^{\bullet,\bullet}(2r\ell\hbar)}
  =\exp\left(f_0(-2r_i\hbar-2r\ell\hbar)
    -f_0(-2r_i\hbar+2r\ell\hbar)\right)\\
  =&\frac{q^{-r_i-r\ell}-q^{r_i+r\ell}}{-2r_i\hbar-2r\ell\hbar}
  \frac{-2r_i\hbar+2r\ell\hbar}{q^{-r_i+r\ell}-q^{r_i-r\ell}}
  =\frac{F(-r_i-r\ell)}{F(-r_i+r\ell)}
  =\frac{F(r_i+r\ell)}{F(r_i-r\ell)}.
\end{align*}
Then the relations \eqref{eq:x+0x-}, \eqref{eq:x+1x-},
\eqref{eq:serre} and \eqref{eq:def-integrable}
are exactly the relations \eqref{eq:x+0x-'}, \eqref{eq:x+1x-'}, \eqref{eq:serre'} and \eqref{eq:def-integrable'}, respectively.
Therefore, $\psi$ induces the desired isomorphism.
\end{proof}

\end{appendices}




\end{document}